\newtheorem{theorem}{Theorem}
\newtheorem{corollary}{Corollary}
\newtheorem{lemma}{Lemma}
\newtheorem{assumption}{Assumption}
\theoremstyle{definition}
\newtheorem{example_tmp}{Example}
\newenvironment{example}
	{ \begin{example_tmp} 	}
	{ 
		
		\qed 
		\end{example_tmp} 
	}
\theoremstyle{definition}
\newtheorem{remark_tmp}{Remark}
\newenvironment{remark}
	{ \begin{remark_tmp} 	}
	{ 
		
		\qed 
		\end{remark_tmp} 
	}
	\newcommand\inde{\protect\mathpalette{\protect\independenT}{\perp}}
	\def\independenT#1#2{\mathrel{\rlap{$#1#2$}\mkern2mu{#1#2}}}
	\newcommand{\indicatortmp}{{\rm 1\hspace*{-0.84ex}%
	\rule{0.06ex}{1.37ex}\hspace*{0.9ex}}}
	\newcommand{\indicator}{\ensuremath{\indicatortmp\!}}
	\newcommand{\sumi}{\sum_{i=1}^n}
	\DeclareMathOperator*{\argmin}{arg\,min}
	\newcommand{\E}{\mathbb{E}}
	\newcommand{\En}{\mathbb{E}_n}
	\renewcommand{\P}{\mathbb{P}}
	\newcommand{\Eqref}[1]{Eqn.\ \eqref{#1}}
	\newcommand{\vertiii}[1]{{\left\vert\kern-0.25ex\left\vert\kern-0.25ex\left\vert #1  \right\vert\kern-0.25ex\right\vert\kern-0.25ex\right\vert_{2,1}}}
	\newcommand{\by}{b^y_{\t,i}}
	\newcommand{\bias}{b_{s}}
	\newcommand{\D}{D} 
	\renewcommand{\d}{d_i} 
	\newcommand{\dt}{\d^{\t}} 
	\newcommand{\T}{\mathcal{T}} 
	\newcommand{\Tbar}{\overline{\mathcal{T}}} 
	\renewcommand{\t}{t} 
	\newcommand{\N}{\mathbb{N}}
	\newcommand{\NT}{\N_\T}		
	\newcommand{\NTbar}{\overline{\N}_\T}		
	\newcommand{\Np}{\N_p}
	\newcommand{\It}{\mathbb{I}_\t}
	\newcommand{\sumt}{\sum_{i \in \It}}
	\newcommand{\maxt}{\max_{i \in \It}}
	\newcommand{\Ent}{\mathbb{E}_{n,\t}}
	\newcommand{\sumT}{  \sum_{\t \in \NT}  }
	\newcommand{\maxT}{\max_{\t \in \NT}}
	\newcommand{\sumTbar}{ \sum_{\t \in \NTbar}  }
	\newcommand{\maxTbar}{\max_{\t \in \NTbar}}
	\newcommand{\minTbar}{\min_{\t \in \NTbar}}
	\newcommand{\maxi}{\max_{i \leq n }}
		\newcommand{\sumj}{\sum_{j \in \N_p}}
		\newcommand{\maxj}{\max_{j \in \N_p}}
		\newcommand{\mut}{\mu_\t}
		\newcommand{\muthat}{\hat{\mu}_\t}
		\newcommand{\mutt}{\mu_{\t,\t'}}
		\newcommand{\mutthat}{\hat{\mu}_{\t,\t'}}
		\newcommand{\drf}{\bm{\mu}}
		\newcommand{\drfhat}{\hat{\drf}}
		\newcommand{\tot}{\bm{\tau}}
		\newcommand{\tothat}{\hat{\bm{\tau}}}
	\newcommand{\Xn}{\bm{X}}
	\newcommand{\Xnt}{\bm{X}_{\t}}
		\newcommand{\newdot}{\bm{\cdot}}
	\newcommand{\btrue}{\beta^*_{\newdot, \newdot}}
	\newcommand{\btruet}{\beta^*_\t}
	\newcommand{\btruetj}{\beta^*_{\t, j}}
	\newcommand{\btilde}{\tilde{\beta}_{\newdot, \newdot}}
	\newcommand{\btildet}{\tilde{\beta}_\t}
	\newcommand{\btildej}{\tilde{\beta}_{\newdot, j}}
	\newcommand{\btildetj}{\tilde{\beta}_{\t, j}}
	\newcommand{\bany}{\beta_{\newdot, \newdot}}
	\newcommand{\banyt}{\beta_\t}
	\newcommand{\bhat}{\hat{\beta}_{\newdot, \newdot}}
	\newcommand{\bhatt}{\hat{\beta}_\t}
	\newcommand{\bhattj}{\hat{\beta}_{\t, j}}
	\newcommand{\gtrue}{\gamma^*_{\newdot, \newdot}}
	\newcommand{\gtruet}{\gamma^*_\t}
	\newcommand{\gtruetj}{\gamma^*_{\t, j}}
	\newcommand{\gtilde}{\tilde{\gamma}_{\newdot, \newdot}}
	\newcommand{\gtildet}{\tilde{\gamma}_\t}
	\newcommand{\gtildej}{\tilde{\gamma}_{\newdot, j}}
	\newcommand{\gtildetj}{\tilde{\gamma}_{\t, j}}
	\newcommand{\gany}{\gamma_{\newdot, \newdot}}
	\newcommand{\ganyt}{\gamma_\t}
	\newcommand{\ganyj}{\gamma_{\newdot, j}}
	\newcommand{\ghat}{\hat{\gamma}_{\newdot, \newdot}}
	\newcommand{\ghatt}{\hat{\gamma}_\t}
	\newcommand{\ghattj}{\hat{\gamma}_{\t, j}}
	\newcommand{\dtilde}{\tilde{\delta}_{\newdot, \newdot}}
	\newcommand{\dtildet}{\tilde{\delta}_\t}
	\newcommand{\dtildej}{\tilde{\delta}_{\newdot, j}}
	\newcommand{\dtildetj}{\tilde{\delta}_{\t, j}}
	\newcommand{\dany}{\delta_{\newdot, \newdot}}
	\newcommand{\danyt}{\delta_\t}
	\newcommand{\danyj}{\delta_{\newdot, j}}
	\newcommand{\danytj}{\delta_{\t, j}}
	\newcommand{\dhat}{\hat{\delta}_{\newdot, \newdot}}
	\newcommand{\dhatt}{\hat{\delta}_\t}
	\newcommand{\dhatj}{\hat{\delta}_{\newdot, j}}
	\newcommand{\dhattj}{\hat{\delta}_{\t, j}}
	\newcommand{\M}{\mathcal{M}}
	\newcommand{\SSE}{\mathcal{E}}
	\renewcommand{\H}{\mathcal{H}}
	\newcommand{\ptany}{\hat{p}_\t(\{{x_i^*}'\ganyt\}_{\NT})}
	\newcommand{\pttilde}{\hat{p}_\t(\{{x_i^*}'\gtildet\}_{\NT})}
	\newcommand{\pthat}{\hat{p}_\t(\{{x_i^*}'\ghatt\}_{\NT})}
	\newcommand{\pttrue}{\hat{p}_\t(\{{x_i^*}'\gtruet\}_{\NT})}
	\newcommand{\dbarphi}{\overline{\overline{\phi}}}
\DeclareMathOperator*{\supp}{supp}
\begin{document}

\title{Robust Inference on Average Treatment Effects with Possibly More Covariates than Observations\thanks{An online suppplement contains additional proofs and simulations. I am deeply grateful to Matias Cattaneo for advice and support. I am indebted to Xuming He, Lutz Kilian, and Jeffrey Smith for thoughtful feedback and discussions. I thank Victor Chernozhukov for pointing to the relevant latest results, obtained in joint work Alexandre Belloni and Christian Hansen, and the latter two authors for conversations in the early stages of this project. I benefited from discussions with Rosa Matzkin, Blaise Melly, and Jack Porter. I also thank the co-editor, Han Hong, and two reviewers for their detailed comments and suggestions that improved the paper.}}
\author{Max H. Farrell\thanks{Correspondence to: University of Chicago Booth School of Business, 5807 South Woodlawn Avenue, Chicago, IL 60637, United States. Tel: +1  773-834-0161; Email: {\tt max.farrell@chicagobooth.edu}; Web: \url{http://faculty.chicagobooth.edu/max.farrell/}. } \\ University of Chicago Booth School of Business}
\date{June 9, 2015}
\maketitle
\vspace{-0.5in}
\begin{center}
Updated: February 1, 2018\footnote{The published version, \citeasnoun{Farrell2015_JoE}, contains an error in the proof [present also in \citeasnoun{Farrell2015_JoE--Supplement}] which is rectified here. Assumption 3(c) is required for the results of Section 5 to be valid; see the author's website for further detail. I am grateful to Whitney Newey for alerting me to this error.}
\end{center}
\bigskip

\thispagestyle{empty}
\setcounter{page}{0}

\begin{abstract}
This paper concerns robust inference on average treatment effects following model selection. Under selection on observables, we construct confidence intervals using a doubly-robust estimator that are robust to model selection errors and prove their uniform validity over a large class of models that allows for multivalued treatments with heterogeneous effects and selection amongst (possibly) more covariates than observations. The semiparametric efficiency bound is attained under appropriate conditions. Precise conditions are given for any model selector to yield these results, and we specifically propose the group lasso, which is apt for treatment effects, and derive new results for high-dimensional, sparse multinomial logistic regression. Both a simulation study and revisiting the National Supported Work demonstration show our estimator performs well in finite samples.
\end{abstract}

\bigskip

{\bf Keywords:} High-dimensional sparse model, heterogeneous treatment effects, uniform inference, model selection, doubly-robust estimator, unconfoundedness, group lasso.

{\bf JEL Classification:} C21, C31, C52.

\newpage
\onehalfspacing

\section{Introduction}
	\label{sec-intro}

Model selection has always had a place in empirical economics, whether or not it is formally acknowledged. A key problem in modern empirical work is that researchers face datasets with large numbers of variables, sometimes more than observations. A complementary problem is that economic theory and prior knowledge may mandate controlling for certain variables, but are generally silent regarding functional form. These two problems force researchers to search for a model that is simultaneously parsimonious and adequately flexible. Many formal methods are computationally infeasible with a large number of variables. A typical response to this challenge is to iteratively search over a small set of alternative specifications, guided only by the researcher's taste and intuition. But no matter the approach used, subsequent inference almost never takes accounts for this ``specification search'' and the resulting confidence intervals are not robust to model selection mistakes, and hence are unreliable in empirical work.

This problem is particularly important in estimating average treatment effects under selection on observables, because in this framework using the right covariates is crucial for identification and correct inference. In this context, we provide an easy-to-implement and objective method for covariate selection and post-selection inference on average treatment effects.\footnote{Treatment effects, missing data, measurement error, and data combination models are equivalent under selection on observables. Thus, all our results immediately apply to those contexts. For reviews of these literatures, see \citeasnoun{Tsiatis2006_Book}, \citeasnoun{Heckman-Vytlacil_2007a_Handbook}, \citeasnoun{Imbens-Wooldridge2009_JEL}, and \citeasnoun{Wooldridge2010_book}.} We establish four main results for multivalued treatments effects with arbitrary heterogeneity in observables and heteroskedasticity. First, we show that a doubly-robust estimator is robust to model selection errors. These estimators were initially developed for robustness to parametric misspecification, but are now known to be robust to selection.\footnote{Doubly-robust estimation and its role in program evaluation is discussed by \citeasnoun{Robins-Rotnitzky1995_JASA}, \citeasnoun{vanderLaan-Robins2007_book}, \citeasnoun[with discussion]{Kang-Schafer2007_SS}, \citeasnoun{Tan2010_Bmka}, and references therein.} By taking explicit account of the model selection stage and its inherent selection errors, we derive precise conditions required for any model selector to deliver confidence intervals for average treatment effects that are uniformly valid over a large class of data-generating processes. Second, we show that a simple refitting procedure allows researchers to augment variables chosen according economic theory with data-driven selection to deliver flexible inference that remains uniformly valid. Third, we prove that our estimator is asymptotically linear, and standard conditions imposed in the program evaluation literature, semiparametrically efficient bound. Fourth, we derive new results for multinomial (and binary) logistic regression, the most widely used model for treatment assignment.

Inference following model selection is notoriously difficult. In a sequence of papers, Leeb and P\"otscher \citeyear{Leeb-Potscher2005_ET,Leeb-Potscher2008_ET,Leeb-Potscher2008_JoE,Potscher-Leeb2009_JMA} have shown that inference relying too heavily on model selection can not be made uniformly valid. Loosely speaking, uniform validity of a confidence interval captures the idea that the interval should have the same quality (coverage) for many data-generating processes. This theoretical property is practically important because it implies greater reliability in applications. Our proposed methods for post model selection inference build upon the path-breaking recent work of \citeasnoun{Belloni-Chernozhukov-Hansen2014_REStud}. 

The crucial insight that leads to uniform inference is to change the goal of model selection away from perfect \emph{covariate} selection (the oracle property) and to high-quality approximation of the underlying \emph{functions}. This fundamental shift in focus allows us to circumvent, without contradicting, the impossibility results of Leeb and P\"otscher. Valid post-selection inference has attracted considerable attention during the preparation of this paper: in contexts and with methods quite different from ours, contributions have been made by \citeasnoun{Belloni-Chernozhukov-Wei2013_logit}, \citeasnoun{Berk-etal2013_AoS}, \citeasnoun{Zhang-Zhang2014_JRSSB}, \citeasnoun{Efron2014_JASA}, \citeasnoun{vandeGeer-etal2014_AoS}, and \citeasnoun{Belloni-etal2014_WP}, among others.

Our approach, based on the doubly-robust estimator, has several key features. The name ``doubly-robust'' reflects that it is robust to misspecification of either the treatment equation (propensity score) or the outcome equation, a property obtained by combining inverse probability weighting and regression imputation. First, we show that this robustness extends to model selection, enabling us to allow for selection errors in both equations without impacting inference. Second, we capture arbitrary treatment effect heterogeneity (dependence of the effect on an individual's observed characteristics), which is crucial in empirical work. With such heterogeneity, the average treatment effect and the treatment on the treated differ, and hence we present results for both. Third, the doubly-robust estimator also stems from the semiparametric efficient moment conditions, and hence we obtain the semiparametric efficiency bound, even under heteroskedasticity, under standard additional conditions. Thus, \possessivecite{Potscher2009_Sankhya} result that sparse estimators have large confidence sets is also circumvented. Taking all these features together enables us to obtain uniform inference over such a large class of treatment effects models.

In recent independent work, \citeasnoun{Belloni-Chernozhukov-Hansen2014_REStud}, propose a similar approach. Their main focus is inference on the linear part of a partially linear model, which motivates an estimator quite different from ours, but it will recover the average treatment effect in the special case of a binary treatment where the effect is constant across observables. However, their Section 5, developed independently from our work, considers heterogeneous effects and proposes an estimator based on the efficient influence function, similar to ours. There are two broad differences. First, we allow for multivalued treatments, which offers a larger set of estimands and can thus enhance the understanding of program impacts.\footnote{Discussion and applications may be found in, for example \citeasnoun{Imbens2000_Bmka}, \citeasnoun{Lechner2001_chapter}, \citeasnoun{Imai-vanDyk2004_JASA}, \citeasnoun{Abadie2005_REStud}, \citeasnoun{Cattaneo2010_JoE}, and \citeasnoun{Cattaneo-Farrell2011_chapter}.} In this context we propose a group lasso based approach that naturally exploits the already-present structure of treatment effects data to improve model selection by pooling information across treatment levels. This is particularly natural in the multivalued case, but even in the binary case there is still a grouped structure in the outcome regressions, though not in treatment assignment (i.e., in propensity score estimation). Second, although in both cases the doubly-robust estimator is used for average treatment effects\footnote{They use different asymptotic variance estimators, and for treatment effects on the treated they do not exploit the simplification discussed in Remark \ref{remark-tot}.} (following a quite different model selection step), we show that this estimator has two benefits: (i) it may require weaker conditions on the first stage (see Assumption \ref{first stage}); and (ii) it does not require using variables selected for the treatment equation in the outcome model, and vice versa (``post double selection''), and indeed, doing may require additional assumptions (see Assumption \ref{ATE union}).

Our analysis is conducted under selection on observables, which has a long tradition and remains quite popular in empirical economics.\footnote{For other approaches and reviews of the literature, see, e.g., \citeasnoun{Holland1986_JASA}, \citeasnoun{Hahn1998_Ecma}, \citeasnoun{Horowitz-Manski2000_JASA}, Chen, Hong, and Tarozzi \citeyear{Chen-Hong-Tarozzi2004_WP,Chen-Hong-Tarozzi2008_AoS}, \citeasnoun{Bang-Robins2005_Biometrics}, \citeasnoun{Abadie-Imbens2006_Ecma}, \citeasnoun{Wooldridge2007_JoE}, and references therein.} Covariates play three crucial roles in this framework. First, using more observed covariates as proxies, and more flexibly, may help account for unobserved confounding and hence increase the plausibility of unconfoundedness. Second, some observed variables may not be part of the causal mechanism under study, and should be excluded. Third, the efficient conditioning set are those variables that drive the outcome, not necessarily those important for treatment assignment. This reasoning mandates contradicting goals for practitioners: a large, rich set of controls on the one hand, and parsimony on the other. Our approach is a formal, theory-driven attempt to reconcile this contradiction.

A special feature of our analysis is that we match the empirical realities of large data sets by considering selection from amongst (possibly) more covariates than observations, so-called \emph{high-dimensional} data. The goal of variable selection is to find a small model that is nonetheless sufficiently flexible to capture unknown features of the data-generating process required for inference. If a small model can perfectly capture the unknown feature it is said to be \emph{exactly sparse}. More realistic is \emph{approximate sparsity}, when the bias from using a small model is well-controlled, but nonzero. Sparsity is a natural framework for thinking about model selection. Indeed, any time only a few of the available variables are used, a sparsity assumption has effectively been made. It is common empirical practice to report results from several small models, but for these results to be valid one must assume these specifications give high-quality, sparse representations of the unknown features. The alternative we provide involves selecting a sparse, yet flexible, model from among a large set of variables. Results may then be compared with more traditional methods.

With the aim of mimicking common empirical practice we estimate the propensity score with multinomial logistic regression, coupled with group lasso selection \cite{Yuan-Lin2006_JRSSB}. Our results are stated in the language of treatment effects, but apply to general data structures and are of independent interest in the high-dimensional literature.\footnote{Our techniques build on prior studies, in particular \citeasnoun{Bickel-Ritov-Tsybakov2009_AoS}, \citeasnoun{Lounici-etal2011_AoS}, \citeasnoun{Obozinski-Wainwright-Jordan2011_AoS}, \citeasnoun{Belloni-Chernozhukov2011_AoS}, \citeasnoun{BCCH2012_Ecma}, \citeasnoun{Belloni-Chernozhukov2013_Bern}, and \citeasnoun{Belloni-etal2014_WP}.} Much of the literature has focused on linear models (see \citeasnoun{Buhlmann-vandeGeer2011_book} for a survey), while prior studies of nonlinear models often assume exact sparsity or present limited results.\footnote{Examples include \citeasnoun{vandeGeer2008_AoS} and \citeasnoun{Negahban-etal2012_StatSci}, whose bounds do not imply our results. \citeasnoun{Bach2010_EJS} only gives an error bound on coefficients in exactly sparse logistic regression, which can not yield our results; and does not consider prediction error or post-selection estimation. In independent work, \cite{Kwemou2012_logit} and \citeasnoun{Belloni-Chernozhukov-Wei2013_logit} also apply \possessivecite{Bach2010_EJS} tools, but are focused on a different goals. \citeasnoun{Vincent-Hansen2014_CSDA} apply the group lasso to multinomial logistic regression, but do not derive any theoretical results.} Furthermore, these studies often use high-level conditions that can be hard to verify. In contrast, we obtain sharp results for logistic regression under the same simple and intuitive conditions used for linear modeling by exploiting mathematical techniques of self-concordant functions put forth by \citeasnoun{Bach2010_EJS}. We also provide extensions to prior work on linear models needed to apply them in treatment effect estimation.

Finally, we offer numerical evidence on the finite sample performance of our procedure. In a small simulation study we find that our procedure delivers very accurate coverage of confidence intervals even for models where covariate selection is difficult, either because of a low signal-to-noise ratio or lack of sparsity, thus highlighting the uniform validity of inference. We also apply our method to the widely-used National Supported Work Demonstration data \cite{LaLonde1986_AER} and find very accurate estimates and tight confidence intervals (see Table \ref{table-lalonde}).

The paper proceeds as follows. Section \ref{sec-overview} gives short, self-contained overview. Section \ref{sec-notation} collects notation. Section \ref{sec-model} describes the treatment effect models. Sparse models are discussed in Section \ref{sec-sparse}, which shows how several commonly used models fit in this framework. Section \ref{sec-ate} presents our estimation method and complete results on treatment effect inference. Theoretical results for the group lasso are in Section \ref{sec-grplasso}. Section \ref{sec-data} presents the numerical evidence and Section \ref{sec-conclusion} concludes. The main proofs are presented in the Appendix, while the remainder are available in a supplement.

\section{Overview of Results and Notation}
	\label{sec-overview}

Here we give an overview of the paper, including treatment effect inference (Section \ref{sec-overview ate}), our new results for the group lasso (Section \ref{sec-overview grplasso}), and notation used throughout (Section \ref{sec-notation}).

\subsection{Treatment Effects and Results on Post-Selection Inference}
	\label{sec-overview ate}

We consider a multivalued treatment, with status indicated by $\D \in \{0, 1, \ldots, \T\}$. Interest lies in mean effects of the treatment on a scalar outcome $Y$. Let $\{Y(\t)\}_{\t = 0}^\T$ be the (latent) potential outcomes: $Y(\t)$ is the outcome a unit would have under $\D = \t$ and is only observed for units with $\D = \t$; that is, $Y = \sum_{\t = 0}^\T \indicator\{\D = \t\} Y(\t)$. Many interesting parameters combine means of potential outcomes, and having multivalued treatments allows for a wider range of estimands. Define the mean of one potential outcome as $\mut = \E[Y(\t)]$. To fix ideas, $\mu_1 - \mu_0$ is the average treatment effect in the binary case ($\D \in \{0,1\}$). Sections \ref{sec-model} and \ref{sec-ate} consider more general average effects, including effects on treated groups. For simplicity, in this section we focus on a single $\mut$.

We use the selection on observables framework to identify $\mut$. For a vector of covariates $X$, define the generalized propensity score and conditional outcome regressions as
\[p_\t(x) = \P[ \D = \t \vert X = x] 	\qquad \text{ and } \qquad 	\mut(x) = \E[ Y \vert \D = \t, X = x].\]
For identification it is sufficient to assume that $\E[Y(\t) \vert \D, X] = \E[Y(\t) \vert X]$ (mean independence) and $p_\t(X)$ is bounded away from zero (overlap) for all treatment levels. Broadly, these two assumptions imply that units from one treatment group are good proxies for other treatments and that there are always such proxies available (see Section \ref{sec-model}).

For an i.i.d. sample $\{(y_i, \d, x_i')\}_{i=1}^n$ and model-selection-based estimators $\hat{p}_\t(x_i)$ and $\muthat(x_i)$, we estimate $\mut$ with
	\[\muthat  = \frac{1}{n} \sumi \left\{ \frac{ \indicator\{\d = \t\} (y_i - \muthat(x_i) ) }{ \hat{p}_\t(x_i) } + \muthat(x_i) \right\}.\]
This doubly-robust estimator combines regression imputation and inverse probability weighting, and remains consistent if either the model $p_\t(x)$ or $\mut(x)$ is misspecified. Following widespread empirical practice, we estimate $\hat{p}_\t(x_i)$ with multinomial logistic regression and $\muthat(x_i)$ linearly (see Section \ref{sec-grplasso}). The choice of covariates in $\hat{p}_\t(x_i)$ and $\muthat(x_i)$ impacts consistency, efficiency, and finite sample performance. Covariate selection based on ad hoc, iterative searches is common in empirical work, but is not formal, objective, or replicable. Balancing tests are also common, but have the additional drawback of assuming the same covariates are important for outcomes and treatment assignment, and more generally do not weight the covariates by their importance for bias.

On the other hand, our proposed procedure gives practitioners an easy to implement, fully objective tool to perform data-driven covariate selection and treatment effect inference, with replicable results.\footnote{For the final step, the doubly-robust estimator is available in STATA and the package of \citeasnoun{Cattaneo-Drukker-Holland2013_stata}. The covariate selection stage is easily implemented in {\sf R}.} Importantly, we do not preclude the addition of variables known to be important from economic theory or prior knowledge. Our procedure is intended to supplement these variables with a flexible set of controls, guarding against misspecification or overfitting.

The following theorem is an example of the more general results presented in Section \ref{sec-ate theory}, wherein we also define $V_\t$ and $\hat{V}_\t$.
\begin{theorem}
	\label{thm-ate overview}
	Consider a sequence $\{P_n\}$ of data-generating processes that obey, for each $n$, Assumptions \ref{ignorability} and \ref{dgp asmpts} below. If the first stage obeys
	\begin{enumerate}[label=(\roman{*})]
		\item $\sumi (\hat{p}_\t(x_i)  -  p_\t(x_i))^2 /n = o_{P_n}(1)$ and $\sumi (\muthat(x_i) - \mut(x_i))^2 /n = o_{P_n}(1)$,

		\item $\bigl[ \sumi \indicator\{\d = \t\}(\hat{p}_\t(x_i) - p_\t(x_i))^2/n \bigr]^{1/2} \bigl[\sumi \indicator\{\d = \t\}(\muthat(x_i) - \mut(x_i))^2/n\bigr]^{1/2} = o_{P_n}(n^{-1/2})$, and 
		
		\item $\bigl[ \sumi  (\muthat(x_i) - \mut(x_i)) (1 - d_i^t/ p_\t(x_i))/n \bigr]    = o_{P_n}(n^{-1/2})$,
	\end{enumerate}
	then $\sqrt{n} ( \muthat - \mut ) \to_d N(0,V_\t)$ and $\hat{V}_\t/V_\t \to_{P_n} 1$. For each $n$, let $\bm{P}_n$ be the set of data-generating processes obeying Assumptions \ref{ignorability} and \ref{dgp asmpts} and (i) and (ii) above. Then for $c_\alpha = \Phi^{-1}(1 - \alpha/2)$
\[ \sup_{P \in \bm{P}_n}  \left|  \P_P \left[  \mut  \in \left\{ \muthat \pm c_\alpha \sqrt{ \hat{V}_\t / n}\right\} \right] - (1 - \alpha) \right| \to 0.\]
\end{theorem}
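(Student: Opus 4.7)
The approach is to decompose $\sqrt{n}(\muthat - \mut)$ as the sample mean of the semiparametrically efficient influence function at the true nuisances (the ``oracle''), plus a remainder, apply a Lindeberg--Feller triangular-array CLT to the oracle, show the remainder is $o_{P_n}(n^{-1/2})$ via the doubly-robust algebra, establish variance consistency by the same bookkeeping, and finally upgrade the pointwise conclusion to a uniform one by a subsequence argument. Write $D_i = \indicator\{\d = \t\}$, $p_i = p_\t(x_i)$, $\hat p_i = \hat p_\t(x_i)$, $m_i = \mut(x_i)$, $\hat m_i = \muthat(x_i)$, and the influence function $\psi_i = D_i(y_i - m_i)/p_i + m_i - \mut$, which is mean zero with variance $V_\t$ under Assumption \ref{ignorability}. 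A direct algebraic identity gives
\[
\muthat - \mut \;=\; \frac{1}{n}\sumi \psi_i \;+\; R_n^{(1)} \;+\; R_n^{(2)} \;+\; R_n^{(3)},
\]
where
\[
R_n^{(1)} = \frac{1}{n}\sumi \frac{D_i(y_i - m_i)(p_i - \hat p_i)}{p_i \hat p_i}, \quad R_n^{(2)} = \frac{1}{n}\sumi (\hat m_i - m_i)\Bigl(1 - \frac{D_i}{p_i}\Bigr),
\]
and $R_n^{(3)} = \tfrac{1}{n}\sumi (\hat m_i - m_i) D_i (p_i - \hat p_i)/(p_i \hat p_i)$.

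The easy piece is $R_n^{(3)}$: overlap gives $p_i \geq c > 0$, and condition (i) implies $\hat p_i$ is bounded below on an event of probability tending to one, so by Cauchy--Schwarz
\[
\sqrt{n}\,|R_n^{(3)}| \;\lesssim\; \sqrt{n}\,\Bigl[\tfrac{1}{n}\sumi D_i(\hat m_i-m_i)^2\Bigr]^{1/2}\Bigl[\tfrac{1}{n}\sumi D_i(p_i-\hat p_i)^2\Bigr]^{1/2} \;=\; o_{P_n}(1)
\]
directly by condition (ii). The main obstacle is the two ``single-error'' pieces $R_n^{(1)}$ and $R_n^{(2)}$: each summand has conditional mean zero given $X_i$ (using $\E[D_i(y_i-m_i)\mid X_i]=0$ and $\E[D_i/p_i\mid X_i]=1$), but because $\hat p_i,\hat m_i$ depend on the full sample one cannot apply a naive variance bound. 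Here I would lean on the parametric-plus-sparsity structure of the first-stage fits (logistic $\hat p_\t$ and linear $\muthat$): on a high-probability event, $\hat p_\t - p_\t$ and $\muthat - \mut$ lie in a union of low-dimensional linear spaces indexed by a support of controlled cardinality (guaranteed by the group-lasso rates of Section \ref{sec-grplasso} and sparsity/regularity bounds in Assumption \ref{dgp asmpts}), so a maximal inequality over that union converts condition (i) into $\sqrt{n}R_n^{(j)} = o_{P_n}(1)$ for $j=1,2$.

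Given $\sqrt{n}(R_n^{(1)}+R_n^{(2)}+R_n^{(3)})=o_{P_n}(1)$, the Lindeberg--Feller CLT applied to the triangular array $\{\psi_i/\sqrt{n}\}$, with the Lindeberg condition verified from the moment and overlap bounds in Assumption \ref{dgp asmpts}, delivers $\sqrt{n}(\muthat - \mut)\to_d N(0,V_\t)$. Consistency $\hat V_\t/V_\t\to_{P_n}1$ is obtained by writing $\hat V_\t$ as the plug-in second moment of $\hat\psi_i$ (formed from $\hat p_\t,\muthat$) and decomposing $\hat V_\t - V_\t$ into terms each controlled by condition (i), overlap, and the uniform moment bounds of Assumption \ref{dgp asmpts}; Slutsky then gives the asymptotically $N(0,1)$ pivot.

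Uniformity over $\bm{P}_n$ is upgraded via the standard subsequence device. Suppose for contradiction that the claimed sup does not tend to zero: then there exist $\epsilon>0$ and $P_{n_k}\in\bm{P}_{n_k}$ along which the coverage error exceeds $\epsilon$. Each $P_{n_k}$ satisfies Assumptions \ref{ignorability}, \ref{dgp asmpts} and conditions (i)--(ii) by construction, so the preceding triangular-array argument applies verbatim along the subsequence to give $\sqrt{n_k}(\muthat - \mut)/\sqrt{\hat V_\t/n_k}\to_d N(0,1)$. Because the standard normal CDF is continuous, a Polya-type strengthening converts this into uniform convergence of the coverage probability to $1-\alpha$ along the subsequence, contradicting $\epsilon>0$.
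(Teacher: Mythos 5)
Your decomposition is exactly the paper's: your $R_n^{(1)}$, $R_n^{(2)}$, $R_n^{(3)}$ are the paper's $R_1$, $R_{21}$, $R_{22}$ in the proof of Theorem \ref{thm-ate}, your treatment of the product term $R_n^{(3)}$ via Cauchy--Schwarz and condition (ii) is identical, and the Lindeberg--Feller step, the plug-in variance consistency, and the subsequence argument for uniformity (Corollary \ref{thm-ate uniform}) all match. The problem is in the middle: for the two single-error terms you assert that ``one cannot apply a naive variance bound'' and substitute a maximal inequality over sparse supports. That substitution is a genuine gap, for two reasons. First, it imports hypotheses the theorem does not have. Theorem \ref{thm-ate overview} is stated for a \emph{generic} first stage satisfying only (i) and (ii); there is no group lasso, no sparsity, and no union of low-dimensional supports in its hypotheses. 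Second, even granting that structure, condition (i) is only an $o_{P_n}(1)$ consistency statement in $L_2(\Pn)$, and no maximal inequality can upgrade an $o(1)$ fit error into an $o_{P_n}(n^{-1/2})$ bound on a weighted average: the paper's own ``additional randomness'' argument for exactly these terms needs the quantitative $\ell_1$ coefficient rates and the $\log(p)=o(n^{1/3})$ restriction of Assumption \ref{ATE union}, via self-normalized moderate-deviation bounds --- strictly more than (i).

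The missing idea is that the ``naive'' conditional variance bound is in fact available here, and it is what closes the argument under (i) alone. For $R_n^{(1)}$, condition on $\{x_i,\d\}_{i=1}^n$: the propensity fit $\hat p_\t(\cdot)$ is measurable with respect to this sigma-field (it uses no outcome data), while the $u_i = y_i - \mut(x_i)$ are conditionally mean zero (Assumption \ref{mean inde}) and conditionally independent across $i$. Hence $\E[R_n^{(1)}\mid \{x_i,\d\}]=0$ and
\[
\V\bigl[R_n^{(1)} \,\big\vert\, \{x_i,\d\}_{i=1}^n\bigr] \;=\; \En\left[\frac{\dt\,\sigma_\t^2(x_i)}{\hat p_\t(x_i)^2 p_\t(x_i)^2}\bigl(p_\t(x_i)-\hat p_\t(x_i)\bigr)^2\right] \;\leq\; C\,\En\bigl[(\hat p_\t(x_i)-p_\t(x_i))^2\bigr] \;=\; o_{P_n}(1),
\]
so $R_n^{(1)}=o_{P_n}(1)$ by conditional Chebyshev --- the $1/\sqrt{n}$ normalization is already absorbed into the variance, so consistency of the first stage suffices and no rate is needed. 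The symmetric argument conditions on $\{x_i\}_{i=1}^n$ and uses that $\dt - p_\t(x_i)$ is conditionally mean zero with bounded conditional variance to kill $R_n^{(2)}$. This conditional-orthogonality structure is exactly what the paper's ``no additional randomness in the estimated supports'' caveat in Theorem \ref{thm-ate} is protecting; when the first stage does inject extra dependence (e.g.\ using treatment-selected covariates in the outcome model), the paper switches to the self-normalized-sum machinery you sketched, but only under the extra Assumption \ref{ATE union}. As written, your proposal proves neither case: it discards the argument that works under (i)--(ii) and replaces it with one that needs assumptions you do not have.
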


This result establishes the uniform validity of an asymptotic confidence interval for $\mut$, overcoming all the post model selection inference challenges: robustness to model selection errors, selecting a model that is small but flexible enough to capture the features of the underlying data generating process, and still retaining efficiency under standard conditions (see Section \ref{sec-efficiency}). Intuitively, this is similar to (but distinct from) overcoming pretesting bias in other contexts. Also, although our discussion is in terms of covariate selection in high-dimensional, sparse models, the inference result is generic for any first stage estimator.

The two conditions placed on the first stage are analogous to the commonly-used, high-level requirement in semiparametrics that first stage components converge faster than $n^{-1/4}$. However exploiting features of the doubly-robust estimator yields weaker conditions. The first is a mild consistency requirement. The second requires a rate on the product of errors and is thus easier to satisfy if one function is easier to estimate, e.g.\ more smooth or more sparse. In model selection, the rates for the first stage depend on the sample size, the number of covariates considered, and the sparsity level. Importantly, the rate will depend on the total number of covariates only logarithmically, allowing for a large number. We propose to use the group lasso and prove that these estimators satisfy (i) and (ii).

\subsection{Model Selection Stage}
	\label{sec-overview grplasso}

We propose refitting following group lasso selection, and show that it meets all requirements on the model selector. The group lasso is well-suited to program evaluation applications because covariates are penalized according to their overall contribution in all treatment groups. This has two consequences. First, information from all treatments is pooled when doing selection, and hence a weaker signal may be extracted, which improves the selection properties. Second, the selected variables are common to all treatment levels. From a practical point of view this is desirable, as interest rarely lies in a single $\mut$, but rather a collection, and substantial commonality is expected in the variables important for different treatment levels.

We consider high-dimensional, sparse models for $p_\t(x)$ and $\mut(x)$. These are defined by a $p$-dimensional vector $X^*$ based on the original variables $X$. The $X^*$ may consist of any combination of the original variables, interactions, flexible parametric transformations, and/or nonparametric series terms (such as splines or polynomials). A model is approximately sparse if there are $s < n$ of these terms that yield a good approximation ($s\to \infty$ is allowed). To build intuition, suppose that $\mut(x)$ obeys a $p$-dimensional linear model. Then the sparsity assumption is that there is an $s$-dimensional submodel with sufficiently small specification bias. In the nonparametric case, sparsity is weaker than (but analogous to) the familiar assumption that a small set of basis functions can approximate the unknown objects well. In practice researchers employ a hybrid of these approaches, which is covered by our results. Section \ref{sec-sparse} gives more detail and examples.

We form $\hat{p}_\t(x)$ and $\muthat(x)$ in two steps (complete details in Section \ref{sec-grplasso}). First, the group lasso is applied separately to multinomial logistic and least squares regression to select covariates from $X^*$. We then estimate $p_\t(x)$ and $\mut(x)$ by refitting unpenalized models using the selected variables, possibly augmented with controls suggested by prior work or economic theory. It is not desirable for a model selector to discard theory and prior work, and our procedure explicitly avoids this. We also allow for using logistic-selected variables in the linear model refitting and vice versa, but this is not necessary for uniformity nor efficiency.

Our main results give precise bounds for the number of covariates selected and the estimation error, both for the penalized and unpenalized estimates. Section \ref{sec-grplasso} results gives nonasymptotic bounds, with exact constants. Such results are complex and so we give the following intuitive, asymptotic result (The notation $O_{P_n}$ is defined in Section \ref{sec-notation}).
\begin{corollary}
	\label{thm-grplasso overview}
	Suppose the biases from the best $s_d$- and $s_y$-term approximations to $p_\t(x)$ and $\mut(x)$ are order $\sqrt{s_d / n}$ and $\sqrt{s_y /n}$, respectively. Then under the assumptions in Section \ref{sec-grplasso}, and $\delta > 0$ described therein, with high probability we have:
		\begin{enumerate}
			\item $\sumi (\hat{p}_\t(x_i) - p_\t(x_i))^2/n = O_{P_n}\left(   n^{-1} s_d\log(p \vee n)^{3/2 + \delta}  \right)$ and
			\item $\sumi (\muthat(x_i) - \mut(x_i))^2/n = O_{P_n} \left(  n^{-1} s_y \log(p \vee n)^{3/2 + \delta}  \right)$.
		\end{enumerate}
\end{corollary}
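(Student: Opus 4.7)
The plan is to deduce Corollary~\ref{thm-grplasso overview} by specializing the nonasymptotic group-lasso results of Section~\ref{sec-grplasso} to the two first-stage problems—multinomial logistic regression for $\hat p_\t$ and least squares for $\muthat$—and then tracing through the two estimation stages: a penalized pilot estimate followed by refitting on the selected support. For each stage I would (a) verify a high-probability event on which the empirical score is dominated by the chosen penalty, (b) convert the resulting basic inequality into a prediction-error bound via the restricted eigenvalue / compatibility condition assumed in Section~\ref{sec-grplasso}, and (c) combine with the sparse-approximation bias hypothesis to conclude.

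For the least-squares step I would take a penalty $\lambda_y \asymp \sqrt{\log(p\vee n)/n}$ with a polylog slack; a maximal inequality over the $p$ groups ensures the subgradient condition holds with probability $1-o(1)$ under the tail assumptions of Section~\ref{sec-grplasso}. The standard group-lasso oracle inequality then bounds the pilot prediction error by $s_y\lambda_y^2$ plus the squared sparse-approximation bias, which by hypothesis is $O(s_y/n)$ and hence absorbed. The sparsity bound in Section~\ref{sec-grplasso} guarantees at most $O(s_y)$ groups are selected, so the unpenalized OLS refit, under a restricted eigenvalue condition inherited by the selected subdesign, contributes at most $O(s_y/n)$—of smaller order than the target rate. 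Collecting terms yields part (2). For the logistic step I would follow the self-concordance approach of Bach: the multinomial log-likelihood is self-concordant, so in a neighborhood of the truth the Bregman divergence dominates a quadratic form in the prediction error. Combining this with the group-lasso oracle inequality for the score (using the restricted eigenvalue condition on the empirical Fisher information and a maximal inequality for the multivariate score over $p$ groups) yields a Bregman excess-risk rate of order $s_d\log(p\vee n)/n$ times the polylog slack, which the overlap condition (lower bound on $p_\t$) converts into the bound on $\sumi(\hat p_\t(x_i)-p_\t(x_i))^2/n$. The refit adds another $O(s_d/n)$ from standard MLE theory on the selected $O(s_d)$-dimensional model.

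The main obstacle is the logistic stage. Self-concordance only provides quadratic curvature in a shrinking neighborhood of the truth, so one must certify that the group-lasso iterate lies inside that neighborhood—which demands the restricted eigenvalue condition to hold for the empirical Fisher information (not merely the design Gram matrix) with high probability, under the moment and overlap assumptions. A secondary difficulty is the refit: the restricted or minimal eigenvalue of the selected subdesign must transfer from the population uniformly over possible random supports of size $O(s_d)$ or $O(s_y)$. Once these two ingredients are secured, the rates in the statement follow by bookkeeping of the polylog factors accumulated from the maximal inequality, the self-concordance neighborhood certification, and the refit step, which together produce the $3/2+\delta$ exponent on $\log(p\vee n)$.
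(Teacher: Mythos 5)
Your proposal follows essentially the same route as the paper: a score/maximal-inequality bound showing the penalty dominates the noise, a basic inequality converted to a prediction-error bound via restricted eigenvalues (with Bach's self-concordance and the overlap condition handling the curvature of the multinomial logistic loss), a sparsity bound of order $s$ on the selected set, a refit analysis via sparse eigenvalues, and absorption of the $O(s/n)$ squared approximation bias into the leading term. The one imprecision is your treatment of the refit: because the support is data-dependent (selected from $p$ candidates), it cannot be handled by ``standard MLE theory on a fixed $O(s)$-dimensional model,'' and its error is $O(s\log(p)/n)$ rather than $O(s/n)$ --- the paper's Theorems \ref{thm-post mlogit} and \ref{thm-post ols} retain exactly this $\log p$ factor; the stated rates survive only because the pilot term $n^{-1}s\log(p\vee n)^{3/2+\delta}$ still dominates.
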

These two results for our proposed group lasso estimators can be directly used to verify the high-level conditions in Theorem \ref{thm-ate overview} above. Specifically, if $s_d s_y \log(p)^{3 + 2\delta} = o(n)$, conditions (i) and (ii) of Theorem \ref{thm-ate overview} are met (requiring $s^2 = o(n)$, up to $\log$ factors, as found in other results in the literature). Further, it is clear how the doubly-robust estimator can help: if one function is more smooth or more sparse, $s_d$ or $s_y$ will be lower, easing the restriction. Section \ref{sec-grplasso results} gives further results: showing that the number of variables selected is the same order as the sparsity level, and provides bounds on the logistic and linear coefficients directly. Both these results are important for certain steps in treatment effect estimation that aren't reflected in the simple statement of Theorem \ref{thm-ate overview}. These results appear to be entirely new for the multinomial logistic regression, for any version of the lasso. From a practical point of view, these results provide formal justification for using multinomial logistic regression, coupled with group lasso selection and post-selection refitting.

\subsection{Notation}
	\label{sec-notation}

We collect here notation to be used for the rest of the paper. The data generating process (DGP) is denoted by $P_n$ and is defined by the joint law of the random variables $(Y,\D, X')'$. For a given $n$, $\{(y_i, \d, x_i')'\}_{i=1}^n$ constitute draws from $P_n$. The DGP may vary with $n$, along with features such as parameters, distributions, and so forth, as discussed in Section \ref{sec-vary with n}. This is generally suppressed for clarity. We adopt the following conventions.
\begin{description}

	\item[Treatments.] Define the treatment sets $\NTbar = \{0, 1, 2, \ldots, \T\}$ and $\NT = \{1, 2, \ldots, \T\}$. No order is assumed in the treatments. For each unit $i$, $\d$ indicates treatment assignment, and define $\dt = \indicator\{\d =\t\}$. Let $n_\t = \sumi \dt$ be the number of individuals with treatment $\t$ and define $\underline{n} = \minTbar n_\t$ and $\overline{n} = \maxTbar n_\t$. Further define $\Tbar  = \T + 1$. 

	\item[Vectors.] Define $\Np = \{1, 2, \ldots, p\}$. For a doubly-indexed collection of scalars $\{\danytj : \t \in \NTbar, j \in \Np\}$, define $\danyj  \in \mathbb{R}^{\Tbar}$ as the vector that collects over all $\t$ for fixed $j$; $\delta_{\t,\newdot} \in \mathbb{R}^p$ collects over $j \in \Np$ for fixed $\t$; and $\dany \in \mathbb{R}^{p \times \Tbar}$ the concatenation of all $\delta_{\t,\newdot}$. For simplicity, we write $\delta_\t$ for $\delta_{\t,\newdot}$. When considering the multinomial logistic model, $\t$ will vary only over $\NT$ but the notation will be maintained. For a set $S \subset \Np$, let $\delta_{\t,S} \in \mathbb{R}^{\text{card}(S)}$ be the vector of $\{\delta_{\t,j} : j \in S\}$ for fixed $\t$ and similarly let $\delta_{\newdot,S} \in \mathbb{R}^{|S| \times \Tbar} = \{\delta_{\t,j} : \t \in \NTbar, j \in S\}$. 
		
	\item[Norms.] Single bars will be either absolute value or cardinality of a set, and will be clear from the context. For a vector $v$, let $\| v \|_1$ and $\| v \|_2$ denote the $\ell_1$ and $\ell_2$ norms, respectively. For the group lasso, define the mixed $\ell_2$/$\ell_1$ norm as $\vertiii{\dany }  = \sumj \|  \danyj \|_2$. It will always be the case that the (``outer'') $\ell_1$ norm is over the covariates and the (``inner'') $\ell_2$ norm is over the treatments (in our application). When discussing the multinomial logistic model, treatments will be restricted to $\NT$ with no change in notation.

	\item[Data-Generating Processes.] The set of all $P_n$ considered is $\bm{P}_n$. For sequences, $\{P_n\} = \{P_n : n \geq 1, P_n \in \bm{P}_n\}$. Expectations and probabilities are taken against $P_n$, though notationally suppressed. For asymptotic arguments dependence on $n$ is explicit, so that $O_{P_n}(\cdot)$ and $o_{P_n}(\cdot)$ have their usual meaning with the understanding that the measure $P_n$ is used for each $n$.

\end{description}

For a set of scalars $\{m_\t\}_{\t = 1}^{\T}$, let $\hat{p}_\t( \{m_\t\}_{\NT} ) = \exp(m_\t) [1 + \sum_{\t \in \NT} \exp( m_\t) ]^{-1}$ denote the multinomial logit function. Empirical expectation will be denoted $\En[w_i] = \sumi w_i/n$ and $\Ent[w_i] = \sumt w_i / n_\t = \sumi \dt w_i / n_\t$.

\section{Treatment Effects Model}
	\label{sec-model}

In this section we formally define the treatment effects model and the parameters of interest. Recall that $\D \in \{0, 1, \ldots, \T\}$ indicates treatment status, $\{Y(\t)\}_{\t \in \NTbar}$ are the (latent) potential outcomes, and $Y(\t)$ is only observed for units with $\D = \t$; that is, $Y = \sum_{\t \in \NTbar} Y(\t)$. The building blocks of many general estimands are the averages
\begin{equation}
	\label{eqn-ate}
	\mut = \E[Y(\t)],	\quad 	\t \in \NTbar,		\qquad \text{ and } \qquad		\mutt = \E[Y(\t) \vert D = \t'], 	\quad 	\t, \t' \in \NTbar \times \NTbar.
\end{equation}
In the binary case, the average treatment effect is $\mu_1 - \mu_0$ and the treatment on the treated is $\mu_{1,1} - \mu_{0,1}$. A multivalued treatment allows for a large range of interesting estimands. To fix ideas, we keep as running examples two leading cases. First, the so-called dose-response function: the $(\T + 1)$-vector $\drf = (\mu_0, \mu_1, \ldots, \mu_\T)'$. Second, define $\tot$ as the $\T$-vector with element $\t$ given by $\mu_{\t,\t} - \mu_{0,\t}$. This gives the effect of each treatment relative to the baseline $\t=0$, only for those who received that treatment. These are by no means the only interesting estimands constructed from $\mut$ and $\mutt$; many others are given by \citeasnoun{Lechner2001_chapter}, \citeasnoun{Heckman-Vytlacil_2007a_Handbook}, and others.

The following two conditions are sufficient to identify $\mut$ and $\mutt$. 
\begin{assumption}[Identification]
	\label{ignorability} \
	For all $\t \in \NTbar$ and almost surely $X$, $P_n$ obeys:
	\begin{enumerate}[label=(\alph{*}), ref=\ref{ignorability}(\alph{*})]
	
		\item (Mean independence) $\E[Y(\t) \vert D, X=x] = \E[Y(\t) \vert X=x]$, and			\label{mean inde} 
		\item (Overlap) $\P[\D = \t \vert X = x]) \geq p_{\min} > 0$ for all $\t \in \NTbar$. 			\label{overlap}

	\end{enumerate}
\end{assumption}
This assumption is a form of ``ignorability'' coined by \citeasnoun{Rosenbaum-Rubin1983_Bmka}. This model allows arbitrary treatment effect heterogeneity in observables, but not unobservables. This assumption is standard in the program evaluation literature, and its plausibility has been discussed at length, so we omit a general discussion (see, e.g., \citeasnoun{Imbens2004_REStat}, \citeasnoun[Chapter 21]{Wooldridge2010_book}, and references therein). However, in the context of model selection, three remarks are warranted.

First, in place of \ref{mean inde}, it is more common to instead assume full conditional independence: $Y \inde \D \vert X$. However, as observed by \citeasnoun{Heckman-Ichimura-Todd1997_REStud}, the weaker mean independence is sufficient. For our purposes, the ``gap'' between the two assumptions is important. Suppose full independence holds only conditional on a set of variables strictly larger than the variables entering the mean functions (e.g.\ the excess variables affect higher moments). In this case, because mean independence is still sufficient, we need not aim to select the larger set. Full independence is important for the efficiency discussed in Section \ref{sec-efficiency}. 

Second, the covariates may, in general, include instruments for treatment status, but they are not known as such. This is standard, but left implicit, in discussions of ignorability. If instruments are present, and selected for estimation, efficiency suffers but unbiasedness is not harmed. Efficiency bounds in this context typically (implicitly) assume there are no instruments in $X$. Assumption \ref{overlap} rules out perfect predictors. Section \ref{sec-efficiency} offers further discussion.

Finally, the main drawback of Assumption \ref{mean inde} is that it does not give identification of average effects on transformations of $Y(\t)$. However, we are expressly interested in model selection on the mean function of the level of $Y(\t)$, and hence Assumption \ref{mean inde} is more natural. To operationalize model selection, structure must be placed on $\E[Y(\t) \vert X = x]$, and hence functional form conditions tied to mean independence are not limiting per se. If the parameter of interest is changed, say to $\E[\log(Y(\t))]$, and a sparsity assumption is made for $\E[\log(Y(\t)) \vert X = x]$, then our method applies.

Assumption \ref{ignorability} yields identification of $\mut$ and $\mutt$ using either inverse weighting or regression, and double robustness follows from combining the two strategies. Recall the notation $p_\t(x) = \P[\D = \t \vert X = x]$ and $\mut(x) = \E[Y \vert \D = \t, X = x]$. Applying Assumption \ref{ignorability} we find that 
\begin{equation}
	\label{eqn-ate moments}
	\E\bigl[ \psi_\t\bigl(Y, \D, \mut(X), p_\t(X), \mut \bigr) \bigr] = \E\left[  \frac{\indicator\{\D = \t\} Y}{p_\t(X)} + \mut(X) - \frac{\indicator\{\D = \t\} \mut(X)}{p_\t(X)}  -  \mut \right]  =  0
\end{equation}
and 
\begin{multline}
	\label{eqn-tot moments}
	\E\bigl[ \psi_{\t,\t'} \bigl(Y, \D, \mut(X), p_\t(X), p_{\t'}(X), \mutt \bigr) \bigr] 		\\		= \E\left[ \frac{\indicator\{\D = \t'\}  \mut(X)}{p_{\t'}} +  \frac{ p_{\t'}(X)}{p_{\t'}} \frac{ \indicator\{\D = \t\} (Y - \mut(X))} { p_\t(X) }  -  \mutt\right]  =  0,
\end{multline}
where $p_\t = \P[\D = \t]$. The moment condition \eqref{eqn-ate moments} holds if either $p_\t(x)$ or $\mut(x)$ is misspecified. For $\mutt$, if $\mut(x)$ is misspecified, both $p_\t(X)$ and $p_{\t'}(X)$ must be correctly specified, while if $\mut(x)$ is correct, both propensity scores may be misspecified. It is important to note that the forms of $\psi_\t(\cdot)$ and $\psi_{\t, \t'}(\cdot)$ are fixed, so the function itself does not depend on the sample size even if its arguments do. Our estimator is a plug-in version of this moment condition.

\begin{remark}[Simplifications for $\mu_{\t,\t}$]
	\label{remark-tot}
Identification of $\mu_{\t,\t}$ does not require Assumption \ref{ignorability}. $Y(\t)$ is fully observed for the sub-population of interest and so a simple average will deliver $\mu_{\t,\t} = \E[ \indicator\{\D = \t\} Y] / p_\t$. Note that \eqref{eqn-tot moments} reduces to this when $\t = \t'$. For $\tot$ this means we must only estimate the function $\mut(x_i)$ for $\t = 0$. Intuitively, we must use comparison group observations to proxy for treated units, but not the other way around. Thus, for certain parameters of interest, Assumption \ref{ignorability} can be weakened to hold only for the comparison group. However, we cover generic estimands, without necessarily specifying a comparison group, and so we maintain Assumption \ref{ignorability} for simplicity, rather than keeping track of hosts of special cases.
\end{remark}

\begin{remark}[Efficient Influence Functions]
	\label{remark-eif}
	The efficient influence functions in this model are exactly $\psi_\t(\cdot)$ and $\psi_{\t, \t'}(\cdot)$, and so our estimators have the interpretation of being plug-in versions of these, and indeed, will be asymptotically linear with this influence function (see Section \ref{sec-efficiency}).
\end{remark}

\section{Approximately Sparse Models}
	\label{sec-sparse}

We now formalize approximate sparsity. Let $X_Y^*$ and $X_\D^*$ be $p$-dimensional transformations of the covariates $X$, with $p>n$ allowed. These transformations are specific to the outcome and treatment models, but may overlap. They do not vary with $\t$, nor depend on the DGP. Some examples are given below in Section \ref{sec-examples}. For the multinomial logistic model it is convenient to work with the log-odds ratio. We take $p_0(x) = 1 - \sum_{\t \in \NT} p_\t(x)$ and write
\begin{equation}
	\label{eqn-propensity}
	\log \left( \frac{p_\t(x) }{ p_0(x) }\right)  = {x_\D^*}' \gtruet + B_\t^\D,		\qquad \t \in \NT.
\end{equation}
Similarly, write the outcome regressions as
\begin{equation}
	\label{eqn-outcome}
	\mut(x)  = {x_Y^*}' \btruet + B_\t^Y,		\qquad t \in \NTbar,
\end{equation}
The terms $B_\t^\D = B_\t^\D(x)$ and $B_\t^Y = B_\t^Y(x)$ are bias terms arising from the parametric specification. As discussed below, these encompass the usual nonparametric bias as well. Approximate sparsity requires that only a small number of the $X^*$ are needed to make the bias small. Define $ S_*^\D = \bigcup_{\NT} \supp(\gtruet)$ and $ S_*^Y =\bigcup_{\NTbar} \supp(\btruet) $, so that these sets capture all variables important for treatment and outcomes, respectively. We assume that there are some $s_d < n$ and $s_y < n$, such that for $| S_*^\D | = s_d$ and $|S_*^Y | = s_y $, the biases $B_\t^\D$ and $B_\t^Y$ are sufficiently small. This is made precise by defining the bounds:
\begin{equation}
	\label{eqn-bias}
	\En[(\pttrue - p_\t(x_i))^2]^{1/2} \leq \bias^d 		\quad  \text{ and } \quad 		 \En[B_\t^Y(x_i)^2]^{1/2} \vee \Ent[B_\t^Y(x_i)^2]^{1/2} \leq \bias^y.
\end{equation}
Note that the former bias bound is placed directly on the propensity score because it is the ultimate object of interest, rather than on the linearization of the log-odds.

While a great deal of overlap is expected, in practice it is likely that a few covariates will be more or less important for different treatments, and so we do not require that the supports of $\gtruet, \t \in \NT$ or $\btruet, \t \in \NTbar$ are constant over $\t$, nor that $S_*^\D$ overlaps with $S_*^Y$. Instead, it may be better to think of $\N_p \setminus S_*^\D$ and $\N_p \setminus S_*^Y$ as the ``common nonsupports'' of the treatment and outcome equations. When it is clear from the context we will abbreviate both $X_\D^*$ and $X_Y^*$ by $X^*$ (and their realizations by $x_i^*$) and refer to them generically as ``covariates'', and further write $s$ for either $s_d$ or $s_y$. We assume $\En[({x_{i,j}^*})^2] = 1$ without loss of generality (see Remark \ref{remark-penalty}).

\subsection{Parametric and Nonparametric Examples}
	\label{sec-examples}

To concretize the sparse model idea, we now discuss how several models commonly used in practice fit into this framework. These include parametric and nonparametric models for $p_\t(x)$ and $\mut(x)$, and hybrids of these. A common theme to all examples will be comparison to the \emph{oracle} model: the model that knows the true support in advance. Our uniform inference results include all these examples as special cases because, loosely speaking, we obtain uniformity over DGPs where $p_\t(x)$ and $\mut(x)$ have sparse representations. We aim for an accessible discussion of each model, and defer technicalities to the literature \cite{Raskutti-Wainwright-Yu2010_JMLR,Rudelson-Zhou2013_IEEE,Belloni-Chernozhukov-Hansen2014_REStud}.

\begin{example}[Oracle parametric model]
Assume models \eqref{eqn-propensity} and \eqref{eqn-outcome} hold with $B_\t^\D = B_\t^Y = 0$ and $X_\D^* = X_Y^* = X$. Let $p = s = \dim(X)$. All covariates are used in all modeling. If dimension is fixed this is the textbook parametric model, see for example \citeasnoun{Wooldridge2010_book}. Alternatively, the dimension can be diverging, but more slowly than $n$. We are not aware of any work which covers this case explicitly, though for the first stage, \citeasnoun{He-Shao2000_JMA} cover linear and logistic regression, and their results easily extend to multinomial logistic models.

The vast majority of treatment effect studies adopt this model (with dimension fixed), taking the set of covariates as given. In our framework, this is equivalent to the researcher having  prior knowledge of which covariates are important and which are not. Such knowledge no doubt plays an important role, but it cannot cover all situations or all variables. Furthermore, as more data become available, the researcher does not increase the complexity of their model.
\end{example}

\begin{example}[Exactly sparse parametric model]
	\label{eg-parametric}
Retain the exact parametric structure of the prior example, but let $\dim(X) = p$ be possibly larger than $n$, and assume that $S_*^Y$ and $S_*^\D$ are unknown sets of cardinality less than $n$. Model selection must be performed. Often, researchers (implicitly) rely on the \emph{oracle property}, that $S_*^Y$ and $S_*^\D$ can be found with probability approaching one, and conduct inference conditioning on this event. This approach cannot be made uniformly valid and has poor finite sample properties, as shown by Leeb and P\"otscher \citeyear{Leeb-Potscher2005_ET,Leeb-Potscher2008_ET,Leeb-Potscher2008_JoE,Potscher-Leeb2009_JMA}.
\end{example}

\begin{example}[Approximately sparse parametric model]
	\label{eg-approx sparse}
Again suppose a purely parametric model, so that $X_\D^* = X_Y^* = X$ and $\dim(X) = p$, possibly greater than $n$. Suppose that there exist coefficients $\gamma_{\newdot, \newdot}^0$ and $\beta_{\newdot, \newdot}^0$ such that $\log[p_\t(x) / p_0(x) ]  = {x_\D^*}' \gamma_\t^0$ and $\mut(x) = x'\beta_\t^0$ exactly, but instead of any coefficients being precisely zero, suppose they may be ordered such that $|\gamma_{\t,j}^0| \propto j^{-\alpha_\gamma}$ and $|\beta_{\t,j}^0| \propto j^{-\alpha_\beta}$, with $\alpha_\gamma$ and $\alpha_\gamma$ at least one. Then, there exist $s_d$ and $s_y$ that are $o(n)$ such that Equations \eqref{eqn-propensity} and \eqref{eqn-outcome}, and other conditions needed, are satisfied for $\gtruetj = \gamma_{\t,j}^0$  for $j \leq s_d$ and  $\btruetj = \beta_{\t,j}^0$ for $j \leq s_y$ and the rest truncated to zero. That is $S_*^\D$ and $S_*^Y$ collect the largest coefficients and $B_\t^\D = \sum_{\N_p \setminus S_*^\D} x_j \gamma_{\t,j}^0$, and similarly for $B_\t^Y$.
\end{example}

\begin{example}[Semiparametric model]
	\label{eg-semiparametric}
Assume $p_\t(x)$ and $\mut(x)$ are unknown functions that can be well-approximated by a linear combination of $s_d$ and $s_y$ basis functions, respectively (e.g. are sufficiently smooth). In \eqref{eqn-propensity} and \eqref{eqn-outcome}, $\gtrue$ and $\btrue$ are the coefficients of these approximations, while $B_\t^D$ and $B_\t^Y$ are the usual nonparametric biases. $X_\D^* = R_\D(X)$ and $X_Y^* = R_Y(X)$ are series terms used in the approximation. Standard semiparametric analyses, such as \citeasnoun{Hirano-Imbens-Ridder2003_Ecma}, \citeasnoun{Imbens-Newey-Ridder2007_MSE}, or \citeasnoun{Cattaneo2010_JoE}, can be viewed in this context as oracle models that know in advance which terms yield the best approximation, typically assumed to be the first terms. Instead, we only require that some $s_d$ (or $s_y$) of a set of $p$ series terms give good approximations. This allows for greater flexibility in applications, where there is no knowledge of which series terms to use, and the researcher may want to mix terms from different bases. 
\end{example}

\begin{example}[Mixed parametric and semiparametric model]
	\label{eg-mixed}
	Partition $X = (X_1, X_2)$. Suppose that the true log-odds function satisfies $\log [p_\t(x) / p_0(x) ] = x_1'\gamma_{\t}^1 + h_\t(x_2) + B_\t^1(x)$, where $B_\t^1(x)$ is a specification bias and $h_\t(\cdot)$ is a smooth unknown function. For a set of basis functions $R_\D(x_2)$, there will exist coefficients $\gamma_\t^2$ such that $h_\t(x_2) = R_\D(x_2)'\gamma_\t^2 + B_\t^2(x_2)$ and so
\[ \log \left( \frac{p_\t(x) }{ p_0(x) }\right)  =  {x_\D^*}' \gtruet + B_\t^\D,	\quad 	x_\D^* = (x_1', R_\D(x_2)')',		\quad 	\gtruet = ({\gamma_{\t}^1}', {\gamma_{\t}^2}')',  		\quad \text{ and } \quad 	B_\t^\D=B_\t^1+ B_\t^2.\]
We require that some collection of variables and series terms give a good, sparse approximation, without placing explicit conditions on how many of either. Implicitly, one will restrict the other. For example, if the dimension of the parametric part is large, then we require that $h_\t(\cdot)$ can be more easily approximated. We treat $\mut(x)$ the same. This example is closest to actual practice, where some variables (e.g. dummies) enter in a known way and should not be considered part of a nonparametric object, while other covariates must be considered flexibly.
\end{example}

It is important to note that misspecification of the type guarded against by double robustness can arise in any type of model. In parametric cases, this is most often functional form misspecification. While this type of misspecification does not occur in nonparametrics, others are possible, such as shape restrictions or separability assumptions being incorrect, or omitting relevant variables. None of these errors disappear asymptotically, and all of them are guarded against by use of the doubly-robust estimator.

\subsection{Conceptual considerations in $n$-varying DGPs}
	\label{sec-vary with n}

Much of the DGP, including parameters and distributions, is allowed to depend on $n$.  Perhaps the most salient features that do not depend on $n$ are the set of treatments and the functions $\psi_\t$ and $\psi_{\t, \t'}$. It is likely that our results can be extended to accommodate a growing number of treatments, but that is beyond the scope of our study. In the models \eqref{eqn-propensity} and \eqref{eqn-outcome}, $X^*$, $\gtrue$, and $\btrue$ must depend on $n$ by construction. Our results on estimation of these models are nonasymptotic. For treatment effect inference, we use triangular array asymptotics to retain the dependence on $n$ of the DGP. The interpretation of the results does, and should, change depending on what is assumed about the DGP. To illustrate, let us return to Examples \ref{eg-parametric} and \ref{eg-semiparametric}.

First, consider the simple parametric models of Example \ref{eg-parametric}. In this case, $\mut = \E[ \E[Y(\t) \vert X]] = \E[ X']\btruet$, which depends on $n$ by construction, as the dimension is diverging. It may seem unnatural that the parameter to be estimated depends on $n$, as we typically think of ``true'' parameters being features of a (large) fixed study population. However, with a diverging number of covariates, there is no fixed DGP. Indeed, if we estimate $\mut = \mut^{(n_1)}$ based upon $n_1$ observations, and then proceed to gather $n_2$ \emph{more} observations, when we re-estimate our target is now $\mut^{(n_1 + n_2)} \neq \mut^{(n_1)}$. One possible resolution is as follows. First, the parameter of interest is $\mut^{(\infty)} = \E[Y(\t)]$, which is defined without reference to covariates. We can view each successive $n$-dependent $\mut$ as an approximation of $\mut^{(\infty)}$ based upon $p = p_n$ covariates. Note well that in our thought experiment, $p_{n_1} \neq p_{n_1 + n_2}$, and so additional variables should have been collected for all $n_1 + n_2$ samples.

Contrast this with the semiparametric model in Example \ref{eg-semiparametric}. It is common to assume the population DGP is fixed over $n$. The treatment effects may be constructed in terms of the underlying variables, e.g.\ $\mut^{(\infty)} = \E[Y(\t)] = \E[\E[Y(\t) \vert X]]$, with $X^*$ serving only the purpose of aiding in approximating the regression functions. Model selection is performed on series terms, not underlying variables, to estimate the coefficients $\gtrue$ and $\btrue$. If $\mut  = \E[{X_Y^*}'] \btruet + \E[B_\t^Y]$ does not depend on $n$, the bias term, by definition, exactly compensates for the $n$-dependence in $\E[{X_Y^*}'] \btruet$. We emphasize that our inference results allow for general $n$-dependence in the DGP, and interpretation by the econometrician must take careful account of any conceptual assumptions.

\section{Main Results on Treatment Effect Estimation and Inference}
	\label{sec-ate}

In this section we present results on uniformly valid treatment effect inference. We first present the estimators and conditions required for a generic first stage to yield uniform inference. Although our focus is on model selection and sparsity, our results are more general, showcasing the benefits of doubly robust estimation for any model in Section \ref{sec-sparse} where Assumption \ref{first stage} below (which does not refer to selection or sparsity) can be satisfied.

\subsection{Estimation Procedure with a Generic Model Selector}
	\label{sec-estimator}

The moment functions $\psi_\t(\cdot)$ and $\psi_{\t, \t'}(\cdot)$ of Equations \eqref{eqn-ate moments} and \eqref{eqn-tot moments} have fixed and known form, and so for estimators $\hat{p}_\t(x)$ and $\muthat(x)$, we can define 
\begin{equation}
	\label{eqn-ate hat}
	\muthat  = \frac{1}{n} \sumi \left\{ \frac{ \dt (y_i - \muthat(x_i) ) }{ \hat{p}_\t(x_i) } + \muthat(x_i) \right\}
\end{equation}
and 
\begin{equation}
	\label{eqn-tot hat}
		\mutthat  = \frac{1}{n} \sumi \left\{ \frac{ d_i^{\t'}  \muthat(x_i)}{ \hat{p}_{\t'} }   +   \frac{ \hat{p}_{\t'}(x_i) }{\hat{p}_{\t'}} \frac{\dt (y_i - \muthat(x_i)) } {\hat{p}_{\t}(x_i)}  \right\},	
\end{equation}
where $\hat{p}_\t = n_\t / n$. By combining these estimators appropriately we can construct estimators $\drfhat$ and $\tothat$ for the dose-response function $\drf$ and the vector $\tot$, respectively, and any other estimand. Notice that when $\t=\t'$ $\hat{\mu}_{\t,\t}$ is an average over the appropriate subpopulation: $\hat{\mu}_{\t,\t} = \Ent[ y_i]$.

Although in this section we allow for generic estimates $\hat{p}_\t(x)$ and $\muthat(x)$, it is important to distinguish between estimates based upon selected sets that have no ``additional randomness'' and those that do. Model selection based estimation will naturally have two steps: first data-driven selection and then refitting to ameliorate the shrinkage bias and allow the researcher to augment the selected variables. Let $\tilde{S}^\D$ and $\tilde{S}^Y$ be the selected sets and $\hat{S}^\D$ and $\hat{S}^Y$ be the final sets of variables used in the refitting. We will say that these contain no ``additional randomness'' if the added variables (i.e. $\hat{S} \setminus \tilde{S}$, for $Y$ or $\D$) are nonrandomly selected, such as from economic theory or prior knowledge. On the other hand, the added variables may be selected from a random process beyond that included in $\tilde{S}$. The leading example would be using logistic-selected variables in the regressions or vice versa. Then the variables used in $\muthat(x_i)$ depend not only on the randomness of $\tilde{S}^Y$, but also on that of $\tilde{S}^\D$, and hence on $\{ \d \}_{i = 1}^n$. Additional conditions are required for the estimators with additional randomness.

The choice of method is in part dependent on the assumptions of the underlying  model. To illustrate, first, return to Example \ref{eg-parametric}, where we have a purely parametric model with $X = X_\D^* = X_Y^*$. The researcher may want to set $\hat{S}^\D \supset \tilde{S}^\D \cup \tilde{S}^Y$, in order to have a better chance that $S_*^Y \subset \hat{S}^\D$. The set $\hat{S}^\D$ now contains additional randomness due to $\tilde{S}^Y$. Conversely, consider Example \ref{eg-semiparametric}. It is natural to include ``low-order'' basis functions for each underlying covariate, say linear and quadratic polynomials. Thus, the researcher may want to include these in $\hat{S}$, whether or not selected by the group lasso. However, there is no reason that the series terms useful for approximating the functions $\mut(x)$ would be useful for $p_\t(x)$, or vice versa, and no additional randomness is injected.

We now state the sufficient conditions used for treatment effect estimation and inference. For exposition, we present these in three groups: those concerning the underlying DGP, requirements of $\hat{p}_\t(x)$ and $\muthat(x)$ in the ``no additional randomness'' case, and finally the additional conditions to allow for ``additionally random'' selected sets. Begin with conditions on the DGP. Let $U \equiv Y(\t) - \mut(X)$ and impose the following conditions.

\begin{assumption}[Data Generating Process]
	\label{dgp asmpts} \
	$P_n$ obeys the following, with bounds uniform in $n$.
	\begin{enumerate}[label=(\alph{*}), ref=\ref{dgp asmpts}(\alph{*})]

		\item $\{(y_i, \d, x_i')'\}_{i=1}^n$ is an i.i.d. sample from $(Y, \D, X')'$.		\label{iid}

		\item The covariates $X^*$  have bounded support, with $\maxj \vert X^*_j \vert \leq \mathcal{X} < \infty$. Transformations may depend on $n$ but not the underlying data generating process.		\label{bounded}

		\item  $\E[|U|^4 \mid X] \leq \mathcal{U}^4$.	\label{fourth moments} 

		\item $\min_{j \in \N_p,\ \t \in \NTbar} \E[ {X_j^*}^2 U^2] \wedge \E[ {X_j^*}^2 (\indicator\{\D = \t\} - p_\t(X))^2]$ is bounded away from zero.		\label{positive variance} 

		\item For some $r > 0$: $\E[|\mut(x_i) \mu_{\t'}(x_i)|^{1 + r}]$ and $\E[|u_i|^{4 + r}]$ are bounded.	\label{ATE moments}

	\end{enumerate}
\end{assumption}

These conditions are mild and intuitive, and not unique to high-dimensional models or model selection. Assumption \ref{iid} restricts attention to cross-sectional applications. The condition of bounded covariates is unlikely to be a limitation in practice. Any $X^*$ that are underlying variables will naturally be bounded in applications. This condition is automatically satisfied for most common choices of basis functions employed in nonparametric estimation. The rest are moment conditions on the potential outcome models, including allowing the errors to be heteroskedastic and non-Gaussian. The uniform bounds in $n$ are needed for array asymptotics.

We now give precise conditions on the model selector sufficient for uniformly valid inference.
\begin{assumption}[First Stage Restrictions] \label{first stage} \ 
	The estimators $\hat{p}_\t(x)$ and $\muthat(x)$ obey the following for a sequence $\{P_n\}$, uniformly in $\t \in \NTbar$.
	\begin{enumerate}[label=(\alph{*}), ref=\ref{first stage}(\alph{*})]

		\item $ \En[(\hat{p}_\t(x_i) - p_\t(x_i))^2]  = o_{P_n}(1)$ and $\En \left[ (\muthat(x_i) - \mut(x_i))^2\right]  = o_{P_n}(1)$,	\label{consistent}

		\item $ \En[ (\muthat(x_i) - \mut(x_i))^2]^{1/2}  \En[ (\hat{p}_\t(x_i) - p_\t(x_i))^2]^{1/2}    = o_{P_n}(n^{-1/2})$. 	\label{ATE RATES}
		
		\item $ \En[ (\muthat(x_i) - \mut(x_i)) (1 - d_i^t/ p_\t(x_i))]    = o_{P_n}(n^{-1/2})$. 	\label{new}

	\end{enumerate}
\end{assumption}
These two collectively play the same role as the commonly-used, high-level requirement in semiparametrics that each first-step component separately converge at $n^{-1/4}$ at least.\footnote{See \citeasnoun{Newey-McFadden1994_handbook} and \citeasnoun{Chen2007_handbook}, and references therein.} Indeed, \citeasnoun{Belloni-Chernozhukov-Hansen2014_REStud} employ just such a condition for each component. However, by making use of the doubly-robust property we have the weaker conditions shown.\footnote{Many studies in the semiparametric literature relax or do not rely on the $n^{1/4}$ condition, allowing the nonparametric portion to converge at a slower rate, at any rate, or in some cases be inconsistent; examples include \citeasnoun{Powell-Stock-Stoker1989_Ecma}, \citeasnoun{Newey1990_Ecma}, \citeasnoun{Robins-etal2008_IMS}, \citeasnoun{Cattaneo-Jansson-Newey2014_alt}, and Cattaneo, Crump, and Jansson \citeyear{Cattaneo-Crump-Jansson2013_JASA,Cattaneo-Crump-Jansson2014_ET}, among others.} The first is a mild consistency requirement. The second requires an explicit rate on the product of errors, and hence if one function is relatively easy to estimate Assumption \ref{ATE RATES} can be satisfied even if the other does not converge at $n^{-1/4}$. This formalizes the benefit of doubly-robust estimation in general. In high-dimensional, sparse modeling specifically the rates for the first stage depend on the sample size, the number of covariates considered, and the sparsity level. Thus, if one function requires fewer covariates to estimate, i.e. smaller $p$ or $s$, then greater complexity can be allowed for in the other (capturing, in particular, their relative smoothness).

The so-called ``additional-randomness'' estimators are more specific to the (approximately) sparse model context, and so we now codify the sparsity requirements of Section \ref{sec-sparse} and then give the additional conditions required for these estimators.
\begin{assumption}[Sparsity] \label{sparsity} \ 
	For each $n$, $P_n$ obeys \eqref{eqn-propensity}, \eqref{eqn-outcome}, and \eqref{eqn-bias}, with $| S_*^Y| = s_d$ and $|S_*^\D | = s_y$.
\end{assumption}
\begin{assumption}[Regularity conditions for union estimators] \label{ATE union} \ 
	For a sequence $\{P_n\}$, $\log(p) = o(n^{1/3})$ and the estimators $p_\t(x)$ and $\muthat(x)$ obey the following, uniformly $\t \in \NTbar$:
	\[\bigl(  \maxt |u_i| \bigr) \left|   \En [ (\hat{p}_\t(x_i) - p_\t(x_i))^2] \right| = o_{P_n}(n^{-1/2}) 		\quad \text{and} \quad 		
		\left\| \ghatt - \gtruet \right\|_1   \vee   \| \bhatt - \btruet \|_1 = o_{P_n}(\log(p \vee n)^{-1/2}).\]
\end{assumption}
These conditions are needed to apply bounds for self-normalized sums \cite{delaPena-Lai-Shao2009_book}. \citeasnoun{BCCH2012_Ecma} were the first to use these techniques in high-dimensional, sparse models. The first condition is high-level, but can be verified with conditions on the errors and a bound for estimation. For the former, \citeasnoun{BCCH2012_Ecma} assume that $ \max_{i \in \N_n} |u_i| = O_{P_n}(n^{1/q})$ for some $q>2$. A larger $q$ eases the restriction in Assumption \ref{ATE union} but at the expense of stronger conditions on the noise distribution. For example, if $u_i$ are assumed Gaussian, $q$ can be taken to be any (large) positive number.

\begin{remark}[Linear Probability Models]
	\label{remark-LPM}
	Our results cover use of a linear probability model for $p_\t(x)$, instead of the multinomial logistic form. All we require is a sufficiently high-quality approximation of the unknown function, and hence if Assumptions \ref{first stage}, and \ref{ATE union} if appropriate,\footnote{Assumption \ref{ATE union} can be slightly weakened in this case due to the linear link function.} are met then uniform inference is possible using a linear probability model. Our group lasso results (Theorems \ref{thm-ols} and \ref{thm-post ols}) can be used directly to verify these conditions. In the same vein, multinomial logistic regression can be used to estimate $\mut(x)$ if the outcome $Y$ is discretely valued.
\end{remark}

\subsection{Theoretical Results}
	\label{sec-ate theory}

We now come to our main results on inference on average treatment effects. Most of our discussion will concern $\mut$ and $\drf$; similar points apply to results for $\mutt$ and $\tot$. Our first result formalizes consistency of our estimates under misspecification. 

\begin{theorem}[Double Robustness]
	\label{thm-ate consistency}
	Consider a sequence $\{P_n\}$ of data-generating processes. Suppose that for some $p_\t^0(x)$ and $\mut^0(x)$, $\En[(\hat{p}_\t(x_i)  -  p_\t^0(x_i))^2]  = o_{P_n}(1)$ and $\En [ (\muthat(x_i) - \mut^0(x_i))^2]   = o_{P_n}(1)$. Let Assumptions \ref{ignorability} and \ref{dgp asmpts} hold for each $n$, with the regularity conditions also holding for $p_\t^0(x)$ and $\mut^0(x)$. If $p_\t^0(x) = p_\t(x)$ or $\mut^0(x) = \mut(x)$, then $ \left|\muthat - \mut \right|  = o_{P_n}(1)$.
\end{theorem}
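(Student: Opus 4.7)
The core strategy is to split $\muthat - \mut$ into an \emph{oracle} piece, in which the unknown nuisance functions are replaced by their probability limits $p_\t^0$ and $\mut^0$, plus a first-stage estimation remainder, and to show both pieces are $o_{P_n}(1)$. Concretely, define the infeasible oracle
\[\tilde{\mu}_\t^0 \;=\; \En\!\left[\frac{\dt(y_i - \mut^0(x_i))}{p_\t^0(x_i)} + \mut^0(x_i)\right],\]
and write $\muthat - \mut = (\muthat - \tilde{\mu}_\t^0) + (\tilde{\mu}_\t^0 - \mut)$.

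The second piece is where the double robustness is encoded. I would first verify the algebraic identity
\[\E[\psi_\t(Y,\D,\mut^0(X),p_\t^0(X),\mut)] \;=\; \E\!\left[\left(\frac{p_\t(X)}{p_\t^0(X)} - 1\right)\bigl(\mut(X) - \mut^0(X)\bigr)\right],\]
obtained by iterating expectations and using Assumption \ref{mean inde} together with $\mut = \E[\mut(X)]$. This right-hand side is identically zero whenever \emph{either} $p_\t^0 = p_\t$ or $\mut^0 = \mut$. Hence $\E[\tilde{\mu}_\t^0] = \mut$, and the variance of each summand is finite by Assumption \ref{overlap} (giving $p_\t^0 \ge p_{\min}$, inherited from the regularity hypothesis on $p_\t^0$), Assumption \ref{dgp asmpts}(c)--(e), so Chebyshev yields $\tilde{\mu}_\t^0 - \mut = O_{P_n}(n^{-1/2}) = o_{P_n}(1)$.

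For the first piece, I would use the algebraic decomposition
\[\frac{\dt(y_i-\muthat(x_i))}{\hat p_\t(x_i)} - \frac{\dt(y_i-\mut^0(x_i))}{p_\t^0(x_i)} \;=\; \frac{\dt\bigl(p_\t^0(x_i) - \hat p_\t(x_i)\bigr)\bigl(y_i - \mut^0(x_i)\bigr)}{\hat p_\t(x_i)\,p_\t^0(x_i)} + \frac{\dt\bigl(\mut^0(x_i) - \muthat(x_i)\bigr)}{\hat p_\t(x_i)},\]
so that
\[\muthat - \tilde{\mu}_\t^0 \;=\; \En\!\left[\frac{\dt(p_\t^0-\hat p_\t)(y_i - \mut^0)}{\hat p_\t\,p_\t^0}\right] + \En\!\left[\frac{\dt(\mut^0 - \muthat)}{\hat p_\t}\right] + \En[\muthat - \mut^0].\]
Bounding the three terms by Cauchy--Schwarz and combining the $L^2(\mathbb{P}_n)$ consistency hypotheses with the moment condition $\En[(y_i - \mut^0(x_i))^2]^{1/2} = O_{P_n}(1)$ (from Assumptions \ref{dgp asmpts}(c)--(e), extended to $\mut^0$ by the stated regularity) gives $o_{P_n}(1)$ on each piece, provided $\hat p_\t(x_i)^{-1}$ stays $O_{P_n}(1)$ uniformly in $i$.

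The principal obstacle is precisely this last point: converting the $L^2(\mathbb{P}_n)$ consistency of $\hat p_\t$ into a uniform floor. I would handle it by combining overlap for $p_\t^0$ with the observation that $\En[(\hat p_\t - p_\t^0)^2] = o_{P_n}(1)$ implies the empirical measure of the set $\{i : \hat p_\t(x_i) < p_{\min}/2\}$ is $o_{P_n}(1)$; restricting the averages to the complementary event (where $\hat p_\t \ge p_{\min}/2$) and absorbing the negligible residual mass into an $o_{P_n}(1)$ term preserves all Cauchy--Schwarz bounds above. In the model-selection-based implementation of Section~\ref{sec-grplasso}, the multinomial logit link combined with the $\ell_1$ bound on $\ghatt - \gtruet$ in Assumption \ref{ATE union} in fact gives a deterministic uniform lower bound on $\hat p_\t$ with probability $\to 1$, but the $L^2$ trimming argument suffices for the generic statement of Theorem~\ref{thm-ate consistency}.
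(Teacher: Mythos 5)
Your proposal follows essentially the same route as the paper's proof: decompose $\muthat - \mut$ into the sample average of $\psi_\t$ evaluated at the limits $p_\t^0,\ \mut^0$ plus first-stage remainders, kill the remainders by Cauchy--Schwarz and the $L^2(\Pn)$ consistency hypotheses, and kill the oracle term by a law of large numbers after checking that its population mean vanishes whenever either nuisance function is correctly specified. Your explicit product-of-errors identity and your trimming device for controlling $\hat{p}_\t(x_i)^{-1}$ are if anything more careful than the paper's treatment, which handles the latter point implicitly and uses the von Bahr--Esseen inequality where you use Chebyshev.
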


This theorem formalizes the double-robustness property of our estimators: the propensity score or regression may be misspecified if the limiting objects are well-behaved. Compare to Assumption \ref{consistent}. The nearly identical result for $\mutt$ is omitted to save space.

We now turn to our main inference results. First we demonstrate a Bahadur representation of a generic $\muthat$ or $\mutthat$. These are shown to be equivalent to a sample average of the moment functions $\psi_\t(\cdot)$ and $\psi_{\t,\t'}(\cdot)$, respectively, after proper centering and scaling, evaluated at the true $p_\t(x_i)$ and $\mut(x_i)$. Using these results, asymptotic normality can be obtained for general estimands. We state explicit results for the leading examples $\drf$ and $\tot$.

An asymptotic variance formula is needed to state the results. Define the conditional variance of the potential outcomes s $\sigma_\t^2(x) =  \E[U^2 \vert \D = \t, X = x]$ and the $\Tbar$-square matrix $V_{\drf}$ with elements
	\[V_{\drf}[\t, \t'] = \indicator\{\t = \t'\} \E\left[  \frac{\sigma_\t^2(X)}{p_\t(X)}\right] + \E\left[(\mut(X) - \mut)(\mu_{\t'}(X) - \mu_{\t'})\right] \equiv V_{\drf}^W(\t) + V_{\drf}^B(\t,\t').\]
Straightforward plug-in estimators for these two components are given by\footnote{Estimators can also be based on sample averages of outer products of influence functions, which would include the covariance term that vanishes in expectation.}
	\[\hat{V}_{\drf}^W(\t) = \En\left[ \frac{ \dt (y_i - \muthat(x_i) )^2}{\hat{p}_\t(x_i)^2}\right] 	\qquad \text{ and } \qquad 	\hat{V}_{\drf}^B(\t,\t') = \En\left[ (\muthat(x_i) - \muthat) (\hat{\mu}_{\t'}(x_i) - \hat{\mu}_{\t'}) \right].\]

Our first result gives the asymptotic behavior of $\muthat$ and $\drfhat$ for a sequence of DGPs. 

\begin{theorem}[Estimation of Average Treatment Effects]
	\label{thm-ate}
	Consider a sequence $\{P_n\}$ of data-generating processes that obey Assumptions \ref{ignorability}, \ref{dgp asmpts}, and \ref{first stage} for each $n$. If $\muthat(x_i)$ and $\hat{p}_\t(x_i)$ do not have additional randomness in the estimated supports, we have:
	\begin{enumerate}[label=\arabic{*}., ref=\ref{thm-ate}.\arabic{*}]

		\item \label{thm-ate linearity}  $\sqrt{n} ( \muthat - \mut ) = \sumi \psi_\t(y_i, \dt, \mut(x_i), p_\t(x_i), \mut)  / \sqrt{n} + o_{P_n}(1)$;

		\item \label{thm-ate normality}  $V_{\drf}^{-1/2} \sqrt{n} (\hat{\drf} - \drf) \to_d \mathcal{N}(0,I_{\Tbar})$; and

		\item \label{thm-ate variance} $\hat{V}_{\drf}^W(\t) - V_{\drf}^W(\t) = o_{P_n}(1)$ and $\hat{V}_{\drf}^B(\t,\t') - V_{\drf}^B(\t,\t') = o_{P_n}(1)$.

	\end{enumerate}
If, in addition, Assumptions \ref{sparsity} and \ref{ATE union} hold, then the same is true when the supports contain additional randomness.
\end{theorem}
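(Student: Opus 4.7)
The plan is to establish part 1 (Bahadur linearity) first, then derive parts 2 and 3 as consequences, and finally handle the additional-randomness extension under Assumptions \ref{sparsity} and \ref{ATE union}. Writing $\Delta\mu_i = \muthat(x_i) - \mut(x_i)$, $\Delta p_i = \hat{p}_\t(x_i) - p_\t(x_i)$, $u_i = y_i - \mut(x_i)$, and $\psi_\t^{*,i} = \psi_\t(y_i, \dt, \mut(x_i), p_\t(x_i), \mut)$, a direct expansion of \eqref{eqn-ate hat} around the true nuisance functions yields the identity
\begin{equation*}
\sqrt{n}(\muthat - \mut) - \tfrac{1}{\sqrt{n}}\sumi \psi_\t^{*,i} \;=\; R_{n,1} - R_{n,2} + R_{n,3},
\end{equation*}
where $R_{n,1} = \sqrt{n}\,\En[\Delta\mu_i(1 - \dt/p_\t(x_i))]$, $R_{n,2} = \sqrt{n}\,\En[\dt u_i \Delta p_i/(\hat{p}_\t(x_i)p_\t(x_i))]$, and $R_{n,3} = \sqrt{n}\,\En[\dt \Delta\mu_i \Delta p_i/(\hat{p}_\t(x_i)p_\t(x_i))]$. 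The goal is to show each $R_{n,k} = o_{P_n}(1)$.

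The product-of-errors term $R_{n,3}$ falls immediately to Cauchy-Schwarz: overlap (Assumption \ref{overlap}) and consistency of $\hat{p}_\t$ (Assumption \ref{consistent}) bound $1/(\hat{p}_\t p_\t)$ eventually, and Assumption \ref{ATE RATES} delivers $o_{P_n}(1)$. The substance lies in $R_{n,1}$ and $R_{n,2}$, because the non-plug-in factors $1 - \dt/p_\t(x_i)$ and $\dt u_i/p_\t(x_i)$ each have mean zero given $X$, but $\Delta\mu_i$ and $\Delta p_i$ are formed on the same sample. I would decompose $\sqrt{n}\En[\hat m_i g_i] = G_n(\hat m \cdot g) + \sqrt{n}\,P(\hat m \cdot g)$ with $G_n = \sqrt{n}(\En - P)$; the second piece vanishes for any fixed $\hat m$ since $E[g\mid X]=0$, leaving only the empirical-process piece to control. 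In the no-additional-randomness case, the class within which $\hat m$ falls is narrow enough that a standard maximal inequality, combined with the consistency rate of Assumption \ref{consistent}, yields $G_n(\hat m \cdot g) = o_{P_n}(1)$.

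For the additional-randomness extension the class is substantially larger; here I would apply H\"older to bound $|G_n(\hat m \cdot g)| \leq \|\bhatt - \btruet\|_1 \cdot \max_{j\in\Np}|G_n(X_j^* g)|$ plus an approximation-bias piece controlled by \eqref{eqn-bias}, and then invoke the self-normalized sum bounds of \citeasnoun{BCCH2012_Ecma} to get $\max_j|G_n(X_j^* g)| = O_{P_n}(\sqrt{\log(p\vee n)})$ under $\log(p) = o(n^{1/3})$ and the moment conditions in Assumptions \ref{fourth moments}--\ref{positive variance}; the matching $\ell_1$ rate in Assumption \ref{ATE union} then furnishes the $o_{P_n}(1)$ conclusion. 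The term $R_{n,2}$ additionally requires truncation of $u_i$, which is precisely why Assumption \ref{ATE union} bounds $(\maxt|u_i|)\,\En[(\hat p_\t - p_\t)^2]$. Managing the three-way interplay between the truncation level, the $\ell_1$ error, and the $\sqrt{\log(p\vee n)}$ factor from the self-normalized bound is the main technical obstacle of the proof.

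With linearity in hand, part 2 follows from the multivariate Lindeberg-Feller CLT applied to the $\Tbar$-vector $(\psi_0^{*,i}, \ldots, \psi_\T^{*,i})$: the Lindeberg condition is verified by Assumption \ref{ATE moments}, and a direct second-moment calculation produces $V_{\drf}$, with the ``within'' contributions being diagonal since $\dt d_i^{\t'} = 0$ for $\t \neq \t'$ and the ``between'' contributions filling in the full matrix. Part 3 decomposes $\hat V_{\drf}^W(\t) - V_{\drf}^W(\t)$ into a sample-average WLLN piece (via Assumption \ref{ATE moments}) and plug-in errors controlled by Assumption \ref{first stage}, and similarly for $\hat V_{\drf}^B(\t,\t')$. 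The corresponding statements for $\mutthat$ and $\tothat$ follow from the parallel expansion of \eqref{eqn-tot hat}, with the added wrinkle that $\hat p_{\t'} = n_{\t'}/n$ in the denominator must also be expanded around $p_{\t'}$.
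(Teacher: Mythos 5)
Your decomposition is the same as the paper's: your $R_{n,2}$ and $R_{n,3}$ are exactly the paper's $R_1$ and $R_{22}$, your $R_{n,1}$ is its $R_{21}$, and the product term is killed by Cauchy--Schwarz plus Assumption \ref{ATE RATES} in both treatments. Your handling of the additional-randomness case (add and subtract the sparse parametric representation, bound the resulting sum by $\|\bhatt - \btruet\|_1$ or $\|\ghatt-\gtruet\|_1$ times a maximum of self-normalized sums controlled by the moderate-deviation theory of \citeasnoun{BCCH2012_Ecma}, and use the $(\maxt|u_i|)\,\En[(\hat p_\t-p_\t)^2]$ condition for the second-order/quadratic pieces) is also essentially the paper's argument. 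Parts 2 and 3 proceed as you describe.

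The genuine gap is in your treatment of $R_{n,1}$ and $R_{n,2}$ in the no-additional-randomness case. You write $\sqrt{n}\,\En[\hat m_i g_i]$ as an empirical-process term plus a centering term and propose to control the former by ``a standard maximal inequality'' over the class containing $\hat m$, which you assert is ``narrow enough'' that consistency alone (Assumption \ref{consistent}) suffices. It is not: $\hat m$ is a post-selection fit over a data-dependent support drawn from $p>n$ candidate covariates, so the relevant function class is the union of all $s$-dimensional linear spans, whose metric entropy is of order $s\log(p/s)$. A uniform maximal inequality over that class yields a bound of order $\sqrt{s\log p}\cdot\En[\hat m_i^2]^{1/2}$ (up to constants), which is $o_{P_n}(1)$ only under a rate condition strictly stronger than the bare $L_2$-consistency of Assumption \ref{consistent} --- and imposing such a rate would defeat the point of Theorem \ref{thm-ate}, namely that the doubly-robust structure lets each nuisance converge arbitrarily slowly so long as the product rate holds. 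The paper closes this step differently: it computes the \emph{conditional} second moment of the remainder given the covariates (and, for the $u_i$ term, the treatment indicators), under which the summands are mean zero and uncorrelated, so that, e.g., $\E[R_{21}^2\mid\{x_i\}] \leq C\,\En[(\muthat(x_i)-\mut(x_i))^2]=o_{P_n}(1)$, and Markov's inequality finishes. No entropy control is needed, which is precisely what makes consistency alone sufficient. You would need to replace your maximal-inequality step with this conditional-variance calculation (or an equivalent device) for the first part of the theorem to go through as stated.
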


Theorem \ref{thm-ate} itself may appear standard, but what is nonstandard is that the model selection step of the estimation has been explicitly accounted for. This immediately gives the following uniform inference results.
\begin{corollary}[Uniformly Valid Inference]
	\label{thm-ate uniform}
	Let $\bm{P}_n$ be the set of data-generating processes satisfying the conditions of Theorem \ref{thm-ate} for a given $n$ and $G: \mathbb{R}^{\Tbar} \to \mathbb{R}$ be a fixed, twice uniformly continuously differentiable function with gradient $\nabla_G$ such that $\liminf_{n \to \infty} \| \nabla_G(\drf)\|_2$ is bounded away from zero. Then for $c_\alpha = \Phi^{-1}(1 - \alpha/2)$, we have:
	\[ \sup_{P \in \bm{P}_n}  \left|  \P_P \left[  G(\drf) \in \left\{ G(\drfhat) \pm c_\alpha \sqrt{ \nabla_G(\drfhat)' \hat{V}_{\drf} \nabla_G(\drfhat) / n}\right\} \right] - (1 - \alpha) \right| \to 0.\]
\end{corollary}

Corollary \ref{thm-ate uniform} shows that these procedures are uniformly valid over the class of DGPs we consider, and hence will be reliable in applications. The crucial insight that leads to uniform inference is to change the goal of model selection away from perfect \emph{covariate} selection (the oracle property) and to high-quality approximation of the underlying \emph{functions} (here $p_\t(\cdot)$ and $\mut(\cdot)$). This fundamental shift in focus allows us to avoid the uniformity problems demonstrated by Leeb and P{\"o}tscher. Assumption \ref{first stage} formalizes exactly the quality of approximation needed. Such an approximation can be found for any element in $\bm{P}_n$, and hence inference is uniformly valid over that class. This method of proving uniformity follows \citeasnoun{Belloni-Chernozhukov-Hansen2014_REStud} and \citeasnoun{Romano2004_SJS}, and is distinct from the approach of \citeasnoun{Andrews-Guggenberger2009_JoE}.

Results for the treatment effects on the treated are similar. The variance formula for $\tot$ is slightly more cumbersome. Define the $\T$-square matrix $V_{\tot}$ with elements
	\begin{align*}
		V_{\tot}[\t, \t']  & =  \indicator\{\t = \t'\} \E\left[ \frac{p_\t(X)}{p_\t^2}\left[\sigma_t^2(X) + \left(\mut(X) - \mu_0(X) - \mu_{\t,\t} + \mu_{0,\t}\right)^2 \right] \right] +  \E\left[\frac{p_\t(X) p_{\t'}(X) }{p_\t p_{\t'} p_0(X)}\sigma_0^2(X) \right]  		\\
		& \equiv   V_{\tot}^W(\t) + V_{\tot}^B(\t,\t').
	\end{align*}
Straightforward plug-in estimators for these two components are given by
\[\hat{V}_{\tot}^W(\t) = \En\left[ \frac{ \dt }{\hat{p}_\t^2} \left[ \left(y_i - \hat{\mu}_0(x_i) - \hat{\mu}_{\t,\t} + \hat{\mu}_{0,\t}\right)^2 \right] \right] \text{ and } \hat{V}_{\tot}^B(\t,\t') = \En\left[ \frac{\hat{p}_\t(x_i) \hat{p}_{\t'}(x_i) }{\hat{p}_\t \hat{p}_{\t'} \hat{p}_0(x_i)^2} \d^0(y_i - \hat{\mu}_0(x_i) )^2 \right].\]
Note that we needn't estimate $\mut(x)$ and $\sigma_t^2(x)$, due to the simplification in Remark \ref{remark-tot}. With this notation, we have the following results. Proofs are so similar to those for Theorem \ref{thm-ate} and Corollary \ref{thm-ate uniform} that we omit them. 
\begin{theorem}[Estimation of Treatment Effects on Treated Groups]
	\label{thm-tot}
	Consider a sequence $\{P_n\}$ of data-generating processes that obey Assumptions \ref{ignorability}, \ref{dgp asmpts}, and \ref{first stage} for each $n$. Then under $P_n$, as $n \to \infty$, if $\muthat(x_i)$ and $\hat{p}_\t(x_i)$ do not have additional randomness in the estimated supports:
	\begin{enumerate}[label=\arabic{*}., ref=\ref{thm-ate}.\arabic{*}]

		\item \label{thm-tot linearity}  $\sqrt{n} ( \mutthat - \mutt ) = \sumi \psi_{\t,\t'}(y_i, \dt, \mut(x_i), p_\t(x_i), p_{\t'}(x_i),\mutt) / \sqrt{n} + o_{P_n}(1)$;

		\item \label{thm-tot normality}  $V_{\tot}^{-1/2} \sqrt{n} ( \tothat - \tot ) \to_d \mathcal{N}(0,I_\T)$; and

		\item \label{thm-tot variance} $\hat{V}_{\tot}^W(\t) - V_{\tot}^W(\t) = o_{P_n}(1)$ and $\hat{V}_{\tot}^B(\t,\t') - V_{\tot}^B(\t,\t') = o_{P_n}(1)$.
	\end{enumerate}
If, in addition, Assumptions \ref{sparsity} and \ref{ATE union} hold, then the same is true when the supports contain additional randomness.
\end{theorem}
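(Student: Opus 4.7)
The overall strategy is to mirror the three-step argument used for Theorem \ref{thm-ate}. First I would establish the Bahadur representation (part 1), then derive joint asymptotic normality via the Cram\'er-Wold device and a Lyapunov CLT applied to the i.i.d.\ averages of $\psi_{\t,\t'}$ (part 2), and finally show consistency of $\hat{V}_{\tot}^W$ and $\hat{V}_{\tot}^B$ by plug-in arguments combining Assumption \ref{first stage} with uniform laws of large numbers (part 3). Part 1 carries essentially all the analytic content; parts 2 and 3 adapt almost verbatim from the corresponding steps for $\drfhat$.

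For the Bahadur representation I would decompose $\mutthat - \mutt = \En[\psi_{\t,\t'}(y_i,d_i,\mut(x_i),p_\t(x_i),p_{\t'}(x_i),\mutt)] + R$, where $R$ collects the perturbations from replacing each of the four nuisance objects by its estimate: $\muthat$ for $\mut$ in the imputation piece, $\hat{p}_\t(x)$ for $p_\t(x)$ in the weighting denominator, $\hat{p}_{\t'}(x)$ for $p_{\t'}(x)$ in the reweighting numerator, and the scalar $\hat{p}_{\t'}$ for $p_{\t'}$ in the two overall denominators. Linearizing $\hat{p}_{\t'}^{-1}$ around $p_{\t'}^{-1}$ is admissible because $\hat{p}_{\t'} - p_{\t'} = O_{P_n}(n^{-1/2})$ and $p_{\t'} \geq p_{\min}$ by Assumption \ref{overlap}. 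Splitting $R$ into its first-order (single-nuisance) and bilinear (product) pieces, the doubly-robust moment identity \eqref{eqn-tot moments} ensures every first-order piece has population mean zero at the truth. These first-order pieces are then $o_{P_n}(n^{-1/2})$ by Assumption \ref{consistent} combined with Assumption \ref{overlap}; in the ``additional randomness'' case, one further invokes self-normalized sum inequalities \cite{delaPena-Lai-Shao2009_book}, which is precisely what Assumption \ref{ATE union} is designed to enable. The bilinear pieces are controlled directly by Cauchy-Schwarz and Assumption \ref{ATE RATES}.

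Given the representation, part 2 follows by the Cram\'er-Wold device and a Lyapunov CLT applied to the $\T$-vector of averages of $\psi_{\t,\t'}$; the $(2+r)$-th moment bound comes from Assumptions \ref{dgp asmpts}(d)--(e) and Assumption \ref{overlap}, while nondegeneracy of $V_{\tot}$ follows from Assumption \ref{dgp asmpts}(d). For part 3, the simplification in Remark \ref{remark-tot} is critical: since $\hat{\mu}_{\t,\t} = \Ent[y_i]$, only $\hat{\mu}_0(x)$ enters $\hat{V}_{\tot}^W$ and $\hat{V}_{\tot}^B$, so variance consistency only requires consistency of the baseline outcome regression plus $\hat{p}_\t \to p_\t$ and $\hat{p}_{\t'} \to p_{\t'}$ uniformly away from zero, which Assumption \ref{overlap} guarantees. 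Each deviation $\hat{V}_{\tot}^\bullet - V_{\tot}^\bullet$ can then be written as a sum of (i) an empirical mean with bounded second moment, handled by Markov's inequality, and (ii) plug-in residuals that vanish by Assumption \ref{consistent}.

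The main obstacle is the bookkeeping in part 1: tracking three functional nuisances $(\muthat,\hat{p}_\t,\hat{p}_{\t'})$ plus the scalar $\hat{p}_{\t'}$, the cross-products proliferate, and one must carefully pair remainders so that every non-orthogonal interaction ultimately falls under Assumption \ref{ATE RATES}. The scalar $\hat{p}_{\t'}$ is benign once Taylor-expanded because $\sqrt{n}(\hat{p}_{\t'} - p_{\t'}) = O_{P_n}(1)$ already, and its contribution is absorbed into the linear representation as the standard sample-average correction; the subtlety is to verify that this correction coincides with the $-\mutt$ summand of $\psi_{\t,\t'}$ up to $o_{P_n}(n^{-1/2})$. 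This is why the paper can truthfully describe the argument as ``so similar'' rather than identical to that of Theorem \ref{thm-ate}.
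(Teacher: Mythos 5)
Your outline is correct and follows exactly the route the paper takes for Theorem \ref{thm-ate} (whose proof the paper states transfers to Theorem \ref{thm-tot} with only notational changes): decompose into the influence-function average plus remainders, kill the conditionally mean-zero pieces by conditional variance bounds, kill the bilinear piece by Cauchy--Schwarz together with Assumption \ref{ATE RATES}, and invoke the self-normalized-sum machinery of Assumption \ref{ATE union} only in the additional-randomness case, with the variance consistency handled by the same plug-in arguments. You have also correctly isolated the one genuinely new wrinkle --- the ratio-estimator correction arising from $\hat{p}_{\t'} = n_{\t'}/n$, which recenters the $-\mutt$ term of $\psi_{\t,\t'}$ and is precisely what makes the stated $V_{\tot}$ the correct asymptotic variance.
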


\begin{corollary}[Uniformly Valid Inference]
	\label{thm-tot uniform}
	Let $\bm{P}_n$ be the set of data-generating processes satisfying the conditions of Theorem \ref{thm-tot} for a given $n$ and $G: \mathbb{R}^{\T} \to \mathbb{R}$ be a fixed, twice uniformly continuously differentiable function with gradient $\nabla_G$ such that $\liminf_{n \to \infty} \| \nabla_G(\tot)\|_2$ is bounded away from zero. Then for $c_\alpha = \Phi^{-1}(1 - \alpha/2)$, we have:
	\[ \sup_{P \in \bm{P}_n} \left| \P_P  \left[  G(\tot) \in \left\{ G(\tothat) \pm c_\alpha  \sqrt{ \nabla_G(\tothat)' \hat{V}_{\tot} \nabla_G(\tothat) / n}\right\} \right] - (1 - \alpha) \right| \to 0.\]
\end{corollary}

\subsection{Efficiency Considerations}
	\label{sec-efficiency}

The prior theoretical results are aimed at delivering robust inference. In this section, we briefly discuss the efficiency of our estimator according to two criteria: semiparametric efficiency and oracle efficiency. To put each on sound conceptual footing we separate discussion and restrict to an appropriate set of models. 

For semiparametric efficiency, $p_\t(x)$ and $\mu_\t(x)$ are nonparametric objects, as in Example \ref{eg-semiparametric}, $X$ are fixed-dimension variables and the DGP does not vary with $n$. If we ``upgrade'' the mean independence of Assumption \ref{mean inde} to full, namely $ \{ Y(\t) \}_{\NTbar} \inde \D \vert X$, then Theorems \ref{thm-ate} and Theorem \ref{thm-tot} immediately yield asymptotic linearity and semiparametric efficiency, attaining \possessivecite{Hahn1998_Ecma} or \possessivecite{Cattaneo2010_JoE} bounds. This requires there be no (known) instruments for treatment status in $X$, as implicitly assumed in those works, else the bound may change \cite{Hahn2004_REStat}.

Turning to oracle efficiency, an alternative to our robust approach is to prove that the true support can be found with probability approaching one (the oracle property), then conduct inference conditioning on this event. This approach cannot be made uniformly valid, but may be of interest in the exactly sparse models of Example \ref{eg-parametric} (there is no ``true'' support in approximately sparse models), because discovering the true support is equivalent to finding the variables in the causal mechanism \cite{White-Lu2011_REStat}, if one exists. This may be interesting in its own right, or for future applications by way of hypothesis generation. The post oracle selection estimator is made efficient by using only the variables important for $\mut(x_i) = \E[Y \vert \D = \t, x_i]$. This amounts to entirely removing the instrumental variables indexed by $S_*^\D \setminus S_*^Y$, whose inclusion would, in general, reduce efficiency, though not increase bias. Further, $S_*^Y \setminus S_*^\D$ are excluded from propensity score estimation.

Perfect selection requires two strong conditions: (i) an orthogonality condition on the Gram matrices that restricts the correlation between the variables in and out of the true support \cite{Bach2008_JMLR}, and (ii) a \emph{beta-min} condition bounding the nonzero coefficients away from zero. Intuitively, highly correlated variables cannot be distinguished, nor can coefficients sufficiently close to zero be found with certainty. Both bounds may depend on $n$, and in particular the lower bound on the coefficients may vanish at an appropriate rate. Under such conditions, it is straightforward to show that $S_*^Y$ and $S_*^\D$ can be found with probability approaching one.

\section{Group Lasso Selection and Estimation}
	\label{sec-grplasso}

We now give details for group lasso model selection and estimation, and make the refitting precise. Section \ref{sec-lambda} discusses penalty choices and implementation. Restricted and sparse eigenvalues, key quantities in our bounds, are discussed in Section \ref{sec-eigenvalues}. Our main nonasymptotic results are stated in Section \ref{sec-grplasso results}. These results are of interest more generally in the literature on high-dimensional sparse models Finally, Section \ref{sec-rates} gives asymptotic rates and verifies the conditions of Section \ref{sec-ate}.

We first select covariates by applying the group lasso penalty to the multinomial logistic loss (for the propensity scores) and to least squares loss (to estimate the outcome regression). The loss functions are defined as
\[\M(\gany) = \sumT \En \left[  -  \dt \log\left( \ptany \right) \right]		\qquad \text{ and } \qquad 		\SSE(\bany) = \sumTbar \Ent[ (y_i - {x_i^*}' \banyt)^2].\] Then, the group lasso estimates for the propensity score coefficients, denoted $\gtilde$, and the regression coefficients, $\btilde$, respectively solve
\begin{equation}
	\label{grplasso}
	\gtilde  = \argmin_{\gany \in \mathbb{R}^{p\T}} \left\{ \M(\gany)  +  \lambda_\D \vertiii{\gany}  \right\}		\quad \text{ and } \quad 		\btilde  = \argmin_{\bany \in \mathbb{R}^{p\Tbar}} \left\{ \SSE(\bany)  + \lambda_Y \vertiii{\bany}  \right\},
\end{equation}
where $\lambda_\D$ and $\lambda_Y$ are the penalty parameters discussed below and $\vertiii{\gany}$ is the mixed $\ell_2$/$\ell_1$ norm.

To ameliorate the downward bias induced by the penalty and to allow for researcher-added variables, we refit unpenalized models.\footnote{The bias is away from the pseudo-true coefficients of the sparse parametric representation, $\gtrue$ and $\btrue$. There is no relation to specification biases $B_\t^\D$ and $B_\t^Y$.} Let $\tilde{S}^\D = \{j : \| \gtildej \|_2 > 0 \}$ and $\tilde{S}^Y = \{j : \| \btildej \|_2 > 0 \}$ be the selected covariates and $\hat{S}^\D$ and $\hat{S}^Y$ those used in refitting.\footnote{When $\supp(\gtruet)$ and $\supp(\btruet)$ do not vary much over $\t$, the group lasso is known to have better properties than the ordinary lasso in terms of selection and convergence. \citeasnoun{Obozinski-Wainwright-Jordan2011_AoS} give a sharp bound on the overlap necessary to yield improvements, while \citeasnoun{Huang-Zhang2010_AoS}, \citeasnoun{Kolar-Lafferty-Wasserman2011_JMLR}, and \citeasnoun{Lounici-etal2011_AoS} also demonstrate advantages of the group lasso approach. These works show, among other things, that the group lasso advantage increases with large $\T$, and with the group structure, may perform better with smaller samples. We defer to the works cited for a formal discussion.} We require $\hat{S} \supset \tilde{S}$ and $|\hat{S}| \leq s$ for $\D$ and $Y$ (we will prove that $| \tilde{S}| \leq s$ in both cases). The refitting estimators solve
\begin{equation}
	\label{post grplasso}
	\ghat  = \argmin_{\gany, \ \supp(\ganyt) = \hat{S}^\D} \left\{ \M(\gany)  \right\}
		\qquad \text{ and } \qquad 		
	\bhat  = \argmin_{\bany, \ \supp(\banyt) = \hat{S}^Y} \left\{ \SSE(\bany)  \right\}.
\end{equation}

\begin{remark}[Weighted Penalties]
	\label{remark-penalty}
The group lasso penalty can be weighted in two ways. First, one may weight the $\ell_2$ portion, as in $\lambda_\D \sumj \| \bm{X}_j \ganyj \|_2$, where $\bm{X}_j$ is the design matrix for covariate $j$, across all the treatments. Other weight matrices are possible, but with this choice, the estimate is invariant to within group (treatment) reparameterizations, and is thus scale invariant for each covariate. We therefore assume $\En[({x_{i,j}^*})^2] = 1$ without loss of generality.

Second, the $\ell_1$ norm can be weighted to give a penalty of the form $\lambda_\D \sumj w_j \| \ganyj \|_2$. Two common choices for $w_j$ are the number of variables in group $j$ or an adaptive penalty from a pilot estimate. Our groups are equally sized, and although adaptive procedures may improve oracle properties \cite{Zou2006_JASA,Wei-Huang2010_Bern}, our goal is not perfect selection. 
\end{remark}

\subsection{Choice of Penalty}
	\label{sec-lambda}

We must specify choices of $\lambda_\D$ and $\lambda_Y$ for programs \eqref{grplasso}. From a theoretical point of view, these must be chosen so that the penalty dominates the noise, which is captured by the magnitude of the score in the dual of the $\vertiii{\cdot}$ norm, with high probability. To acheive this, we set
\begin{equation}
	\label{lambda}
	\lambda_\D =  \frac{ 2 \mathcal{X} \sqrt{\T}  }{\sqrt{ n } } \left( 1 + \frac{\log(p \vee n)^{3/2 + \delta_\D}}{\sqrt{\T}} \right)^{1/2}     	 \text{ and }    		  \lambda_Y = \frac{ 4 \mathcal{X} \mathcal{U} \sqrt{\Tbar} }{\sqrt{\underline{n} } } \left( 1 + \frac{\log(p \vee \underline{n})^{3/2 + \delta_Y}}{\sqrt{\Tbar}} \right)^{1/2},
\end{equation}
for some $\delta_\D > 0$ and $\delta_Y > 0$. With these choices, $\lambda_\D > 2 \maxj \| \En [(p_\t(x_i) - \dt)  x_{i,j}^* ] \|_2$ and $\lambda_Y >  4 \maxj \| \Ent [u_i x_{i,j}^* ] \|_2$ with probability $1 - \mathcal{P}$ for a small (and shrinking) $\mathcal{P}$. In generic terms, $\lambda$ is of the form $\Lambda (1 + r_n)$, where $\Lambda$ is an upper bound on the true score and $r_n$ is a rate that depends on $n$ and $p$.\footnote{The slight differences in the two are as follows. The full sample has information on the logistic coefficients, so $n$ appears instead of $\underline{n}$. No error bound appears in $\lambda_\D$ because the errors are bounded by one. The multiple $4$ for $\lambda_Y$, instead of 2, can be traced to the quadratic loss. These forms are determined at heart by the maximal inequality of \citeasnoun{Lounici-etal2011_AoS}.} The specific rate chosen serves to balances the rate of convergence against the concentration effect: a smaller $r_n$ would increase the rate of convergence, but at the expensive of lowering the concentration probability $1 - \mathcal{P}$. In the Appendix we show that (for appropriate $\delta$ and $n$ or $\underline{n}$) the concentration probability is given by
\begin{equation}
	\label{probability}
	\mathcal{P} = \frac{ 4 \sqrt{ \log(2p) ( 1 + 64 \log(12 p)^2) } } { \log(p \vee n)^{3/2 + \delta}  },
\end{equation}

There are two practical methods to make these choices for feasible for implementation.  When $\hat{p}_\t(x)$ and $\muthat(x)$ are used to estimate average treatment effects, the decreased sensitivity of the final estimate to the first stage, thanks to the doubly-robust estimator, in turn results in less sensitivity to the choice of penalty (through the sparsity).\footnote{To our knowledge, no formal results exist on ``optimal'' penalty parameter choices for inference in high-dimensional problems nor are any procedures free of user-specified choices.} 
The first option is an iterative procedure to estimate the unknown $\mathcal{X}$ and $\mathcal{U}$ in $\lambda_Y$ and $\lambda_\D$, as employed by \citeasnoun{BCCH2012_Ecma} (validity of this procedure may be established along the same lines as in that study). We use $\maxi \maxj |x_{i,j}^*|$ for $\mathcal{X}$ and estimate $\mathcal{U}$ by iteration: given an initial estimate $\muthat^{(0)}(x)$, set $\hat{\mathcal{U}}^{(k)} = \En[(y_i - \muthat^{(k-1)}(x_i))^4]^{1/4}$, where $\muthat^{(k)}(x_i)$, $k > 0$, is based on \Eqref{post grplasso}. In implementation we found 10 iterations more than sufficient, and based the initial estimate on ridge regression (with penalty chosen by cross validation). A second option is to select $\lambda_Y$ and $\lambda_\D$ directly by cross-validation. This has the appealing feature that the precise forms of \Eqref{lambda} need not be characterized and estimated. If interest lies in the underlying functions $p_\t(x)$ and $\mut(x)$, cross validation is appropriate as it minimizes a relevant loss function. Formal results establishing the validity of cross-validation are not available, but it performs well in practice.

\subsection{Restricted Eigenvalues}
	\label{sec-eigenvalues}

The local behavior of optimizations \eqref{grplasso} and \eqref{post grplasso} is captured by their respective Hessians, which involve the second moment matrix of the covariates. The eigenvalues of such matrices will be explicit in our bounds. We are interested in finite sample bounds, and so we will only discuss the empirical Gram matrices (see Remark \ref{remark-gram}). Define
\begin{equation}
	\label{gram}
	Q = \En[{x_i^*}{x_i^*}'] 	\qquad \text{ and } \qquad 	Q_\t = \Ent[{x_i^*}{x_i^*}'].
\end{equation}
In high-dimensional data, both are singular, and so we use restricted eigenvalues and sparse eigenvalues \cite{Bickel-Ritov-Tsybakov2009_AoS}.

For the multinomial logistic regression, the minimal restricted eigenvalue is defined by
\begin{equation}
	\label{RE full} 
	\kappa_\D^2 \leq \min_\delta \left\{ \frac{\sumT \delta_{\t}' Q \delta_\t }{\| \delta_{\newdot, S_\D^*} \|_2^2 } : \delta \in \mathbb{R}^{p \T} \setminus \{0\}, \vertiii{\delta_{\newdot, \{S_*^\D\}^c} } \leq 4 \vertiii{\delta_{\newdot, S_*^\D } }  		 \right\}.
\end{equation}
For least squares estimation we instead use
\begin{equation}
	\label{RE} 
	\kappa_Y^2 \leq \min_\delta \left\{ \frac{\sumTbar \delta_{\t}' Q_\t \delta_\t }{\| \delta_{\newdot, S_Y^*} \|_2^2 } : \delta \in \mathbb{R}^{p \Tbar} \setminus \{0\}, \vertiii{\delta_{\newdot, \{S_*^Y\}^c} } \leq 3 \vertiii{\delta_{\newdot, S_*^Y } }  	 \right\}.
\end{equation}
Note that $Q$ appears for $\kappa_\D$, whereas the $Q_\t$ are used in $\kappa_Y$. The restricted set, or cone constraint, requires the magnitude of $\dany$ off the true support be small relative to the true support, measured in the group lasso norm.\footnote{The multiplier of 4 in the constraint for $\kappa_\D$ is traceable to the nonlinear model.} We will show that $(\gtilde - \gtrue)$ and $(\btilde - \btrue)$ obey the respective constraints.

In contrast, the refitting errors $(\ghat - \gtrue)$ and $(\bhat - \btrue)$ from \eqref{post grplasso} may not obey the cone constraint, but are sparse by construction. This motivates the use of sparse eigenvalues. For a set $S \subset \Np$ and a $p \times p$ matrix $\tilde{Q}$, define
\begin{equation}
	\label{SE}
	\underline{\phi}\{\tilde{Q}, S\}^2 = \min_{\delta \in \mathbb{R}^p,\,  \supp(\delta) = S} \frac{\delta' \tilde{Q} \delta}{\| \delta \|_2^2} 
	\qquad \text{ and }  \qquad 
	\overline{\phi}\{\tilde{Q}, S\}^2 = \max_{\delta \in \mathbb{R}^p,\,  \supp(\delta) = S} \frac{\delta' \tilde{Q} \delta}{\| \delta \|_2^2}.
\end{equation}
Finally, it will be useful to define a bound on $\overline{\phi}\{\tilde{Q}, S\}$ over all subsets of a certain size. To this end, for any integer $m$, define $\dbarphi(\tilde{Q},m) = \max_{S \subset \N_p,\, |S| \leq m} \overline{\phi}\{\tilde{Q}, S\}$. 

We take these quantities to be primitive, and defer to the literature. For example, \citeasnoun{vandeGeer-Buhlmann2009_EJS}, \citeasnoun{Huang-Zhang2010_AoS}, \citeasnoun{Raskutti-Wainwright-Yu2010_JMLR}, \citeasnoun{Rudelson-Zhou2013_IEEE}, and \citeasnoun{Belloni-Chernozhukov-Hansen2014_REStud}. In particular, \citeasnoun{Huang-Zhang2010_AoS} show that the group lasso may need fewer observations to satisfy conditions on $\underline{\phi}\{\tilde{Q}, S\}$.

\begin{remark}
	\label{remark-gram}
Often, invertibility of $Q$ and $Q_\t$ relies on their convergence to nonsingular population counterparts.\footnote{This is standard in fixed-dimension models, and has been used for diverging-dimensions parametric models \cite{He-Shao2000_JMA} and nonparametrics \cite{Newey1997_JoE,Huang2003_AoS,Cattaneo-Farrell2013_JoE,Belloni-etal2015_JoE,Chen-Christensen2015_JoE}. The eigenvalue assumptions employed in those works are conceptually the same as the the restricted eigenvalues used here, only restricted to the $p<n$ case.} Some of the papers cited use this approach and our results can be restated in this way by conditioning on the event that $Q$ and $Q_\t$ are close to their counterparts in the appropriate sense, and adjusting the probability with which the conclusions hold. We instead take bounds to be infinite if the minimum eigenvalues are zero. 
\end{remark}

\subsection{Finite Sample Theoretical Results}
	\label{sec-grplasso results}

We now have the necessary notation and assumptions to state our theoretical results on group lasso estimation, beginning with multinomial logistic regression, followed by a terse treatment of linear models. Corollary \ref{thm-grplasso overview} is a special case of the results in this section, see Section \ref{sec-rates}.

Our first result is a nonasymptotic bound on the group lasso estimates from \eqref{grplasso}.
\begin{theorem}[Group Lasso Estimation of Multinomial Logistic Models]
	\label{thm-mlogit}
	Suppose Assumptions \ref{overlap}, \ref{iid}, \ref{bounded}, \ref{fourth moments}, and \ref{sparsity} hold. Define $ A_p = p_{\min}/(0 \vee (p_{\min} - \bias^d))$ and
	\[R_\M =  \left(A_p \big/ p_{\min}\right)^{\Tbar}     \T A_K \left(    6  \lambda_\D \sqrt{|S_*|} \kappa_\D^{-1}    +   8  \bias^d \sqrt{\T}   \right),\]
for $A_K > 2 \kappa_\D^2 \left\{   \kappa_\D^2 - (2/3)  \mathcal{X} \sqrt{\T} \left( 30 \lambda_\D |S_*| + 100 \sqrt{|S_*|} \kappa_\D \bias^d \sqrt{\T} + 80 \kappa_\D^2 (\bias^d)^2 \T \lambda_\D^{-1} \right) \right\}^{-1}$. Then with probability $1 - \mathcal{P}$, we have
\begin{enumerate}

	\item $\displaystyle \max_{\t \in \NT} \En[(\pttilde - p_\t(x_i))^2] ^{1/2} \leq R_\M + \bias^d$,

	\item $\displaystyle \max_{\t \in \NT} \left\| \gtildet - \gtruet \right\|_1  \leq R_\M \sqrt{|\tilde{S}^\D \cup S_\D^*| \big/ \underline{\phi}\{Q, \tilde{S}^\D \cup S_\D^*\}}$,

	\item and $\displaystyle | \tilde{S}^\D | \leq 8 s L_n \left( \min \{ \dbarphi(Q, m)  : m \in \N_Q^\D \} \right)$, 
\end{enumerate}
where $\N_Q^\D = \left\{ m \in \{1, 2, \ldots n\} : m > 8 s L_n  \dbarphi(Q, m) \right\}$ and $L_n = \T \left( (R_\M + \bias^d) \big/ (\lambda_\D \sqrt{s}) \right)^2 $. 
\end{theorem}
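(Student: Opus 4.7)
The plan is to follow a standard group-lasso template, adapted to multinomial logistic regression via the self-concordance techniques of \citeasnoun{Bach2010_EJS}. The argument has three stages: (i) establish a high-probability event on which the penalty dominates the empirical score; (ii) use self-concordance together with the basic inequality to bound the prediction error; and (iii) convert this prediction bound into the $\ell_1$ estimation error and the sparsity bound via restricted and sparse eigenvalues.

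\textbf{Step 1 (score event).} I would first work on the event
\[\mathcal{E}_\D := \Bigl\{ \lambda_\D \;\geq\; 2 \maxj \bigl\| (\En[(p_\t(x_i) - \dt)\, x_{i,j}^*])_{\t \in \NT} \bigr\|_2 \Bigr\}.\]
Because the coordinates $(p_\t(x_i) - \dt)$ are bounded, a group-wise maximal inequality in the spirit of \citeasnoun{Lounici-etal2011_AoS} applied to $p$ groups of size $\T$, with $\lambda_\D$ chosen as in \eqref{lambda}, yields $\P(\mathcal{E}_\D) \geq 1 - \mathcal{P}$ with $\mathcal{P}$ as in \eqref{probability}. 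On $\mathcal{E}_\D$, convexity of $\M$ combined with the group-lasso KKT conditions produces the cone inequality $\vertiii{(\gtilde - \gtrue)_{\newdot, (S_*^\D)^c}} \leq 4 \vertiii{(\gtilde - \gtrue)_{\newdot, S_*^\D}}$, the factor $4$ (rather than $3$) being traceable to the nonvanishing specification bias $\bias^d$. Hence $\kappa_\D$ from \eqref{RE full} is applicable to $\gtilde - \gtrue$.

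\textbf{Step 2 (prediction error via self-concordance).} Still on $\mathcal{E}_\D$, begin from the basic inequality $\M(\gtilde) - \M(\gtrue) \leq \lambda_\D (\vertiii{\gtrue} - \vertiii{\gtilde})$, then split $\vertiii{\cdot}$ along $S_*^\D$ and $(S_*^\D)^c$ using Step 1. The multinomial logistic loss is self-concordant (its third derivative is dominated by the Hessian along the relevant direction), so following \citeasnoun{Bach2010_EJS}, Proposition 1, the Bregman divergence $\M(\gtilde) - \M(\gtrue) - \langle \nabla \M(\gtrue), \gtilde - \gtrue \rangle$ is bounded below by a quadratic form in $\gtilde - \gtrue$ with Hessian $\nabla^2 \M(\gtrue)$, multiplied by a deflator of order $(A_p / p_{\min})^{-\Tbar}$ that measures how far the fitted probabilities may drift inside the self-concordance neighborhood around $\gtrue$. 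Invoking $\kappa_\D$ to lower-bound this quadratic form by a multiple of $\sumT \En[({x_i^*}'(\gtildet - \gtruet))^2]/\|(\gtilde - \gtrue)_{\newdot, S_*^\D}\|_2^2$, and then relating ${x_i^*}'(\gtildet - \gtruet)$ to $\pttilde - p_\t(x_i)$ through the Lipschitz inverse of the softmax link (again contributing $A_p/p_{\min}$ factors), produces a quadratic inequality in
\[R := \Bigl( \sumT \En[(\pttilde - p_\t(x_i))^2] \Bigr)^{1/2}.\]
The condition $A_K > 2\kappa_\D^2 \{\,\kappa_\D^2 - (2/3)\mathcal{X}\sqrt{\T}(\cdots)\,\}^{-1}$ in the theorem is precisely the requirement that this quadratic inequality admit a finite positive root, i.e.\ that the contemplated value of $R$ remain inside the self-concordance neighborhood where the lower bound is valid. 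Solving and adding the approximation bias $\bias^d$ via the triangle inequality yields the stated bound on $\max_\t \En[(\pttilde - p_\t(x_i))^2]^{1/2}$.

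\textbf{Step 3 ($\ell_1$ bound and sparsity).} For the $\ell_1$ bound, the cone property implies that $\gtilde - \gtrue$ has its mass effectively concentrated on $\tilde{S}^\D \cup S_*^\D$, so
\[\|\gtildet - \gtruet\|_1 \;\leq\; \sqrt{|\tilde{S}^\D \cup S_*^\D|}\, \|(\gtildet - \gtruet)_{\tilde{S}^\D \cup S_*^\D}\|_2,\]
and the inner $\ell_2$ norm converts to the $Q$-seminorm of $\gtilde - \gtrue$ at cost $\underline{\phi}\{Q, \tilde{S}^\D \cup S_*^\D\}^{-1/2}$, which in turn is controlled by $R_\M$ from Step 2. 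For the sparsity bound, the first-order condition of \eqref{grplasso} forces $\|(\En[(\pttilde - \dt)\, x_{i,j}^*])_{\t \in \NT}\|_2 = \lambda_\D$ for every $j \in \tilde{S}^\D$; decomposing $\dt - \pttilde$ into the score piece $(\dt - p_\t(x_i))$ (bounded via Step 1 by $\lambda_\D/2$) plus the prediction error $(p_\t(x_i) - \pttilde)$ (whose contribution, after summing across $j \in \tilde{S}^\D$, is bounded by $\dbarphi(Q, |\tilde{S}^\D|)^2 \cdot (R_\M + \bias^d)^2$), yields $|\tilde{S}^\D|\lambda_\D^2 \lesssim \T\, \dbarphi(Q,|\tilde{S}^\D|)^2 (R_\M + \bias^d)^2$. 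The standard trick of minimizing the resulting implicit inequality over admissible sparsity levels $m \in \N_Q^\D$ gives the stated closed-form bound.

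The main obstacle is Step 2. Extending the binary self-concordance analysis of Bach to the multinomial, group-structured case requires carefully tracking how the softmax link couples the $\T$ treatment levels, which is the source of the factor $(A_p/p_{\min})^{\Tbar}$, and the argument is inherently implicit: the self-concordance lower bound only holds within a neighborhood of $\gtrue$, so one must verify that the upper bound on $R$ itself stays inside that neighborhood, which is exactly the content of the $A_K$ side condition. Steps 1 and 3 are largely mechanical once the correct maximal inequality and eigenvalue machinery from \citeasnoun{Lounici-etal2011_AoS} and \citeasnoun{Bickel-Ritov-Tsybakov2009_AoS} are in place.
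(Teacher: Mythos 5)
Your outline tracks the paper's proof closely: the same score-domination event (Lemma \ref{appx-mlogit score bound}), the same self-concordance lower bound on the Bregman divergence of $\M$ following Bach, the same role for $A_K$ (guaranteeing the two-term expansion is valid, i.e.\ ruling out the regime where the cubic term dominates), and the same KKT-plus-sublinearity device for the sparsity bound. One step, however, is asserted rather than proved, and as stated it fails in the approximately sparse setting the theorem actually covers.

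In Step 1 you claim that convexity plus the KKT conditions ``produce the cone inequality'' $\vertiii{\tilde{\delta}_{\newdot,(S_*^\D)^c}} \leq 4 \vertiii{\tilde{\delta}_{\newdot,S_*^\D}}$, with the factor $4$ absorbing the specification bias, so that $\kappa_\D$ applies to $\dtilde = \gtilde - \gtrue$. What the basic inequality actually yields (the paper's \Eqref{appx-mlogit cases}) is
\[
\vertiii{ \tilde{\delta}_{\newdot, S_*^c}} \;\leq\; 3 \vertiii{ \tilde{\delta}_{\newdot, S_*}} \;+\; \frac{2 \bias^d \sqrt{\T}}{\lambda_\D}\, \En [ \| \{ {x_i^*}' \dtildet\}_{\NT}  \|_2^2]^{1/2},
\]
and the additive term is the prediction seminorm of $\dtilde$ itself, not a multiple of $\vertiii{\tilde{\delta}_{\newdot,S_*}}$; it cannot be folded into the cone constant. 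When the bias term dominates $\vertiii{\tilde{\delta}_{\newdot,S_*}}$, the error vector lies outside the cone and the restricted eigenvalue is simply not applicable. The paper's Lemma \ref{appx-mlogit cone} therefore runs a case analysis: if the cone constraint holds, $\kappa_\D$ gives $\vertiii{\dtilde}\leq 5\kappa_\D^{-1}\sqrt{|S_*|}\,\En[\cdot]^{1/2}$; if it fails, the display above bootstraps to $\vertiii{\dtilde}\leq 10\lambda_\D^{-1}\bias^d\sqrt{\T}\,\En[\cdot]^{1/2}$ with no eigenvalue condition at all. This is not cosmetic: the second branch is exactly where the $8\bias^d\sqrt{\T}$ component of $R_\M$ (and the $\bias^d$ terms in the $A_K$ restriction) comes from, and your route would not produce it. With that repair, the rest of your plan --- Bach's lemma combined with the Tanabe--Sagae determinant bound to give the $(p_{\min}/A_p)^{\Tbar}$ eigenvalue floor for the multinomial Hessian, the mean-value step converting index error to probability error (which needs no inverse-link Lipschitz factor, since the softmax itself is $1$-Lipschitz in the relevant coordinate), and the sparsity argument --- proceeds as in the paper.
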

This theorem is new to the literature, to the best of our knowledge. Much of the detail involves capturing the finite sample behavior of the Hessian and Gram matrices. We discuss the features of this result in the following remarks.
\begin{itemize}
		
	\item The Hessian of $\M(\gany)$ is $\En[\H_i \otimes {x_i^*}{x_i^*}']$ for a $\T$-square matrix $\H_i$ that depends on the coefficients and ${x_i^*}$ through the estimated probabilities $\ptany$. The error $R_\M$ depends on how well-controlled is this matrix. The factors $p_{\min}$, $A_p$, and $A_K$ capture the behavior of $\H_i$ and $\kappa_\D^{-1}$ accounts for the rest. Under overlap, the true probabilities are bounded below by $p_{\min}$, and hence $p_{\min}^{-\Tbar}$ captures the nonsingularity of the population version of $\H_i$. To get to this point requires two steps. First, the sparse parametric representations $\pttrue$ must also be bounded away from zero, leading to the factor of $A_p$. This is essentially a bias condition, which in the asymptotic case holds trivially: $A_p$ may be chosen arbitrarily close to one as $\bias^d \to 0$. Second, $A_K$ controls the neighborhood in which $\pttilde$ is also bounded away from zero. Intuitively (and asymptotically), the estimate will be in a small (shrinking) neighborhood of the $\pttrue$. In asymptotics $A_K$ may be chosen arbitrarily close to 2, which stems from the factor of 1/2 in a quadratic expansion of $\M(\cdot)$. A lower bound on $A_K$ is required in finite samples to ensure that $\pttilde$ is positive, and hence the two-term expansion is valid. This is analogous to \possessivecite{Belloni-Chernozhukov2011_AoS} ``restricted nonlinear impact coefficient'' approach, also used by \citeasnoun{Belloni-etal2014_WP} with a central difference that $A_K$ is captured in our bound directly.

	\item The maximal sparse eigenvalues are crucial to the bound on $| \tilde{S}^\D |$. In many prior results, the latter is bounded using the largest eigenvalue of $Q$ itself, i.e.\! $\dbarphi(Q,n)$. Adapting the technique of \citeasnoun{Belloni-Chernozhukov2013_Bern} to the present case, we are able to find a tighter bound, which yields sparsity proportional to $s$ under weaker conditions. This is crucial for refitting.

	\item For the linear model the constants in the group lasso bounds can offset the (logarithmic) suboptimality in rate \cite{Huang-Zhang2010_AoS,Lounici-etal2011_AoS}, and this may be true here as well. This is application dependent however.

\end{itemize}

The error bounds for post-selection estimation are more complex and depend in part on the good properties of the initial group lasso fit. The following theorem gives our results.
\begin{theorem}[Post-Selection Multinomial Logistic Regression]
	\label{thm-post mlogit}
	Suppose the conditions of Theorem \ref{thm-mlogit} hold. To save notation, let $S_\D = \hat{S}_\D \cup S_\D^*$ and $\underline{\phi} = \underline{\phi}\{Q, \hat{S}_\D \cup S_\D^*  \}$. Then for
	\[A_K > 2 \left\{ \frac{\underline{\phi}^2}{ \underline{\phi}^2  -  \mathcal{X} \sqrt{\T} ( \lambda_\D  | S_\D | + \bias^d \underline{\phi} \sqrt{\T} \sqrt{| S_\D |} )  } \right\} 				\vee \left\{  \frac{\underline{\phi}}{ \underline{\phi}  -   2 R_\M  \mathcal{X}   \sqrt{\T} \sqrt{| S_\D |} } \right\}\]
define $R_\M' = \left(A_p / p_{\min}\right)^{\Tbar} \T A_K \left( \lambda_\D \sqrt{ | S_\D |} \underline{\phi}^{-1}/2     + \bias^d \sqrt{\T} \right)$ and 
\[R_\M'' = \left\{  R_\M  \right\}  \vee  \left\{  R_\M' + \left[  R_\M' R_\M + \left(A_p \big/ p_{\min}\right)^{\Tbar} \T A_K R_\M^2 \right]^{1/2} \right\}.\]
Then with probability $1 - \mathcal{P}$, $\displaystyle \max_{\t \in \NT} \En[(\pthat - p_\t(x_i))^2] ^{1/2} \leq   R_\M''   + \bias^d$, and \linebreak $\displaystyle \max_{\t \in \NT} \left\| \ghatt - \gtruet \right\|_1  \leq \left(|S^\D| / \underline{\phi} \right)^{1/2}  R_\M''$.
\end{theorem}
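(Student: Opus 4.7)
The plan is to leverage two structural facts. First, since $\tilde{S}^\D \subset \hat{S}^\D$, the penalized estimator $\gtilde$ is feasible for the unpenalized, support-constrained problem defining $\ghat$, so by optimality $\M(\ghat) \leq \M(\gtilde)$. Second, the refit error $\delta := \ghat - \gtrue$ is $\t$-wise supported on the sparse set $S_\D = \hat{S}^\D \cup S_*^\D$, so the restricted eigenvalue $\kappa_\D$ of Theorem \ref{thm-mlogit} is replaced throughout by the sparse eigenvalue $\underline{\phi} = \underline{\phi}\{Q, S_\D\}$.

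The core step is a self-concordance sandwich of $\M$ around $\gtrue$. Following the same machinery used in the proof of Theorem \ref{thm-mlogit}, on the neighborhood where $\pthat$ and $\pttrue$ stay bounded away from zero we obtain
\[
\frac{1}{A_K}\, \delta' \nabla^2 \M(\gtrue) \delta \; \leq \; \M(\ghat) - \M(\gtrue) - \nabla \M(\gtrue)' \delta \; \leq \; \M(\gtilde) - \M(\gtrue) - \nabla \M(\gtrue)' \delta ,
\]
where the outer inequality uses $\M(\ghat) \leq \M(\gtilde)$. Overlap plus the bias control $\bias^d$ bound the logit Hessian block below, contributing the factor $(A_p/p_{\min})^{-\Tbar}$, so the leftmost term dominates $\underline{\phi}^2 \sumT \|\delta_\t\|_2^2 / [A_K (A_p/p_{\min})^{\Tbar}]$. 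On the event $\{\lambda_\D \geq 2 \maxj \|\En[(\dt - p_\t(x_i))\,x_{i,j}^*]\|_2\}$, which holds with probability $1 - \mathcal{P}$, Cauchy--Schwarz on the sparse support yields $\nabla\M(\gtrue)' \delta \geq -(\lambda_\D/2)\sqrt{|S_\D|}\, (\sumT \|\delta_\t\|_2^2)^{1/2}$, plus an $\bias^d\sqrt{\T}$ bias contribution. The upper side is handled by converting the group-lasso prediction-error bound $R_\M$ from Theorem \ref{thm-mlogit} into a bound of order $R_\M^2$ on $\M(\gtilde) - \M(\gtrue) - \nabla\M(\gtrue)'(\gtilde - \gtrue)$ via the matching upper self-concordance quadratic bound.

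Assembling these pieces and letting $r := \max_\t \En[(\pthat - p_\t(x_i))^2]^{1/2}$ denote the prediction-error analogue of $\|\delta\|_2$ (linked through the logit derivative, which under overlap is bounded above and below), a quadratic inequality of the form
\[
\frac{r^2}{(A_p/p_{\min})^{\Tbar}\, A_K \T} \; \leq \; \tfrac{1}{2}\lambda_\D\sqrt{|S_\D|}\, r / \underline{\phi} \;+\; \bias^d\sqrt{\T}\, r \;+\; C\, R_\M^2
\]
emerges, whose positive root is exactly $R_\M' + \bigl[R_\M' R_\M + (A_p/p_{\min})^{\Tbar}\T A_K R_\M^2\bigr]^{1/2}$, matching the definition of $R_\M''$. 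The outer maximum with $R_\M$ in $R_\M''$ handles the degenerate case $\hat{S}^\D = \tilde{S}^\D$, where refitting cannot improve on the penalized fit. The $\ell_1$ bound follows by translating $r$ back to coefficients through $\underline{\phi}$ and Cauchy--Schwarz over the sparse support $S_\D$, exactly as in Theorem \ref{thm-mlogit}.

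The principal technical obstacle is the nonlinear bootstrap: the self-concordance quadratic lower bound with constant $A_K$ requires $\ghat$ and $\pthat$ to lie in a neighborhood of $\gtrue$ on which the estimated probabilities remain bounded away from zero, yet this is essentially what the conclusion is asserting. The two conditions on $A_K$ in the hypothesis anticipate this: the first sets the base neighborhood radius needed for the direct comparison of $\ghat$ with $\gtrue$ (scaling with $\lambda_\D|S_\D|$ and $\bias^d \underline{\phi}\sqrt{\T}\sqrt{|S_\D|}$), and the second the enlarged radius needed to absorb the group-lasso detour through $\gtilde$ (scaling with $R_\M \sqrt{|S_\D|}$). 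A continuity/contradiction argument, mirroring the one used for Theorem \ref{thm-mlogit}, closes the loop and delivers the bound on the event of probability $1 - \mathcal{P}$.
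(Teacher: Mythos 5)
Your proposal tracks the paper's own proof essentially step for step: the optimality comparison $\M(\ghat) \leq \M(\gtilde)$ combined with the sparse-eigenvalue bound $\vertiii{\dhat} \leq \sqrt{|S_\D|}\,\underline{\phi}^{-1} \En[\|\{{x_i^*}'\dhatt\}_{\NT}\|_2^2]^{1/2}$, the score bound on the event of probability $1-\mathcal{P}$, the Bach self-concordance lower bound with the two-branch case analysis governed by the two conditions on $A_K$, and the quadratic inequality whose positive root is $R_\M' + [R_\M' R_\M + (A_p/p_{\min})^{\Tbar}\T A_K R_\M^2]^{1/2}$. The one inaccuracy is your attribution of the outer maximum with $R_\M$ to the degenerate case $\hat{S}^\D = \tilde{S}^\D$: in fact that term arises from the non-quadratic (linear) branch of the self-concordance lower bound, where the second restriction on $A_K$ forces $\En[\|\{{x_i^*}'\dhatt\}_{\NT}\|_2^2]^{1/2} \leq R_\M$ directly — but since you correctly invoke the two-case contradiction argument elsewhere, this does not change the substance of the proof.
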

It is not readily discernible if these bounds improve upon the initial fit. This will depend on the DGP, the selection success of the initial fit, and any added variables. In this result, further lower bounds on $A_K$ are required to handle the sparse eigenvalues, compared to the restricted version in Theorem \ref{thm-mlogit}. The role played by $A_K$ is the same in both cases, as with the other factors.

It is worth noting that, despite the complexity of multinomial logistic regression, the conditions for Theorems \ref{thm-mlogit} and \ref{thm-post mlogit} are simple and intuitive, and match those used for linear models.

We now give our results for group lasso estimation of the conditional outcome regressions. In computing $\mut(x_i)$ for $\dt \neq 1$ we are performing out of sample prediction, which slightly complicates the bounds. Our first result is on the initial group lasso fit.

\begin{theorem}[Group Lasso Estimation of Linear Models]
	\label{thm-ols}
	Suppose Assumptions \ref{overlap}, \ref{iid}, \ref{bounded}, \ref{fourth moments}, and \ref{sparsity} hold.  To save notation, let $S_Y = \tilde{S}^Y \cup S_Y^*$. Define
	\[R_\SSE =  \left( \frac{3 \lambda_Y \sqrt{s}}{\kappa_Y} +  2 \bias^y \right).\]
	Then with probability $1 - \mathcal{P}$, we have
	\begin{enumerate}

		\item $\displaystyle \maxTbar \En[({x_i^*}'\btildet - \mut(x_i))^2] ^{1/2} \leq \left( \overline{\phi}\{Q, S_Y \} \big/ \underline{\phi}\{Q_\t, S_Y \} \right)^{1/2} R_\SSE + \bias^y$,

		\item $\displaystyle \maxTbar  \left\| \btildet - \btruet \right\|_1 \leq \left( |S_Y| \big/ \underline{\phi}\{Q, S_Y \} \right)^{1/2} \left( \overline{\phi}\{Q, S_Y \} \big/ \underline{\phi}\{Q_\t, S_Y \} \right)^{1/2} R_\SSE$, 

		\item and $ | \tilde{S}^Y | \leq 32 s L_n \left\{ \min_{m \in \N_Q^Y} \sum_{\t \in \NTbar} \dbarphi(Q_\t, m) \right\}$,

	\end{enumerate}
	where $ \N_Q^Y = \left\{ m \in \{1, 2, \ldots, \overline{n}\} : m > 32 s L_n \sum_{\t \in \NTbar} \dbarphi(Q_\t, m) \right\}$ and $L_n = \left( (R_\SSE + \bias^y) \big/ (\lambda_Y \sqrt{s}) \right)^2$.
\end{theorem}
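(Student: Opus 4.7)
}

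The plan is to follow the standard group lasso template, adapted to the treatment-effects structure in which the loss $\SSE(\bany) = \sumTbar \Ent[(y_i - {x_i^*}'\banyt)^2]$ is block-separable across treatments (each block averaging over the subsample $\dt = 1$) while the penalty $\vertiii{\bany}$ ties all blocks together. First, I set up the deterministic analysis on the ``favorable'' event
\[
\mathcal{E} \;=\; \Bigl\{\; 4 \maxj \bigl\| \bigl\{ \Ent[u_i x_{i,j}^*] \bigr\}_{\t \in \NTbar} \bigr\|_2 \;<\; \lambda_Y \;\Bigr\},
\]
which ensures that $\lambda_Y$ dominates the score of $\SSE$ at $\btrue$ in the norm dual to $\vertiii{\cdot}$. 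The choice of $\lambda_Y$ in \eqref{lambda} is engineered so that $\P(\mathcal{E}) \geq 1 - \mathcal{P}$, by applying the maximal inequality of \citeasnoun{Lounici-etal2011_AoS} for $\ell_2$-norms of Gaussian-type group scores. This is routine given Assumption \ref{fourth moments} and the bounded-covariates Assumption \ref{bounded}, and is essentially parallel to the score bound used for the multinomial logistic case.

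Next, on $\mathcal{E}$, I derive a cone constraint for $\delta := \btilde - \btrue$. The basic inequality $\SSE(\btilde) + \lambda_Y\vertiii{\btilde} \leq \SSE(\btrue) + \lambda_Y\vertiii{\btrue}$ expands as
\[
\sumTbar \Ent[({x_i^*}'\delta_\t)^2] \;\leq\; 2\bigl| \sumTbar \Ent[u_i {x_i^*}' \delta_\t] \bigr| + 2 \bigl| \sumTbar \Ent[B_\t^Y {x_i^*}' \delta_\t] \bigr| + \lambda_Y\bigl(\vertiii{\btrue} - \vertiii{\btilde}\bigr).
\]
The first term is bounded by $(\lambda_Y/2)\vertiii{\delta}$ on $\mathcal{E}$ via the Hölder inequality for the $\ell_2/\ell_1$ norm; the bias term is handled by Cauchy--Schwarz and \eqref{eqn-bias}, absorbed into $\sumTbar \Ent[({x_i^*}'\delta_\t)^2]$ via a quadratic-in-square-root inequality $2ab \leq a^2/2 + 2b^2$; and the penalty gap is split over $S_*^Y$ and its complement, yielding $\vertiii{\delta_{\newdot, \{S_*^Y\}^c}} \leq 3 \vertiii{\delta_{\newdot, S_*^Y}}$ after rearrangement. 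The main obstacle in this step is to keep the bias terms and the $\sqrt{\Tbar}$ factors tracked precisely so the ``$3$''\,--\,cone required by \eqref{RE} emerges with the correct constant in front of $R_\SSE$; this is where the extra additive $2\bias^y$ appears.

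With the cone constraint established, I apply the restricted eigenvalue $\kappa_Y$ to $\delta$ to get $\|\delta_{\newdot, S_*^Y}\|_2 \leq (\sumTbar \Ent[({x_i^*}'\delta_\t)^2])^{1/2}/\kappa_Y$, then feed this back into the displayed basic inequality (now bounding $\vertiii{\delta_{\newdot, S_*^Y}} \leq \sqrt{s}\|\delta_{\newdot, S_*^Y}\|_2$) and solve the resulting quadratic in $(\sumTbar \Ent[\cdot])^{1/2}$ to obtain the treatment-aggregated prediction-error bound by $R_\SSE$ up to the stated constants. Since $\delta_\t$ is supported on $\tilde{S}^Y \cup S_*^Y$, I translate this treatment-aggregated bound into the per-$\t$ full-sample prediction error via the sparse eigenvalues: $\En[({x_i^*}'\delta_\t)^2] \leq \overline{\phi}\{Q,\tilde{S}^Y \cup S_*^Y\}^2 \|\delta_\t\|_2^2$ and $\|\delta_\t\|_2^2 \leq \Ent[({x_i^*}'\delta_\t)^2]/\underline{\phi}\{Q_\t,\tilde{S}^Y \cup S_*^Y\}^2$. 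Adding the approximation bias via triangle inequality and \eqref{eqn-bias} delivers the first stated bound. The $\ell_1$ bound follows by $\|\delta_\t\|_1 \leq \sqrt{|\tilde{S}^Y \cup S_*^Y|}\,\|\delta_\t\|_2$ applied to the sparse $\delta_\t$, combined with the same sparse-eigenvalue conversion.

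Finally, for the sparsity bound on $|\tilde{S}^Y|$, I use the KKT conditions of program \eqref{grplasso}: for every $j \in \tilde{S}^Y$, the subgradient condition forces $\|\{\Ent[x_{i,j}^*(y_i - {x_i^*}'\btildet)]\}_{\NTbar}\|_2 = \lambda_Y$. Summing squares over $j \in \tilde{S}^Y$ and bounding the right side by a sparse-eigenvalue-weighted quadratic form in $(y - X^*\btilde)$ (adapting the argument of \citeasnoun{Belloni-Chernozhukov2013_Bern} so that $\dbarphi(Q_\t, m)$, not $\dbarphi(Q, n)$, appears), I obtain an inequality of the form $|\tilde{S}^Y| \lambda_Y^2 \leq \Tbar \dbarphi(Q_\t, |\tilde{S}^Y|) (R_\SSE + \bias^y)^2$, which rearranges via the $L_n$ definition to the stated fixed-point form. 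The main obstacle in this last step is arguing that $|\tilde{S}^Y|$ can be taken in $\N_Q^Y$ (rather than merely bounded by $n$), which proceeds by the now-standard ``sublinearity'' argument: the map $m \mapsto 32 s L_n \sum_\t \dbarphi(Q_\t, m)$ has slow growth in $m$, so any violation of the bound would force $m$ below the threshold in $\N_Q^Y$, contradicting minimality.
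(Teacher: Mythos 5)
Your overall architecture is the same as the paper's: a score-domination event established via the maximal inequality of Lounici et al., the basic inequality from optimality, a cone constraint feeding the restricted eigenvalue $\kappa_Y$, sparse-eigenvalue ratios $\overline{\phi}\{Q,\cdot\}/\underline{\phi}\{Q_\t,\cdot\}$ to convert the within-treatment fit into a full-sample (out-of-sample) bound, and the KKT-plus-sublinearity argument of Belloni--Chernozhukov for $|\tilde{S}^Y|$. The last three steps are fine as sketched (modulo two small slips: the KKT condition for the squared loss gives $\|\{\Ent[x_{i,j}^*(y_i-{x_i^*}'\btildet)]\}_\t\|_2=\lambda_Y/2$, not $\lambda_Y$, and in the sparsity bound you must first strip the noise $u_i$ from the residual using the score bound so that the quadratic form is in $\mut(x_i)-{x_i^*}'\btildet$, not in $y_i-{x_i^*}'\btildet$; otherwise you cannot land on $(R_\SSE+\bias^y)^2$).

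The genuine problem is in your treatment of the approximation-bias term when deriving the cone constraint. You propose absorbing $2\sumTbar\Ent[B^Y_\t\,{x_i^*}'\delta_\t]\le 2\bias^y\bigl(\sumTbar\Ent[({x_i^*}'\delta_\t)^2]\bigr)^{1/2}$ into the quadratic via $2ab\le a^2/2+2b^2$. That leaves an additive remainder of order $(\bias^y)^2$ on the right-hand side of the basic inequality, so after rearranging you obtain only $\vertiii{\delta_{\newdot,\{S_*^Y\}^c}}\le 3\vertiii{\delta_{\newdot,S_*^Y}}+4(\bias^y)^2/\lambda_Y$. This is not the exact $3$-cone over which $\kappa_Y$ is defined in \eqref{RE}, so the restricted eigenvalue cannot be invoked; moreover, completing the square after halving the quadratic would inflate the leading constant to roughly $6\lambda_Y\sqrt{s}/\kappa_Y$ rather than the stated $3\lambda_Y\sqrt{s}/\kappa_Y$. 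The paper's device is a two-case split: either $\sumTbar\Ent[({x_i^*}'\delta_\t)^2]< 2\bias^y\bigl(\sumTbar\Ent[({x_i^*}'\delta_\t)^2]\bigr)^{1/2}$, in which case the prediction error is already at most $2\bias^y\le R_\SSE$ and you are done; or the reverse inequality holds, in which case the entire left side minus the bias term is nonnegative and can be discarded, yielding the clean $\vertiii{\delta_{\newdot,\{S_*^Y\}^c}}\le 3\vertiii{\delta_{\newdot,S_*^Y}}$ with no slack, after which the quadratic $x^2\le R_\SSE\,x$ gives $x\le R_\SSE$ directly. You should replace the quadratic-absorption step with this case analysis (or, equivalently, redefine the restricted eigenvalue over a slack cone, which the stated theorem does not do).
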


This theorem generalizes \citeasnoun{Lounici-etal2011_AoS} to the nonparametric, approximately sparse case, improves the sparsity bound, and gives out of sample prediction (imputation) results. The analogous generalization for within sample prediction loss (e.g.\ multi-task learning), $\Ent[({x_i^*}'\btildet - \mut(x_i))^2] ^{1/2}$, may be found in the Supplement.

For refitting, we are predicting for the entire sample and so we utilize the general results given by \citeasnoun{BCCH2012_Ecma} for post-selection estimation of least squares. The following result is a direct implication of their Lemma 7 and our Theorem \ref{thm-ols}.

\begin{theorem}[Post-Selection Linear Regression]
	\label{thm-post ols}
	Suppose $\log(p) = o(n^{1/3})$ in addition to the conditions of Theorem \ref{thm-ols}. Then for constants $A_1$, $A_2$, $A_3$, and $A_4$ not depending on $n$ nor the DGP:
	\[\En[ (x_i'\bhatt - \mu_t(x_i))^2]^{1/2}  \leq A_1  \sqrt{ \frac{ s (\T \wedge \log(s \T)) }{ n \underline{\phi}\{Q, S_Y^*\}} }  +  A_2 \sqrt{ \frac{ |\hat{S}_Y \setminus S_Y^*| \log(p\T) } {n \underline{\phi} \{Q, S_Y^{FP}\} } }  +   A_3 \sqrt{ \En[({x_i^*}'\btildet - \mut(x_i))^2] } \]
and $\displaystyle \maxTbar \| \bhatt - \btruet\|_1  \leq    A_4 \left( |\hat{S}_Y \cup S_Y^* | \En[ (x_i'\bhatt - \mu_t(x_i))^2]  \big/ \underline{\phi}\{Q, \hat{S}_Y \cup S_Y^*\}  \right)^{1/2}$. 
	
\end{theorem}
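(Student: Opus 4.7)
The strategy is to apply Lemma 7 of \citeasnoun{BCCH2012_Ecma} (hereafter ``BCCH'') equation-by-equation, with Theorem \ref{thm-ols} supplying the required initial-fit inputs, and then to take a careful maximum over $\t \in \NTbar$ that exploits the fact that $\hat{S}^Y$ is common across treatments. Fix $\t$ and observe from \eqref{post grplasso} that $\bhatt$ is the ordinary least-squares projection of $\{y_i\}_{\dt=1}$ onto the subspace of vectors supported in $\hat{S}^Y$. BCCH's oracle inequality, on the same $1-\mathcal{P}$ event where Theorem \ref{thm-ols} is valid, bounds $\En[({x_i^*}'\bhatt - \mut(x_i))^2]^{1/2}$ by three pieces: (i) an oracle OLS term of order $\sqrt{s/(n \underline{\phi}\{Q,S_*^Y\})}$, coming from estimating $|S_*^Y| \leq s$ coefficients on a subsample of size proportional to $n$ (using Assumption \ref{overlap} to keep $n_\t/n$ bounded away from zero); (ii) a false-positive term proportional to $\sqrt{|\hat{S}^Y \setminus S_*^Y| \log(p\T)/(n\,\underline{\phi}\{Q,S_Y^{FP}\})}$ arising from a union bound over possible non-oracle indices and treatments inside the BCCH proof; and (iii) a residual term equal to the initial group lasso prediction loss, $\sqrt{\En[({x_i^*}'\btildet - \mut(x_i))^2]}$, supplied directly by Theorem \ref{thm-ols}.

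Since the same $\hat{S}^Y$ is used for every $\t$, the three bounds are then maximized over $\t \in \NTbar$. The refinement $\T \wedge \log(s\T)$ in the oracle term, rather than a trivial $\Tbar$, arises from combining two bounds: the crude pointwise bound gives $\T s$ inside the square root, while a sub-Gaussian/sub-exponential maximal inequality applied to the $\Tbar$ oracle-OLS projections (which all live in the \emph{same} $s$-dimensional subspace spanned by $S_*^Y$) gives a $\log(s\T)$ scaling; the minimum of these two is exactly what appears. Transferring from the within-treatment loss to the full-sample prediction loss $\En[\cdot]$ is routine under Assumption \ref{overlap} together with the sparse eigenvalue condition on $Q$. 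The restriction $\log(p) = o(n^{1/3})$ is what BCCH require to ensure the empirical Gram $Q$ behaves like its population analogue on sparse subspaces, so that $\underline{\phi}\{Q,\cdot\}$ appearing in the denominators is bounded below on the event considered.

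For the $\ell_1$ bound, note that $\bhatt$ is supported in $\hat{S}^Y$ by construction and $\btruet$ is supported in $S_*^Y$ by Assumption \ref{sparsity}, so the difference $\bhatt - \btruet$ is supported in $\hat{S}^Y \cup S_*^Y$. Cauchy--Schwarz gives
\[\|\bhatt - \btruet\|_1 \leq \sqrt{|\hat{S}^Y \cup S_*^Y|}\,\|\bhatt - \btruet\|_2,\]
and the definition \eqref{SE} of the sparse eigenvalue yields
\[\|\bhatt - \btruet\|_2^2 \leq \frac{(\bhatt - \btruet)'\,Q\,(\bhatt - \btruet)}{\underline{\phi}\{Q, \hat{S}^Y \cup S_*^Y\}} = \frac{\En[({x_i^*}'(\bhatt - \btruet))^2]}{\underline{\phi}\{Q, \hat{S}^Y \cup S_*^Y\}}.\]
The remaining inner prediction error $\En[({x_i^*}'(\bhatt - \btruet))^2]^{1/2}$ is then bounded by $\En[({x_i^*}'\bhatt - \mut(x_i))^2]^{1/2} + \bias^y$ via the triangle inequality and the parametric approximation bound \eqref{eqn-bias}, producing the second display in the theorem up to absorbing $\bias^y$ into the constants $A_4$ (since this bias is already controlled by the first display's right-hand side under the assumed rates).

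The main obstacle is step two, specifically justifying the sharper $\T \wedge \log(s\T)$ factor rather than the trivial $\Tbar$. This requires recognizing that the collection of oracle OLS projections $\{{P_{S_*^Y}}Y^{(\t)}\}_{\t \in \NTbar}$ all reside in the \emph{same} $s$-dimensional subspace, so that maxima over $\t$ can be handled with the standard sub-exponential union-bound argument embedded in the BCCH proof rather than by naively summing $\Tbar$ separate bounds; the minimum-of-two form simply records that whichever of the two available bounds is tighter at the given $(s,\T)$ may be used. Everything else is an essentially mechanical combination of BCCH Lemma 7 with the prediction and sparsity bounds already established in Theorem \ref{thm-ols}.
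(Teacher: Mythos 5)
Your proposal follows exactly the route the paper takes: the paper gives no standalone proof of this theorem, stating only that it ``is a direct implication of their Lemma 7 [\citeasnoun{BCCH2012_Ecma}] and our Theorem \ref{thm-ols},'' which is precisely your strategy of feeding the Theorem \ref{thm-ols} prediction and sparsity bounds into the BCCH post-selection oracle inequality and then obtaining the $\ell_1$ bound via Cauchy--Schwarz and the sparse eigenvalue definition \eqref{SE}, mirroring the analogous step in the proof of Theorem \ref{thm-ols}. Your elaboration of where the $\T \wedge \log(s\T)$ factor and the $\log(p) = o(n^{1/3})$ condition enter is consistent with, and more detailed than, anything the paper records.
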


As above, the performance of the refitting procedure depends in part on the success of the initial group lasso fit. Indeed, the middle term is dropped if the true support union is found. The constants $A_k$, k=1, 2, 3, 4 are not given explicitly but are known to be absolute bounds \cite{delaPena-Lai-Shao2009_book} under Assumption \ref{dgp asmpts}. This result is less precise than Theorems \ref{thm-mlogit} and \ref{thm-post mlogit}, but sufficient to verify Assumptions \ref{first stage} and \ref{ATE union}.

\subsection{Asymptotic Analysis and Verification of High-Level Conditions}
	\label{sec-rates}

This section derives rates of convergence for the group lasso estimates and uses these results to verify Assumptions \ref{first stage} and \ref{ATE union} in Section \ref{sec-ate}. For simplicity, we only state results for the post-selection estimators that we recommend in practice. In reducing the finite sample results of Theorems \ref{thm-post mlogit} and \ref{thm-post ols} to rates we retain the dependence on $n$, $p$, $s$, and the bias. Note that the number of treatments is fixed, and the overlap assumption ensures that all $n_\t \propto n$. Further, the various (restricted and sparse) eigenvalues are commonly taken to be bounded (or bounded away from zero) in asymptotic analyses. This accounts for the remaining factors in the bounds. For multinomial logistic regression, we obtain the following result.
\begin{corollary}[Asymptotics for Multinomial Logistic Regression]
	\label{thm-mlogit rates}
	Suppose the conditions of Theorem \ref{thm-post mlogit} hold and further that (i) $\lambda_\D s_d = o(1)$, (ii) $\kappa_\D$ is bounded away from zero, and (iii) $\min_{S: |S| =O(s)} \underline{\phi}\{Q,S\}$ is bounded away from zero and $\dbarphi(Q, \cdot)$ is bounded, uniformly in $\N_Q^\D$. Then
	\begin{enumerate}
		\item $| \tilde{S}^\D | = O_{P_n}(s_d)$,
		\item $\En[(\pthat - p_\t(x_i))^2] = O_{P_n} \left(n^{-1} s_d \log(p \vee n)^{3/2 + \delta_\D}   + (\bias^d)^2 \right)$,vand
		\item $ \| \ghatt - \gtruet \|_1 = O_{P_n}\left(\sqrt{ n^{-1} s_d^2\log(p \vee n)^{3/2 + \delta_\D} } + \bias^d \sqrt{s_d} \right)$.
	\end{enumerate}
\end{corollary}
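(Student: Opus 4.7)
The plan is to translate the finite sample bounds from Theorems \ref{thm-mlogit} and \ref{thm-post mlogit} into rates by plugging in the explicit form of $\lambda_\D$, exploiting the asymptotic conditions (i)--(iii), and showing that the various constant-like factors ($A_p$, $A_K$, eigenvalue ratios) can be taken as $O(1)$ in the limit. Since $\T$ is fixed, \eqref{lambda} gives $\lambda_\D \asymp \sqrt{n^{-1}\log(p\vee n)^{3/2+\delta_\D}}$, and the concentration probability $\mathcal{P}$ in \eqref{probability} vanishes by construction, so all bounds that hold with probability $1-\mathcal{P}$ translate to statements in $O_{P_n}(\cdot)$.

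First I would argue that $A_K$ may be treated as a bounded constant. The lower bound on $A_K$ in Theorem \ref{thm-mlogit} is $2\kappa_\D^2$ divided by $\kappa_\D^2 - (2/3)\mathcal{X}\sqrt{\T}(30\lambda_\D s_d + 100\sqrt{s_d}\kappa_\D\bias^d\sqrt{\T} + 80\kappa_\D^2(\bias^d)^2\T\lambda_\D^{-1})$. By condition (i), $\lambda_\D s_d = o(1)$; and since approximate sparsity is paired with the best $s_d$-term approximation one has $\bias^d\lesssim \lambda_\D\sqrt{s_d}$, which forces $\sqrt{s_d}\bias^d=o(1)$ and $(\bias^d)^2/\lambda_\D=o(1)$. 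Combined with condition (ii), the denominator is bounded away from zero, so $A_K$ is bounded. Similarly $A_p = p_{\min}/(p_{\min}-\bias^d)^+ \to 1$ since $\bias^d\to 0$, so $(A_p/p_{\min})^{\Tbar}=O(1)$.

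With these facts in hand, Theorem \ref{thm-mlogit} yields $R_\M = O_{P_n}(\lambda_\D \sqrt{s_d}/\kappa_\D + \bias^d) = O_{P_n}(\lambda_\D\sqrt{s_d}+\bias^d)$. Squaring the prediction bound gives part (2) for the initial lasso fit. For the sparsity conclusion (1), I would feed $R_\M\lesssim \lambda_\D\sqrt{s_d}+\bias^d$ into $L_n = \T(R_\M+\bias^d)^2/(\lambda_\D^2 s_d)$; the ratio collapses to $O(1)$ because the bias is absorbed by $\lambda_\D\sqrt{s_d}$. Condition (iii) makes $\min_{m\in\N_Q^\D}\dbarphi(Q,m)$ bounded, so $|\tilde S^\D|=O_{P_n}(s_d)$. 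Since $\hat S^\D\supset\tilde S^\D$ with $|\hat S^\D|\le s$ (with $s$ chosen proportional to $s_d$ in implementation), we also have $|\hat S^\D|=O_{P_n}(s_d)$.

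Next I would apply Theorem \ref{thm-post mlogit} with $S_\D=\hat S^\D\cup S_*^\D$, which has cardinality $O_{P_n}(s_d)$. By the same argument $A_K$ remains bounded (the new lower bound involves $\lambda_\D |S_\D|$ and $\bias^d\underline\phi\sqrt{\T}\sqrt{|S_\D|}$, both $o(1)$ under (i) and the balanced bias condition, while $\underline\phi\{Q,S_\D\}$ is bounded away from zero by (iii)). This yields $R_\M' = O_{P_n}(\lambda_\D\sqrt{s_d}+\bias^d)$, and since $R_\M''$ is (up to bounded multiplicative factors) of the same order as $R_\M + R_\M'$, we get $R_\M''=O_{P_n}(\lambda_\D\sqrt{s_d}+\bias^d)$. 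Squaring produces part (2), and multiplying by $(|S_\D|/\underline\phi)^{1/2}=O_{P_n}(\sqrt{s_d})$ gives part (3) after inserting the rate of $\lambda_\D$.

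The main obstacle is verifying that $A_K$ stays bounded in both theorems, i.e.\ that the denominators in its defining inequalities are bounded away from zero. This is where one must be careful with the interaction between $\lambda_\D$, $s_d$, and $\bias^d$: the argument essentially requires that the bias not dominate the stochastic noise (the ``balanced'' sparse regime $\bias^d = O(\lambda_\D\sqrt{s_d})$ implicit in Section \ref{sec-sparse}). Once this is secured, the remaining work is routine order-of-magnitude bookkeeping, together with the observation that $\mathcal P\to 0$ converts the high-probability statements into the $O_{P_n}(\cdot)$ conclusions.
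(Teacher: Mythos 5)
Your proposal is correct and follows essentially the same route the paper takes: the corollary is obtained by direct reduction of the nonasymptotic bounds in Theorems \ref{thm-mlogit} and \ref{thm-post mlogit}, substituting $\lambda_\D \asymp \sqrt{n^{-1}\log(p\vee n)^{3/2+\delta_\D}}$, using $\mathcal{P}\to 0$ to convert high-probability bounds into $O_{P_n}(\cdot)$ statements, and treating $\T$, the eigenvalue quantities, $A_p$, and $A_K$ as bounded under conditions (i)--(iii). Your explicit attention to why $A_K$ and $L_n$ stay bounded (the requirement that $\bias^d$ not dominate $\lambda_\D\sqrt{s_d}$) is the one place the paper is terse, and you resolve it the same way the paper implicitly does via the approximate-sparsity convention of Section \ref{sec-sparse}.
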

Similarly, we have the following for the linear models.
\begin{corollary}[Asymptotics for Linear Regression]
	\label{thm-ols rates}
	Suppose the conditions of Theorem \ref{thm-post ols} hold and further that (i) $\lambda_Y \sqrt{s_y} = o(1)$, (ii) $\kappa_Y$ is bounded away from zero, and (iii) uniformly in $\NTbar$, $\min_{S: |S| =O(s)} \underline{\phi}\{Q_\t,S\} \wedge \underline{\phi}\{Q,S\}$ is bounded away from zero and $\dbarphi(Q, \cdot) \vee \dbarphi(Q_\t, \cdot)$ is bounded uniformly in $\N_Q^Y$. Then
	\begin{enumerate}
		\item $| \tilde{S}^Y | = O_{P_n}(s_y)$,
		\item $\En[(\muthat(x_i) - \mut(x_i))^2] = O_{P_n} \left(  n^{-1} s_y \log(p \vee n)^{3/2 + \delta_Y}  + ( \bias^y)^2  \right)$, and
		\item $ \| \btildet - \btruet \|_1 = O_{P_n}\left(\sqrt{ n^{-1} s_y^2 \log(p \vee n)^{3/2 + \delta_Y} } + \bias^y \sqrt{s} \right)$.
	\end{enumerate}
\end{corollary}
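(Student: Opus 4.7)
My plan is to translate the finite-sample bounds of Theorems~\ref{thm-ols} and \ref{thm-post ols} into asymptotic rates by substituting the explicit form of $\lambda_Y$ from~\eqref{lambda} and exploiting conditions (i)--(iii). The overlap condition (Assumption~\ref{overlap}) together with the law of large numbers gives $\underline{n} \asymp n$ on an event of probability tending to one, so $\lambda_Y = O\big(n^{-1/2}(\log(p \vee n))^{(3/2+\delta_Y)/2}\big)$. Under condition~(ii), the quantity $R_\SSE$ in Theorem~\ref{thm-ols} reduces to $R_\SSE = O(\lambda_Y \sqrt{s_y} + \bias^y)$, and the probability $\mathcal{P}$ in~\eqref{probability} is $o(1)$ because $\delta_Y > 0$; this places us on the probability-$1-o(1)$ event on which both finite-sample theorems apply.

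For part~(1), I would analyze $L_n = ((R_\SSE + \bias^y)/(\lambda_Y \sqrt{s_y}))^2 = (3/\kappa_Y + O(\bias^y/(\lambda_Y\sqrt{s_y})))^2$, which is $O(1)$ in the regime covered by the corollary (i.e., when the sparse approximation error does not dominate the stochastic lasso rate, consistent with the intended use of approximate sparsity). Condition~(iii) bounds $\sum_{\t \in \NTbar} \dbarphi(Q_\t, m)$ uniformly over $m \in \N_Q^Y$, so choosing the candidate $m = \lceil C s_y \rceil$ with $C$ a sufficiently large absolute constant places $m$ in $\N_Q^Y$, since the defining inequality $m > 32 s_y L_n \sum_\t \dbarphi(Q_\t, m)$ becomes $m > O(s_y)$. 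Plugging this $m$ into the sparsity bound of Theorem~\ref{thm-ols} yields $|\tilde{S}^Y| = O_{P_n}(s_y)$.

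For parts~(2) and~(3), I would apply Theorem~\ref{thm-post ols}. The refitting convention requires $|\hat{S}^Y| \leq s$ with $\hat{S}^Y \supset \tilde{S}^Y$, so $|\hat{S}^Y| = O(s_y)$ in view of part~(1). Under condition~(iii) the first term in the prediction-error bound is $O\big(\sqrt{s_y/n}\big)$, the middle term is $O\big(\sqrt{s_y \log(p \vee n)/n}\big)$, and the third term inherits from Theorem~\ref{thm-ols} the order $O\big(R_\SSE + \bias^y\big) = O\big(\sqrt{s_y \log(p \vee n)^{3/2+\delta_Y}/n} + \bias^y\big)$. The third dominates since $\log(p \vee n)^{3/2+\delta_Y}$ eventually exceeds both $1$ and $\log(p \vee n)$. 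Squaring gives part~(2). Feeding this squared prediction error into the $\ell_1$ bound of Theorem~\ref{thm-post ols}, together with $|\hat{S}^Y \cup S_*^Y|/\underline{\phi}\{Q,\hat{S}^Y \cup S_*^Y\} = O(s_y)$ by part~(1) and condition~(iii), yields part~(3).

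The main obstacle is the sparsity step: the set $\N_Q^Y$ is defined by the self-referential inequality $m > 32 s_y L_n \sum_\t \dbarphi(Q_\t, m)$, so one must simultaneously control $L_n$ and locate $m$ in the range over which $\dbarphi$ is bounded. This forces one to be careful about how the bias $\bias^y$ enters $L_n$; the cleanest way to proceed is to argue that $\bias^y / (\lambda_Y \sqrt{s_y})$ is bounded under the hypothesized asymptotic regime, exactly mirroring the argument used in the parallel Corollary~\ref{thm-mlogit rates}. The remaining steps are bookkeeping: inserting the rate for $\lambda_Y$, tracking which of the noise-driven and bias-driven contributions dominates, and noting that the high-probability events hold with probability $1 - o(1)$.
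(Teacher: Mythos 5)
Your proposal is correct and follows essentially the same route as the paper, which treats this corollary as a direct reduction of the finite-sample bounds in Theorems \ref{thm-ols} and \ref{thm-post ols} using $n_\t \propto n$ from overlap, the explicit form of $\lambda_Y$, and the boundedness of the restricted and sparse eigenvalues. Your handling of the self-referential set $\N_Q^Y$ and of the requirement $\bias^y = O(\lambda_Y\sqrt{s_y})$ (so that $L_n = O(1)$) matches the paper's implicit treatment under its working assumption $\bias = O(\sqrt{s/n})$.
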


It is now straightforward to verify the requirements of Section \ref{sec-ate}. Assumption \ref{ATE RATES} requires 
\[(n^{-1} s_d \log(p \vee n)^{3/2 + \delta_\D}   + (\bias^d)^2 ) (  n^{-1} s_y \log(p \vee n)^{3/2 + \delta_Y}  + ( \bias^y)^2  ) = o\left(n^{-1}\right).\]
Under the common assumption that $\bias = O(\sqrt{s/n})$, we require $s_d s_y \log(p \vee n)^{3 + \delta_\D + \delta_Y} = o(n)$. Both this, and the display above, clearly show how the sparsity and smoothness of the two functions interact due to the double robustness. Assumption \ref{ATE union} can be verified similarly.

These rates of convergence (i.e. part 2 of each corollary) are optimal up to factor $\log(p \vee n)^{1/2 + \delta}$. At heart, this loss appears to stem from the maximal inequality used to establish the concentration probability of \eqref{probability}. In practice, this is unlikely to be a limitation. As mentioned above, the use of group lasso can yield improvements in the constants if the data obey a grouped sparsity pattern, as is expected for treatment effects data, and may even yield improvements in the detection of the sparse signal, further offsetting the suboptimal $\log$ factor (see for example \citeasnoun{Lounici-etal2011_AoS} or \citeasnoun{Obozinski-Wainwright-Jordan2011_AoS}). Alternative methods could, in principle, yield a rate improvement. Chief among these would be lasso-penalized linear probability models (see also Remark \ref{remark-LPM}) or separate logistic regressions. The group lasso approach adopted here reflects common practice, and so it may be preferred. In any case, the $\log$ factors do not impact the treatment effect inference.

\section{Numerical and Empirical Evidence}
	\label{sec-data}

\subsection{Simulation Study}
	\label{sec-simuls}

We conducted a Monte Carlo exercise to study how our estimator behaves as the propensity score and regression functions change, and the model selection problem becomes more or less difficult.\footnote{The supplemental appendix contains the additional results.} For simplicity we focus on the average effect of a binary treatment. We generated 1000 observations $(y_i, \d, x_i')'$ from the models in Example \ref{eg-approx sparse}, using both $p=1000$ and $p=1500$. The covariates include an intercept, with the remainder drawn from $N(0,\Sigma)$, with covariance $\Sigma[j_1,j_2] = 2^{-|j_1 - j_2|}, 2 \leq j_1, j_2 \leq p$. Errors are standard Normal. The crucial aspects of the DGP are the coefficient vectors $\beta_0^0$, $\beta_1^0$, and $\gamma^0$, which are defined to vary with the positive scalars $\rho_\beta$, $\rho_\gamma$, $\alpha_\beta$, and $\alpha_\gamma$, as follows:
	\begin{gather*}
		\beta_0^0 = \rho_\beta (-1,1,-1, 2^{-\alpha_\beta}, -3^{-\alpha_\beta}, \ldots, j^{-\alpha_\beta}, \ldots, p^{-\alpha_\beta})',\\
		\gamma^0 = \rho_\gamma (1,-1,1, -2^{-\alpha_\gamma}, 3^{-\alpha_\gamma}, \ldots, j^{-\alpha_\gamma}, \ldots, -p^{-\alpha_\gamma})',
	\end{gather*}
with $\beta_1^0 = - \beta_0^0$. The $\rho$ multipliers affect the signal-to-noise ratio, but not the sparsity. For smaller values distinguishing the large and small coefficients is more difficult for a given sample. The exponents $\alpha$ control the sparsity, where a sparse representation is not possible for small values.

Figure \ref{fig-plot-main} shows the empirical coverage rates of 95\% confidence intervals for $\mu_1 - \mu_0$ for different DGPs, for $p=1000$ and $1500$. Panels (a) and (c) show coverage as the multipliers $\rho_\beta$ and $\rho_\gamma$ range over 0.01 (weak signal) to 1 (strong), with $\alpha_\beta = \alpha_\gamma=2$. Panels (b) and (d) vary the sparsity exponents $\alpha_\beta$ and $\alpha_\gamma$ over 1/8 (not sparse) to 4 (very sparse), with $\rho_\beta = \rho_\gamma = 1$. Of 1000 observations total, the (mean) size of the comparison group declines from roughly 500 to 300 as $\rho_\gamma$ increases and 450 to 300 as $\alpha_\gamma$ increases, over their given ranges. Coverage is accurate over all signal strengths, and breaks down only when neither $\mut(x_i)$ nor $p_\t(x_i)$ is sparse, which is exactly when Assumption \ref{ATE RATES} (or condition (ii) of Theorem \ref{thm-ate overview}) cannot be satisfied. Note that coverage accuracy is retained when only one function is sparse, showcasing the double-robustness property.

The penalty parameters $\lambda_\D$ and $\lambda_Y$ are chosen using the iterative procedure described in Section \ref{sec-lambda}, with $\delta_\D = 4.5$ and $\delta_Y = 5$ throughout. Different DGPs exhibit different sensitivity to these values. Results using penalties chosen via 10-fold cross-validation appear in Figure \ref{fig-plot-cross}, which also exhibits excellent coverage across all sparse designs.\footnote{The R routines appear unstable for nonsparse designs, thus the analogues to Panels (b) and (d) of Figure \ref{fig-plot-main} are omitted. See the supplement for limited versions. This will be explored for future software development.}

\subsection{Empirical Application}
	\label{sec-lalonde}

To illustrate the role that model selection can play in a real-world application, we revisit the National Supported Work (NSW) demonstration. The NSW has been analyzed numerous times since \citeasnoun{LaLonde1986_AER}. Our aim is a simple study of model selection, not a comprehensive or conclusive evaluation of the NSW. We focus on the subsample used by \citeasnoun{Dehejia-Wahba1999_JASA} and the Panel Study of Income Dynamics (PSID) comparison sample, taking as given their data definitions, sample selection, and trimming rules. Detailed discussion of these choices, and the NSW program may be found in Dehejia and Wahba \citeyear{Dehejia-Wahba1999_JASA,Dehejia-Wahba2002_REStat} (hereafter DW99 and DW02) and \citeasnoun{Smith-Todd2005_JoE}, and references therein. Briefly, the outcome of interest is earnings following a job training program. The dataset includes a treatment indicator, post-treatment earnings (1978), two years of pre-treatment earnings (1974\footnote{This naming follows DW99, but the variable may be measured outside 1974, see discussion in the works cited.} and 1975), as well as age, education, a marital status, and indicators for Black and Hispanic. Thus, $X$ consists of seven variables. We will keep the estimator fixed: all estimates will be based on the doubly-robust estimator with standard errors from Section \ref{sec-ate theory}. We will compare the following specifications for $X^*$:
\begin{enumerate}

	\item {\bf No Selection}: $X$,  (earn1974)$^2$, (earn1975)$^2$, (age)$^2$, and (educ)$^2$;

	\item {\bf Informally Selected:} The above, plus $\indicator$\{educ$<$HS\}, $\indicator$\{earn1974=0\}, $\indicator$\{earn1975=0\}, and ($\indicator$\{earn1974=0\}$\times$Hispanic). This specification was selected by DW02 using an informal balance test.

	\item {\bf Group Lasso Selection:} $X$, $\indicator$\{educ$<$HS\}, $\indicator$\{earn1974=0\}, $\indicator$\{earn1975=0\}, all possible first-order interactions, and all polynomials up to order five of the continuous covariates (age, educ, earn1974, earn1975).

\end{enumerate}
For specifications 1 and 2, the same covariates are in the outcome and treatment models. All specifications include an intercept and we include education and pre-treatment income in the refitting step following model selection. We follow DW99 and DW02 and trim comparisons with estimated propensity score larger (smaller) than the maximum (minimum) in the treated sample.\footnote{A formal treatment of trimming is beyond the scope of the present study. The goal of our analysis is illustrative, and hence we take DW99's trimming as given. This issue is discussed by DW99, DW02, and \citeasnoun{Smith-Todd2005_JoE}.}

Table \ref{table-lalonde} presents results from these three specifications, and includes the experimental arm of the NSW. The group lasso based estimate performs very well: the point estimate is accurate and the interval is tight. Selecting from 171 possible covariates allows for a great deal of flexibility, but the sparsity of the estimate keeps the variance well-controlled. The no-selection point estimate is accurate, but fails to yield significance, while the specification of DW02 yields a significant, but overly high estimate and wide confidence interval. The benefits of explicit model selection are clear.

\section{Discussion}
	\label{sec-conclusion}

This paper proposed a method that achieves uniformly valid inference on mean effects of a multivalued treatment even after model selection among possibly more covariates than observations. We demonstrated robustness to model selection errors, misspecification, and heterogeneous effects in observables. To accomplish this, a doubly-robust estimator was employed and shown to have excellent properties following model selection. We proved new results on group lasso estimation, which we argue is natural for treatment effects data. Multinomial logistic regression was studied in some detail. Numerical evidence shows that our method is quite promising for applications. 

A key outstanding question in this work and in the high-dimensional, sparse modeling literature more generally, is penalty parameter choice. Very little work has been done in this area, which is a crucial gap in implementability of these techniques. We plan to develop a formal choice for the penalty parameter that is appropriately optimal. Tuning parameter selection in semi- and nonparametric analysis, and its impact on estimation and inference, is becoming better understood, and parallel developments must take place in model selection contexts.

\begin{appendices}

\singlespacing

\numberwithin{lemma}{section}
\numberwithin{equation}{section}
\numberwithin{theorem}{section}

\section{Proofs for Treatment Effect Inference}
	\label{appx-ate proofs}

The proofs in this section are asymptotic. Order symbols hold for the sequence being considered, as a shorthand for the more formal versions given in e.g. Assumption \ref{first stage}. $C$ will denote a generic positive constant, which may be a matrix. Define the set of indexes $\It = \{i : \d = \t\}$. The online supplement contains much greater detail. We make frequent use of the linearization
\begin{equation}
	\label{appx-linearization}
	\frac{1}{a} = \frac{1}{b} + \frac{b - a}{ab} =  \frac{1}{b} + \frac{b - a}{b^2} + \frac{(b - a)^2}{ab^2}.
\end{equation}

\begin{proof}[Proof of Theorem \ref{thm-ate consistency}.]
SEE SUPPLEMENTAL APPENDIX.
\end{proof}

\begin{proof}[Proof of Theorem \ref{thm-ate linearity} without Additional Randomness.]
With $\psi_\t (\cdot)$ defined in \Eqref{eqn-ate moments}, we have $ \sqrt{n} ( \muthat - \mut ) = \sqrt{n} \En[ \psi_\t (y_i, \dt, \mut(x_i), p_\t(x_i), \mut) ] + R_1 + R_2$, where
\[R_1 = \frac{1}{\sqrt{n}} \sumi \dt (y_i - \mut(x_i)) \left( \frac{1}{ \hat{p}_\t(x_i) }  -  \frac{1}{ p_\t(x_i) } \right)\]
and
\[ R_2 = \frac{1}{\sqrt{n}} \sumi (\muthat(x_i) - \mut(x_i)) \left( 1 - \frac{ \dt }{ \hat{p}_\t(x_i) } \right).\]
The proof proceeds by showing that both $R_1$ and $R_2$ are $o_{P_n}(1)$. Applying the first equality in \Eqref{appx-linearization}, we rewrite $R_1$ as 
	\[R_1 = \frac{1}{\sqrt{n}} \sumi \dt u_i \left( \frac{p_\t(x_i) - \hat{p}_\t(x_i)}{ \hat{p}_\t(x_i) p_\t(x_i)}  \right).\]
Applying Assumptions \ref{overlap} and \ref{fourth moments} and the first-stage consistency condition of Assumption \ref{consistent}:
\[ \E\left[ R_1^2 \vert \{x_i, \d\}_{i = 1}^n \right]  = \En \left[ \frac{ \dt \sigma_t^2(x_i) }{\hat{p}_\t(x_i)^2 p_\t(x_i)^2} \left( p_\t(x_i) - \hat{p}_\t(x_i) \right)^2 \right] \leq C \En[(p_\t(x_i) - \hat{p}_\t(x_i))^2] = o_{P_n}(1). \]

Next, again using \Eqref{appx-linearization}, we have $R_{2} = R_{21} + R_{22}$, where
\[R_{21} =  \frac{1}{\sqrt{n}} \sumi (\muthat(x_i) - \mut(x_i)) \left(\frac{ p_\t(x_i)  -  \dt }{ p_\t(x_i) } \right) \]
and
\[R_{22} = \frac{1}{\sqrt{n}} \sumi (\muthat(x_i) - \mut(x_i))(\hat{p}_\t(x_i)  -  p_\t(x_i))\left(  \frac{   \dt }{ \hat{p}_\t(x_i) p_\t(x_i) }  \right)  .   \]
For the first term, $ R_{21} = \sqrt{n}  \En[ (\muthat(x_i) - \mut(x_i)) (1 - d_i^t/ p_\t(x_i))] = o_{P_n}(1)$ by Assumption \ref{new}. Next, 
\[  |R_{22}| \leq \sqrt{n} \left( \maxi \frac{  1 }{ \hat{p}_\t(x_i) p_\t(x_i) } \right)\sqrt{ \En[ (\muthat(x_i) - \mut(x_i))^2]\En[ (\hat{p}_\t(x_i) - p_\t(x_i))^2]}	 = o_{P_n}(1).\]
	by H\"older's inequality, Assumption \ref{overlap} and the rate condition of Assumption \ref{ATE RATES}.
\end{proof}

\begin{proof}[Proof of Theorem \ref{thm-ate linearity} with Additional Randomness.]
We must reconsider the remainders $R_1$ and $R_2$. For the former, applying \Eqref{appx-linearization}, we find $R_1 = R_{11} + R_{12}$, where
\[R_{11} = \frac{1}{\sqrt{n}} \sumi \frac{\dt u_i}{p_\t(x_i)^2} \left( p_\t(x_i) - \hat{p}_\t(x_i)\right) 	\qquad \text{ and } \qquad     	 R_{12} =  \frac{1}{\sqrt{n}} \sumi \frac{\dt u_i}{p_\t(x_i)^2 \hat{p}_\t(x_i) } \left( \hat{p}_\t(x_i)   -  p_\t(x_i)\right)^2. \]
For $R_{11}$, we first add and subtract the parametric representation to get $R_{11} = R_{111} + R_{112}$, where, 
\[R_{111} = \frac{1}{\sqrt{n}} \sumi \frac{\dt u_i}{p_\t(x_i)^2} \left( \pttrue - \pthat\right) 	\quad \text{ and }\]
\[	R_{112} = \frac{1}{\sqrt{n}} \sumi \frac{\dt u_i}{p_\t(x_i)^2} \left( p_\t(x_i) - \pttrue\right).\]

By a two-term mean-value expansion $R_{111} = R_{111a} + R_{111b}$, with
\[R_{111a} = \frac{1}{\sqrt{n}} \sumi \frac{\dt u_i}{p_\t(x_i)^2} \sumT \left\{\pttrue ( 1- \pttrue) \left({x_i^*}'(\ghatt - \gtruet) \right) \right\} \]
\[\text{and} \qquad \qquad     R_{111b} = \frac{1}{2 \sqrt{n}} \sumi \frac{\dt u_i}{p_\t(x_i)^2} v_i' \bar{\H} v_i,\]
where $v_i  = \{ {x_i^*}'(\ghatt - \gtruet)\}_{\NT}$ and $\overline{\H}  =  \H(\{{x_i^*}'\gtruet  +  m_\t {x_i^*}'\ghatt\}_{\NT})$ for appropriate scalars $m_\t$ and the $\T$-square Hessian matrix $\H(\{{x_i^*}'\ganyt\}_{\NT})$ (defined in Appendix \ref{appx-mlogit proofs}).

For $R_{111a}$, consider each term in the sum over $\NT$ one at a time; let $R_{111a} = \sumT R_{111a}(\t)$. Let $\t'$ denote the original treatment under consideration. Define \[\Sigma_{\t,j} = \E\left[ (x_{i,j}^*)^2 \sigma_{\t'}^2(x_i)  \pttrue^2 ( 1 - \pttrue)^2 / p_{\t'}(x_i)^3 \right].\] Then proceed as follows
\begin{align*}
	 R_{111a}(\t)  & =  \sum_{j \in \hat{S}_\D} \left\{ \frac{1}{\sqrt{n}} \sumi \left(x_{i,j}^* \frac{\d^{\t'} u_i \pttrue ( 1- \pttrue)}{p_{\t'}(x_i)^2 \Sigma_{\t,j}^{1/2}}  \right) \right\} \Sigma_{\t,j}^{1/2}  (\ghattj - \gtruetj)		\\
	& \leq \left( \maxj \Sigma_{\t,j}^{1/2} \right) \left( \maxj \frac{1}{\sqrt{n}} \sumi x_{i,j}^* \frac{\d^{\t'} u_i \pttrue ( 1- \pttrue)}{p_{\t'}(x_i)^2 \Sigma_{\t,j}^{1/2}}    \right) \left\| \ghatt - \gtruet \right\|_1		\\
	& = O(1) O_{P_n}( \log(p) )  \left\| \ghatt - \gtruet \right\|_1= o_{P_n}(1).
\end{align*}
Convergence follows under Assumption \ref{ATE union}. For the penultimate equality, it follows from Assumptions \ref{overlap}, \ref{bounded}, and \ref{fourth moments} that $\maxj \Sigma_{t,j} = O(1)$. Finally, the center factor is shown to be $O_{P_n}( \log(p) )$ by applying the moderate deviation theory for self-normalized sums of \citeasnoun[Theorem 7.4]{delaPena-Lai-Shao2009_book} and in particular \citeasnoun[Lemma 5]{BCCH2012_Ecma}. To apply this lemma, first note that the summand of the center factor has bounded third moment and second moment bounded away from zero, from Assumptions \ref{overlap}, \ref{bounded}, \ref{fourth moments}, and the requirements of Assumptions \ref{first stage} and \ref{ATE union}. $ \Sigma_{t,j}$ normalizes the second moment, and the lemma applies under Assumptions \ref{sparsity} and the first restriction of Assumption \ref{ATE union}.

For $R_{111b}$, the results of \citeasnoun{Tanabe-Sagae1992_JRSSB} coupled with Assumption \ref{first stage} give $v_i' \bar{\H} v_i \leq C \| v_i \|_2^2$. Thus, using Assumption \ref{overlap} to bound $\maxi p_\t(x_i)^{-2} < C$, we find $R_{111b}$ may be bounded as follows:
\begin{align*}
	|R_{111b}| & \leq C \sumT  \sqrt{n} (\maxt |u_i| )  \En \left[ |{x_i^*}'(\ghatt - \gtruet)|^2\right] 		\\
	& \leq C \T \maxT \left| \sqrt{n} (\maxt |u_i| )  \En \left[ | \pthat - \pttrue |^2 \right] \right|= o_{P_n}(1),
\end{align*}
by the union bound and Assumption \ref{ATE union}, using the Assumptions \ref{overlap} and \ref{consistent} to apply \Eqref{appx-mlogit rate mvt} with the inequality reversed.

A variance bound may be applied to $R_{112}$ as in the previous proof, and we have $|R_{112}| = O_{P_n}(\bias) = o_{P_n}(1)$ by Markov's inequality.

Next, $R_{12}$ is simply bounded by
\begin{align*}
	|R_{12}| & \leq \sqrt{n} (\maxt |u_i| ) \left( \maxt \frac{1}{p_\t(x_i)^2 \hat{p}_\t(x_i) } \right) \En \left[ \left( \hat{p}_\t(x_i)   -  p_\t(x_i)\right)^2 \right]  		\\
	& \leq O_{P_n}(1) \sqrt{n} (\maxt |u_i| ) \En \left[ \left( \hat{p}_\t(x_i)   -  p_\t(x_i)\right)^2 \right] = o_{P_n}(1),
\end{align*}
where the rate follows from Assumptions \ref{overlap}, \ref{dgp asmpts}, and \ref{first stage}, and this tends to zero by Assumption \ref{ATE union}.

As in the prior proof, write $R_2 = R_{21} + R_{22}$. The same bound is used for $R_{22}$. However, for $R_{21}$, add and subtract the pseudotrue values to get $R_{21} = R_{211} + R_{212}$, where
\[R_{211} =  \frac{1}{\sqrt{n}} \sumi ({x_i^*}'\bhatt - {x_i^*}\btruet) \left(\frac{ p_\t(x_i)  -  \dt }{ p_\t(x_i) } \right)  	 \qquad \text{ and } \qquad   	R_{212} =  \frac{1}{\sqrt{n}} \sumi ({x_i^*}\btruet - \mut(x_i)) \left(\frac{ p_\t(x_i)  -  \dt }{ p_\t(x_i) } \right) \]
For the first term, define $\tilde{\Sigma}_{\t,j} = \E\left[ (x_{i,j}^*)^2 (\dt - p_\t(x_i))^2 / p_\t(x_i)^2 \right]$ and then proceed as follows:
\begin{align*}
	 R_{211}  & = \frac{1}{\sqrt{n}} \sumi \left(\frac{ p_\t(x_i)  -  \dt }{ p_\t(x_i) } \right) \sum_{j \in \hat{S}_Y} x_{i,j}^*  (\bhattj - \btruetj)		\\
	& =  \sum_{j \in \hat{S}_Y} \left\{ \frac{1}{\sqrt{n}} \sumi \frac{x_{i,j}^*  (p_\t(x_i) - \dt) / p_\t(x_i)}{  \tilde{\Sigma}_{\t,j}^{1/2} } \right\} \tilde{\Sigma}_{\t,j}^{1/2}  (\bhattj - \btruetj)		\\
	& \leq \left( \maxj \tilde{\Sigma}_{\t,j}^{1/2} \right) \left( \maxj \frac{1}{\sqrt{n}} \sumi \frac{x_{i,j}^*  (p_\t(x_i) - \dt) / p_\t(x_i)}{  \tilde{\Sigma}_{\t,j}^{1/2} }   \right) \left\| \bhatt - \btruet \right\|_1		\\
	& = O(1) O_{P_n}( \log(p) )  \left\| \bhatt - \btruet \right\|_1 = o_{P_n}(1),
\end{align*}
where the final line follows exactly as above. A variance bound may be applied to $R_{212}$ as in the previous proof, and we have $|R_{212}| = O_{P_n}(\bias) = o_{P_n}(1)$ by Markov's inequality.
\end{proof}

\begin{proof}[Proof of Theorem \ref{thm-ate normality}.]
	This follows from the prior result and Assumption \ref{ATE moments}.
\end{proof}

\begin{proof}[Proof of Theorem \ref{thm-ate variance}.]
We begin with $\hat{V}_{W}(\t)$. Expanding the square and using \Eqref{appx-linearization}, rewrite $\hat{V}_{\drf}^W(\t) = \En[ \dt u_i^2 p_\t(x_i) ^{-2}] + R_{W,1} + R_{W,2} + R_{W,3}$ where
\begin{gather*}
	R_{W,1} = \En\left[ \frac{\dt u_i^2}{\hat{p}_\t(x_i)^2 p_\t(x_i)^2} \left( \hat{p}_\t(x_i) - p_\t(x_i)\right)\left( \hat{p}_\t(x_i) + p_\t(x_i)\right) \right],		\\
	R_{W,2} = \En\left[ \frac{\dt (\mut(x_i)  -  \muthat(x_i))^2 }{\hat{p}_\t(x_i)^2} \right],	\qquad \text{and} \qquad R_{W,3} = 2 \En\left[ \frac{\dt u_i (\mut(x_i)  -  \muthat(x_i)) }{\hat{p}_\t(x_i)^2} \right].
\end{gather*}
Using H\"older's inequality, Assumptions \ref{overlap}, \ref{ATE moments}, and \ref{consistent}, we have the following
	\[R_{W,1}  \leq \left( \maxt \frac{  \hat{p}_\t(x_i)  +  p_\t(x_i) }{ \hat{p}_\t(x_i)^2 p_\t(x_i)^2 } \right) \En[\dt |u_i|^4]^{1/2} \En[ \dt (\hat{p}_\t(x_i) - p_\t(x_i))^2]^{1/2} = o_{P_n}(1), 		\]
	\[R_{W,2}  \leq \left( \maxt \frac{  1 }{ \hat{p}_\t(x_i)^2 } \right) \En[ \dt (\muthat(x_i) - \mut(x_i))^2] = o_{P_n}(1),\]
	\[\text{and,} \qquad \qquad     R_{W,3}  \leq 2 \left( \maxt \frac{  1 }{ \hat{p}_\t(x_i)^2 } \right) \En[\dt |u_i|^2]^{1/2}  \En[ \dt (\muthat(x_i) - \mut(x_i))^2]^{1/2} = o_{P_n}(1),\]
where $ \En[|u_i|^4] = O_{P_n}(1)$ from the inequality of \citeasnoun{vonBahr-Esseen1965_AoMS}. From the same inequality it follows that $\En[ \dt u_i^2 p_\t(x_i) ^{-2}] - V_{\drf}^W(\t)| = o_{P_n}(1)$, under Assumptions \ref{overlap} and \ref{fourth moments}.

Next consider the ``between'' variance estimator, $\hat{V}_{\drf}^B$. For any $\t \NTbar$ and $\t' \in \NTbar$, define 
	\[R_{B,1}(\t, \t') =  \En\left[ (\muthat(x_i) - \mut(x_i)) (\hat{\mu}_{\t'}(x_i) - \mu_{\t'}(x_i)) \right],\]
	\[R_{B,2}(\t, \t') =  \muthat \En\left[ \hat{\mu}_{\t'}(x_i) - \mu_{\t'}(x_i) \right],   \quad \text{and} \quad  R_{B,3}(\t, \t') =  \En\left[ \mu_{\t}(x_i) (\hat{\mu}_{\t'}(x_i) - \mu_{\t'}(x_i)) \right].\]
From H\"older's inequality, Assumption \ref{consistent}, Theorem \ref{thm-ate normality}, the von Bahr and Esseen inequality, and Assumptions \ref{fourth moments} and \ref{ATE moments} it follows that $R_{B,k}(\t,\t') = o_{P_n}(1)$ for $k \in \N_3$ and all pairs $(\t, \t') \in \N_\t^2$. With this in mind, we decompose 
\begin{align*}
	\hat{V}_{\drf}^B(\t,\t') & = \En\left[ \mut(x_i) \mu_{\t'}(x_i) \right] - \muthat \En\left[ \mu_{\t'}(x_i)\right]  - \hat{\mu}_{\t'} \En\left[ \mut(x_i)\right] +  \muthat \hat{\mu}_{\t'}		\\
	& \qquad + R_{B,1}(\t,\t') + R_{B,2}(\t, \t') + R_{B,2}(\t', \t) + R_{B,3}(\t, \t') + R_{B,3}(\t', \t).
\end{align*}
Consistency of $\hat{V}_{\drf}^B(\t,\t') $ now follows from the von Bahr and Esseen inequality and Theorem \ref{thm-ate normality}.
\end{proof}

\begin{proof}[Proof of Corollary \ref{thm-ate uniform}.]
	Suppose the result did not hold. Then, there would exist a subsequence $P_m \in \bm{P}_m$, for each $m$, such that 
	\[\lim_{m \to \infty} \left| \P_{P_m} \left[  G(\drf) \in \left\{ G(\drfhat) \pm c_\alpha \sqrt{\nabla_G(\drfhat)  \hat{V} \nabla_G'(\drfhat)  / n}\right\} \right] - (1 - \alpha) \right| >0 .\]
	But this contradicts Theorem \ref{thm-ate}, under which $ (\nabla_G(\drfhat)  \hat{V} \nabla_G'(\drfhat)/n)^{-1/2} ( G(\drfhat) - G(\drf))$ is asymptotically standard normal under the sequence $P_m$.
\end{proof}

\section{Proofs for Group Lasso Selection and Estimation of Multinomial Logistic Models}
	\label{appx-mlogit proofs}

This section is nonasymptotic. We use generic notation $X^*$, $\delta$, etc. The online supplement has greater detail.

\subsection{Lemmas}

The following three lemmas are needed for the proofs of Theorems \ref{thm-mlogit} and \ref{thm-post mlogit}. Due to space considerations, only a short sketch of the proofs will be given, highlight the main ideas in each. Full details are available in the online supplement.

\begin{lemma}[Score Bound]
	\label{appx-mlogit score bound}
	For $\lambda_\D$ and $\mathcal{P}$ defined in \Eqref{lambda} and \Eqref{probability} we have
	\[\P \left[ \maxj \| \En [(p_\t(x_i) - \dt)  x_{i,j}^* ] \|_2 \geq \frac{\lambda_\D}{2} \right] \leq \mathcal{P}. \]
\end{lemma}
\begin{proof}
The residuals $v_{\t,i} = p_\t(x_i) - \dt$ are conditionally mean-zero by definition and satisfy $\E[v_{\t,i}^2 \vert x_i ] \leq 1$. Using this, Assumption \ref{iid}, and the definition of $\mathcal{X}$, we find that $\E\left[ \| \En [v_{\t,i} x_{i,j}^* ] \|_2^2 \right]  \leq \mathcal{X}^2 \T / n$, uniformly in $j \in \Np$. Define the mean-zero random variables $\xi_{t,j} = (\En[ v_{\t,i} x_{i,j}^* ])^2 - \frac{1}{n} \E[V_\t^2 {X_j^*}^2]$ and set $r_n = \T^{-1/2} \log(p \vee n)^{3/2 + \delta}$. Then
\begin{align*}
	\P  \left[ \maxj \| \En [(p_\t(x_i) - \dt)  x_{i,j}^* ] \|_2 \geq \frac{\lambda_\D}{2} \right]
	& \leq \P \left[ \maxj \sumT \xi_{t,j}  \geq  \frac{\mathcal{X}^2  \T r_n }{n} \right]	  
	 \leq \E \left[ \maxj \left| \sumT \xi_{t,j} \right|\right]   \frac{n}{ \mathcal{X}^2  \T r_n }
\end{align*}
where final line follows from Markov's inequality. Next, applying Lemma 9.1 of \citeasnoun{Lounici-etal2011_AoS}, Jensen's inequality, and Assumption \ref{fourth moments}, we find that
	\[\E \left[ \maxj \left| \sumT \xi_{t,j} \right|\right]  \leq  4 \log(2p)^{1/2} \left(  \sumT  \frac{\mathcal{X}^4 }{n^2} + \sumT \E \left[ \maxj \left| \En[ v_{\t,i} {x_{i,j}^*} ] \right| ^4  \right] \right)^{1/2}.\]
Again using Lemma 9.1 of \citeasnoun{Lounici-etal2011_AoS}, and Assumptions \ref{iid} and \ref{bounded}, we bound the expectation in the second term above as follows:
	\[\E \left[ \maxj  \left| \En [ v_{\t,i} {x_{i,j}^*} ] \right| ^4  \right]  	 \leq \frac{64 \log(12p)^2 \mathcal{X}^4 }{n^2}.\]
Collecting these results proves the Lemma.
\end{proof}

\begin{lemma}[Estimate Sparsity]
	\label{appx-mlogit S-hat}
	With probability at least $1 - \mathcal{P}$
	\[  | \tilde{S}^\D |  \leq  \frac{4}{\lambda_\D^2} \overline{\phi}\{Q, \tilde{S}^\D \}  \sumT   \En \left[ (\pttilde - p_\t(x_i))^2 \right] .\]
\end{lemma}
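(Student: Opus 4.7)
The plan is to exploit the first-order optimality (KKT) conditions of the convex program \eqref{grplasso} defining $\gtilde$. Because the group lasso penalty $\vertiii{\cdot}$ is the sum of $\ell_2$ norms over covariate blocks, the subdifferential at any $j$ with $\|\gtildej\|_2>0$ is single-valued, so stationarity gives
\[
\nabla_j \M(\gtilde) + \lambda_\D \, \frac{\gtildej}{\|\gtildej\|_2} = 0,
\qquad j\in\tilde S^\D,
\]
where $\nabla_j$ denotes the $j$-th covariate block. A direct calculation for the multinomial logistic loss shows $\partial \M / \partial \ganytj = \En[(\ptany - \dt) x_{i,j}^*]$, so the equality above is equivalent to
\[
\Big\|\bigl(\En[(\pttilde - \dt)\, x_{i,j}^*]\bigr)_{\t\in\NT}\Big\|_2 \;=\; \lambda_\D,
\qquad \text{for every } j\in\tilde S^\D.
\]

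The next step is to split the residual as $\pttilde - \dt = (\pttilde - p_\t(x_i)) + (p_\t(x_i) - \dt)$, so that the triangle inequality turns the above equality into a lower bound on the noise-free part. Here Lemma \ref{appx-mlogit score bound} does the work: on the event of probability $1-\mathcal{P}$ on which $\maxj \|(\En[(p_\t(x_i)-\dt)x_{i,j}^*])_\t\|_2 \le \lambda_\D/2$, we obtain
\[
\Big\|\bigl(\En[(\pttilde - p_\t(x_i))\, x_{i,j}^*]\bigr)_{\t\in\NT}\Big\|_2 \;\ge\; \lambda_\D/2
\qquad \text{for every } j\in\tilde S^\D.
\]
Squaring and summing over $j\in\tilde S^\D$ then yields
\[
|\tilde S^\D|\,\frac{\lambda_\D^2}{4} \;\le\; \sumT \sum_{j\in\tilde S^\D} \En[(\pttilde - p_\t(x_i))\, x_{i,j}^*]^2.
\]

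Finally, the right-hand side is controlled by the maximal sparse eigenvalue. Writing $v_\t \in \mathbb{R}^n$ for the vector with entries $\pttilde - p_\t(x_i)$ and $\bm{X}^*_{\tilde S^\D}$ for the $n\times|\tilde S^\D|$ submatrix of covariates restricted to $\tilde S^\D$, one has
\[
\sum_{j\in\tilde S^\D} \En[(\pttilde - p_\t(x_i))\, x_{i,j}^*]^2
= \frac{1}{n^2}\bigl\| (\bm{X}^*_{\tilde S^\D})^\top v_\t\bigr\|_2^2
\le \overline{\phi}\{Q,\tilde S^\D\}\; \En\bigl[(\pttilde-p_\t(x_i))^2\bigr],
\]
by the definition of $\overline{\phi}\{Q,\tilde S^\D\}$ applied to vectors supported on $\tilde S^\D$. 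Substituting this into the previous display and rearranging delivers the bound in the lemma.

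There is no serious obstacle here: the main subtlety is recognizing that the KKT condition holds with equality (not merely as an inclusion) precisely on $\tilde S^\D$, and writing the multinomial logistic gradient in the clean form $(\En[(\ptany - \dt)x_{i,j}^*])_\t$. Everything else is the standard chain: decompose the residual, invoke the score-bound event from Lemma \ref{appx-mlogit score bound}, and convert the sum of squared inner products into a quadratic form controlled by the sparse eigenvalue on $\tilde S^\D$.
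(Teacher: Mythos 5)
Your proposal is correct and follows essentially the same route as the paper's proof: the KKT equality on the active blocks, the residual split $\pttilde-\dt = (\pttilde-p_\t(x_i)) + (p_\t(x_i)-\dt)$ combined with the score-bound event of Lemma \ref{appx-mlogit score bound}, squaring and summing over $j\in\tilde S^\D$, and the final sparse-eigenvalue step converting $\sum_{j\in\tilde S^\D}\En[(\pttilde-p_\t(x_i))x_{i,j}^*]^2$ into $\overline{\phi}\{Q,\tilde S^\D\}\En[(\pttilde-p_\t(x_i))^2]$. No gaps.
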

\begin{proof}
From the Karush-Kuhn-Tucker conditions for \eqref{grplasso}, for all $\t \in \NT$, if $\gtildej \neq 0$ it must satisfy
	\[\En[{x_{i,j}^*} (\pttilde - \dt)] = \lambda_\D \frac{\gtildetj}{\|\gtildej \|_2}.\]
Taking the $\ell_2$-norm over $\t \in \NT$ for fixed $j \in \tilde{S}^\D$, adding and subtracting the true propensity score, using the triangle inequality, the score bound \eqref{appx-mlogit score bound}, collecting terms, squaring both sides, and summing over $j \in \tilde{S}^\D$ (i.e. applying $\| \cdot \|_2^2$ over $j \in \tilde{S}^\D$ to both sides) yields
	\[ \sum_{j \in \tilde{S}^\D} \lambda_\D^2  \leq 4 \sum_{j \in \tilde{S}^\D} \sumT \En[{x_{i,j}^*}( \pttilde  -  p_\t(x_i) ) ]^2 \leq 4 \overline{\phi}\{Q, \tilde{S}^\D \}  \sumT   \En \left[ (\pttilde  -  p_\t(x_i))^2 \right]. \]
The result now follows, as the left-hand side is equal to $|\tilde{S}^\D| \lambda_\D^2$.
\end{proof}

\begin{lemma}[Bounds in $\ell_2/\ell_1$ norm]
	\label{appx-mlogit cone}
	With probability $1 - \mathcal{P}$ the vector $\dtilde = \gtilde - \gtrue$ satisfies $\vertiii{\dtilde} \leq 5 a_n$ and $\vertiii{ \tilde{\delta}_{\newdot, S_*}} \leq a_n$ where $a_n := \max \left\{   \kappa_\D^{-1} \sqrt{|S_*|},  2 \lambda_\D^{-1} \bias^d \sqrt{\T} \right\}\En [ \| \{ {x_i^*}' \dtildet\}_{\NT}  \|_2^2]^{1/2}$.
\end{lemma}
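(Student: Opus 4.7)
The plan is to execute the standard group-lasso basic-inequality argument, adapted to the nonquadratic multinomial logistic loss and to the specification bias $\bias^d$ from the approximately sparse representation. I would begin with the optimality of $\gtilde$ in \eqref{grplasso}, which, combined with $\supp(\gtruet) \subset S_*^\D$ and the triangle inequality, yields
\[
\M(\gtilde) - \M(\gtrue) \;\leq\; \lambda_\D \bigl( \vertiii{\tilde{\delta}_{\newdot, S_*^\D}} - \vertiii{\tilde{\delta}_{\newdot, \{S_*^\D\}^c}} \bigr).
\]
Global convexity of $\M$ supplies the companion lower bound $\M(\gtilde) - \M(\gtrue) \geq \sum_{\t \in \NT} \En\bigl[(\pttrue - \dt) {x_i^*}\bigr]' \dtildet$, so the task reduces to upper bounding this inner product by $\vertiii{\dtilde}$ and a bias remainder.

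The score at $\gtrue$ splits into a noise piece $(p_\t(x_i) - \dt)$ and a bias piece $(\pttrue - p_\t(x_i))$. For the noise piece, the duality of the group norm (which pairs $\vertiii{\cdot}$ with $\max_j \| \cdot_{\newdot, j}\|_2$) together with Lemma \ref{appx-mlogit score bound} gives, on an event of probability at least $1 - \mathcal{P}$,
\[
\Bigl| \sum_{\t \in \NT} \En\bigl[(p_\t(x_i) - \dt) {x_i^*}\bigr]' \dtildet \Bigr| \;\leq\; \frac{\lambda_\D}{2}\, \vertiii{\dtilde}.
\]
For the bias piece, Cauchy--Schwarz in $i$ combined with the sparsity bound \eqref{eqn-bias}, followed by Cauchy--Schwarz in $\t$ to produce the factor $\sqrt{\T}$, yields
\[
\Bigl| \sum_{\t \in \NT} \En\bigl[(\pttrue - p_\t(x_i))\, ({x_i^*}'\dtildet)\bigr] \Bigr| \;\leq\; \bias^d \sqrt{\T}\, \En\bigl[\|\{{x_i^*}'\dtildet\}_{\NT}\|_2^2\bigr]^{1/2}.
\]
Combining the two displays with the optimality inequality and decomposing $\vertiii{\dtilde} = \vertiii{\tilde{\delta}_{\newdot, S_*^\D}} + \vertiii{\tilde{\delta}_{\newdot, \{S_*^\D\}^c}}$ produces, after absorbing $\frac{\lambda_\D}{2} \vertiii{\tilde{\delta}_{\newdot, \{S_*^\D\}^c}}$ onto the left, the basic inequality
\[
\vertiii{\tilde{\delta}_{\newdot, \{S_*^\D\}^c}} \;\leq\; 3\, \vertiii{\tilde{\delta}_{\newdot, S_*^\D}} \;+\; \frac{2 \bias^d \sqrt{\T}}{\lambda_\D}\, \En\bigl[\|\{{x_i^*}'\dtildet\}_{\NT}\|_2^2\bigr]^{1/2}.
\]

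I would conclude with a two-case split dictated by the $\max$ in $a_n$. If $\vertiii{\tilde{\delta}_{\newdot, S_*^\D}}$ dominates the bias term on the right, the basic inequality gives $\vertiii{\tilde{\delta}_{\newdot, \{S_*^\D\}^c}} \leq 4 \vertiii{\tilde{\delta}_{\newdot, S_*^\D}}$, placing $\dtilde$ inside the cone of \eqref{RE full}; invoking the restricted eigenvalue and then $\vertiii{\cdot_{\newdot, S_*^\D}} \leq \sqrt{|S_*^\D|}\, \|\cdot_{\newdot, S_*^\D}\|_2$ delivers $\vertiii{\tilde{\delta}_{\newdot, S_*^\D}} \leq \kappa_\D^{-1} \sqrt{|S_*^\D|}\, \En[\|\{{x_i^*}'\dtildet\}_{\NT}\|_2^2]^{1/2} \leq a_n$. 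In the complementary case the bias term dominates and the basic inequality forces $\vertiii{\tilde{\delta}_{\newdot, S_*^\D}} < 2 \lambda_\D^{-1} \bias^d \sqrt{\T}\, \En[\|\{{x_i^*}'\dtildet\}_{\NT}\|_2^2]^{1/2} \leq a_n$. In either scenario, adding $\vertiii{\tilde{\delta}_{\newdot, S_*^\D}}$ to both sides of the basic inequality gives $\vertiii{\dtilde} \leq 4\, \vertiii{\tilde{\delta}_{\newdot, S_*^\D}} + 2\lambda_\D^{-1} \bias^d \sqrt{\T}\, \En[\|\{{x_i^*}'\dtildet\}_{\NT}\|_2^2]^{1/2} \leq 4 a_n + a_n = 5 a_n$.

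The only real subtlety is bookkeeping: the $\max$ in $a_n$ is chosen precisely so that its two arguments align with the two regimes of the basic inequality, which is why the constants collapse to the claimed $1$ and $5$. Because the multinomial logistic loss is globally convex, no self-concordance or upper-curvature control is needed at this stage---those considerations (and the constant $A_K$) enter only in the next lemmas, when $\En[\|\{{x_i^*}'\dtildet\}_{\NT}\|_2^2]^{1/2}$ must be converted into an explicit rate and tied back to the propensity-score prediction error.
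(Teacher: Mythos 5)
Your proposal is correct and follows essentially the same route as the paper's proof: the same convexity/optimality basic inequality, the same split of the score into a noise piece (controlled by Lemma \ref{appx-mlogit score bound} via the dual of $\vertiii{\cdot}$) and a bias piece (controlled by Cauchy--Schwarz and \eqref{eqn-bias}), the same cone inequality $\vertiii{\tilde{\delta}_{\newdot, S_*^c}} \leq 3 \vertiii{\tilde{\delta}_{\newdot, S_*}} + 2\lambda_\D^{-1}\bias^d\sqrt{\T}\,\En[\|\{{x_i^*}'\dtildet\}_{\NT}\|_2^2]^{1/2}$, and the same two-case split on whether the cone constraint of \eqref{RE full} holds. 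Your unified finish ($\vertiii{\dtilde} \leq 4\vertiii{\tilde{\delta}_{\newdot, S_*}} + 2\lambda_\D^{-1}\bias^d\sqrt{\T}\,\En[\cdot]^{1/2} \leq 5a_n$) is slightly tidier than the paper's separate self-bounding argument in the non-cone case, but it yields the identical constants.
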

\begin{proof}
By the Cauchy-Schwarz inequality and Lemma \ref{appx-mlogit score bound}, 
\begin{align}
	\sumT \En \left[(p_\t(x_i) - \dt ) {x_i^*}' \dtildet \right]   
	& \leq \sumj \sqrt{\sumT \En \left[(p_\t(x_i) - \dt) {x_{i,j}^*} \right]^2} \sqrt{\sumT \dtildetj^2}		\nonumber \\
	& \leq \maxj \left\{ \left\| \En \left[(p_\t(x_i) - \dt) {x_{i,j}^*} \right] \right\|_2 \right\}  \sumj  \left\| \dtildej  \right\|_2			
	& \leq \frac{\lambda_\D}{2} \vertiii{ \dtilde }, 	\label{appx-mlogit score variance}
\end{align}
with probability at least $1 - \mathcal{P}$. Applying the Cauchy-Schwarz inequality, the bias condition of Assumption \ref{sparsity}, and Cauchy-Schwarz again yields
\begin{align}
	\sumT \En \left[(\pttrue - p_\t(x_i)) {x_i^*}' \dtildet \right]    
	& \leq    \sumT \En \left[(\pttrue - p_\t(x_i) )^2\right]^{1/2}  \En \left[({x_i^*}' \dtildet)^2 \right]^{1/2}		\nonumber \\
	& \leq    \bias^d \sumT  \En \left[({x_i^*}' \dtildet)^2 \right]^{1/2}		\nonumber  \\
	& \leq     \bias^d \sqrt{\T} \En [ \| \{ {x_i^*}' \dtildet\}_{\NT}  \|_2^2]^{1/2}.		\label{appx-mlogit score bias} 
\end{align}

Combining Equations \eqref{appx-mlogit score variance} and \eqref{appx-mlogit score bias}, we have, probability at least $1 - \mathcal{P}$,
\begin{align}
	\sumT \En \left[(\pttrue - \dt ) {x_i^*}' \dtildet \right] &  = \sumT \En \left[(p_\t(x_i) - \dt ) {x_i^*}' \dtildet \right] 		\nonumber \\
		& \qquad  + \sumT \En \left[(\pttrue - p_\t(x_i)) {x_i^*}' \dtildet \right]       	\nonumber \\
	& \leq  \frac{\lambda_\D}{2} \vertiii{ \dtilde } + \bias^d \sqrt{\T} \En [ \| \{ {x_i^*}' \dtildet\}_{\NT}  \|_2^2]^{1/2}.	\label{appx-mlogit score}
\end{align}

By the optimality of $\dtilde$, 
$\M(\gtrue + \dtilde) + \lambda_\D \vertiii{\gtrue + \dtilde} \leq  \M(\gtrue) + \lambda_\D \vertiii{\gtrue}$,
and so
	\[\lambda_\D \left\{ \vertiii{\gtrue}  -  \vertiii{\gtrue + \dtilde}  \right\}  \geq \M(\gtrue + \dtilde) -   \M(\gtrue)		
	 \geq \sumT \En \left[(\pttrue - \dt) {x_i^*}' \dtildet \right],\]
applying the convexity of $\M$. Using the bound in \Eqref{appx-mlogit score} and rearranging we find that
	\[0  \leq  \lambda_\D \left\{ \vertiii{\gtrue}  -  \vertiii{\gtrue + \dtilde}  \right\} + \frac{\lambda_\D}{2} \vertiii{ \dtilde } + \bias^d \sqrt{\T} \En [ \| \{ {x_i^*}' \dtildet\}_{\NT}  \|_2^2]^{1/2}.\]
Dividing through $\lambda_\D$ and decomposing the supports, we find that
\begin{align*}
	0 & \leq  \frac{1}{2} \vertiii{ \dtilde }    +  \left\{ \vertiii{\gtrue} -  \vertiii{\gtrue + \dtilde} \right\} +  \frac{\bias^d \sqrt{\T}}{\lambda_\D} \En [ \| \{ {x_i^*}' \dtildet\}_{\NT}  \|_2^2]^{1/2}		\\
	& = \frac{1}{2} \vertiii{ \tilde{\delta}_{\newdot, S_*}} +  \frac{1}{2} \vertiii{ \tilde{\delta}_{\newdot, S_*^c}} + \vertiii{\gamma^*_{\newdot, S_*}} - \vertiii{\gamma^*_{\newdot, S_*} + \tilde{\delta}_{\newdot, S_*}} - \vertiii{ \tilde{\delta}_{\newdot, S_*^c}} +  \frac{\bias^d \sqrt{\T}}{\lambda_\D} \En [ \| \{ {x_i^*}' \dtildet\}_{\NT}  \|_2^2]^{1/2},
\end{align*}
because $\gamma^*_{\newdot, S_*^c} = 0$. Collecting terms and applying the triangle inequality yields
\begin{align*}
	\frac{1}{2} \vertiii{ \tilde{\delta}_{\newdot, S_*^c}} & \leq \frac{1}{2} \vertiii{ \tilde{\delta}_{\newdot, S_*}} + \left| \vertiii{\gamma^*_{\newdot, S_*}} - \vertiii{\gamma^*_{\newdot, S_*} + \tilde{\delta}_{\newdot, S_*}}\right|  +  \frac{\bias^d \sqrt{\T}}{\lambda_\D} \En [ \| \{ {x_i^*}' \dtildet\}_{\NT}  \|_2^2]^{1/2}  		\\
	& \leq \frac{1}{2} \vertiii{ \tilde{\delta}_{\newdot, S_*}} +  \vertiii{\gamma^*_{\newdot, S_*} - \left( \gamma^*_{\newdot, S_*} + \tilde{\delta}_{\newdot, S_*} \right)}  +  \frac{\bias^d \sqrt{\T}}{\lambda_\D} \En [ \| \{ {x_i^*}' \dtildet\}_{\NT}  \|_2^2]^{1/2}	  		\\
	& =  \frac{1}{2} \vertiii{ \tilde{\delta}_{\newdot, S_*}} +  \vertiii{ \tilde{\delta}_{\newdot, S_*}} +  \frac{\bias^d \sqrt{\T}}{\lambda_\D} \En [ \| \{ {x_i^*}' \dtildet\}_{\NT}  \|_2^2]^{1/2}.
\end{align*}
Therefore with probability at least $1 - \mathcal{P}$
\begin{equation}
	\label{appx-mlogit cases}
	\vertiii{ \tilde{\delta}_{\newdot, S_*^c}} \leq  3 \vertiii{ \tilde{\delta}_{\newdot, S_*}} +  \frac{2 \bias^d \sqrt{\T}}{\lambda_\D} \En [ \| \{ {x_i^*}' \dtildet\}_{\NT}  \|_2^2]^{1/2}.
\end{equation}

Consider two cases based on the upper bound in \eqref{appx-mlogit cases}. First, suppose that $\dtilde$ obeys the cone constraint of \Eqref{RE full} in the definition of $\kappa_\D^2$. This implies
\begin{align}
	\vertiii{\dtilde} & \leq 5 \vertiii{ \tilde{\delta}_{\newdot, S_*}}  \leq 5 \sqrt{|S_*|} \left\| \tilde{\delta}_{\newdot, S_*}\right\|_2 			 \leq  \frac{ 5 \sqrt{|S_*|} }{\kappa_\D} \En [ \| \{ {x_i^*}' \dtildet\}_{\NT}  \|_2^2]^{1/2},     \label{appx-mlogit group norm 1}
\end{align}
by the Cauchy-Schwarz inequality, the restricted eigenvalue definition of \Eqref{RE full}, and noting that $\sumT \dtildet' Q \dtildet =  \En [ \| \{ {x_i^*}' \dtildet\}_{\NT}  \|_2^2]$. Collecting across the second and third inequalities yields 
\begin{equation}
	\label{appx-mlogit group norm 2}
	\vertiii{ \tilde{\delta}_{\newdot, S_*}} \leq \frac{\sqrt{|S_*|} }{\kappa_\D} \En [ \| \{ {x_i^*}' \dtildet\}_{\NT}  \|_2^2]^{1/2}.
\end{equation}

On the other hand, if the cone constraint fails, then $\vertiii{ \tilde{\delta}_{\newdot, S_*}}  <  \frac{1}{4} \vertiii{ \tilde{\delta}_{\newdot, S_*^c}}$. Using this for the first and third inequalities, and \Eqref{appx-mlogit cases} for the second, we have
\begin{align*}
	\vertiii{\dtilde}  \leq \frac{5}{4} \vertiii{ \tilde{\delta}_{\newdot, S_*^c}} 	&  \leq \frac{15}{4} \vertiii{ \tilde{\delta}_{\newdot, S_*}} +  \frac{5}{2} \frac{ \bias^d \sqrt{\T}}{\lambda_\D} \En [ \| \{ {x_i^*}' \dtildet\}_{\NT}  \|_2^2]^{1/2}		 \\
	& \leq \frac{15}{16}\vertiii{ \tilde{\delta}_{\newdot, S_*^c}}  +  \frac{5}{2} \frac{ \bias^d \sqrt{\T}}{\lambda_\D} \En [ \| \{ {x_i^*}' \dtildet\}_{\NT}  \|_2^2]^{1/2}
\end{align*}
Combining the right hand side of the first line with third lines yields $\vertiii{ \tilde{\delta}_{\newdot, S_*^c}}  \leq \linebreak 8 \bias^d \sqrt{\T} \En [ \| \{ {x_i^*}' \dtildet\}_{\NT}  \|_2^2]^{1/2} \big/ \lambda_\D$. 
Plugging this back into the last line we obtain the bound
\begin{equation}
	\label{appx-mlogit group norm 3}
	\vertiii{\dtilde}  \leq  10 \frac{ \bias^d \sqrt{\T}}{\lambda_\D} \En [ \| \{ {x_i^*}' \dtildet\}_{\NT}  \|_2^2]^{1/2},
\end{equation}
while instead, plugging it into the failure of the cone constraint yields
\begin{equation}
	\label{appx-mlogit group norm 4}
	\vertiii{ \tilde{\delta}_{\newdot, S_*}} \leq 2 \frac{ \bias^d \sqrt{\T}}{\lambda_\D} \En [ \| \{ {x_i^*}' \dtildet\}_{\NT}  \|_2^2]^{1/2}.
\end{equation}
Combining Equations \eqref{appx-mlogit group norm 1} and \eqref{appx-mlogit group norm 3} gives the first claim of the lemma and Equations \eqref{appx-mlogit group norm 2} and \eqref{appx-mlogit group norm 4} give the second. 
\end{proof}

\subsection{Proof of Theorem \ref{thm-mlogit}}

Define $\dtilde = \gtilde - \gtrue$. By the optimality of $\dtilde$, we have
\[ \M(\gtrue + \dtilde)   + \lambda_\D \vertiii{\gtrue + \dtilde} \leq  \M(\gtrue) + \lambda_\D \vertiii{\gtrue}.\]
Rearranging and subtracting the score, we have
\begin{align}
	\begin{split}
		\label{appx-mlogit start}
		& \M(\gtrue + \dtilde)  -  \M(\gtrue)  -  \sumT \En \left[(\pttrue - \dt) {x_i^*}' \right] \dtildet		\\
		& \qquad \quad \leq  \lambda_\D \left\{ \vertiii{\gtrue}  -  \vertiii{\gtrue + \dtilde}  \right\}  -  \sumT \En \left[(\pttrue - \dt) {x_i^*}' \right] \dtildet.
	\end{split}
\end{align}
The proof proceeds by deriving a further upper bound to the right and a quadratic lower bound of the left. The combination of these will yield a bound on $\En[({x_i^*}'\dtildet )^2] ^{1/2}$. 

Begin with the right side of \Eqref{appx-mlogit start}. For the penalized difference of coefficients we have
$\vertiii{\gamma^*_{\newdot, S_*^c}} - \vertiii{\gamma^*_{\newdot, S_*^c} + \tilde{\delta}_{\newdot, S_*^c}} = \vertiii{ \tilde{\delta}_{\newdot, S_*^c}}$,
because $\gamma^*_{\newdot, S_*^c} = 0$. Therefore,
\begin{align*}
	\vertiii{\gtrue} - \vertiii{\gtrue + \dtilde} & = \vertiii{\gamma^*_{\newdot, S_*}} - \vertiii{\gamma^*_{\newdot, S_*} + \tilde{\delta}_{\newdot, S_*}} - \vertiii{ \tilde{\delta}_{\newdot, S_*^c}}		 \\
	& \leq \vertiii{\gamma^*_{\newdot, S_*}} - \vertiii{\gamma^*_{\newdot, S_*} + \tilde{\delta}_{\newdot, S_*}}		 \\
	& \leq \left| \vertiii{\gamma^*_{\newdot, S_*}} - \vertiii{\gamma^*_{\newdot, S_*} + \tilde{\delta}_{\newdot, S_*}} \right|		 \\
	& \leq \vertiii{\gamma^*_{\newdot, S_*} - \left(\gamma^*_{\newdot, S_*} + \tilde{\delta}_{\newdot, S_*}\right) }		 = \vertiii{ \tilde{\delta}_{\newdot, S_*}}, 
\end{align*}
where the first inequality reflects dropping the nonpositive final term (the norm is nonnegative) and the third inequality follows from the triangle inequality. Using this result for the first term and the bound \eqref{appx-mlogit score} for the second, the right side of \Eqref{appx-mlogit start} is bounded by
\begin{align}
	& \lambda_\D \vertiii{ \tilde{\delta}_{\newdot, S_*}}   +  \frac{\lambda_\D}{2} \vertiii{ \dtilde } + \bias^d \sqrt{\T} \En [ \| \{ {x_i^*}' \dtildet\}_{\NT}  \|_2^2]^{1/2} 		\nonumber \\
	& \qquad \leq \left(         \lambda_\D \left\{ \frac{  \sqrt{|S_*|} }{\kappa_\D}  \vee  \frac{ 2 \bias^d \sqrt{\T}}{\lambda_\D} \right\}         +        \frac{\lambda_\D}{2}\left\{ \frac{ 5 \sqrt{|S_*|} }{\kappa_\D}  \vee  \frac{ 10 \bias^d \sqrt{\T}}{\lambda_\D} \right\}       +      \bias^d \sqrt{\T}             \right)   \En [ \| \{ {x_i^*}' \dtildet\}_{\NT}  \|_2^2]^{1/2} 		\nonumber \\
	& \qquad \leq \left(    6 \frac{ \lambda_\D \sqrt{|S_*|} }{\kappa_\D}        +     8  \bias^d \sqrt{\T}             \right)   \En [ \| \{ {x_i^*}' \dtildet\}_{\NT}  \|_2^2]^{1/2},			\label{appx-mlogit upper} 	
\end{align}
where the second inequality applies Lemma \ref{appx-mlogit cone} and the third bounds the maximum by the sum.

Now turn to the left side of \Eqref{appx-mlogit start}. Our goal is to show that this is bounded below by a quadratic function. We apply the bounds for \possessivecite{Bach2010_EJS} modified self-concordant functions. To show that $\M(\cdot)$ belongs to this class, we must bound the third derivative in terms of the Hessian. Recall that $\ptany = \exp\{ {x_i^*}'\ganyt\}/\left(1 + \sum_{\NT} \exp\{ {x_i^*}'\ganyt\} \right)$ and the $\T$-square matrix $\H(\{{x_i^*}'\ganyt\}_{\NT})$ has $(\t, \t') \in \NT^2$ entry given by
\begin{equation*}
	\H(\{{x_i^*}'\ganyt\}_{\NT})_{[\t,\t']} = \begin{cases}
				\ptany (1- \ptany) 	& \text{ if } \t = \t'	\\
				- \ptany \hat{p}_{\t'}(\{{x_i^*}'\ganyt\}_{\NT}) 	 & \text{ if } \t \neq \t'
			\end{cases}
\end{equation*}
First, note that $\M(\gany)$ can be written as
\[\M(\gany) = \En\biggl[ \log\biggl(1 + \sumT \exp\{ {x_i^*}'\ganyt\} \biggr)   -  \sumT \dt ({x_i^*}'\ganyt)\biggr].\]
Define $F: \mathbb{R}^\T \to \mathbb{R}$ as $F(w) = \log\left(1 + \sumT \exp(w_\t)\right)$, so that $\M(\gany) = \En\left[ F(w_i) - \sumT \dt w_{i,\t} \right]$, where $w_{i,\t} = {x_i^*}'\ganyt$ and $w_i = \{w_{i,\t}\}_{\NT}$. Then for any $w \in \mathbb{R}^\T$, $v \in \mathbb{R}^\T$, and scalar $\alpha$, define $g(\alpha) = F(w + \alpha v): \mathbb{R} \to \mathbb{R}$. We verify the conditions of \citeasnoun[Lemma 1]{Bach2010_EJS} for this $g(\alpha)$ and $F(w)$. This involves finding the third derivative of $g(\alpha)$, and bounding it in terms of the second (i.e. the Hessian). To this end, note that the multinomial function has the property that $\partial \ptany / \partial \ganyt = \ptany (1- \ptany) {x_i^*}$ and $\partial \ptany / \partial \gamma_{\t',\newdot} = - \ptany \hat{p}_{\t'}(\{{x_i^*}'\ganyt\}_{\NT}) {x_i^*}$. From these, we find 	\[g'(\alpha) = v'F'(w + \alpha v) = \sumT v_\t \hat{p}_\t(w + \alpha v) \qquad \text{and}  \qquad g''(\alpha) = v'F''(w + \alpha v) v = v' \H(w + \alpha v) v.\]
To bound $g'''(\alpha)$, we again use the derivatives of $\ptany$ to find the derivatives of elements $\H(w)$. Routine calculations give, for any $r \neq s \neq \t$:
\begin{align*}
	\partial \H(w)_{\t,\t}/ \partial w_\t & =  \hat{p}_\t(w) ( 1- \hat{p}_\t(w)) (1 - 2 \hat{p}_\t(w)) = \H(w)_{\t,\t} (1 - 2 \hat{p}_\t(w))		\\
	\partial \H(w)_{\t,\t}/ \partial w_r & = - \hat{p}_\t(w) \hat{p}_r(w) ( 1- \hat{p}_\t(w))  + \hat{p}_\t(w)^2\hat{p}_r(w)  = \H(w)_{\t,\t} (\hat{p}_\t(w)\hat{p}_r(w)( 1- \hat{p}_\t(w))^{-1} -  \hat{p}_r(w))		\\
	\partial \H(w)_{\t,s}/ \partial w_\t & =  - \hat{p}_\t(w) \hat{p}_s(w)  (1 - 2 \hat{p}_\t(w)) = \H(w)_{\t,s} (1 - 2 \hat{p}_\t(w))		\\
	\partial \H(w)_{\t,s}/ \partial w_r & =  - \hat{p}_\t(w) \hat{p}_s(w)  ( - 2 \hat{p}_r(w)) = \H(w)_{\t,s} ( -  2 \hat{p}_r(w)).
\end{align*}
Each derivative returns the same Hessian element multiplied by term bounded by 2 in absolute value. Let $a_r$ represent this factor. Then we bound
\begin{multline*}
	g'''(\alpha)  = \left| \sum_{r \in \NT} v_r  \left. \frac{\partial v' \H(\tilde{w}) v  }{\partial w_r}\right|_{\tilde{w} = w + \alpha v} \right| = \left| \sum_{r \in \NT} v_r  v' \H(w + \alpha v) v a_r \right|  	\\	 \leq    \sum_{r \in \NT} v' \H(w + \alpha v) v |v_r|   |a_r |  \leq   2  v' \H(w + \alpha v) v \sum_{r \in \NT}  |v_r|    = 2 \| v \|_1 g''(\alpha) \leq 2 \sqrt{\T}\| v \|_2 g''(\alpha) .
\end{multline*}
Applying \possessivecite{Bach2010_EJS} Lemma 1 to each observation, as in \citeasnoun{Belloni-Chernozhukov-Wei2013_logit}, with $w_i = \{{x_i^*}'\gtruet \}_{\NT}$ and $v_i = \{ {x_i^*}' \dtildet\}_{\NT}$ we get the lower bound
\begin{align}
	M(\gtrue + \dtilde) &  - \M(\gtrue) -  \sumT \En \left[(\pttrue - \dt) {x_i^*}' \right] \dtildet  		\nonumber \\
	&   \geq     \En \left[ \frac{v_i' \H(\{{x_i^*}'\ganyt\}_{\NT}) v_i}{ 4  \T  \| v_i  \|_2^2} \left( e^{-2\| v_i  \|_2} + 2 \| v_i  \|_2 - 1\right) \right]		\nonumber \\
	& \geq  \En \left[ \frac{v_i' \H(\{{x_i^*}'\ganyt\}_{\NT}) v_i}{ 4  \T  \| v_i  \|_2^2} \left( 2 \| v_i  \|_2^2 - \frac{4}{3} \| v_i  \|_2^3\right) \right],	\label{appx-mlogit Bach}
\end{align}
where the second inequality follows from \citeasnoun[Lemma 9]{Belloni-Chernozhukov-Wei2013_logit}. 

\citeasnoun[Theorem 1]{Tanabe-Sagae1992_JRSSB} give $\H(\{{x_i^*}'\gtruet\}_{\NT}) \geq \phi_{\min}\{ \H(\{{x_i^*}'\gtruet\}_{\NT})\} \mathcal{I}_\T $, in the positive definite sense, where $\phi_{\min}(A)$ denotes the smallest eigenvalue of $A$ and $\mathcal{I}_T$ is the $\T \times \T$ identity matrix. Then
\[ \phi_{\min}\{ \H(\{{x_i^*}\gtruet\}_{\NT})\} \geq \det\{ \H(\{{x_i^*}'\ganyt\}_{\NT}) \} = \prod_{\t \in \NTbar} \pttrue \geq \left(p_{\min} \big/ A_p\right)^{\Tbar},\]
where $p_0(\{{x_i^*}'\gtruet\}_{\NT}) = 1 - \sum_{\t \in \NT} \pttrue$ and the first inequality is also due to \citeasnoun{Tanabe-Sagae1992_JRSSB}. These results imply that $v_i' \H(\{{x_i^*}'\ganyt\}_{\NT}) v_i \geq (p_{\min} / A_p)^{\Tbar} v_i'\mathcal{I}_\T v_i = (p_{\min} / A_p)^{\Tbar} \| v_i  \|_2^2$ and therefore
\begin{align}
	\En \left[ \frac{v_i' \H(\{{x_i^*}'\ganyt\}_{\NT}) v_i}{ 4  \T  \| v_i  \|_2^2} \left( 2 \| v_i  \|_2^2 - \frac{4}{3} \| v_i  \|_2^3\right) \right]   &  \geq    \left(p_{\min} \big/ A_p\right)^{\Tbar} \frac{1}{4  \T } \En \left[ 2 \| v_i  \|_2^2 - \frac{4}{3} \| v_i  \|_2^3 \right] 		\nonumber \\
	& = \left(p_{\min} \big/ A_p\right)^{\Tbar} \frac{1}{\T } \frac{\En [ \| v_i  \|_2^2]}{2} \left( 1 - \frac{2}{3}\frac{\En [ \| v_i  \|_2^3]}{\En [ \| v_i  \|_2^2]} \right).		\label{appx-mlogit Hessian cases}
\end{align}

Recall that $v_i = \{ {x_i^*}' \dtildet\}_{\NT}$. To prove a quadratic lower bound, consider two cases, depending on whether 
\[\frac{1}{2} \left( 1 - \frac{2}{3}\frac{\En [ \| \{ {x_i^*}' \dtildet\}_{\NT}  \|_2^3]}{\En [ \| \{ {x_i^*}' \dtildet\}_{\NT}  \|_2^2]} \right)\]
is above or below $1 / A_K$. In the first case, combining Equations \eqref{appx-mlogit Bach} and \eqref{appx-mlogit Hessian cases} gives
\begin{equation}
	\label{appx-mlogit case 1}
	\M(\gtrue + \dtilde)   - \M(\gtrue) -  \sumT \En \left[(\pttrue - \dt) {x_i^*}' \right] \dtildet    \geq   \left(p_{\min} \big/ A_p\right)^{\Tbar} \frac{1}{\T } \frac{\En [ \| \{ {x_i^*}' \dtildet\}_{\NT}  \|_2^2]}{A_K}.
\end{equation}
Now consider the second case, where this bound does not hold. By Assumption \ref{bounded}, the Cauchy-Schwarz inequality, and the conclusion of Lemma \ref{appx-mlogit cone}
\begin{align*}
	\| \{ {x_i^*}' \dtildet \}_{\NT}  \|_1 = \sumT \sumj \left| {x_{i,j}^*} \dtildetj \right|      \leq      \mathcal{X} \left\| \dtilde \right\|_1       &  \leq      \sqrt{\T} \mathcal{X} \vertiii{\dtilde}      		\\
	&  \leq      \sqrt{\T} \mathcal{X}  \left\{ \frac{ 5 \sqrt{|S_*|} }{\kappa_\D}  \vee  \frac{ 10 \bias^d \sqrt{\T}}{\lambda_\D} \right\}\En [ \| \{ {x_i^*}' \dtildet\}_{\NT}  \|_2^2]^{1/2}.
\end{align*}
Hence, by subadditivity (to bound the $\ell_2$ norm by the $\ell_1$ norm), 
\[ \En [ \| \{ {x_i^*}' \dtildet\}_{\NT}  \|_2^3] \leq \En [ \| \{ {x_i^*}' \dtildet\}_{\NT}  \|_2^2 \| \{ {x_i^*}' \dtildet\}_{\NT}  \|_1] \leq \En [ \| \{ {x_i^*}' \dtildet\}_{\NT}  \|_2^2]^{3/2}   \sqrt{\T} \mathcal{X}  \left\{ \frac{ 5 \sqrt{|S_*|} }{\kappa_\D}  \vee  \frac{ 10 \bias^d \sqrt{\T}}{\lambda_\D} \right\}.  \]
Thus
\[\frac{1}{A_K} > \frac{1}{2} \left( 1 - \frac{2}{3}\frac{\En [ \| \{ {x_i^*}' \dtildet\}_{\NT}  \|_2^3]}{\En [ \| \{ {x_i^*}' \dtildet\}_{\NT}  \|_2^2]} \right)  \geq  \frac{1}{2} \left( 1 -    \frac{2}{3} \frac{\mathcal{X} \sqrt{\T} }{\kappa_\D \lambda_\D} \left( 5 \lambda_\D \sqrt{|S_*|}  +   10 \kappa_\D \bias^d \sqrt{\T}  \right)  \En [ \| \{ {x_i^*}' \dtildet\}_{\NT}  \|_2^2]^{1/2} \right),   \]
which is equivalent to
\[\En [ \| \{ {x_i^*}' \dtildet\}_{\NT}  \|_2^2]^{1/2} > \left( 1 - \frac{2}{A_K}\right) \frac{3}{2}  \frac{\kappa_\D \lambda_\D}{\mathcal{X} \sqrt{\T} } \left( 5 \lambda_\D \sqrt{|S_*|}  +   10 \kappa_\D \bias^d \sqrt{\T}  \right)^{-1}     := r_n.\]
Because $\M(\gtrue + \dany) - \M(\gany) - \sumT \En \left[(\pttrue - \dt) {x_i^*}' \right] \danyt$ is convex in $\dany$, and hence any line segment lies above the function, we know that $\En [ \| \{ {x_i^*}' \dtildet\}_{\NT}  \|_2^2]^{1/2} > r_n$, so we have
\begin{align*}
\M(\gtrue + \dtilde) - \M(\gany) - \sumT \En \left[(\pttrue - \dt) {x_i^*}' \right] \dtildet \geq r_n^2 & \geq  r_n^2 \frac{\En [ \| \{ {x_i^*}' \dtildet \}_{\NT}  \|_2^2]^{1/2}}{r_n} 		\\
& = r_n \En [ \| \{ {x_i^*}' \dtildet \}_{\NT}  \|_2^2]^{1/2}.
\end{align*}
Combining this result with Equations \eqref{appx-mlogit start} and \eqref{appx-mlogit upper}, we have
\begin{multline*}
	\left( 1 - \frac{2}{A_K}\right) \frac{3}{2} \frac{\kappa_\D \lambda_\D}{\mathcal{X} \sqrt{\T} } \left( 5 \lambda_\D \sqrt{|S_*|}  +   10 \kappa_\D \bias^d \sqrt{\T}  \right)^{-1}  \En [ \| \{ {x_i^*}' \danyt\}_{\NT}  \|_2^2]^{1/2}      		\\		\leq      \left(    6 \frac{ \lambda_\D \sqrt{|S_*|} }{\kappa_\D}        +     8  \bias^d \sqrt{\T}             \right)   \En [ \| \{ {x_i^*}' \dtildet\}_{\NT}  \|_2^2]^{1/2},
\end{multline*}
which is impossible under the restriction on $A_K$. Therefore, \Eqref{appx-mlogit case 1} must hold.\footnote{This analysis is conceptually similar to using \possessivecite{Belloni-Chernozhukov2011_AoS} restricted nonlinearity impact coefficient, but our characterization is different.} Combining this with Equations \eqref{appx-mlogit start} and \eqref{appx-mlogit upper}, we find that
\[\left(p_{\min} \big/ A_p\right)^{\Tbar} \frac{1}{\T } \frac{\En [ \| \{ {x_i^*}' \dtildet\}_{\NT}  \|_2^2]}{A_K} \leq  \left(    6 \frac{ \lambda_\D \sqrt{|S_*|} }{\kappa_\D}        +     8  \bias^d \sqrt{\T}             \right)   \En [ \| \{ {x_i^*}' \dtildet\}_{\NT}  \|_2^2]^{1/2}.\]
Thus, dividing through and applying the union bound we find that
\begin{equation}
	\label{appx-mlogit log odds rate}
	\max_{\t \in \NT} \En [  ({x_i^*}' \dtildet)^2]^{1/2}      \leq        \En [ \| \{ {x_i^*}' \dtildet\}_{\NT}  \|_2^2]^{1/2}         \leq         \left(A_p \big/ p_{\min}\right)^{\Tbar}     \T A_K \left(    6 \frac{ \lambda_\D \sqrt{|S_*|} }{\kappa_\D}    +   8  \bias^d \sqrt{\T}   \right)   .
\end{equation}

To bound the propensity score error, we apply the mean value theorem and the form of $\partial \ptany / \partial \ganyt$. We must linearize with respect to $\t$ only (recall that $\pttilde$ depends on all of $\gtilde$). To this end, define $M_\t$ as the $\T$-vector with entry $\t$ given by ${x_i^*}'\gtruet + \tilde{m}_\t{x_i^*}'\gtildet$ for a scalar $\tilde{m}_\t \in [0,1]$ and entries $\t' \in \NT \setminus \{\t\}$ equal to ${x_i^*}'\gamma_{\t'}$. Then we have
\begin{equation}
	\label{appx-mlogit rate mvt}
	\left|\pttilde - \pttrue \right|   =   \left| \hat{p}_\t(M_\t) [1 - \hat{p}_\t(M_\t)]{x_i^*}'\dtildet\right|   \leq   \left|{x_i^*}'\dtildet\right|.
\end{equation}
Using this result coupled with the triangle inequality, the bias condition, and \Eqref{appx-mlogit log odds rate}, we find
\begin{align*}
	\En[(\pttilde - p_\t(x_i))^2] ^{1/2} & \leq \En[(\pttilde - \pttrue)^2] ^{1/2} + \En[(\pttrue - p_\t(x_i))^2] ^{1/2} 		\\
	& \leq \En\left[({x_i^*}'\dtildet )^2\right] ^{1/2} + \bias^d		\\
	& \leq \left(A_p \big/ p_{\min}\right)^{\Tbar}     \T A_K \left(    6 \frac{ \lambda_\D \sqrt{|S_*|} }{\kappa_\D}        +     8  \bias^d \sqrt{\T}             \right)   +   \bias^d.
\end{align*}

The $\ell_1$ bound follows from \Eqref{appx-mlogit log odds rate}, the Cauchy-Schwarz inequality, and \Eqref{SE}:
\begin{align*}
	\left\| \gtildet - \gtruet \right\|_1 \leq \sqrt{|\tilde{S}^\D \cup S_\D^*|} \left\| \gtildet - \gtruet \right\|_{2,p} \leq \left(\frac{ |\tilde{S}^\D \cup S_\D^*| }{ \underline{\phi}\{Q, \tilde{S}^\D \cup S_\D^*\} } \right)^{1/2} \En[({x_i^*}'(\gtildet - \gtruet))^2]^{1/2}.	
\end{align*}

Finally, we bound the size of the selected set of coefficients. First, note that optimality of $\gtilde$ ensures that $| \tilde{S}^\D | \leq n$. Then, restating the conclusion Lemma \ref{appx-mlogit S-hat} using the notation of the Theorem and the rate result \eqref{appx-mlogit log odds rate}, then bounding $\overline{\phi}$ by $\dbarphi$ we find that
\[ | \tilde{S}^\D |    \leq    |S_\D^* |4 L_n \dbarphi\{Q, |\tilde{S}^\D|\}.\]
The argument now parallels that used by \citeasnoun{Belloni-Chernozhukov2013_Bern}, relying on their result on the sublinearity of sparse eigenvalues. Let $\lceil m \rceil$ be the ceiling function and note that $\lceil{m}\rceil \leq 2m$. For any $m \in \N_Q^\D$, suppose that $|\tilde{S}^\D| > m$. Then, 
\begin{align*}
	| \tilde{S}^\D |& \leq |S_\D^* |4 L_n \dbarphi\{Q, m (|\tilde{S}^\D| / m) \} 		\\
	& \leq  \left\lceil |\tilde{S}^\D| / m \right \rceil  |S_\D^* |4 L_n   \dbarphi\{Q, m  \}		\\
	& \leq  ( |\tilde{S}^\D| / m) |S_\D^* | 8 L_n   \dbarphi\{Q, m\}.
\end{align*}
Rearranging gives 
$m \leq |S_\D^* | 8 L_n   \dbarphi\{Q, m\}$
whence $m \not\in \N_Q^\D$. Minimizing over $\N_Q^\D$ gives the result.	 \qed

\subsection{Proof of Theorem \ref{thm-post mlogit}}

Define $\dhat = \ghat - \gtrue$. Many of the arguments parallel those for Theorem \ref{thm-mlogit}. The key differences are that a quadratic lower bound for $\M(\gtrue + \dhat) - \M(\gtrue) - \sumT \En \left[(\pttrue - \dt) {x_i^*}' \right] \dhatt$ may occur, but is not necessary, and $\dhat$ may not belong to the cone of the restricted eigenvalues, but obeys the sparse eigenvalue constraints. 

We first give a suitable upper bound for $\M(\gtrue + \dhat) - \M(\gtrue) - \sumT \En \left[(\pttrue - \dt) {x_i^*}' \right] \dhatt$. By the Cauchy-Schwarz inequality and the definition of the sparse eigenvalues of \Eqref{SE},
\begin{align}
	\vertiii{ \dhat } & \leq  \sqrt{ \left| \hat{S}_\D \cup S_\D^* \right|} \sqrt{ \sumT \sum_{j \in \hat{S}_\D \cup S_\D^*}  \dhattj^2 }	 		\nonumber  \\
	& \leq  \sqrt{ \left| \hat{S}_\D \cup S_\D^* \right|} \sqrt{ \sumT \underline{\phi}\left\{Q, \hat{S}_\D \cup S_\D^*  \right\}^{-2} \dhatt' Q \dhatt }		\nonumber  \\
	& =  \sqrt{ \left| \hat{S}_\D \cup S_\D^* \right|}  \underline{\phi}\left\{Q, \hat{S}_\D \cup S_\D^*  \right\}^{-1} \En [ \| \{ {x_i^*}' \dhatt\}_{\NT}  \|_2^2]^{1/2}.				\label{appx-post mlogit norm}
\end{align}
Following identical steps to Equations \eqref{appx-mlogit score variance}, \eqref{appx-mlogit score bias}, and \eqref{appx-mlogit score}, but with $\dhat$ in place of $\dtilde$, and then using the above bound, we have
\begin{align}
	\left| \sumT \En \left[(\pttrue - \dt) {x_i^*}' \right] \dhatt \right|  & \leq   \frac{\lambda_\D}{2} \vertiii{ \dhat } + \bias^d \sqrt{\T} \En [ \| \{ {x_i^*}' \dhatt\}_{\NT}  \|_2^2]^{1/2} 		 \nonumber \\
	& \leq    \left(  \frac{\lambda_\D}{2}  \frac{ \sqrt{ | \hat{S}_\D \cup S_\D^* |} }{  \underline{\phi}\{Q, \hat{S}_\D \cup S_\D^*  \} }    + \bias^d \sqrt{\T}    \right)   \En [ \| \{ {x_i^*}' \dhatt\}_{\NT}  \|_2^2]^{1/2}.		\label{appx-post mlogit score}
\end{align}

Next we turn to $\M(\gtrue + \dhat)  - \M(\gtrue)$. By optimality of the post selection estimator $\M(\ghat) \leq \M(\gtilde)$, as $\tilde{S}^\D \subset \hat{S}_\D$ by construction, and hence $\M(\gtrue + \dhat)  - \M(\gtrue) \leq \M(\gtilde)  - \M(\gtrue)$. By the mean value theorem, for scalars $\{m_\t \in [0,1]\}_{\NT}$ we have
\begin{align}
	\M(\gtrue + \dtilde)  - \M(\gtrue)  &  =  \sumT \En \left[(\dt - \hat{p}_\t(\{{x_i^*}'\gtruet + m_\t{x_i^*}'\dtildet\}) ) {x_i^*}'\dtildet \right]		\nonumber \\
	& = \sumT \En \left[(\dt  - \pttrue) {x_i^*}'\dtildet \right]  		\nonumber \\
	& \quad  +   \sumT \En \left[(\pttrue - \hat{p}_\t(\{{x_i^*}'\gtruet + m_\t{x_i^*}'\dtildet\}) ) {x_i^*}'\dtildet \right],		\nonumber \\
	& \leq   \frac{\lambda_\D}{2} \vertiii{ \dtilde } + \bias^d \sqrt{\T} \En [ \| \{ {x_i^*}' \dtildet\}_{\NT}  \|_2^2]^{1/2}    + \sumT \En \left[m_\t ({x_i^*}'\dtildet)^2 \right].		\nonumber \\
	& \leq \left( \frac{\lambda_\D}{2}  \frac{\sqrt{ | \hat{S}_\D \cup S_\D^* |}} {  \underline{\phi}\{Q, \hat{S}_\D \cup S_\D^*  \}}     + \bias^d \sqrt{\T} \right)   \En [ \| \{ {x_i^*}' \dtildet\}_{\NT}  \|_2^2]^{1/2}    +  \En [ \| \{ {x_i^*}' \dtildet\}_{\NT}  \|_2^2],		\label{appx-mlogit loss fcn}		
\end{align}
where the first inequality follows from \Eqref{appx-mlogit score} and the same steps as in \eqref{appx-mlogit rate mvt} while the second applies \eqref{appx-post mlogit norm} with $\dtilde$ and $m_\t \leq 1$.\footnote{Applying the steps of \Eqref{appx-post mlogit norm} to $\dtilde$ is preferred to using the results of Lemma \ref{appx-mlogit cone} because it leads to the tidier expression involving $\underline{\phi}\{Q, \hat{S}_\D \cup S_\D^*  \}$, but the latter method could be substituted.}

Collecting the bounds of \eqref{appx-post mlogit score} and \eqref{appx-mlogit loss fcn}, and the definition of $R_\M$ gives
\begin{multline}
	\label{appx-post mlogit upper}
	\M(\gtrue + \dhat) - \M(\gtrue) - \sumT \En \left[(\pttrue - \dt) {x_i^*}' \right] \dhatt 	 		\\		
	\leq   \left( \frac{\lambda_\D}{2}  \frac{\sqrt{ | \hat{S}_\D \cup S_\D^* |}} {  \underline{\phi}\{Q, \hat{S}_\D \cup S_\D^*  \}}     + \bias^d \sqrt{\T} \right)  \left( \En [ \| \{ {x_i^*}' \dhatt\}_{\NT}  \|_2^2]^{1/2} +  R_\M  \right) +  R_\M^2 .
\end{multline}

Next, we turn to a lower bound. Consider the same two cases as in the proof of Theorem \ref{thm-mlogit}. In the first case, we have the quadratic lower bound:
\begin{equation}
	\label{appx-post mlogit case 1}
	M(\gtrue + \dhat)   - \M(\gtrue) -  \sumT \En \left[(\pttrue - \dt) {x_i^*}' \right] \dhatt    \geq   \left(p_{\min} \big/ A_p\right)^{\Tbar}  \frac{\En [ \| \{ {x_i^*}' \dhatt\}_{\NT}  \|_2^2]}{ \T A_K}.
\end{equation}
In the other case, this bound may not hold. Arguing as in the proof of Theorem \ref{thm-mlogit}, but applying \Eqref{appx-post mlogit norm}, we get
\[\| \{ {x_i^*}' \dhatt \}_{\NT}  \|_1 \leq \sqrt{\T} \mathcal{X} \sqrt{ | \hat{S}_\D \cup S_\D^* |}  \underline{\phi}\{Q, \hat{S}_\D \cup S_\D^*  \}^{-1} \En [ \| \{ {x_i^*}' \dtildet\}_{\NT}  \|_2^2]^{1/2}.\]
Therefore, as above, we find 
\begin{equation}
	\label{appx-post mlogit case 2}
	\M(\gtrue + \dhat) - \M(\gany) - \sumT \En \left[(\pttrue - \dt) {x_i^*}' \right] \dhatt \geq r_n \En [ \| \{ {x_i^*}' \dhatt \}_{\NT}  \|_2^2]^{1/2},
\end{equation}

\[\text{with} \qquad \qquad r_n  =  \frac{3}{2} \left( 1 - \frac{2}{A_K}\right) \frac{ \underline{\phi}\{Q, \hat{S}_\D \cup S_\D^*  \} }{\mathcal{X} \sqrt{\T} \sqrt{ | \hat{S}_\D \cup S_\D^* |}}.\]
Collecting the upper bound of \eqref{appx-post mlogit upper} and the lower bounds \eqref{appx-post mlogit case 1} and \eqref{appx-post mlogit case 2} we have
\begin{multline}
	\label{appx-post mlogit cases}
	\left\{ \left(p_{\min} \big/ A_p\right)^{\Tbar} \frac{1}{\T } \frac{\En [ \| \{ {x_i^*}' \dhatt\}_{\NT}  \|_2^2] }{A_K} \right\}    \wedge   \left\{  r_n \En [ \| \{ {x_i^*}' \dhatt \}_{\NT}  \|_2^2]^{1/2} \right\}     		\\
	\leq      \left( \frac{\lambda_\D}{2}  \frac{\sqrt{ | \hat{S}_\D \cup S_\D^* |}} {  \underline{\phi}\{Q, \hat{S}_\D \cup S_\D^*  \}}     + \bias^d \sqrt{\T} \right)  \left( \En [ \| \{ {x_i^*}' \dhatt\}_{\NT}  \|_2^2]^{1/2} +  R_\M  \right) +  R_\M^2.	
\end{multline}
Suppose the linear term is the minimum. The restrictions on $A_K$ imply, algebraically, that \Eqref{appx-post mlogit cases} yields
\begin{align*}
	r_n \En [ \| \{ {x_i^*}' \dhatt \}_{\NT}  \|_2^2]^{1/2}   &   \leq (r_n / 3) \left(  \En [ \| \{ {x_i^*}' \dhatt \}_{\NT}  \|_2^2]^{1/2} +  R_\M \right) + R_\M^2  		\\
	& \leq  (r_n / 3) \left(  \En [ \| \{ {x_i^*}' \dhatt \}_{\NT}  \|_2^2]^{1/2} +  2R_\M \right).
\end{align*}
Canceling the $r_n$ and solving yields
$\En [ \| \{ {x_i^*}' \dhatt \}_{\NT}  \|_2^2]^{1/2}   \leq    R_\M $.
On the other hand, if the quadratic term is the minimum, define
\[R_\M' = \left(A_p \big/ p_{\min}\right)^{\Tbar}\T A_K \left( 2^{-1} \lambda_\D \sqrt{ | \hat{S}_\D \cup S_\D^* |} \big/   \underline{\phi}\{Q, \hat{S}_\D \cup S_\D^*  \}     + \bias^d \sqrt{\T} \right).\]
With this notation and the quadratic term being the minimum, \Eqref{appx-post mlogit cases} becomes
\[ \En [ \| \{ {x_i^*}' \dhatt\}_{\NT}  \|_2^2] \leq R_\M'  \En [ \| \{ {x_i^*}' \dhatt\}_{\NT}  \|_2^2]^1/2 + R_\M' R_\M + \left(A_p \big/ p_{\min}\right)^{\Tbar} \T A_K R_\M^2.\]
Then, because $a^2 \leq ab + c$ implies that $a \leq b + \sqrt{c}$, we have
	\[\En [ \| \{ {x_i^*}' \dhatt \}_{\NT}  \|_2^2]^{1/2}  \leq R_\M' + \left(  R_\M' R_\M + \left(A_p \big/ p_{\min}\right)^{\Tbar} \T A_K R_\M^2 \right)^{1/2}.\]
Combining the bounds on $\En [ \| \{ {x_i^*}' \dhatt \}_{\NT}  \|_2^2]^{1/2}$ from the two cases gives
\begin{equation*}
	\label{appx-post log odds rate}
	\En [ \| \{ {x_i^*}' \dhatt \}_{\NT}  \|_2^2]^{1/2}  \leq   \left\{ R_\M \right\}  \vee  \left\{  R_\M' + \left(  R_\M' R_\M + \left(A_p \big/ p_{\min}\right)^{\Tbar} \T A_K R_\M^2 \right)^{1/2} \right\}.
\end{equation*}

From this bound on the log-odds, we bound the propensity score and the $\ell_1$ rate:
	\[\max_{\t \in \NT} \En[(\pthat - p_\t(x_i))^2] ^{1/2} \leq   \left\{R_\M \right\}  \vee  \left\{  R_\M' + \left(  R_\M' R_\M + \left(A_p \big/ p_{\min}\right)^{\Tbar} \T A_K R_\M^2 \right)^{1/2} \right\}   + \bias^d;\]
	\[\max_{\t \in \NT} \left\| \ghatt - \gtruet \right\|_1  \leq \left(\frac{ |\tilde{S}^\D \cup S_\D^*| }{ \underline{\phi}\{Q, \tilde{S}^\D \cup S_\D^*\} } \right)^{1/2} \left\{ R_\M \right\}  \vee  \left\{  R_\M' + \left(  R_\M' R_\M + \left(A_p \big/ p_{\min}\right)^{\Tbar} \T A_K R_\M^2 \right)^{1/2} \right\},\]
by arguments parallel to those used in the proof of Theorem \ref{thm-mlogit}. \qed

\section{Proofs for Group Lasso Selection and Estimation of Linear Models}

SEE SUPPLEMENTAL APPENDIX.

\end{appendices}



	\normalsize
	\section{References}
	\small
	\singlespacing
	\begingroup
	\renewcommand{\section}[2]{}	
	\bibliography{Farrell2015_JoE--Corrigendum-Bibliography}{}
	\bibliographystyle{joe}
	\endgroup

\clearpage

\pagestyle{empty}

\begin{sidewaystable}
	\begin{center}
		\begin{threeparttable}
			\caption{Analysis of NSW Demonstration: Treatment Effects on the Treated and Confidence Intervals for Various Specifications}
			\label{table-lalonde}
			\small
			\begin{tabular}{l r r c r r c r c r   }
				\hline
				\hline\noalign{\smallskip}
				& \multicolumn{2}{c}{Number of Variables}  &  & \multicolumn{2}{c}{Sample Sizes$^{\text{(c)}}$}   \\
				\cline{2-3}  \cline{5-6}  \noalign{\smallskip}
				 & Before & After  & &  &  & &  & &  \\
				Specifications: & selection$^{\text{(a)}}$ & selection$^{\text{(b)}}$  & & Control & Treated & & ATT & & 95\% CI  \\
				\hline
				\noalign{\smallskip}
				\emph{Experimental Benchmark}  & -- & -- &   & 260 & 185 & & 	1794 &     & [110, 3479]  \\

				\noalign{\medskip}
				\emph{Doubly-Robust Estimates} \\
				Specification 1 (No Selection)	 & N/A & 11 &   & 1211 & 185   & &   1664   & &   [-276, 3604] \\
				DW02 (Informal Selection)	 & ?? & 15 &   & 1058 & 185   & &   2528   & &   [149, 4908] \\

				\noalign{\smallskip}
				{\bf Refitting after Group Lasso Selection} & 171 & 20/6 &   & 1735 & 185   & &   1737   & &   [33, 3441]  \\

				\hline
			\end{tabular}
			\begin{tablenotes}[online]
				\item[Notes:] All analyses use the DW99 subsample and PSID comparison group. Specifications vary, but all estimates and standard errors of from the method defined in Section \ref{sec-ate} with the exception of the partially linear model. 
				\item[(a)] Not counting the intercept. The total set of variables considered by DW02 is not known.
				\item[(b)] For the group lasso estimators, the two numbers given are for those used in the outcome regressions and propensity score, respectively. For other doubly-robust estimators all variables are used in the propensity score and outcome models.
				\item[(c)] The full sample begins with 2490 comparisons and 185 treated units. Control observations outside the range of estimated propensity scores in the treated sample are discarded. 
			\end{tablenotes}
		\end{threeparttable}
	\end{center}
\end{sidewaystable}

\begin{figure}
	\begin{center}
		\caption{Empirical Coverage of 95\% Confidence Intervals, Varying Signal Strength and Sparsity of $p_\t(x)$ and $\mut(x)$}
		\label{fig-plot-main}
		\includegraphics[scale=1]{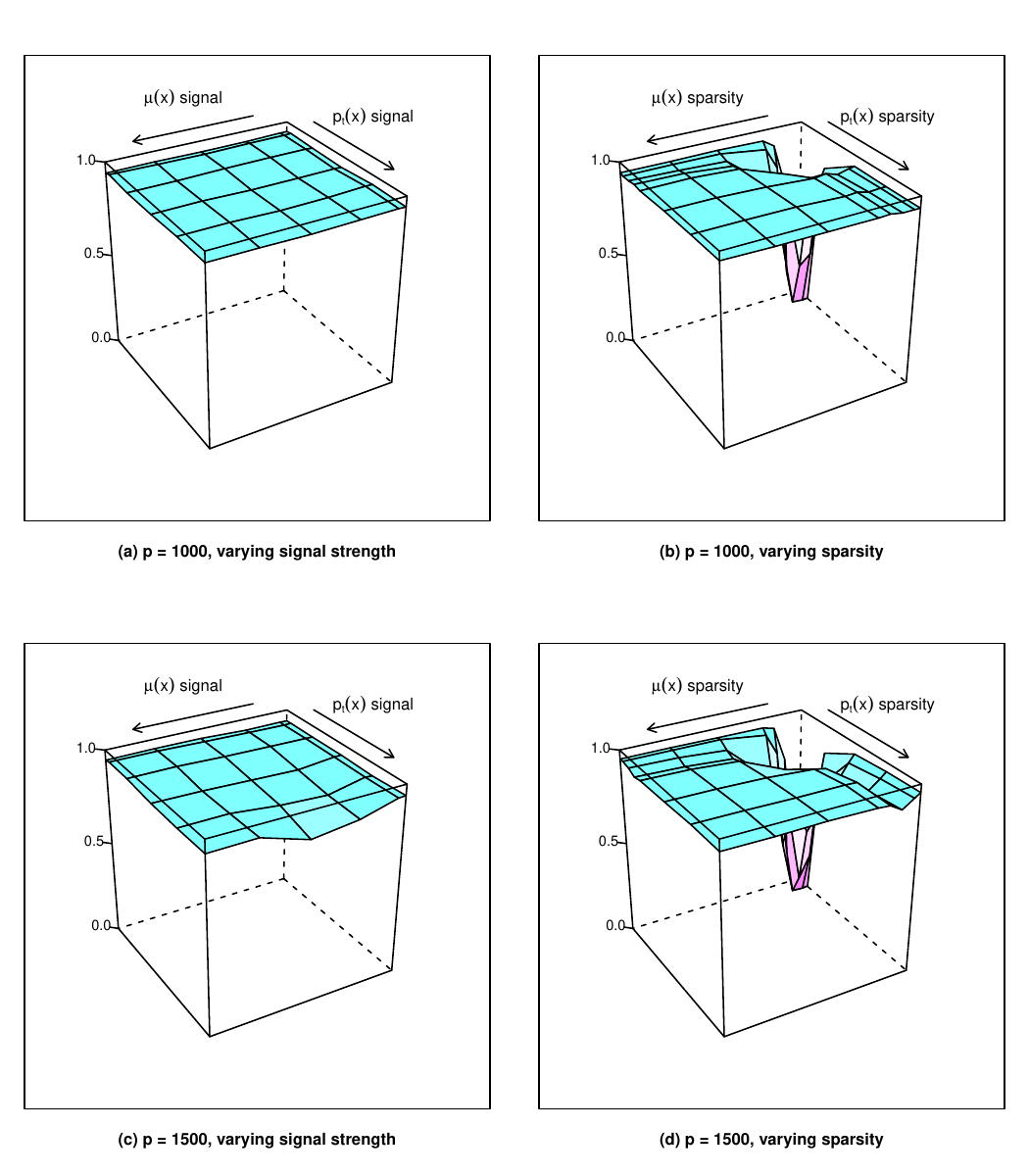}
	\end{center}
\end{figure}

\begin{figure}
	\begin{center}
		\caption{Empirical Coverage of 95\% Confidence Intervals, Penalty Chosen with Cross-validation, Varying Signal Strength of $p_\t(x)$ and $\mut(x)$}
		\label{fig-plot-cross}
		\includegraphics[scale=1]{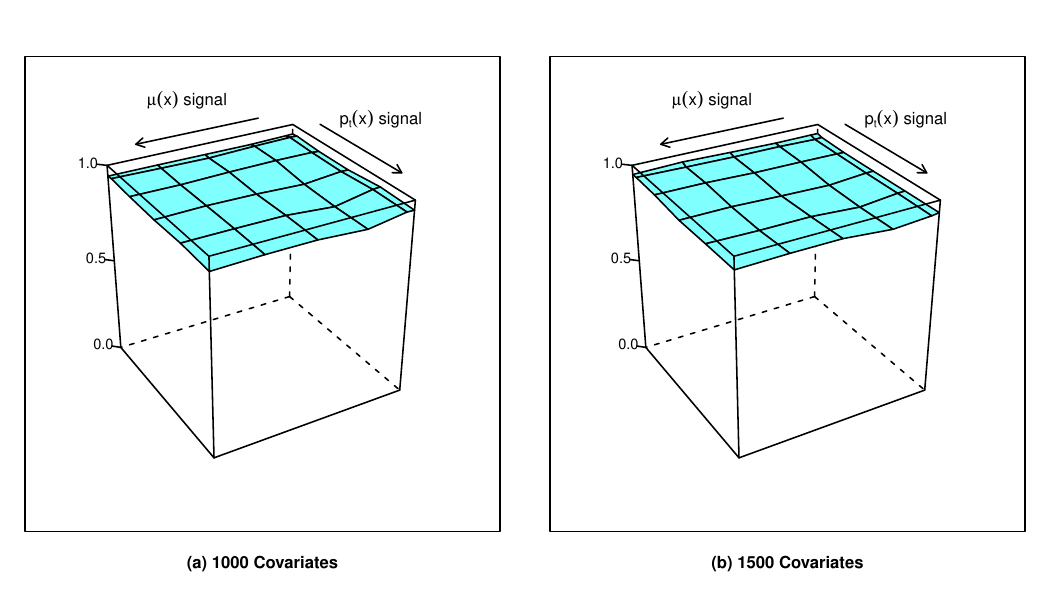}
	\end{center}
\end{figure}

\clearpage


\begin{center}
{\Large Supplemental Appendix for ``Robust Inference on Average Treatment Effects with Possibly More Covariates than Observations''}

\bigskip 
Max H. Farrell 

March 27, 2015

Updated: February 1, 2018\footnote{The published version of the supplement, \citeasnoun{Farrell2015_JoE--Supplement}, contains an error in the proof which is rectified here. See the author's website for further detail. I am grateful to Whitney Newey for alerting me to this error.}
\end{center}
\bigskip
\begin{center}
{\small\bf Summary}
\end{center}
{\small This is a supplemental appendix for ``Robust Inference on Average Treatment Effects with Possibly More Covariates than Observations'' containing complete proofs. Notation is kept in line with the main text, though equation numbers may change. This file may serve as a drop-in replacement for the main appendix. Additional simulations results are also presented.}

\begin{appendices}
\newpage


\small
\singlespacing

\numberwithin{lemma}{section}
\numberwithin{equation}{section}
\numberwithin{figure}{section}
\numberwithin{table}{section}
\numberwithin{theorem}{section}

\section{Proofs for Treatment Effect Inference}
	\label{supp-ate proofs}

The proofs in this section are asymptotic in nature, compared to the nonasymptotic bounds of the next section. It shall be understood that asymptotic order symbols hold for the sequence being considered, as a shorthand for the more formal versions given in the assumptions (e.g. Assumption \ref{first stage}). $C$ will denote a generic positive constant, which may be a matrix. Define the set of indexes $\It = \{i : \d = \t\}$.

\begin{proof}[Proof of Theorem \ref{thm-ate consistency}.]
First, we have $\muthat - \mut  = \En[ \psi_\t (y_i, \dt, \mut^0(x_i), p_\t^0(x_i), \mut) ] + R_1 + R_2 + R_3 + R_4$, where
\begin{gather*}
	R_1 = \En[\muthat(x_i) - \mut(x_i)], \qquad R_2 = \En\left[\frac{\dt y_i }{\hat{p}_\t(x_i) p_\t^0(x_i)} ( p_\t^0(x_i) - \hat{p}_\t(x_i)) \right],		\\
	R_3 = - \En\left[\frac{\dt }{\hat{p}_\t(x_i) } ( \muthat(x_i) - \mut^0(x_i)) \right], \qquad \text{and}  \qquad  R_4 = -  \En\left[\frac{\dt \mut^0(x_i) }{\hat{p}_\t(x_i) p_\t^0(x_i)} ( p_\t^0(x_i) - \hat{p}_\t(x_i)) \right].
\end{gather*}
Under Assumptions \ref{overlap} and \ref{dgp asmpts}, if follows from the Cauchy-Schwarz inequality, the consistency condition of the theorem and the inequality of \citeasnoun{vonBahr-Esseen1965_AoMS}, it follows that $|R_k| = o_{P_n}(1)$ for $k = 1, 2, 3, 4$. Finally, by the same von Bahr and Esseen inequality $| \En[ \psi_\t (y_i, \dt, \mut^0(x_i), p_\t^0(x_i), \mut) ]| = o_{P_n}(1)$ if either $\mut(x)$ or $p_\t(x)$ is correctly specified.
\end{proof}

We first prove Theorem \ref{thm-ate linearity} assuming there is no additional randomness injected into the support estimates. Following this, we redo the proof to account for additional randomness. We then turn to the remaining portions of Theorem \ref{thm-ate} and to Corollary \ref{thm-ate uniform}, which require shorter arguments. 

We make frequent use of the linearization
\begin{equation}
	\label{supp-linearization}
	\frac{1}{a} = \frac{1}{b} + \frac{b - a}{ab} =  \frac{1}{b} + \frac{b - a}{b^2} + \frac{(b - a)^2}{ab^2},
\end{equation}
where the first inequality is readily verified, and the second re-applies the first.

\begin{proof}[Proof of Theorem \ref{thm-ate linearity} without Additional Randomness.]
With $\psi_\t (\cdot)$ defined in \Eqref{eqn-ate moments}, we have $ \sqrt{n} ( \muthat - \mut ) = \sqrt{n} \En[ \psi_\t (y_i, \dt, \mut(x_i), p_\t(x_i), \mut) ] + R_1 + R_2$, where
\[R_1 = \frac{1}{\sqrt{n}} \sumi \dt (y_i - \mut(x_i)) \left( \frac{1}{ \hat{p}_\t(x_i) }  -  \frac{1}{ p_\t(x_i) } \right)\]
and
\[ R_2 = \frac{1}{\sqrt{n}} \sumi (\muthat(x_i) - \mut(x_i)) \left( 1 - \frac{ \dt }{ \hat{p}_\t(x_i) } \right).\]
The proof proceeds by showing that both $R_1$ and $R_2$ are $o_{P_n}(1)$.

For $R_1$, applying the first equality in \Eqref{supp-linearization}, we rewrite $R_1$ as 
	\[R_1 = \frac{1}{\sqrt{n}} \sumi \dt u_i \left( \frac{p_\t(x_i) - \hat{p}_\t(x_i)}{ \hat{p}_\t(x_i) p_\t(x_i)}  \right).\]
Applying Assumptions \ref{overlap} and \ref{fourth moments} and the first-stage consistency condition of Assumption \ref{consistent}:
\[ \E\left[ R_1^2 \vert \{x_i, \d\}_{i = 1}^n \right]  = \En \left[ \frac{ \dt \sigma_t^2(x_i) }{\hat{p}_\t(x_i)^2 p_\t(x_i)^2} \left( p_\t(x_i) - \hat{p}_\t(x_i) \right)^2 \right] \leq C \En[(p_\t(x_i) - \hat{p}_\t(x_i))^2] = o_{P_n}(1). \]

Next, again using \Eqref{supp-linearization} we have
\[1 - \frac{\dt}{\hat{p}_\t(x_i) }= \frac{ p_t(x_i)  - \dt}{p_t(x_i)} + \frac{\dt( \hat{p}_t(x_i) -  p_t(x_i)) }{\hat{p}_t(x_i) p_t(x_i)}.\]
We use this to re-write $R_{2} = R_{21} + R_{22}$, where
\[R_{21} =  \frac{1}{\sqrt{n}} \sumi (\muthat(x_i) - \mut(x_i)) \left(\frac{ p_\t(x_i)  -  \dt }{ p_\t(x_i) } \right) \]
and
\[R_{22} = \frac{1}{\sqrt{n}} \sumi (\muthat(x_i) - \mut(x_i))(\hat{p}_\t(x_i)  -  p_\t(x_i))\left(  \frac{   \dt }{ \hat{p}_\t(x_i) p_\t(x_i) }  \right)  .   \]
For the first term, $ R_{21} = \sqrt{n}  \En[ (\muthat(x_i) - \mut(x_i)) (1 - d_i^t/ p_\t(x_i))] = o_{P_n}(1)$ by Assumption \ref{new}. Next, by H\"older's inequality, Assumption \ref{overlap} and the rate condition of Assumption \ref{ATE RATES}
\begin{align*}
	|R_{22}| & \leq \sqrt{n} \left( \maxi \frac{  1 }{ \hat{p}_\t(x_i) p_\t(x_i) } \right)\sqrt{ \En[ (\muthat(x_i) - \mut(x_i))^2]\En[ (\hat{p}_\t(x_i) - p_\t(x_i))^2]}		\\
	& = O_{P_n} (1) \sqrt{n} \sqrt{ \En[ (\muthat(x_i) - \mut(x_i))^2]\En[ (\hat{p}_\t(x_i) - p_\t(x_i))^2]} = o_{P_n}(1).
\end{align*}
This completes the proof, as $|R_1 + R_2| = o_{P_n}(1)$ by Markov's inequality and the triangle inequality.
\end{proof}

\begin{proof}[Proof of Theorem \ref{thm-ate linearity} with Additional Randomness.]
We must reconsider the remainders $R_1$ and $R_2$. For the former, applying \Eqref{supp-linearization}, we find $R_1 = R_{11} + R_{12}$, where
\[R_{11} = \frac{1}{\sqrt{n}} \sumi \frac{\dt u_i}{p_\t(x_i)^2} \left( p_\t(x_i) - \hat{p}_\t(x_i)\right) 	\qquad \text{ and } \qquad     	 R_{12} =  \frac{1}{\sqrt{n}} \sumi \frac{\dt u_i}{p_\t(x_i)^2 \hat{p}_\t(x_i) } \left( \hat{p}_\t(x_i)   -  p_\t(x_i)\right)^2. \]
For $R_{11}$, we first add and subtract the parametric representation to get $R_{11} = R_{111} + R_{112}$, where, 
\[R_{111} = \frac{1}{\sqrt{n}} \sumi \frac{\dt u_i}{p_\t(x_i)^2} \left( \pttrue - \pthat\right) 	\quad \text{ and } \quad     	R_{112} = \frac{1}{\sqrt{n}} \sumi \frac{\dt u_i}{p_\t(x_i)^2} \left( p_\t(x_i) - \pttrue\right).\]

By a two-term mean-value expansion $R_{111} = R_{111a} + R_{111b}$, with
\[R_{111a} = \frac{1}{\sqrt{n}} \sumi \frac{\dt u_i}{p_\t(x_i)^2} \sumT \left\{\pttrue ( 1- \pttrue) \left({x_i^*}'(\ghatt - \gtruet) \right) \right\} \]
and
\[R_{111b} = \frac{1}{2 \sqrt{n}} \sumi \frac{\dt u_i}{p_\t(x_i)^2} v_i' \bar{\H} v_i,\]
where $v_i  = \{ {x_i^*}'(\ghatt - \gtruet)\}_{\NT}$ and $\overline{\H}  =  \H(\{{x_i^*}'\gtruet  +  m_\t {x_i^*}'\ghatt\}_{\NT})$ for appropriate scalars $m_\t$ where the $\T$-square matrix $\H(\{{x_i^*}'\ganyt\}_{\NT})$ is defined as having the $(\t, \t') \in \NT^2$ entry given by
\begin{equation*}
	\H(\{{x_i^*}'\ganyt\}_{\NT})_{[\t,\t']} = \begin{cases}
				\ptany (1- \ptany) 	& \text{ if } \t = \t'	\\
				- \ptany \hat{p}_{\t'}(\{{x_i^*}'\ganyt\}_{\NT}) 	 & \text{ if } \t \neq \t'
			\end{cases}
\end{equation*}

For $R_{111a}$, consider each term in the sum over $\NT$ one at a time; let $R_{111a} = \sumT R_{111a}(\t)$. Let $\t'$ denote the original treatment under consideration. Define $\Sigma_{\t,j} = \E\left[ (x_{i,j}^*)^2 \sigma_{\t'}^2(x_i)  \pttrue^2 ( 1- \pttrue)^2 / p_{\t'}(x_i)^3 \right]$. Then proceed as follows
\begin{align*}
	 R_{111a}(\t)  & = \frac{1}{\sqrt{n}} \sumi \left(\frac{\d^{\t'} u_i \pttrue ( 1- \pttrue)}{p_{\t'}(x_i)^2} \right) \sum_{j \in \hat{S}_\D} x_{i,j}^*  (\ghatt - \gtruet)		\\
	& =  \sum_{j \in \hat{S}_\D} \left\{ \frac{1}{\sqrt{n}} \sumi \left(x_{i,j}^* \frac{\d^{\t'} u_i \pttrue ( 1- \pttrue)}{p_{\t'}(x_i)^2 \Sigma_{\t,j}^{1/2}}  \right) \right\} \Sigma_{\t,j}^{1/2}  (\ghattj - \gtruetj)		\\
	& \leq \left( \maxj \Sigma_{\t,j}^{1/2} \right) \left( \maxj \frac{1}{\sqrt{n}} \sumi x_{i,j}^* \frac{\d^{\t'} u_i \pttrue ( 1- \pttrue)}{p_{\t'}(x_i)^2 \Sigma_{\t,j}^{1/2}}    \right) \left\| \ghatt - \gtruet \right\|_1		\\
	& = O(1) O_{P_n}( \log(p) )  \left\| \ghatt - \gtruet \right\|_1= o_{P_n}(1).
\end{align*}
Convergence follows under Assumption \ref{ATE union}. For the penultimate equality, it follows from Assumptions \ref{overlap}, \ref{bounded}, and \ref{fourth moments} that $\maxj \Sigma_{t,j} = O(1)$. Finally, the center factor is shown to be $O_{P_n}( \log(p) )$ by applying the moderate deviation theory for self-normalized sums of \citeasnoun[Theorem 7.4]{delaPena-Lai-Shao2009_book} and in particular \citeasnoun[Lemma 5]{BCCH2012_Ecma}. To apply this lemma, first note that the summand of the center factor has bounded third moment and second moment bounded away from zero, from Assumptions \ref{overlap}, \ref{bounded}, \ref{fourth moments}, and the requirements of Assumptions \ref{first stage} and \ref{ATE union}. $ \Sigma_{t,j}$ normalizes the second moment, and the lemma applies under Assumptions \ref{sparsity} and the first restriction of Assumption \ref{ATE union}.

For $R_{111b}$, the results of \citeasnoun{Tanabe-Sagae1992_JRSSB} coupled with Assumption \ref{first stage} give $v_i' \bar{\H} v_i \leq C \| v_i \|_2^2$. Thus, using Assumption \ref{overlap} to bound $\maxi p_\t(x_i)^{-2} < C$, we find $R_{111b}$ may be bounded as follows:
\begin{align*}
	|R_{111b}| & \leq C \sumT  \sqrt{n} (\maxt |u_i| )  \En \left[ |{x_i^*}'(\ghatt - \gtruet)|^2\right] 		\\
	& \leq C \T \maxT \left| \sqrt{n} (\maxt |u_i| )  \En \left[ | \pthat - \pttrue |^2 \right] \right|= o_{P_n}(1),
\end{align*}
by the union bound and Assumption \ref{ATE union}, using the Assumptions \ref{overlap} and \ref{consistent} to apply \Eqref{supp-mlogit rate mvt} with the inequality reversed.

A variance bound may be applied to $R_{112}$ as in the previous proof, and we have $|R_{112}| = O_{P_n}(\bias) = o_{P_n}(1)$ by Markov's inequality.

Next, $R_{12}$ is simply bounded by
\begin{align*}
	|R_{12}| & \leq \sqrt{n} (\maxt |u_i| ) \left( \maxt \frac{1}{p_\t(x_i)^2 \hat{p}_\t(x_i) } \right) \En \left[ \left( \hat{p}_\t(x_i)   -  p_\t(x_i)\right)^2 \right]  		\\
	& \leq O_{P_n}(1) \sqrt{n} (\maxt |u_i| ) \En \left[ \left( \hat{p}_\t(x_i)   -  p_\t(x_i)\right)^2 \right] = o_{P_n}(1),
\end{align*}
where the rate follows from Assumptions \ref{overlap}, \ref{dgp asmpts}, and \ref{first stage}, and this tends to zero by Assumption \ref{ATE union}.

As in the prior proof, write $R_2 = R_{21} + R_{22}$. The same bound is used for $R_{22}$. However, for $R_{21}$, add and subtract the pseudotrue values to get $R_{21} = R_{211} + R_{212}$, where
\[R_{211} =  \frac{1}{\sqrt{n}} \sumi ({x_i^*}'\bhatt - {x_i^*}\btruet) \left(\frac{ p_\t(x_i)  -  \dt }{ p_\t(x_i) } \right)  	 \qquad \text{ and } \qquad   	R_{212} =  \frac{1}{\sqrt{n}} \sumi ({x_i^*}\btruet - \mut(x_i)) \left(\frac{ p_\t(x_i)  -  \dt }{ p_\t(x_i) } \right) \]
For the first term, define $\tilde{\Sigma}_{\t,j} = \E\left[ (x_{i,j}^*)^2 (\dt - p_\t(x_i))^2 / p_\t(x_i)^2 \right]$ and then proceed as follows:
\begin{align*}
	 R_{211}  & = \frac{1}{\sqrt{n}} \sumi \left(\frac{ p_\t(x_i)  -  \dt }{ p_\t(x_i) } \right) \sum_{j \in \hat{S}_Y} x_{i,j}^*  (\bhattj - \btruetj)		\\
	& =  \sum_{j \in \hat{S}_Y} \left\{ \frac{1}{\sqrt{n}} \sumi \frac{x_{i,j}^*  (p_\t(x_i) - \dt) / p_\t(x_i)}{  \tilde{\Sigma}_{\t,j}^{1/2} } \right\} \tilde{\Sigma}_{\t,j}^{1/2}  (\bhattj - \btruetj)		\\
	& \leq \left( \maxj \tilde{\Sigma}_{\t,j}^{1/2} \right) \left( \maxj \frac{1}{\sqrt{n}} \sumi \frac{x_{i,j}^*  (p_\t(x_i) - \dt) / p_\t(x_i)}{  \tilde{\Sigma}_{\t,j}^{1/2} }   \right) \left\| \bhatt - \btruet \right\|_1		\\
	& = O(1) O_{P_n}( \log(p) )  \left\| \bhatt - \btruet \right\|_1 = o_{P_n}(1),
\end{align*}
where the final line follows exactly as above.

A variance bound may be applied to $R_{212}$ as in the previous proof, and we have $|R_{212}| = O_{P_n}(\bias) = o_{P_n}(1)$ by Markov's inequality.\end{proof}

\begin{proof}[Proof of Theorem \ref{thm-ate normality}.]
	This claim follows directly from the prior result under the moment conditions of Assumption \ref{ATE moments}.
\end{proof}

\begin{proof}[Proof of Theorem \ref{thm-ate variance}.]
We begin with $\hat{V}_{W}(\t)$. Expanding the square and using \Eqref{supp-linearization}, rewrite $\hat{V}_{\drf}^W(\t) = \En[ \dt u_i^2 p_\t(x_i) ^{-2}] + R_{W,1} + R_{W,2} + R_{W,3}$ where
\begin{gather*}
	R_{W,1} = \En\left[ \frac{\dt u_i^2}{\hat{p}_\t(x_i)^2 p_\t(x_i)^2} \left( \hat{p}_\t(x_i) - p_\t(x_i)\right)\left( \hat{p}_\t(x_i) + p_\t(x_i)\right) \right],		\\
	R_{W,2} = \En\left[ \frac{\dt (\mut(x_i)  -  \muthat(x_i))^2 }{\hat{p}_\t(x_i)^2} \right],	\qquad \text{and} \qquad R_{W,3} = 2 \En\left[ \frac{\dt u_i (\mut(x_i)  -  \muthat(x_i)) }{\hat{p}_\t(x_i)^2} \right].
\end{gather*}
Using H\"older's inequality, Assumptions \ref{overlap}, \ref{ATE moments}, and \ref{consistent}, we have the following
	\[R_{W,1}  \leq \left( \maxt \frac{  \hat{p}_\t(x_i)  +  p_\t(x_i) }{ \hat{p}_\t(x_i)^2 p_\t(x_i)^2 } \right) \En[\dt |u_i|^4]^{1/2} \En[ \dt (\hat{p}_\t(x_i) - p_\t(x_i))^2]^{1/2} = o_{P_n}(1), 		\]
	\[R_{W,2}  \leq \left( \maxt \frac{  1 }{ \hat{p}_\t(x_i)^2 } \right) \En[ \dt (\muthat(x_i) - \mut(x_i))^2] = o_{P_n}(1),\]
and,
	\[R_{W,3}  \leq 2 \left( \maxt \frac{  1 }{ \hat{p}_\t(x_i)^2 } \right) \En[\dt |u_i|^2]^{1/2}  \En[ \dt (\muthat(x_i) - \mut(x_i))^2]^{1/2} = o_{P_n}(1),\]
where $ \En[|u_i|^4] = O_{P_n}(1)$ from the inequality of \citeasnoun{vonBahr-Esseen1965_AoMS}. From the same inequality it follows that $\En[ \dt u_i^2 p_\t(x_i) ^{-2}] - V_{\drf}^W(\t)| = o_{P_n}(1)$, under Assumptions \ref{overlap} and \ref{fourth moments}.

Next consider the ``between'' variance estimator, $\hat{V}_{\drf}^B$. For any $\t \NTbar$ and $\t' \in \NTbar$, define 
	\[R_{B,1}(\t, \t') =  \En\left[ (\muthat(x_i) - \mut(x_i)) (\hat{\mu}_{\t'}(x_i) - \mu_{\t'}(x_i)) \right],\]
	\[R_{B,2}(\t, \t') =  \muthat \En\left[ \hat{\mu}_{\t'}(x_i) - \mu_{\t'}(x_i) \right],   \quad \text{and} \quad  R_{B,3}(\t, \t') =  \En\left[ \mu_{\t}(x_i) (\hat{\mu}_{\t'}(x_i) - \mu_{\t'}(x_i)) \right].\]
From H\"older's inequality, Assumption \ref{consistent}, Theorem \ref{thm-ate normality}, the von Bahr and Esseen inequality, and Assumptions \ref{fourth moments} and \ref{ATE moments} it follows that $R_{B,k}(\t,\t') = o_{P_n}(1)$ for $k \in \N_3$ and all pairs $(\t, \t') \in \N_\t^2$. With this in mind, we decompose 
\begin{align*}
	\hat{V}_{\drf}^B(\t,\t') & = \En\left[ \mut(x_i) \mu_{\t'}(x_i) \right] - \muthat \En\left[ \mu_{\t'}(x_i)\right]  - \hat{\mu}_{\t'} \En\left[ \mut(x_i)\right] +  \muthat \hat{\mu}_{\t'}		\\
	& \qquad + R_{B,1}(\t,\t') + R_{B,2}(\t, \t') + R_{B,2}(\t', \t) + R_{B,3}(\t, \t') + R_{B,3}(\t', \t).
\end{align*}
Consistency of $\hat{V}_{\drf}^B(\t,\t') $ now follows from the von Bahr and Esseen inequality and Theorem \ref{thm-ate normality}.
\end{proof}

\begin{proof}[Proof of Corollary \ref{thm-ate uniform}.]
	Suppose the result did not hold. Then, there would exist a subsequence $P_m \in \bm{P}_m$, for each $m$, such that 
	\[\lim_{m \to \infty} \left| \P_{P_m} \left[  G(\drf) \in \left\{ G(\drfhat) \pm c_\alpha \sqrt{\nabla_G(\drfhat)  \hat{V} \nabla_G'(\drfhat)  / n}\right\} \right] - (1 - \alpha) \right| >0 .\]
	But this contradicts Theorem \ref{thm-ate}, under which $ (\nabla_G(\drfhat)  \hat{V} \nabla_G'(\drfhat)/n)^{-1/2} ( G(\drfhat) - G(\drf))$ is asymptotically standard normal under the sequence $P_m$.
\end{proof}

\section{Proofs for Group Lasso Selection and Estimation of Multinomial Logistic Models}

Unless otherwise noted, all bounds in this section are nonasymptotic. We will use generic notation $X^*$, $\delta$, $s$, etc, as this section deals only with multinomial logistic models.

\subsection{Lemmas}

\begin{lemma}[Score Bound]
	\label{supp-mlogit score bound}
	For $\lambda_\D$ and $\mathcal{P}$ defined respectively in \Eqref{lambda} and \Eqref{probability} we have
	\[\P \left[ \maxj \| \En [(p_\t(x_i) - \dt)  x_{i,j}^* ] \|_2 \geq \frac{\lambda_\D}{2} \right] \leq \mathcal{P}. \]
\end{lemma}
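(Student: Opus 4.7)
The plan is to bound the score via a second-moment calculation plus Markov's inequality, with the uniform-in-$j$ control supplied by the maximal inequality of Lounici et al.\ (Lemma 9.1 in their 2011 AoS paper). Let $v_{\t,i} = p_\t(x_i) - d_i^\t$. By construction $\E[v_{\t,i}\mid x_i] = 0$ and $|v_{\t,i}| \le 1$, so $\E[v_{\t,i}^2\mid x_i] \le 1$. Under Assumption \ref{iid} and the bound $|x_{i,j}^*| \le \mathcal{X}$ from Assumption \ref{bounded},
\[
\E\bigl[\|\En[v_{\t,i}x_{i,j}^*]\|_2^2\bigr]
= \sumT \frac{1}{n}\,\E[v_{\t,i}^2 (x_{i,j}^*)^2]
\le \frac{\mathcal{X}^2 \T}{n}
\]
uniformly in $j$. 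The strategy is to square both sides of the event inside the probability, insert the definition of $\lambda_\D$ (writing $\lambda_\D^2/4 = \mathcal{X}^2\T(1 + r_n)/n$ with $r_n = \T^{-1/2}\log(p\vee n)^{3/2 + \delta}$), subtract off the mean bound, and center: set
\[
\xi_{\t,j} = \En[v_{\t,i}x_{i,j}^*]^2 - \frac{1}{n}\E[v_{\t,i}^2 (x_{i,j}^*)^2].
\]

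The probability of interest is then at most $\P\bigl[\maxj \sumT \xi_{\t,j} \ge \mathcal{X}^2 \T r_n / n\bigr]$, which by Markov is bounded by $\E[\maxj|\sumT \xi_{\t,j}|]\cdot n/(\mathcal{X}^2\T r_n)$. The first application of Lounici's maximal inequality (with $m=1$) gives
\[
\E\Bigl[\maxj \Bigl|\sumT \xi_{\t,j}\Bigr|\Bigr]
\le \sqrt{8\log(2p)}\ \E\Bigl[\Bigl(\sumT \maxj \xi_{\t,j}^2\Bigr)^{1/2}\Bigr],
\]
after which Jensen's inequality and the elementary bound $(a-b)^2 \le 2(a^2 + b^2)$ reduce matters to bounding $\E[\maxj \En[v_{\t,i}x_{i,j}^*]^4]$. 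A second application of Lounici's inequality (with $m=4$ and absolute constant $c(4) \le 12$) together with Assumption \ref{bounded} yields
\[
\E\bigl[\maxj |\En[v_{\t,i}x_{i,j}^*]|^4\bigr]
\le \frac{64\log(12p)^2 \mathcal{X}^4}{n^2}.
\]

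Assembling the two inequalities and substituting $r_n = \T^{-1/2}\log(p\vee n)^{3/2+\delta}$ produces exactly
\[
\frac{4\sqrt{\log(2p)(1 + 64\log(12p)^2)}}{\log(p\vee n)^{3/2 + \delta}} \;=\; \mathcal{P},
\]
where the factor $4 = \sqrt{8}\cdot\sqrt{2}$ absorbs both the Lounici constant and the centering inequality. The bookkeeping is routine once the two maximal-inequality applications are lined up correctly, so I do not anticipate a conceptual obstacle; the one step that requires care is the matching of constants at the end to recover the exact form of $\mathcal{P}$ displayed in \eqref{probability}, and in particular ensuring that the $r_n$ factor in the penalty cancels the $r_n$ arising from Markov's inequality to produce the correct exponent of the logarithm in the denominator.
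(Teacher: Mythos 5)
Your proposal is correct and follows essentially the same route as the paper's own proof: the same centering of $\xi_{\t,j}$, Markov's inequality, and the two applications of Lounici et al.'s Lemma 9.1 (with $m=1$ and $m=4$), with the constants assembled identically to recover $\mathcal{P}$. No gaps.
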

\begin{proof}
The residuals $v_{\t,i} = p_\t(x_i) - \dt$ are conditionally mean-zero by definition and satisfy $\E[v_{\t,i}^2 \vert x_i ] \leq 1$. Using this, Assumption \ref{iid}, and the definition of $\mathcal{X}$, we find that
	\[\E\left[ \| \En [v_{\t,i} x_{i,j}^* ] \|_2^2 \right]  =  \sumT \E\left[ \En[ v_{\t,i} x_{i,j}^* ]^2 \right] = \sumT \frac{1}{n} \E [v_{\t,i}^2 (x_{i,j}^*)^2]  \leq \frac{\mathcal{X}^2 \T}{n} \]
uniformly in $j \in \Np$. Define the mean-zero random variables $\xi_{t,j}$ as: 
	\[\xi_{t,j} = (\En[ v_{\t,i} x_{i,j}^* ])^2 - \frac{1}{n} \E[V_\t^2 {X_j^*}^2].\]
Using this definition and the above bound after inserting the definition of $\lambda_\D$, setting $r_n = \T^{-1/2} \log(p \vee n)^{3/2 + \delta}$, and squaring both sides, we have
\begin{align}
	\P  \left[ \maxj \| \En [(p_\t(x_i) - \dt)  x_{i,j}^* ] \|_2 \geq \frac{\lambda_\D}{2} \right] 
	& = \P \left[ \maxj \| \En [v_{\t,i} x_{i,j}^* ] \|_2  \geq   \frac{ \mathcal{X} \sqrt{\T}   }{\sqrt{n} } \left( 1 + r_n \right)^{1/2}\right]		\nonumber  \\
	& = \P \left[ \maxj \| \En [v_{\t,i} x_{i,j}^* ] \|_2^2  \geq   \frac{ \mathcal{X}^2 \T   }{ n  } \left( 1 + r_n \right)\right]		\nonumber  \\
	& = \P \left[ \maxj \| \En [v_{\t,i} {x_{i,j}^*}] \|_2^2  -  \frac{ \mathcal{X}^2  \T }{n}     \geq  \frac{\mathcal{X}^2  \T r_n }{n} \right]		\nonumber  \\
	& \leq \P \left[ \maxj \sumT \xi_{t,j}  \geq  \frac{\mathcal{X}^2  \T r_n }{n} \right]		\nonumber  \\
	& \leq \E \left[ \maxj \left| \sumT \xi_{t,j} \right|\right]   \frac{n}{ \mathcal{X}^2  \T r_n },			\label{supp-mlogit maximal1}
\end{align}
where final line follows from Markov's inequality.

Next, applying Lemma 9.1 of \citeasnoun{Lounici-etal2011_AoS} (with their $m=1$ and hence $c(m) = 2$) followed by Jensen's inequality and Assumption \ref{fourth moments}, we find that
\begin{align}
	\E \left[ \maxj \left| \sumT \xi_{t,j} \right|\right] & \leq ( 8 \log(2p))^{1/2} \E \left[ \left( \sumT \maxj \xi_{t,j}^2 \right)^{1/2} \right]	\nonumber  \\
	& \leq  ( 8 \log(2p))^{1/2} \left( \E \left[  \sumT \maxj \xi_{t,j}^2 \right] \right)^{1/2}  	\nonumber  \\
	& \leq  4 \log(2p)^{1/2} \left(  \sumT  \frac{\mathcal{X}^4 }{n^2} + \sumT \E \left[ \maxj \left| \En[ v_{\t,i} {x_{i,j}^*} ] \right| ^4  \right] \right)^{1/2}.	\label{supp-mlogit maximal2}
\end{align}
The leading 4 is $\sqrt{8} \sqrt{2}$, where $\sqrt{2}$ is a byproduct of applying the inequality $(a - b)^2 \leq 2(a^2 + b^2)$ to $\xi_{t,j}^2$. Again using Lemma 9.1 of \citeasnoun{Lounici-etal2011_AoS} (with their $m=4$, and $c(m)=12$ since $c(4) \geq (e^{4-1} - 1 )/2 + 2 \approx 11.54$), we bound the expectation in the second term above as follows:
\begin{align}
	 & \E \left[ \maxj  \left| \En [ v_{\t,i} {x_{i,j}^*} ] \right| ^4  \right]  \leq [ 8 \log(12p) ] ^{4/2} \E \left[ \left( \sumi \maxj \left| \frac{v_{\t,i} {x_{i,j}^*} }{n}\right|^2  \right)^{4/2} \right] 		 \leq \frac{64 \log(12p)^2 \mathcal{X}^4 }{n^2},	\label{supp-mlogit maximal3}
\end{align}
using Assumptions \ref{iid} and \ref{bounded}.

Now, inserting the results of Eqns.\ \eqref{supp-mlogit maximal2} and \eqref{supp-mlogit maximal3} into \Eqref{supp-mlogit maximal1}, we have
\begin{align*}
	\P \left[ \maxj \| \En [v_{\t,i} {x_{i,j}^*}] \|_2 \geq \frac{\lambda_\D }{4} \right] & \leq \frac{ 4 n  \log(2p)^{1/2} }{\T \mathcal{X}^2  r_n }  \left(  \sumT  \frac{\mathcal{X}^4 }{n^2} + \sumT \frac{64 \log(12p)^2 \mathcal{X}^4 }{n^2} \right)^{1/2} \\
& \leq \frac{ 4\log(2p)^{1/2}  }{ r_n \sqrt{\T}  } [ 1 + 64 \log(12 p)^2]^{1/2} = \mathcal{P},
\end{align*}
using the choice $r_n = \T^{-1/2} \log(p \vee n)^{3/2 + \delta}$.
\end{proof}

\begin{lemma}[Estimate Sparsity]
	\label{supp-mlogit S-hat}
	With probability at least $1 - \mathcal{P}$
	\[  | \tilde{S}^\D |  \leq  \frac{4}{\lambda_\D^2} \overline{\phi}\{Q, \tilde{S}^\D \}  \sumT   \En \left[ (\pttilde - p_\t(x_i))^2 \right] .\]
\end{lemma}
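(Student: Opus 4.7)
The plan is to exploit the Karush-Kuhn-Tucker conditions that hold at the optimum $\gtilde$ of the penalized multinomial logistic problem \eqref{grplasso}. For each $j \in \tilde{S}^\D$, subdifferential calculus of the group-wise $\ell_2$ norm yields
\[\En[{x_{i,j}^*} (\pttilde - \dt)] \;=\; \lambda_\D \,\frac{\gtildej}{\|\gtildej\|_2},\]
and since the right-hand side has $\ell_2$ norm (over $\t \in \NT$) equal to $\lambda_\D$, we obtain $\|\En[{x_{i,j}^*}(\pttilde - \dt)]\|_2 = \lambda_\D$ on every selected coordinate. This is the analogue of the KKT identity already invoked in \eqref{appx-mlogit KKT}.

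Next I would add and subtract the true propensity score inside the empirical mean and apply the triangle inequality in $\ell_2$, together with the score bound of Lemma \ref{supp-mlogit score bound} to dispose of the ``noise'' piece $\|\En[{x_{i,j}^*}(p_\t(x_i) - \dt)]\|_2 \le \lambda_\D/2$ uniformly in $j$, on the event of probability $1-\mathcal{P}$. This leaves the crude but clean lower bound
\[\lambda_\D/2 \;\le\; \bigl\|\En[{x_{i,j}^*}(\pttilde - p_\t(x_i))]\bigr\|_2,\qquad j \in \tilde{S}^\D.\]

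I would then square and sum over $j \in \tilde{S}^\D$. The left-hand side just becomes $|\tilde{S}^\D|\lambda_\D^2/4$. The right-hand side equals $\sumT\sum_{j \in \tilde{S}^\D} \En[{x_{i,j}^*}(\pttilde - p_\t(x_i))]^2$, which, writing $\tilde{\bm{P}}_\t - \bm{P}_\t$ as the $n$-vector of fitted-minus-true probabilities and $\Xn$ as the design matrix, is $\sumT n^{-2}\|[\Xn'(\tilde{\bm{P}}_\t - \bm{P}_\t)]_{\tilde{S}^\D}\|_2^2$. The final step, and the only non-mechanical one, is to convert this quadratic form into an in-sample prediction error by invoking the definition of the maximal sparse eigenvalue in \eqref{SE} applied to the set $\tilde{S}^\D$ and the matrix $Q$: for each $\t$,
\[\frac{1}{n^2}\bigl\|[\Xn'(\tilde{\bm{P}}_\t - \bm{P}_\t)]_{\tilde{S}^\D}\bigr\|_2^2 \;\le\; \frac{\overline{\phi}\{Q,\tilde{S}^\D\}}{n}\,\|\tilde{\bm{P}}_\t - \bm{P}_\t\|_2^2 \;=\; \overline{\phi}\{Q,\tilde{S}^\D\}\,\En[(\pttilde - p_\t(x_i))^2].\]
Dividing by $\lambda_\D^2/4$ delivers the stated bound.

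I do not expect any serious obstacle: the KKT identity and the score bound are both in hand, the sparse eigenvalue step is a one-line Rayleigh-quotient argument restricted to coordinates in $\tilde{S}^\D$, and the event on which everything holds is exactly the event of probability $1-\mathcal{P}$ provided by Lemma \ref{supp-mlogit score bound}. The only thing to be slightly careful about is that $\overline{\phi}\{Q,\tilde{S}^\D\}$ is itself data-dependent (since $\tilde{S}^\D$ is random), but this causes no trouble because the inequality is derived pointwise on the good event, with $\tilde{S}^\D$ treated as a fixed index set inside the Rayleigh-quotient inequality.
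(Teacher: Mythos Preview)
Your proposal is correct and follows essentially the same route as the paper's own proof: invoke the KKT identity \eqref{appx-mlogit KKT} on each selected coordinate, take the $\ell_2$ norm over $\t$, insert $p_\t(x_i)$ and apply the score bound of Lemma \ref{supp-mlogit score bound} to obtain $\lambda_\D/2 \le \|\En[{x_{i,j}^*}(\pttilde - p_\t(x_i))]\|_2$, then square, sum over $j \in \tilde{S}^\D$, and bound the resulting quadratic form via the sparse eigenvalue $\overline{\phi}\{Q,\tilde{S}^\D\}$. Your remark that $\tilde{S}^\D$ is random but treated as fixed inside the pointwise Rayleigh-quotient inequality is exactly right and matches the paper's implicit handling.
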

\begin{proof}
First, by Karush-Kuhn-Tucker conditions for \eqref{grplasso}, for all $\t \in \NT$, if $\gtildej \neq 0$ it must satisfy
\begin{equation}
	\label{supp-mlogit KKT}
	\En[{x_{i,j}^*} (\pttilde - \dt)] = \lambda_\D \frac{\gtildetj}{\|\gtildej \|_2}.
\end{equation}
Hence, taking the $\ell_2$-norm over $\t \in \NT$ for fixed $j \in \tilde{S}^\D$, adding and subtracting the true propensity score, using the triangle inequality, and the score bound \eqref{supp-mlogit score bound}, we find that
\begin{align*}
	\lambda_\D & =  \left\| \En[{x_{i,j}^*} (\pttilde - \dt)] \right\|_2		\\
	& \leq  \left\| \En[{x_{i,j}^*} (p_\t(x_i) - \dt)] \right\|_2 + \left\| \En[{x_{i,j}^*}( \pttilde - p_\t(x_i)) ] \right\|_2		\\
	& \leq  \lambda_\D/2  + \left\| \En[{x_{i,j}^*}( \pttilde - p_\t(x_i) ) ] \right\|_2.
\end{align*}
Let $\tilde{\bm{P}}_\t$ be the vector of $\{\pttilde\}_{i=1}^n$ and $\bm{P}_\t$ collect $\{p_\t(x_i) \}_{i=1}^n$. Collecting terms, then squaring both sides and summing over $j \in \tilde{S}^\D$ (i.e. applying $\| \cdot \|_2^2$ over $j \in \tilde{S}^\D$ to both sides) yields
\begin{align*}
	 \sum_{j \in \tilde{S}^\D} \lambda_\D^2 & \leq 4 \sum_{j \in \tilde{S}^\D} \sumT \En[{x_{i,j}^*}( \pttilde  -  p_\t(x_i) ) ]^2	 		\\
	& = 4 \sumT  \frac{1}{n^2}  \left\|  \left[ \Xn' (\tilde{\bm{P}}_\t  -  \bm{P}_\t)\right]_{j \in \tilde{S}^\D} \right\|_2^2	\\
	& \leq 4 \sumT  \frac{ \overline{\phi}\{Q, \tilde{S}^\D \} }{n}   \left\| \tilde{\bm{P}}_\t  -  \bm{P}_\t  \right\|_{2,n}^2	\\
	& \leq 4 \overline{\phi}\{Q, \tilde{S}^\D \}  \sumT   \En \left[ (\pttilde  -  p_\t(x_i))^2 \right].
\end{align*}
The result now follows, as the left-hand side is equal to $|\tilde{S}^\D| \lambda_\D^2$.
\end{proof}

\begin{lemma}[Bounds in $\ell_2/\ell_1$ norm]
	\label{supp-mlogit cone}
	With probability $1 - \mathcal{P}$ the vector $\dtilde = \gtilde - \gtrue$ satisfies
\begin{equation*}
	\vertiii{\dtilde} \leq \left\{ \frac{ 5 \sqrt{|S_*|} }{\kappa_\D}  \vee  \frac{ 10 \bias^d \sqrt{\T}}{\lambda_\D} \right\}\En [ \| \{ {x_i^*}' \dtildet\}_{\NT}  \|_2^2]^{1/2}.
\end{equation*}
and
\begin{equation*}
	\vertiii{ \tilde{\delta}_{\newdot, S_*}} \leq  \left\{ \frac{  \sqrt{|S_*|} }{\kappa_\D}  \vee  \frac{ 2 \bias^d \sqrt{\T}}{\lambda_\D} \right\}\En [ \| \{ {x_i^*}' \dtildet\}_{\NT}  \|_2^2]^{1/2}.
\end{equation*}
\end{lemma}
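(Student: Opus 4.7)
\textbf{Proof proposal for Lemma \ref{supp-mlogit cone}.} The plan is to exploit the optimality of $\gtilde$ in the penalized program \eqref{grplasso} to derive a cone-type inequality on $\dtilde = \gtilde - \gtrue$, then split into two cases depending on whether that inequality behaves ``like'' the standard restricted eigenvalue cone or is dominated by the bias term. First I would use convexity of $\M$ to get the lower bound $\M(\gtrue + \dtilde) - \M(\gtrue) \geq \sumT \En[(\pttrue - \dt){x_i^*}']\dtildet$, and combine with the optimality inequality $\M(\gtrue+\dtilde) + \lambda_\D \vertiii{\gtrue+\dtilde} \leq \M(\gtrue) + \lambda_\D \vertiii{\gtrue}$ to yield
\[ \sumT \En[(\pttrue - \dt){x_i^*}'\dtildet] \leq \lambda_\D\{\vertiii{\gtrue} - \vertiii{\gtrue+\dtilde}\}.\]
The left-hand side I would split as $\sumT \En[(p_\t(x_i) - \dt){x_i^*}'\dtildet] + \sumT \En[(\pttrue - p_\t(x_i)){x_i^*}'\dtildet]$; the first piece is controlled on the $1-\mathcal{P}$ event by Lemma \ref{supp-mlogit score bound} and duality of the $\vertiii{\cdot}$ norm, giving $\tfrac{\lambda_\D}{2}\vertiii{\dtilde}$, and the second by Cauchy--Schwarz together with the bias bound \eqref{eqn-bias}, giving $\bias^d \sqrt{\T}\, \En[\|\{x_i^{*\prime}\dtildet\}_\NT\|_2^2]^{1/2}$.

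Next I would decompose the RHS over $S_*$ and its complement. Since $\gamma^*_{\newdot, S_*^c} = 0$, the complement contributes $-\vertiii{\tilde{\delta}_{\newdot, S_*^c}}$, while on $S_*$ the triangle inequality yields $\vertiii{\gamma^*_{\newdot, S_*}} - \vertiii{\gamma^*_{\newdot, S_*} + \tilde{\delta}_{\newdot, S_*}} \leq \vertiii{\tilde{\delta}_{\newdot, S_*}}$. Rearranging (and halving $\vertiii{\dtilde}$ to move one copy to the left) gives the near-cone inequality
\[ \vertiii{\tilde{\delta}_{\newdot, S_*^c}} \leq 3\, \vertiii{\tilde{\delta}_{\newdot, S_*}} + \frac{2\bias^d\sqrt{\T}}{\lambda_\D}\, \En[\|\{x_i^{*\prime}\dtildet\}_\NT\|_2^2]^{1/2}.\]

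The key step is the case analysis on which term dominates on the right. If the first dominates, $\dtilde$ lies in the restricted cone of \eqref{RE full}, so $\| \tilde{\delta}_{\newdot, S_*}\|_2 \leq \kappa_\D^{-1}\En[\|\{x_i^{*\prime}\dtildet\}_\NT\|_2^2]^{1/2}$ and Cauchy--Schwarz over the $|S_*|$ active groups plus $\vertiii{\dtilde} \leq 5\vertiii{\tilde{\delta}_{\newdot, S_*}}$ delivers the $\sqrt{|S_*|}/\kappa_\D$ branch. If the second dominates, then substituting back into the near-cone inequality (and into $\vertiii{\dtilde} \leq (5/4)\vertiii{\tilde{\delta}_{\newdot, S_*^c}}$) and absorbing terms yields the $\bias^d\sqrt{\T}/\lambda_\D$ branch, with constants $10$ and $2$ falling out of the arithmetic. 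Taking the maximum over the two cases gives both displayed bounds.

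The main obstacle I expect is the bookkeeping in the case split: one needs to apply the near-cone inequality, then plug the resulting bound on $\vertiii{\tilde{\delta}_{\newdot, S_*^c}}$ back into itself to absorb a $15/16$-type factor and recover a clean constant. This is where the asymmetry in the constants $\{5, 10\}$ for $\vertiii{\dtilde}$ versus $\{1, 2\}$ for $\vertiii{\tilde{\delta}_{\newdot, S_*}}$ arises, and care is needed so that the factor $\sqrt{\T}$ (from Cauchy--Schwarz across treatments when bounding the bias contribution) enters only through the second branch rather than inflating the first.
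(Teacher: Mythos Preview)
Your proposal is correct and follows the paper's argument essentially step by step: convexity plus optimality, the score/bias decomposition via Lemma \ref{supp-mlogit score bound} and \eqref{eqn-bias}, the near-cone inequality, and the two-case analysis with the $5/4$ and $15/16$ bookkeeping in the second case. One small clarification: the paper splits cases on whether the cone constraint $\vertiii{\tilde\delta_{\newdot, S_*^c}} \le 4\,\vertiii{\tilde\delta_{\newdot, S_*}}$ of \eqref{RE full} itself holds, rather than on which term of the near-cone inequality dominates; this is what produces the factor $5$ in the first branch (and is exactly what justifies the $5/4$ you invoke in the second), whereas ``first term dominates'' would only give a cone with multiplier $6$ and would not land you in the restricted set defining $\kappa_\D$.
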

\begin{proof}
By the Cauchy-Schwarz inequality and Lemma \ref{supp-mlogit score bound}, 
\begin{align}
	\sumT \En \left[(p_\t(x_i) - \dt ) {x_i^*}' \dtildet \right]   &  = \sumj \sumT \En \left[(p_\t(x_i) - \dt) {x_{i,j}^*} \right] \dtildetj		\nonumber \\
	& \leq \sumj \sqrt{\sumT \En \left[(p_\t(x_i) - \dt) {x_{i,j}^*} \right]^2} \sqrt{\sumT \dtildetj^2}		\nonumber \\
	& \leq \maxj \left\{ \left\| \En \left[(p_\t(x_i) - \dt) {x_{i,j}^*} \right] \right\|_2 \right\}  \sumj  \left\| \dtildej  \right\|_2			\nonumber \\
	& \leq \frac{\lambda_\D}{2} \vertiii{ \dtilde }, 	\label{supp-mlogit score variance}
\end{align}
with probability at least $1 - \mathcal{P}$. Applying the Cauchy-Schwarz inequality, the bias condition of Assumption \ref{sparsity}, and Cauchy-Schwarz again yield
\begin{align}
	\sumT \En \left[(\pttrue - p_\t(x_i)) {x_i^*}' \dtildet \right]    
	& \leq    \sumT \En \left[(\pttrue - p_\t(x_i) )^2\right]^{1/2}  \En \left[({x_i^*}' \dtildet)^2 \right]^{1/2}		\nonumber \\
	& \leq    \bias^d \sumT  \En \left[({x_i^*}' \dtildet)^2 \right]^{1/2}		\nonumber  \\
	& \leq    \bias^d \sqrt{\T} \sqrt{ \sumT  \En \left[({x_i^*}' \dtildet)^2 \right] }		\nonumber  \\
	& =    \bias^d \sqrt{\T} \En [ \| \{ {x_i^*}' \dtildet\}_{\NT}  \|_2^2]^{1/2}.		\label{supp-mlogit score bias} 
\end{align}

Combining Equations \eqref{supp-mlogit score variance} and \eqref{supp-mlogit score bias}, we have, probability at least $1 - \mathcal{P}$,
\begin{align}
	\sumT \En \left[(\pttrue - \dt ) {x_i^*}' \dtildet \right] &  = \sumT \En \left[(p_\t(x_i) - \dt ) {x_i^*}' \dtildet \right] 		\nonumber \\
		& \qquad  + \sumT \En \left[(\pttrue - p_\t(x_i)) {x_i^*}' \dtildet \right]       	\nonumber \\
	& \leq  \frac{\lambda_\D}{2} \vertiii{ \dtilde } + \bias^d \sqrt{\T} \En [ \| \{ {x_i^*}' \dtildet\}_{\NT}  \|_2^2]^{1/2}.	\label{supp-mlogit score}
\end{align}

By the optimality of $\dtilde$, we have
\[\M(\gtrue + \dtilde) + \lambda_\D \vertiii{\gtrue + \dtilde} \leq  \M(\gtrue) + \lambda_\D \vertiii{\gtrue},\]
implying
\begin{align*}
	\lambda_\D \left\{ \vertiii{\gtrue}  -  \vertiii{\gtrue + \dtilde}  \right\}  & \geq \M(\gtrue + \dtilde) -   \M(\gtrue)		\\
	& \geq \sumT \En \left[(\pttrue - \dt) {x_i^*}' \dtildet \right],
\end{align*}
applying the convexity of $\M$. Using the bound in \Eqref{supp-mlogit score} and rearranging we find that
	\[0  \leq  \lambda_\D \left\{ \vertiii{\gtrue}  -  \vertiii{\gtrue + \dtilde}  \right\} + \frac{\lambda_\D}{2} \vertiii{ \dtilde } + \bias^d \sqrt{\T} \En [ \| \{ {x_i^*}' \dtildet\}_{\NT}  \|_2^2]^{1/2}.\]
Dividing through $\lambda_\D$ and decomposing the supports, we find that
\begin{align*}
	0 & \leq  \frac{1}{2} \vertiii{ \dtilde }    +  \left\{ \vertiii{\gtrue} -  \vertiii{\gtrue + \dtilde} \right\} +  \frac{\bias^d \sqrt{\T}}{\lambda_\D} \En [ \| \{ {x_i^*}' \dtildet\}_{\NT}  \|_2^2]^{1/2}		\\
	& = \frac{1}{2} \vertiii{ \tilde{\delta}_{\newdot, S_*}} +  \frac{1}{2} \vertiii{ \tilde{\delta}_{\newdot, S_*^c}} + \vertiii{\gamma^*_{\newdot, S_*}} - \vertiii{\gamma^*_{\newdot, S_*} + \tilde{\delta}_{\newdot, S_*}} - \vertiii{ \tilde{\delta}_{\newdot, S_*^c}} +  \frac{\bias^d \sqrt{\T}}{\lambda_\D} \En [ \| \{ {x_i^*}' \dtildet\}_{\NT}  \|_2^2]^{1/2},
\end{align*}
where the second line follows because $\gamma^*_{\newdot, S_*^c} = 0$. Collecting terms and applying the triangle inequality yields
\begin{align*}
	\frac{1}{2} \vertiii{ \tilde{\delta}_{\newdot, S_*^c}} & \leq \frac{1}{2} \vertiii{ \tilde{\delta}_{\newdot, S_*}} + \vertiii{\gamma^*_{\newdot, S_*}} - \vertiii{\gamma^*_{\newdot, S_*} + \tilde{\delta}_{\newdot, S_*}} +  \frac{\bias^d \sqrt{\T}}{\lambda_\D} \En [ \| \{ {x_i^*}' \dtildet\}_{\NT}  \|_2^2]^{1/2}   		\\
	& \leq \frac{1}{2} \vertiii{ \tilde{\delta}_{\newdot, S_*}} + \left| \vertiii{\gamma^*_{\newdot, S_*}} - \vertiii{\gamma^*_{\newdot, S_*} + \tilde{\delta}_{\newdot, S_*}}\right|  +  \frac{\bias^d \sqrt{\T}}{\lambda_\D} \En [ \| \{ {x_i^*}' \dtildet\}_{\NT}  \|_2^2]^{1/2}  		\\
	& \leq \frac{1}{2} \vertiii{ \tilde{\delta}_{\newdot, S_*}} +  \vertiii{\gamma^*_{\newdot, S_*} - \left( \gamma^*_{\newdot, S_*} + \tilde{\delta}_{\newdot, S_*} \right)}  +  \frac{\bias^d \sqrt{\T}}{\lambda_\D} \En [ \| \{ {x_i^*}' \dtildet\}_{\NT}  \|_2^2]^{1/2}	  		\\
	& =  \frac{1}{2} \vertiii{ \tilde{\delta}_{\newdot, S_*}} +  \vertiii{ \tilde{\delta}_{\newdot, S_*}} +  \frac{\bias^d \sqrt{\T}}{\lambda_\D} \En [ \| \{ {x_i^*}' \dtildet\}_{\NT}  \|_2^2]^{1/2}.
\end{align*}
Therefore with probability at least $1 - \mathcal{P}$
\begin{equation}
	\label{supp-mlogit cases}
	\vertiii{ \tilde{\delta}_{\newdot, S_*^c}} \leq  3 \vertiii{ \tilde{\delta}_{\newdot, S_*}} +  \frac{2 \bias^d \sqrt{\T}}{\lambda_\D} \En [ \| \{ {x_i^*}' \dtildet\}_{\NT}  \|_2^2]^{1/2}
\end{equation}

Consider two cases based on the upper bound in \eqref{supp-mlogit cases}. First, suppose that $\dtilde$ obeys the cone constraint of \Eqref{RE full} in the definition of $\kappa_\D^2$, such that
	\[ \vertiii{ \tilde{\delta}_{\newdot, S_*^c}} \leq  4 \vertiii{ \tilde{\delta}_{\newdot, S_*}}.\]
This implies
\begin{align}
	\vertiii{\dtilde} = \vertiii{ \tilde{\delta}_{\newdot, S_*}} + \vertiii{ \tilde{\delta}_{\newdot, S_*^c}} & \leq 5 \vertiii{ \tilde{\delta}_{\newdot, S_*}} 		\nonumber \\
	& \leq 5 \sqrt{|S_*|} \left\| \tilde{\delta}_{\newdot, S_*}\right\|_2 			\nonumber \\
	& \leq  \frac{ 5 \sqrt{|S_*|} }{\kappa_\D} \En [ \| \{ {x_i^*}' \dtildet\}_{\NT}  \|_2^2]^{1/2},     \label{supp-mlogit group norm 1}
\end{align}
by the Cauchy-Schwarz inequality, the restricted eigenvalue definition of \Eqref{RE full}, and noting that $\sumT \dtildet' Q \dtildet =  \En [ \| \{ {x_i^*}' \dtildet\}_{\NT}  \|_2^2]$. Collecting across the second and third inequalities yields
\begin{equation}
	\label{supp-mlogit group norm 2}
	\vertiii{ \tilde{\delta}_{\newdot, S_*}} \leq \frac{\sqrt{|S_*|} }{\kappa_\D} \En [ \| \{ {x_i^*}' \dtildet\}_{\NT}  \|_2^2]^{1/2}.
\end{equation}

On the other hand, if the cone constraint fails, then 
	\[  \vertiii{ \tilde{\delta}_{\newdot, S_*}}  <  \frac{1}{4} \vertiii{ \tilde{\delta}_{\newdot, S_*^c}}.\]
Using this for the first and third inequalities, and \Eqref{supp-mlogit cases} for the second, we have
\begin{align*}
	\vertiii{\dtilde} = \vertiii{ \tilde{\delta}_{\newdot, S_*}} + \vertiii{ \tilde{\delta}_{\newdot, S_*^c}} & \leq \frac{5}{4} \vertiii{ \tilde{\delta}_{\newdot, S_*^c}} 		 \\
	& \leq \frac{15}{4} \vertiii{ \tilde{\delta}_{\newdot, S_*}} +  \frac{5}{2} \frac{ \bias^d \sqrt{\T}}{\lambda_\D} \En [ \| \{ {x_i^*}' \dtildet\}_{\NT}  \|_2^2]^{1/2}		 \\
	& \leq \frac{15}{16}\vertiii{ \tilde{\delta}_{\newdot, S_*^c}}  +  \frac{5}{2} \frac{ \bias^d \sqrt{\T}}{\lambda_\D} \En [ \| \{ {x_i^*}' \dtildet\}_{\NT}  \|_2^2]^{1/2}
\end{align*}
Combining the right hand side of the first line with third lines yields 
	\[ \vertiii{ \tilde{\delta}_{\newdot, S_*^c}}  \leq 8 \frac{ \bias^d \sqrt{\T}}{\lambda_\D} \En [ \| \{ {x_i^*}' \dtildet\}_{\NT}  \|_2^2]^{1/2}.\]
Plugging this back into the last line we obtain the bound
\begin{equation}
	\label{supp-mlogit group norm 3}
	\vertiii{\dtilde}  \leq  10 \frac{ \bias^d \sqrt{\T}}{\lambda_\D} \En [ \| \{ {x_i^*}' \dtildet\}_{\NT}  \|_2^2]^{1/2},
\end{equation}
while instead, plugging it into the failure of the cone constraint yields
\begin{equation}
	\label{supp-mlogit group norm 4}
	\vertiii{ \tilde{\delta}_{\newdot, S_*}} \leq 2 \frac{ \bias^d \sqrt{\T}}{\lambda_\D} \En [ \| \{ {x_i^*}' \dtildet\}_{\NT}  \|_2^2]^{1/2}.
\end{equation}
Combining Equations \eqref{supp-mlogit group norm 1} and \eqref{supp-mlogit group norm 3} gives the first claim of the lemma and Equations \eqref{supp-mlogit group norm 2} and \eqref{supp-mlogit group norm 4} give the second. 
\end{proof}

\subsection{Proof of Theorem \ref{thm-mlogit}}

Define $\dtilde = \gtilde - \gtrue$. By the optimality of $\dtilde$, we have
\[ \M(\gtrue + \dtilde)   + \lambda_\D \vertiii{\gtrue + \dtilde} \leq  \M(\gtrue) + \lambda_\D \vertiii{\gtrue}.\]
Rearranging and subtracting the score, we have
\begin{align}
	\begin{split}
		\label{supp-mlogit start}
		& \M(\gtrue + \dtilde)  -  \M(\gtrue)  -  \sumT \En \left[(\pttrue - \dt) {x_i^*}' \right] \dtildet		\\
		& \qquad \quad \leq  \lambda_\D \left\{ \vertiii{\gtrue}  -  \vertiii{\gtrue + \dtilde}  \right\}  -  \sumT \En \left[(\pttrue - \dt) {x_i^*}' \right] \dtildet.
	\end{split}
\end{align}
The proof proceeds by deriving a further upper bound to the right and a quadratic lower bound of the left. The combination of these will yield a bound on $\En[({x_i^*}'\dtildet )^2] ^{1/2}$. 

Let us begin with the right side of \Eqref{supp-mlogit start}. For the penalized difference of coefficients we have
\[\vertiii{\gamma^*_{\newdot, S_*^c}} - \vertiii{\gamma^*_{\newdot, S_*^c} + \tilde{\delta}_{\newdot, S_*^c}} = \vertiii{ \tilde{\delta}_{\newdot, S_*^c}},\]
because $\gamma^*_{\newdot, S_*^c} = 0$. Therefore,
\begin{align*}
	\vertiii{\gtrue} - \vertiii{\gtrue + \dtilde} & = \vertiii{\gamma^*_{\newdot, S_*}} - \vertiii{\gamma^*_{\newdot, S_*} + \tilde{\delta}_{\newdot, S_*}} - \vertiii{ \tilde{\delta}_{\newdot, S_*^c}}		 \\
	& \leq \vertiii{\gamma^*_{\newdot, S_*}} - \vertiii{\gamma^*_{\newdot, S_*} + \tilde{\delta}_{\newdot, S_*}}		 \\
	& \leq \left| \vertiii{\gamma^*_{\newdot, S_*}} - \vertiii{\gamma^*_{\newdot, S_*} + \tilde{\delta}_{\newdot, S_*}} \right|		 \\
	& \leq \vertiii{\gamma^*_{\newdot, S_*} - \left(\gamma^*_{\newdot, S_*} + \tilde{\delta}_{\newdot, S_*}\right) }		 = \vertiii{ \tilde{\delta}_{\newdot, S_*}}, 
\end{align*}
where the first inequality reflects dropping the nonpositive final term (the norm is nonnegative) and the third inequality follows from the triangle inequality. Using this result for the first term and the bound \eqref{supp-mlogit score} for the second, the right side of \Eqref{supp-mlogit start} is bounded by
\begin{align}
	& \lambda_\D \vertiii{ \tilde{\delta}_{\newdot, S_*}}   +  \frac{\lambda_\D}{2} \vertiii{ \dtilde } + \bias^d \sqrt{\T} \En [ \| \{ {x_i^*}' \dtildet\}_{\NT}  \|_2^2]^{1/2} 		\nonumber \\
	& \qquad \leq \left(         \lambda_\D \left\{ \frac{  \sqrt{|S_*|} }{\kappa_\D}  \vee  \frac{ 2 \bias^d \sqrt{\T}}{\lambda_\D} \right\}         +        \frac{\lambda_\D}{2}\left\{ \frac{ 5 \sqrt{|S_*|} }{\kappa_\D}  \vee  \frac{ 10 \bias^d \sqrt{\T}}{\lambda_\D} \right\}       +      \bias^d \sqrt{\T}             \right)   \En [ \| \{ {x_i^*}' \dtildet\}_{\NT}  \|_2^2]^{1/2} 		\nonumber \\
	& \qquad \leq \left(    6 \frac{ \lambda_\D \sqrt{|S_*|} }{\kappa_\D}        +     8  \bias^d \sqrt{\T}             \right)   \En [ \| \{ {x_i^*}' \dtildet\}_{\NT}  \|_2^2]^{1/2},			\label{supp-mlogit upper} 	
\end{align}
where the second inequality applies the results of Lemma \ref{supp-mlogit cone} and the third bounds the maximum by the sum.

Now turn to the left side of \Eqref{supp-mlogit start}. Our goal is to show that this is bounded below by a quadratic function. We apply the bounds for \possessivecite{Bach2010_EJS} modified self-concordant functions. To show that $\M(\cdot)$ belongs to this class, we must bound the third derivative in terms of the Hessian. Recall that $\ptany = \exp\{ {x_i^*}'\ganyt\}/\left(1 + \sum_{\NT} \exp\{ {x_i^*}'\ganyt\} \right)$ and the $\T$-square matrix $\H(\{{x_i^*}'\ganyt\}_{\NT})$ has $(\t, \t') \in \NT^2$ entry given by
\begin{equation*}
	\H(\{{x_i^*}'\ganyt\}_{\NT})_{[\t,\t']} = \begin{cases}
				\ptany (1- \ptany) 	& \text{ if } \t = \t'	\\
				- \ptany \hat{p}_{\t'}(\{{x_i^*}'\ganyt\}_{\NT}) 	 & \text{ if } \t \neq \t'
			\end{cases}
\end{equation*}
First, note that $\M(\gany)$ can be written as
\[\M(\gany) = \En\biggl[ \log\biggl(1 + \sumT \exp\{ {x_i^*}'\ganyt\} \biggr)   -  \sumT \dt ({x_i^*}'\ganyt)\biggr].\]
Define $F: \mathbb{R}^\T \to \mathbb{R}$ as $F(w) = \log\left(1 + \sumT \exp(w_\t)\right)$, so that $\M(\gany) = \En\left[ F(w_i) - \sumT \dt w_{i,\t} \right]$, where $w_{i,\t} = {x_i^*}'\ganyt$ and $w_i = \{w_{i,\t}\}_{\NT}$. Then for any $w \in \mathbb{R}^\T$, $v \in \mathbb{R}^\T$, and scalar $\alpha$, define $g(\alpha) = F(w + \alpha v): \mathbb{R} \to \mathbb{R}$. We verify the conditions of \citeasnoun[Lemma 1]{Bach2010_EJS} for this $g(\alpha)$ and $F(w)$. This involves finding the third derivative of $g(\alpha)$, and bounding it in terms of the second (i.e. the Hessian). To this end, note that the multinomial function has the property that $\partial \ptany / \partial \ganyt = \ptany (1- \ptany) {x_i^*}$ and $\partial \ptany / \partial \gamma_{\t',\newdot} = - \ptany \hat{p}_{\t'}(\{{x_i^*}'\ganyt\}_{\NT}) {x_i^*}$. From these, we find that 
	\[ g'(\alpha) = v'F'(w + \alpha v) = \sumT v_\t \hat{p}_\t(w + \alpha v)\]
and
	\[g''(\alpha) = v'F''(w + \alpha v) v = v' \H(w + \alpha v) v.\]
To bound $g'''(\alpha)$, we again use the derivatives of $\ptany$ to find the derivatives of elements $\H(w)$. Routine calculations give, for any $r \neq s \neq \t$:
\begin{align*}
	\partial \H(w)_{\t,\t}/ \partial w_\t & =  \hat{p}_\t(w) ( 1- \hat{p}_\t(w)) (1 - 2 \hat{p}_\t(w)) = \H(w)_{\t,\t} (1 - 2 \hat{p}_\t(w))		\\
	\partial \H(w)_{\t,\t}/ \partial w_r & = - \hat{p}_\t(w) \hat{p}_r(w) ( 1- \hat{p}_\t(w))  + \hat{p}_\t(w)^2\hat{p}_r(w)  = \H(w)_{\t,\t} (\hat{p}_\t(w)\hat{p}_r(w)( 1- \hat{p}_\t(w))^{-1} -  \hat{p}_r(w))		\\
	\partial \H(w)_{\t,s}/ \partial w_\t & =  - \hat{p}_\t(w) \hat{p}_s(w)  (1 - 2 \hat{p}_\t(w)) = \H(w)_{\t,s} (1 - 2 \hat{p}_\t(w))		\\
	\partial \H(w)_{\t,s}/ \partial w_r & =  - \hat{p}_\t(w) \hat{p}_s(w)  ( - 2 \hat{p}_r(w)) = \H(w)_{\t,s} ( -  2 \hat{p}_r(w)).
\end{align*}
Each derivative returns the same Hessian element multiplied by term bounded by 2 in absolute value. Let $a_r$ represent this factor. Then we bound
\begin{multline*}
	g'''(\alpha)  = \left| \sum_{r \in \NT} v_r  \left. \frac{\partial v' \H(\tilde{w}) v  }{\partial w_r}\right|_{\tilde{w} = w + \alpha v} \right| = \left| \sum_{r \in \NT} v_r  v' \H(w + \alpha v) v a_r \right|  	\\	 \leq    \sum_{r \in \NT} v' \H(w + \alpha v) v |v_r|   |a_r |  \leq   2  v' \H(w + \alpha v) v \sum_{r \in \NT}  |v_r|    = 2 \| v \|_1 g''(\alpha) \leq 2 \sqrt{\T}\| v \|_2 g''(\alpha) .
\end{multline*}
Applying \possessivecite{Bach2010_EJS} Lemma 1 to each observation, as in \citeasnoun{Belloni-Chernozhukov-Wei2013_logit}\footnote{\citeasnoun{Kwemou2012_logit} also applied \possessivecite{Bach2010_EJS} to study sparse logistic regression}, with $w_i = \{{x_i^*}'\gtruet \}_{\NT}$ and $v_i = \{ {x_i^*}' \dtildet\}_{\NT}$ we get the lower bound
\begin{align}
	M(\gtrue + \dtilde) &  - \M(\gtrue) -  \sumT \En \left[(\pttrue - \dt) {x_i^*}' \right] \dtildet  		\nonumber \\
	&   \geq     \En \left[ \frac{v_i' \H(\{{x_i^*}'\ganyt\}_{\NT}) v_i}{ 4  \T  \| v_i  \|_2^2} \left( e^{-2\| v_i  \|_2} + 2 \| v_i  \|_2 - 1\right) \right]		\nonumber \\
	& \geq  \En \left[ \frac{v_i' \H(\{{x_i^*}'\ganyt\}_{\NT}) v_i}{ 4  \T  \| v_i  \|_2^2} \left( 2 \| v_i  \|_2^2 - \frac{4}{3} \| v_i  \|_2^3\right) \right],	\label{supp-mlogit Bach}
\end{align}
where the second inequality follows from \citeasnoun[Lemma 9]{Belloni-Chernozhukov-Wei2013_logit}. 

\citeasnoun[Theorem 1]{Tanabe-Sagae1992_JRSSB} give $\H(\{{x_i^*}'\gtruet\}_{\NT}) \geq \phi_{\min}\{ \H(\{{x_i^*}'\gtruet\}_{\NT})\} \mathcal{I}_\T $, in the positive definite sense, where $\phi_{\min}(A)$ denotes the smallest eigenvalue of $A$ and $\mathcal{I}_T$ is the $\T \times \T$ identity matrix. Then
\[ \phi_{\min}\{ \H(\{{x_i^*}\gtruet\}_{\NT})\} \geq \det\{ \H(\{{x_i^*}'\ganyt\}_{\NT}) \} = \prod_{\t \in \NTbar} \pttrue \geq \left(\frac{p_{\min}}{A_p}\right)^{\Tbar},\]
where $p_0(\{{x_i^*}'\gtruet\}_{\NT}) = 1 - \sum_{\t \in \NT} \pttrue$ and the first inequality is also due to \citeasnoun{Tanabe-Sagae1992_JRSSB}. These results imply that $v_i' \H(\{{x_i^*}'\ganyt\}_{\NT}) v_i \geq (p_{\min} / A_p)^{\Tbar} v_i'\mathcal{I}_\T v_i = (p_{\min} / A_p)^{\Tbar} \| v_i  \|_2^2$ and therefore
\begin{align}
	\En \left[ \frac{v_i' \H(\{{x_i^*}'\ganyt\}_{\NT}) v_i}{ 4  \T  \| v_i  \|_2^2} \left( 2 \| v_i  \|_2^2 - \frac{4}{3} \| v_i  \|_2^3\right) \right]   &  \geq    \left(\frac{p_{\min}}{A_p}\right)^{\Tbar} \frac{1}{4  \T } \En \left[ 2 \| v_i  \|_2^2 - \frac{4}{3} \| v_i  \|_2^3 \right] 		\nonumber \\
	& = \left(\frac{p_{\min}}{A_p}\right)^{\Tbar} \frac{1}{\T } \frac{\En [ \| v_i  \|_2^2]}{2} \left( 1 - \frac{2}{3}\frac{\En [ \| v_i  \|_2^3]}{\En [ \| v_i  \|_2^2]} \right).		\label{supp-mlogit Hessian cases}
\end{align}

Recall that $v_i = \{ {x_i^*}' \dtildet\}_{\NT}$. To prove a quadratic lower bound, consider two cases, depending on whether 
\[\frac{1}{2} \left( 1 - \frac{2}{3}\frac{\En [ \| \{ {x_i^*}' \dtildet\}_{\NT}  \|_2^3]}{\En [ \| \{ {x_i^*}' \dtildet\}_{\NT}  \|_2^2]} \right)\]
is above or below $1 / A_K$. 

In the first case, combining Equations \eqref{supp-mlogit Bach} and \eqref{supp-mlogit Hessian cases} gives
\begin{equation}
	\label{supp-mlogit case 1}
	\M(\gtrue + \dtilde)   - \M(\gtrue) -  \sumT \En \left[(\pttrue - \dt) {x_i^*}' \right] \dtildet    \geq   \left(\frac{p_{\min}}{A_p}\right)^{\Tbar} \frac{1}{\T } \frac{\En [ \| \{ {x_i^*}' \dtildet\}_{\NT}  \|_2^2]}{A_K}.
\end{equation}

Now consider the second case, where this bound does not hold. By Assumption \ref{bounded}, the Cauchy-Schwarz inequality, and the conclusion of Lemma \ref{supp-mlogit cone}
\begin{align*}
	\| \{ {x_i^*}' \dtildet \}_{\NT}  \|_1 = \sumT \sumj \left| {x_{i,j}^*} \dtildetj \right|      \leq      \mathcal{X} \left\| \dtilde \right\|_1       &  \leq      \sqrt{\T} \mathcal{X} \vertiii{\dtilde}      		\\
	&  \leq      \sqrt{\T} \mathcal{X}  \left\{ \frac{ 5 \sqrt{|S_*|} }{\kappa_\D}  \vee  \frac{ 10 \bias^d \sqrt{\T}}{\lambda_\D} \right\}\En [ \| \{ {x_i^*}' \dtildet\}_{\NT}  \|_2^2]^{1/2}.
\end{align*}
Hence, by subadditivity (to bound the $\ell_2$ norm by the $\ell_1$ norm), 
\[ \En [ \| \{ {x_i^*}' \dtildet\}_{\NT}  \|_2^3] \leq \En [ \| \{ {x_i^*}' \dtildet\}_{\NT}  \|_2^2 \| \{ {x_i^*}' \dtildet\}_{\NT}  \|_1] \leq \En [ \| \{ {x_i^*}' \dtildet\}_{\NT}  \|_2^2]^{3/2}   \sqrt{\T} \mathcal{X}  \left\{ \frac{ 5 \sqrt{|S_*|} }{\kappa_\D}  \vee  \frac{ 10 \bias^d \sqrt{\T}}{\lambda_\D} \right\}.  \]
Thus
\[\frac{1}{A_K} > \frac{1}{2} \left( 1 - \frac{2}{3}\frac{\En [ \| \{ {x_i^*}' \dtildet\}_{\NT}  \|_2^3]}{\En [ \| \{ {x_i^*}' \dtildet\}_{\NT}  \|_2^2]} \right)  \geq  \frac{1}{2} \left( 1 -    \frac{2}{3} \frac{\mathcal{X} \sqrt{\T} }{\kappa_\D \lambda_\D} \left( 5 \lambda_\D \sqrt{|S_*|}  +   10 \kappa_\D \bias^d \sqrt{\T}  \right)  \En [ \| \{ {x_i^*}' \dtildet\}_{\NT}  \|_2^2]^{1/2} \right),   \]
which is equivalent to
\[\En [ \| \{ {x_i^*}' \dtildet\}_{\NT}  \|_2^2]^{1/2} > \left( 1 - \frac{2}{A_K}\right) \frac{3}{2}  \frac{\kappa_\D \lambda_\D}{\mathcal{X} \sqrt{\T} } \left( 5 \lambda_\D \sqrt{|S_*|}  +   10 \kappa_\D \bias^d \sqrt{\T}  \right)^{-1}     := r_n.\]
Because $\M(\gtrue + \dany) - \M(\gany) - \sumT \En \left[(\pttrue - \dt) {x_i^*}' \right] \danyt$ is convex in $\dany$, and hence any line segment lies above the function, we know that $\En [ \| \{ {x_i^*}' \dtildet\}_{\NT}  \|_2^2]^{1/2} > r_n$, so we have
\[\M(\gtrue + \dtilde) - \M(\gany) - \sumT \En \left[(\pttrue - \dt) {x_i^*}' \right] \dtildet \geq r_n^2 \geq  r_n^2 \frac{\En [ \| \{ {x_i^*}' \dtildet \}_{\NT}  \|_2^2]^{1/2}}{r_n} = r_n \En [ \| \{ {x_i^*}' \dtildet \}_{\NT}  \|_2^2]^{1/2}.\]
Combining this result with Equations \eqref{supp-mlogit start} and \eqref{supp-mlogit upper}, we have
\[\left( 1 - \frac{2}{A_K}\right) \frac{3}{2} \frac{\kappa_\D \lambda_\D}{\mathcal{X} \sqrt{\T} } \left( 5 \lambda_\D \sqrt{|S_*|}  +   10 \kappa_\D \bias^d \sqrt{\T}  \right)^{-1}  \En [ \| \{ {x_i^*}' \danyt\}_{\NT}  \|_2^2]^{1/2}      \leq      \left(    6 \frac{ \lambda_\D \sqrt{|S_*|} }{\kappa_\D}        +     8  \bias^d \sqrt{\T}             \right)   \En [ \| \{ {x_i^*}' \dtildet\}_{\NT}  \|_2^2]^{1/2},\]
which is impossible under the restriction on $A_K$ because it is equivalent to
\begin{align*}
	1 - \frac{2}{A_K} & \leq \frac{2}{3} \frac{\mathcal{X} \sqrt{\T} }{\kappa_\D^2 \lambda_\D} \left( 30 \lambda_\D^2 |S_*| + 100 \lambda_\D \sqrt{|S_*|} \kappa_\D \bias^d \sqrt{\T} + 80 \kappa_\D^2 (\bias^d)^2 \T \right)		\\
	& = \frac{2}{3} \frac{\mathcal{X} \sqrt{\T} }{\kappa_\D^2} \left( 30 \lambda_\D |S_*| + 100 \sqrt{|S_*|} \kappa_\D \bias^d \sqrt{\T} + 80 \kappa_\D^2 (\bias^d)^2 \T \lambda_\D^{-1} \right).
\end{align*}
Solving this for $A_K$ would require that
\[A_K \leq 2 \frac{\kappa_\D^2}{\kappa_\D^2 - (2/3)  \mathcal{X} \sqrt{\T} \left( 30 \lambda_\D |S_*| + 100 \sqrt{|S_*|} \kappa_\D \bias^d \sqrt{\T} + 80 \kappa_\D^2 (\bias^d)^2 \T \lambda_\D^{-1} \right)}  \]
which contradicts the condition in the Theorem.

Therefore, \Eqref{supp-mlogit case 1} must hold.\footnote{This analysis is conceptually similar to using \possessivecite{Belloni-Chernozhukov2011_AoS} restricted nonlinearity impact coefficient, but our characterization is different.} Combining this with Equations \eqref{supp-mlogit start} and \eqref{supp-mlogit upper}, we find that
\[\left(\frac{p_{\min}}{A_p}\right)^{\Tbar} \frac{1}{\T } \frac{\En [ \| \{ {x_i^*}' \dtildet\}_{\NT}  \|_2^2]}{A_K} \leq  \left(    6 \frac{ \lambda_\D \sqrt{|S_*|} }{\kappa_\D}        +     8  \bias^d \sqrt{\T}             \right)   \En [ \| \{ {x_i^*}' \dtildet\}_{\NT}  \|_2^2]^{1/2}.\]
Thus, dividing through and applying the union bound we find that
\begin{equation}
	\label{supp-mlogit log odds rate}
	\max_{\t \in \NT} \En [  ({x_i^*}' \dtildet)^2]^{1/2}      \leq        \En [ \| \{ {x_i^*}' \dtildet\}_{\NT}  \|_2^2]^{1/2}         \leq         \left(\frac{A_p}{p_{\min}}\right)^{\Tbar}     \T A_K \left(    6 \frac{ \lambda_\D \sqrt{|S_*|} }{\kappa_\D}    +   8  \bias^d \sqrt{\T}   \right)   .
\end{equation}

To bound the propensity score error, we apply the mean value theorem and the form of $\partial \ptany / \partial \ganyt$. We must linearize with respect to $\t$ only (recall that $\pttilde$ depends on all of $\gtilde$). To this end, define $M_\t$ as the $\T$-vector with entry $\t$ given by ${x_i^*}'\gtruet + \tilde{m}_\t{x_i^*}'\gtildet$ for a scalar $\tilde{m}_\t \in [0,1]$ and entries $\t' \in \NT \setminus \{\t\}$ equal to ${x_i^*}'\gamma_{\t'}$. Then we have
\begin{equation}
	\label{supp-mlogit rate mvt}
	\left|\pttilde - \pttrue \right|   =   \left| \hat{p}_\t(M_\t) [1 - \hat{p}_\t(M_\t)]{x_i^*}'\dtildet\right|   \leq   \left|{x_i^*}'\dtildet\right|.
\end{equation}
Using this result coupled with the triangle inequality, the bias condition, and \Eqref{supp-mlogit log odds rate}, we find
\begin{align*}
	\En[(\pttilde - p_\t(x_i))^2] ^{1/2} & \leq \En[(\pttilde - \pttrue)^2] ^{1/2} + \En[(\pttrue - p_\t(x_i))^2] ^{1/2} 		\\
	& \leq \En\left[({x_i^*}'\dtildet )^2\right] ^{1/2} + \bias^d		\\
	& \leq \left(\frac{A_p}{p_{\min}}\right)^{\Tbar}     \T A_K \left(    6 \frac{ \lambda_\D \sqrt{|S_*|} }{\kappa_\D}        +     8  \bias^d \sqrt{\T}             \right)   +   \bias^d.
\end{align*}

The $\ell_1$ bound follows from \Eqref{supp-mlogit log odds rate} by the Cauchy-Schwarz inequality and the definition in \Eqref{SE}:
\begin{align*}
	\left\| \gtildet - \gtruet \right\|_1 \leq \sqrt{|\tilde{S}^\D \cup S_\D^*|} \left\| \gtildet - \gtruet \right\|_{2,p} \leq \left(\frac{ |\tilde{S}^\D \cup S_\D^*| }{ \underline{\phi}\{Q, \tilde{S}^\D \cup S_\D^*\} } \right)^{1/2} \En[({x_i^*}'(\gtildet - \gtruet))^2]^{1/2}.	
\end{align*}

Finally, we bound the size of the selected set of coefficients. First, note that optimality of $\gtilde$ ensures that $| \tilde{S}^\D | \leq n$. Then, restating the conclusion Lemma \ref{supp-mlogit S-hat} using the notation of the Theorem and the rate result \eqref{supp-mlogit log odds rate}, then bounding $\overline{\phi}$ by $\dbarphi$ we find that
\[ | \tilde{S}^\D |    \leq    |S_\D^* |4 L_n \dbarphi\{Q, |\tilde{S}^\D|\}.\]
The argument now parallels that used by \citeasnoun{Belloni-Chernozhukov2013_Bern}, relying on their result on the sublinearity of sparse eigenvalues. Let $\lceil m \rceil$ be the ceiling function and note that $\lceil{m}\rceil \leq 2m$. For any $m \in \N_Q^\D$, suppose that $|\tilde{S}^\D| > m$. Then, 
\begin{align*}
	| \tilde{S}^\D |& \leq |S_\D^* |4 L_n \dbarphi\{Q, m (|\tilde{S}^\D| / m) \} 		\\
	& \leq  \left\lceil |\tilde{S}^\D| / m \right \rceil  |S_\D^* |4 L_n   \dbarphi\{Q, m  \}		\\
	& \leq  ( |\tilde{S}^\D| / m) |S_\D^* | 8 L_n   \dbarphi\{Q, m\}.
\end{align*}
Rearranging gives 
\[m \leq |S_\D^* | 8 L_n   \dbarphi\{Q, m\}\]
whence $m \not\in \N_Q^\D$. Minimizing over $\N_Q^\D$ gives the result.	 \qed

\subsection{Proof of Theorem \ref{thm-post mlogit}}

Define $\dhat = \ghat - \gtrue$. Many of the arguments parallel those for Theorem \ref{thm-mlogit}. The key differences are that a quadratic lower bound for $\M(\gtrue + \dhat) - \M(\gtrue) - \sumT \En \left[(\pttrue - \dt) {x_i^*}' \right] \dhatt$ may occur, but is not necessary, and $\dhat$ may not belong to the cone of the restricted eigenvalues, but obeys the sparse eigenvalue constraints. 

We first give a suitable upper bound for $\M(\gtrue + \dhat) - \M(\gtrue) - \sumT \En \left[(\pttrue - \dt) {x_i^*}' \right] \dhatt$. By the Cauchy-Schwarz inequality and the definition of the sparse eigenvalues of \Eqref{SE},
\begin{align}
	\vertiii{ \dhat } & = \sum_{j \in \hat{S}_\D \cup S_\D^*} \left\| \dhatj \right\|_2		\nonumber \\
	& \leq  \sqrt{ \left| \hat{S}_\D \cup S_\D^* \right|} \sqrt{ \sumT \sum_{j \in \hat{S}_\D \cup S_\D^*}  \dhattj^2 }		\nonumber \\
	& =  \sqrt{ \left| \hat{S}_\D \cup S_\D^* \right|} \sqrt{ \sumT \left\| \dhatj \right\|_2^2 }		\nonumber  \\
	& \leq  \sqrt{ \left| \hat{S}_\D \cup S_\D^* \right|} \sqrt{ \sumT \underline{\phi}\left\{Q, \hat{S}_\D \cup S_\D^*  \right\}^{-2} \dhatt' Q \dhatt }		\nonumber  \\
	& =  \sqrt{ \left| \hat{S}_\D \cup S_\D^* \right|}  \underline{\phi}\left\{Q, \hat{S}_\D \cup S_\D^*  \right\}^{-1} \En [ \| \{ {x_i^*}' \dhatt\}_{\NT}  \|_2^2]^{1/2}.				\label{supp-post mlogit norm}
\end{align}
Following identical steps to Equations \eqref{supp-mlogit score variance}, \eqref{supp-mlogit score bias}, and \eqref{supp-mlogit score}, but with $\dhat$ in place of $\dtilde$, and then using the above bound, we have
\begin{align}
	\left| \sumT \En \left[(\pttrue - \dt) {x_i^*}' \right] \dhatt \right|  & \leq   \frac{\lambda_\D}{2} \vertiii{ \dhat } + \bias^d \sqrt{\T} \En [ \| \{ {x_i^*}' \dhatt\}_{\NT}  \|_2^2]^{1/2} 		 \nonumber \\
	& \leq    \left(  \frac{\lambda_\D}{2}  \frac{ \sqrt{ | \hat{S}_\D \cup S_\D^* |} }{  \underline{\phi}\{Q, \hat{S}_\D \cup S_\D^*  \} }    + \bias^d \sqrt{\T}    \right)   \En [ \| \{ {x_i^*}' \dhatt\}_{\NT}  \|_2^2]^{1/2}.		\label{supp-post mlogit score}
\end{align}

Next we turn to $\M(\gtrue + \dhat)  - \M(\gtrue)$. By optimality of the post selection estimator $\M(\ghat) \leq \M(\gtilde)$, as $\tilde{S}^\D \subset \hat{S}_\D$ by construction, and hence $\M(\gtrue + \dhat)  - \M(\gtrue) \leq \M(\gtilde)  - \M(\gtrue)$. By the mean value theorem, for scalars $\{m_\t \in [0,1]\}_{\NT}$ we have
\begin{align}
	\M(\gtrue + \dtilde)  - \M(\gtrue)  &  =  \sumT \En \left[(\dt - \hat{p}_\t(\{{x_i^*}'\gtruet + m_\t{x_i^*}'\dtildet\}) ) {x_i^*}'\dtildet \right]		\nonumber \\
	& = \sumT \En \left[(\dt  - \pttrue) {x_i^*}'\dtildet \right]  		\nonumber \\
	& \quad  +   \sumT \En \left[(\pttrue - \hat{p}_\t(\{{x_i^*}'\gtruet + m_\t{x_i^*}'\dtildet\}) ) {x_i^*}'\dtildet \right],		\nonumber \\
	& \leq   \frac{\lambda_\D}{2} \vertiii{ \dtilde } + \bias^d \sqrt{\T} \En [ \| \{ {x_i^*}' \dtildet\}_{\NT}  \|_2^2]^{1/2}    + \sumT \En \left[m_\t ({x_i^*}'\dtildet)^2 \right].		\nonumber \\
	& \leq \left( \frac{\lambda_\D}{2}  \frac{\sqrt{ | \hat{S}_\D \cup S_\D^* |}} {  \underline{\phi}\{Q, \hat{S}_\D \cup S_\D^*  \}}     + \bias^d \sqrt{\T} \right)   \En [ \| \{ {x_i^*}' \dtildet\}_{\NT}  \|_2^2]^{1/2}    +  \En [ \| \{ {x_i^*}' \dtildet\}_{\NT}  \|_2^2],		\label{supp-mlogit loss fcn}		
\end{align}
where the first inequality follows from \Eqref{supp-mlogit score} and the same steps as in \eqref{supp-mlogit rate mvt} while the second applies \eqref{supp-post mlogit norm} with $\dtilde$ and $m_\t \leq 1$.\footnote{Applying the steps of \Eqref{supp-post mlogit norm} to $\dtilde$ is preferred to using the results of Lemma \ref{supp-mlogit cone} because it leads to the tidier expression involving $\underline{\phi}\{Q, \hat{S}_\D \cup S_\D^*  \}$, but the latter method could be substituted.}

Collecting the bounds of \eqref{supp-post mlogit score} and \eqref{supp-mlogit loss fcn}, and the definition of $R_\M$ (that is, \Eqref{supp-mlogit log odds rate}) gives
\begin{multline}
	\label{supp-post mlogit upper}
	\M(\gtrue + \dhat) - \M(\gtrue) - \sumT \En \left[(\pttrue - \dt) {x_i^*}' \right] \dhatt 	 		\\		
	\leq   \left( \frac{\lambda_\D}{2}  \frac{\sqrt{ | \hat{S}_\D \cup S_\D^* |}} {  \underline{\phi}\{Q, \hat{S}_\D \cup S_\D^*  \}}     + \bias^d \sqrt{\T} \right)  \left( \En [ \| \{ {x_i^*}' \dhatt\}_{\NT}  \|_2^2]^{1/2} +  R_\M  \right) +  R_\M^2 .
\end{multline}

Next, we turn to a lower bound. Consider the same two cases as in the proof of Theorem \ref{thm-mlogit}. In the first case, we have the quadratic lower bound:
\begin{equation}
	\label{supp-post mlogit case 1}
	M(\gtrue + \dhat)   - \M(\gtrue) -  \sumT \En \left[(\pttrue - \dt) {x_i^*}' \right] \dhatt    \geq   \left(\frac{p_{\min}}{A_p}\right)^{\Tbar} \frac{1}{\T } \frac{\En [ \| \{ {x_i^*}' \dhatt\}_{\NT}  \|_2^2]}{A_K}.
\end{equation}
In the other case, this bound may not hold. Arguing as in the proof of Theorem \ref{thm-mlogit}, but applying \Eqref{supp-post mlogit norm}, we get
\[\| \{ {x_i^*}' \dhatt \}_{\NT}  \|_1 \leq \sqrt{\T} \mathcal{X} \sqrt{ | \hat{S}_\D \cup S_\D^* |}  \underline{\phi}\{Q, \hat{S}_\D \cup S_\D^*  \}^{-1} \En [ \| \{ {x_i^*}' \dtildet\}_{\NT}  \|_2^2]^{1/2}.\]
Therefore, as above, we find 
\begin{equation}
	\label{supp-post mlogit case 2}
	\M(\gtrue + \dhat) - \M(\gany) - \sumT \En \left[(\pttrue - \dt) {x_i^*}' \right] \dhatt \geq r_n \En [ \| \{ {x_i^*}' \dhatt \}_{\NT}  \|_2^2]^{1/2},
\end{equation}
with
\[r_n  =  \frac{3}{2} \left( 1 - \frac{2}{A_K}\right) \frac{ \underline{\phi}\{Q, \hat{S}_\D \cup S_\D^*  \} }{\mathcal{X} \sqrt{\T} \sqrt{ | \hat{S}_\D \cup S_\D^* |}}.\]
Collecting the upper bound of \eqref{supp-post mlogit upper} and the lower bounds \eqref{supp-post mlogit case 1} and \eqref{supp-post mlogit case 2} we have
\begin{multline}
	\label{supp-post mlogit cases}
	\left\{ \left(\frac{p_{\min}}{A_p}\right)^{\Tbar} \frac{1}{\T } \frac{\En [ \| \{ {x_i^*}' \dhatt\}_{\NT}  \|_2^2] }{A_K} \right\}    \wedge   \left\{  r_n \En [ \| \{ {x_i^*}' \dhatt \}_{\NT}  \|_2^2]^{1/2} \right\}     		\\
	\leq      \left( \frac{\lambda_\D}{2}  \frac{\sqrt{ | \hat{S}_\D \cup S_\D^* |}} {  \underline{\phi}\{Q, \hat{S}_\D \cup S_\D^*  \}}     + \bias^d \sqrt{\T} \right)  \left( \En [ \| \{ {x_i^*}' \dhatt\}_{\NT}  \|_2^2]^{1/2} +  R_\M  \right) +  R_\M^2.	
\end{multline}

For some $A_1 > 1$, replace the restriction on $A_K$ in the Theorem with the requirement that
\begin{multline*}
	A_K > 2 \left\{ \frac{\underline{\phi}\{Q, \hat{S}_\D \cup S_\D^*  \}^2}{ \underline{\phi}\{Q, \hat{S}_\D \cup S_\D^*  \}^2  -  (A_1/3)\mathcal{X} \sqrt{\T} \left( \lambda_\D  | \hat{S}_\D \cup S_\D^* | + \bias^d \underline{\phi}\{Q, \hat{S}_\D \cup S_\D^*  \} \sqrt{ | \hat{S}_\D \cup S_\D^* |}  \sqrt{\T} \right)  } \right\} 		\\		\vee \left\{  \frac{\underline{\phi}\{Q, \hat{S}_\D \cup S_\D^*  \}}{ \underline{\phi}\{Q, \hat{S}_\D \cup S_\D^*  \}  -  (A_1/3) 2 R_\M  \mathcal{X} \sqrt{\T}  \sqrt{ | \hat{S}_\D \cup S_\D^* |}} \right\}.
\end{multline*}
Suppose the linear term is the minimum. The first restriction on $A_K$ implies, by simple algebraic manipulations, that
	\[ \left( \frac{\lambda_\D}{2}  \frac{\sqrt{ | \hat{S}_\D \cup S_\D^* |}} {  \underline{\phi}\{Q, \hat{S}_\D \cup S_\D^*  \}}     + \bias^d \sqrt{\T} \right) < \frac{r_n}{A_1},\]
while the second gives $R_\M < (r_n / A_1)$. Plugging the former into \Eqref{supp-post mlogit cases} and then applying the latter yields
\begin{align*}
	r_n \En [ \| \{ {x_i^*}' \dhatt \}_{\NT}  \|_2^2]^{1/2}   &   \leq (r_n / A_1) \left(  \En [ \| \{ {x_i^*}' \dhatt \}_{\NT}  \|_2^2]^{1/2} +  R_\M \right) + R_\M^2  		\\
	& \leq  (r_n / A_1) \left(  \En [ \| \{ {x_i^*}' \dhatt \}_{\NT}  \|_2^2]^{1/2} +  2R_\M \right).
\end{align*}
Canceling the $r_n$ and solving yields
\[\En [ \| \{ {x_i^*}' \dhatt \}_{\NT}  \|_2^2]^{1/2}   \leq   \frac{2 R_\M }{ A_1 - 1}.\]
On the other hand, if the quadratic term is the minimum, define
\[R_\M' = \left(\frac{A_p}{p_{\min}}\right)^{\Tbar}\T A_K \left( \frac{\lambda_\D}{2}  \frac{\sqrt{ | \hat{S}_\D \cup S_\D^* |}} {  \underline{\phi}\{Q, \hat{S}_\D \cup S_\D^*  \}}     + \bias^d \sqrt{\T} \right).\]
With this notation and the quadratic term being the minimum, \Eqref{supp-post mlogit cases} becomes
\[ \En [ \| \{ {x_i^*}' \dhatt\}_{\NT}  \|_2^2] \leq R_\M'  \En [ \| \{ {x_i^*}' \dhatt\}_{\NT}  \|_2^2]^1/2 + R_\M' R_\M + \left(\frac{A_p}{p_{\min}}\right)^{\Tbar} \T A_K R_\M^2.\]
Then, because $a^2 \leq ab + c$ implies that $a \leq b + \sqrt{c}$, we have
	\[\En [ \| \{ {x_i^*}' \dhatt \}_{\NT}  \|_2^2]^{1/2}  \leq R_\M' + \left(  R_\M' R_\M + \left(\frac{A_p}{p_{\min}}\right)^{\Tbar} \T A_K R_\M^2 \right)^{1/2}.\]
Taking $A_1 = 3$ and combining the bounds on $\En [ \| \{ {x_i^*}' \dhatt \}_{\NT}  \|_2^2]^{1/2}$ from the two cases gives
\begin{equation*}
	\label{supp-post log odds rate}
	\En [ \| \{ {x_i^*}' \dhatt \}_{\NT}  \|_2^2]^{1/2}  \leq   \left\{ R_\M \right\}  \vee  \left\{  R_\M' + \left(  R_\M' R_\M + \left(\frac{A_p}{p_{\min}}\right)^{\Tbar} \T A_K R_\M^2 \right)^{1/2} \right\}.
\end{equation*}

From this bound on the log-odds estimates, we obtain the bound on the propensity score estimates and the $\ell_1$ rate, given by,
	\[\max_{\t \in \NT} \En[(\pthat - p_\t(x_i))^2] ^{1/2} \leq   \left\{R_\M \right\}  \vee  \left\{  R_\M' + \left(  R_\M' R_\M + \left(\frac{A_p}{p_{\min}}\right)^{\Tbar} \T A_K R_\M^2 \right)^{1/2} \right\}   + \bias^d,\]
and
	\[\max_{\t \in \NT} \left\| \ghatt - \gtruet \right\|_1  \leq \left(\frac{ |\tilde{S}^\D \cup S_\D^*| }{ \underline{\phi}\{Q, \tilde{S}^\D \cup S_\D^*\} } \right)^{1/2} \left\{ R_\M \right\}  \vee  \left\{  R_\M' + \left(  R_\M' R_\M + \left(\frac{A_p}{p_{\min}}\right)^{\Tbar} \T A_K R_\M^2 \right)^{1/2} \right\},\]
by arguments parallel to those used in the proof of Theorem \ref{thm-mlogit}.

\section{Proofs for Group Lasso Selection and Estimation of Linear Models}

Unless otherwise noted, all bounds in this section are nonasymptotic. We will use generic notation $X^*$, $\delta$, $s$, etc, as this section deals only with linear models.

\subsection{Lemmas}

\begin{lemma}[Score Bound]
	\label{supp-ols score bound}
	For $\lambda_Y$ and $\mathcal{P}$ defined respectively in \Eqref{lambda} and \Eqref{probability} we have
	\[\P \left[ \maxj \| \E_{n,\newdot} [u_i x_{i,j}^* ] \|_2 \geq \frac{\lambda_Y}{4} \right] \leq \mathcal{P}. \]
\end{lemma}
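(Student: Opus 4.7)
The plan is to mirror the proof of Lemma \ref{supp-mlogit score bound}, substituting the conditionally mean-zero errors $u_i$ for the bounded residuals $v_{\t,i} = p_\t(x_i) - \dt$. First I would verify that each component $\Ent[u_i x_{i,j}^*]$ is mean zero. By Assumption \ref{mean inde}, $\E[Y(\t) - \mut(X) \mid X=x_i, \D=\t] = 0$, so the summand $\dt u_i x_{i,j}^*$ has zero mean conditional on $x_i$, and averaging preserves this.

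Next I would compute the second moment. Conditionally on $\{\d\}_{i=1}^n$, the indices in $\It$ are fixed and the within-group sample is i.i.d.\ by Assumption \ref{iid}. Combining Assumptions \ref{bounded} and \ref{fourth moments} gives $\E[u_i^2 (x_{i,j}^*)^2 \mid \d = \t] \leq \mathcal{X}^2 \mathcal{U}^2$, so
\[
\E\bigl[(\Ent[u_i x_{i,j}^*])^2 \bigm| \{\d\}\bigr] \leq \frac{\mathcal{X}^2 \mathcal{U}^2}{n_\t},
\]
and summing over $\t \in \NTbar$ and using $n_\t \geq \underline{n}$ yields the bound $\E[\|\E_{n,\newdot}[u_i x_{i,j}^*]\|_2^2] \leq \mathcal{X}^2 \mathcal{U}^2 \Tbar / \underline{n}$, which is exactly what $(\lambda_Y/4)^2$ is designed to dominate (up to the slack factor $r_n = \Tbar^{-1/2} \log(p \vee \underline{n})^{3/2 + \delta_Y}$).

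With this variance in hand, I would repeat Equations \eqref{supp-mlogit maximal1}--\eqref{supp-mlogit maximal3} verbatim. Introduce the centered random variables $\xi_{\t,j} = (\Ent[u_i x_{i,j}^*])^2 - \E[(\Ent[u_i x_{i,j}^*])^2]$, substitute the definition of $\lambda_Y$ and square both sides to reduce to bounding $\P[\maxj \sum_{\t \in \NTbar} \xi_{\t,j} \geq \mathcal{X}^2 \mathcal{U}^2 \Tbar r_n / \underline{n}]$, apply Markov followed by Lemma 9.1 of \citeasnoun{Lounici-etal2011_AoS} with $m=1$, and then Jensen's inequality and a second application of that lemma with $m=4$. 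The resulting constants will combine with the choice of $r_n$ to produce exactly the probability $\mathcal{P}$ of \eqref{probability}.

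The main obstacle is the $m=4$ step. In the logistic case $|v_{\t,i}| \leq 1$ made the bound $\sumi \max_j |v_{\t,i} x_{i,j}^*/n|^2 \leq \mathcal{X}^2/n$ deterministic. Here $u_i$ is unbounded, so I instead need to bound $\E\bigl[(\sumi \dt u_i^2 \max_j(x_{i,j}^*)^2 / n_\t^2)^2\bigr]$ in expectation. Expanding the square into a double sum over $\It$, conditioning on $\{x_i, \d\}$, and invoking i.i.d.\ plus Assumption \ref{fourth moments} gives $\E[u_{i_1}^2 u_{i_2}^2 \mid x_{i_1}, x_{i_2}] \leq \mathcal{U}^4$ for both equal and distinct indices, leaving $\mathcal{X}^4 \mathcal{U}^4 / \underline{n}^2$ after taking unconditional expectation. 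This is the step that injects the $\mathcal{U}$ factor into $\lambda_Y$ relative to $\lambda_\D$ and ultimately closes the proof at the advertised probability $1 - \mathcal{P}$.
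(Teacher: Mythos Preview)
Your proposal is correct and follows essentially the same route as the paper's own proof: the same centering $\xi_{\t,j}$, the same chain of Markov $\to$ Lemma 9.1 of \citeasnoun{Lounici-etal2011_AoS} with $m=1$ $\to$ Jensen $\to$ a second application with $m=4$, and the same expansion of the squared sum into diagonal and off-diagonal terms handled via the fourth-moment bound $\mathcal{U}^4$. The only cosmetic difference is that the paper bounds $\E[u_i^2 (x_{i,j}^*)^2]$ via Cauchy--Schwarz ($\E[|X_{i,j}^*|^4]^{1/2}\E[|U|^4]^{1/2}$), whereas you use $|x_{i,j}^*|\leq \mathcal{X}$ directly; both yield $\mathcal{X}^2\mathcal{U}^2$.
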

\begin{proof}
The residuals $u_i$ are conditionally mean-zero by definition. Using this, Assumption \ref{iid}, the definitions of $\mathcal{X}$ and $\mathcal{U}$, and the Cauchy-Schwarz inequality, we find that
	\[\E\left[ \| \E_{n,\newdot} [u_i x_{i,j}^* ] \|_2^2 \right]  =  \sumTbar \E\left[ \Ent[u_i x_{i,j}^* ]^2 \right] = \sumTbar \frac{1}{n_\t} \E [u_i^2 (x_{i,j}^*)^2]  \leq \sumTbar \frac{1}{n_\t}  \E[ | X_{i,j}^*|^4]^{1/2} \E[ | U_i|^4]^{1/2} \leq  \frac{\mathcal{X}^2 \mathcal{U}^2 \Tbar}{\underline{n}}  \]
uniformly in $j \in \Np$. Define the mean-zero random variables $\xi_{t,j}$ as: 
	\[\xi_{t,j} = (\Ent[u_i x_{i,j}^* ])^2 - \frac{1}{n_\t} \E[U^2 {X_j^*}^2].\]
Let $r_n = \Tbar^{-1/2} \log(p \vee \underline{n})^{3/2 + \delta}$. Then, using the definition of $\lambda_Y$:
\begin{align}
	\P \left[ \maxj \| \E_{n,\newdot} [u_i {x_{i,j}^*}] \|_2 \geq \frac{\lambda_Y}{4} \right] & = \P \left[ \maxj \| \E_{n,\newdot} [u_i {x_{i,j}^*}] \|_2^2 \geq \frac{\lambda_Y^2 }{16} \right]	\nonumber  \\
	& = \P \left[ \maxj \| \E_{n,\newdot} [u_i {x_{i,j}^*}] \|_2^2 \geq \frac{ \mathcal{X}^2 \mathcal{U}^2 \Tbar}{\underline{n}}  + \frac{ \mathcal{X}^2 \mathcal{U}^2 \Tbar r_n }{\underline{n}} \right]		\nonumber  \\
	& = \P \left[ \maxj \| \E_{n,\newdot} [u_i {x_{i,j}^*}] \|_2^2  -  \frac{ \mathcal{X}^2 \mathcal{U}^2 \Tbar }{\underline{n}}     \geq  \frac{\mathcal{X}^2 \mathcal{U}^2 \Tbar r_n }{\underline{n}} \right]	\nonumber  \\
	& \leq \P \left[ \maxj \sumTbar \xi_{t,j}  \geq  \frac{\mathcal{X}^2 \mathcal{U}^2 \Tbar r_n }{\underline{n}} \right]	\nonumber  \\
	& \leq \E \left[ \maxj \left| \sumTbar \xi_{t,j} \right|\right]   \frac{\underline{n}}{ \mathcal{X}^2 \mathcal{U}^2 \Tbar r_n },			\label{supp-maximal1}
\end{align}
where final line follows from Markov's inequality.

Next, applying Lemma 9.1 of \citeasnoun{Lounici-etal2011_AoS} (with their $m=1$ and hence $c(m) = 2$) followed by Jensen's inequality and Assumption \ref{fourth moments}, we find that
\begin{align}
	\E \left[ \maxj \left| \sumTbar \xi_{t,j} \right|\right] & \leq ( 8 \log(2p))^{1/2} \E \left[ \left( \sumTbar \maxj \xi_{t,j}^2 \right)^{1/2} \right]	\nonumber  \\
	& \leq  ( 8 \log(2p))^{1/2} \left( \E \left[  \sumTbar \maxj \xi_{t,j}^2 \right] \right)^{1/2}  	\nonumber  \\
	& \leq  4 \log(2p)^{1/2} \left(  \sumTbar  \frac{\mathcal{X}^4 \mathcal{U}^4}{\underline{n}^2} + \sumTbar \E \left[ \maxj \left| \Ent[u _i {x_{i,j}^*}]  \right| ^4  \right] \right)^{1/2}.	\label{supp-maximal2}
\end{align}
The leading 4 is $\sqrt{8} \sqrt{2}$, where $\sqrt{2}$ is a byproduct of applying the inequality $(a - b)^2 \leq 2(a^2 + b^2)$ to $\xi_{t,j}^2$. Again using Lemma 9.1 of \citeasnoun{Lounici-etal2011_AoS} (with their $m=4$, and $c(m)=12$ since $c(4) \geq (e^{4-1} - 1 )/2 + 2 \approx 11.54$), we bound the expectation in the second term above as follows:
\begin{align}
	 & \E \left[ \maxj  \left|  \Ent[u _i {x_{i,j}^*}]  \right| ^4  \right]  \leq [ 8 \log(12p) ] ^{4/2} \E \left[ \left( \sumt \maxj \left| \frac{u_i {x_{i,j}^*} }{n_\t}\right|^2  \right)^{4/2} \right] 		\nonumber  \\
	& \qquad \leq \frac{64 \log(12p)^2 \mathcal{X}^4 }{n_\t^4} \left( \sumt \E\left[ |u_i|^4  \right] + \sumt \sum_{ k \in \It \setminus \{i\}} \E [  |u_i|^2 |u_k|^2] \right)			\nonumber  \\
	& \qquad \leq \frac{64 \log(12p)^2 \mathcal{X}^4 }{n_\t^4} \left( \sumt \E\left[ |u_i|^4  \right] + \sumt \sum_{ k \in \It \setminus \{i\}} \E [  |u_i|^4 ]^{1/2} \E[ |u_k|^4]^{1/2} \right)			\nonumber  \\
	& \qquad \leq \frac{64 \log(12p)^2 \mathcal{X}^4 \mathcal{U}^4}{n_\t^4} \left( n_\t + n_\t(n_\t - 1 )  \right)			\nonumber  \\
	& \qquad = \frac{64 \log(12p)^2 \mathcal{X}^4 \mathcal{U}^4}{n_\t^2},	\label{supp-maximal3}
\end{align}
where the second inequality uses H\"older's inequality and the final inequality applies Assumptions \ref{iid} and \ref{fourth moments}.

Now, inserting the results of Eqns.\ \eqref{supp-maximal2} and \eqref{supp-maximal3} into \Eqref{supp-maximal1}, we have
\begin{align*}
	\P \left[ \maxj \| \E_{n,\newdot} [u_i {x_{i,j}^*}] \|_2 \geq \frac{\lambda_Y }{4} \right] & \leq \frac{ 4 \underline{n}  \log(2p)^{1/2} }{\Tbar \mathcal{X}^2 \mathcal{U}^2 r_n }  \left(  \sumTbar  \frac{\mathcal{X}^4 \mathcal{U}^4}{\underline{n}^2} + \sumTbar \frac{64 \log(12p)^2 \mathcal{X}^4 \mathcal{U}^4}{n_\t^2} \right)^{1/2} \\
& \leq \frac{ 4\log(2p)^{1/2}  }{ r_n \sqrt{\Tbar}  } [ 1 + 64 \log(12 p)^2]^{1/2} = \mathcal{P},
\end{align*}
using the choice $r_n = \Tbar^{-1/2} \log(p \vee \underline{n})^{3/2 + \delta}$.
\end{proof}

\begin{lemma}[Estimate Sparsity]
	\label{supp-ols S-hat}
	With probability $ 1 - \mathcal{P}$, as defined in \Eqref{probability}, the model selected by solving \eqref{grplasso} obeys
	\[ | \tilde{S}^Y |  \leq \frac{16}{\lambda_Y^2} \sumTbar \overline{\phi}\{Q_\t, \tilde{S}^Y \}  \Ent [ (  \mut(x_i)  - {x_i^*}'\btildet)^2  ]. \]
\end{lemma}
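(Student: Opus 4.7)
The plan is to mirror the proof of Lemma \ref{supp-mlogit S-hat} verbatim, with the multinomial KKT condition replaced by its least-squares analogue and Lemma \ref{supp-mlogit score bound} replaced by the linear-model score bound of Lemma \ref{supp-ols score bound}. Subgradient optimality for program \eqref{grplasso} applied to $\SSE(\bany)+\lambda_Y\vertiii{\bany}$ forces, for every $j$ with $\btildej\neq 0$,
\[
2\bigl\{\Ent[x_{i,j}^{*}(y_i - {x_i^{*}}'\btildet)]\bigr\}_{\t\in\NTbar}
\;=\;\lambda_Y\,\frac{\btildej}{\|\btildej\|_2},
\]
so taking the $\ell_2$-norm in $\t$ on both sides yields $\bigl\|\{\Ent[x_{i,j}^{*}(y_i - {x_i^{*}}'\btildet)]\}_{\t\in\NTbar}\bigr\|_2 = \lambda_Y/2$ for every $j\in\tilde S^{Y}$.

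Next I would decompose $y_i - {x_i^{*}}'\btildet = u_i + (\mu_\t(x_i) - {x_i^{*}}'\btildet)$ for $i\in\It$, apply the triangle inequality in the group norm, and invoke Lemma \ref{supp-ols score bound}, which on an event of probability $1-\mathcal P$ bounds $\maxj \bigl\|\{\Ent[x_{i,j}^{*} u_i]\}_{\t\in\NTbar}\bigr\|_2 \leq \lambda_Y/4$ uniformly in $j$. On that event,
\[
\lambda_Y/4\;\leq\;\bigl\|\{\Ent[x_{i,j}^{*}(\mu_\t(x_i) - {x_i^{*}}'\btildet)]\}_{\t\in\NTbar}\bigr\|_2
\qquad\text{for every } j\in\tilde S^{Y}.
\]
Squaring this pointwise in $j$, summing over $j\in\tilde S^{Y}$, and interchanging the two summations leaves $|\tilde S^{Y}|\lambda_Y^{2}/16$ on the left. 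For each fixed $\t$, write $\bm X_{\t}^{\tilde S^{Y}}$ for the $n_\t\times|\tilde S^{Y}|$ design submatrix for treatment $\t$ restricted to columns $\tilde S^{Y}$, and $r_\t$ for the residual vector $\{\mu_\t(x_i) - {x_i^{*}}'\btildet\}_{i\in\It}$; the per-treatment sum equals $n_\t^{-2}\|(\bm X_{\t}^{\tilde S^{Y}})'r_\t\|_2^{2}$. Controlling
\[
\|(\bm X_{\t}^{\tilde S^{Y}})'r_\t\|_2^{2}\;\leq\;\|\bm X_{\t}^{\tilde S^{Y}}\|_{op}^{2}\,\|r_\t\|_2^{2}\;=\;n_\t\,\overline{\phi}\{Q_\t,\tilde S^{Y}\}^{2}\,\|r_\t\|_2^{2}
\]
via the sparse-eigenvalue definition \eqref{SE} gives the bound $\overline{\phi}\{Q_\t,\tilde S^{Y}\}^{2}\,\Ent[(\mu_\t(x_i)-{x_i^{*}}'\btildet)^{2}]$; summing over $\t\in\NTbar$ and rearranging delivers the claim.

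No serious obstacle is anticipated: the argument is entirely parallel to the multinomial case of Lemma \ref{supp-mlogit S-hat}, and the only required ingredient beyond standard subgradient bookkeeping and a maximal-eigenvalue bound is the uniform-in-$j$ score control already supplied by Lemma \ref{supp-ols score bound}. The constant $16$ (versus $4$ in Lemma \ref{supp-mlogit S-hat}) is the only arithmetic difference, traceable to the factor $4$ rather than $2$ appearing in the definition of $\lambda_Y$ in \eqref{lambda} (itself a byproduct of the quadratic loss).
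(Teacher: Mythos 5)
Your argument is correct and follows the paper's own proof essentially line for line: the same KKT identity, the same decomposition of $y_i-{x_i^*}'\btildet$ into noise plus approximation error, the same invocation of the score bound of Lemma \ref{supp-ols score bound}, and the same operator-norm/sparse-eigenvalue step treatment by treatment. The only discrepancy is that, applying definition \eqref{SE} literally, your operator-norm step yields $\overline{\phi}\{Q_\t,\tilde{S}^Y\}^2$ where the lemma statement (and the paper's own proof) write $\overline{\phi}\{Q_\t,\tilde{S}^Y\}$ to the first power --- an inconsistency already present in the paper's handling of the sparse eigenvalue, not a gap in your reasoning.
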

\begin{proof}
First, by Karush-Kuhn-Tucker conditions for \eqref{grplasso}, for all $\t \in \NTbar$, if $\btildej \neq 0$ it must satisfy
\begin{equation}
	\label{supp-ols KKT}
		2\Ent[{x_{i,j}^*} (y_i - {x_i^*}'\btildet)] = \lambda_Y \frac{\btildetj}{\|\btildej \|_2}.
\end{equation}
Hence, taking the $\ell_2$-norm over $\t \in \NTbar$ for fixed $j \in \tilde{S}^Y$, using $y_i = \mut(x_i) + u_i$, the triangle inequality, and Lemma \ref{supp-ols score bound}:
\begin{align*}
	\lambda_Y & = 2 \left\| \Ent[{x_{i,j}^*} (y_i - {x_i^*}'\btildet)] \right\|_2		\\
	& \leq  2 \left\| \Ent[{x_{i,j}^*} \{ \mut(x_i) - {x_i^*}'\btildet\}] \right\|_2  +  2 \left\| \Ent[{x_{i,j}^*} u_i ] \right\|_2		\\
	& \leq  2 \left\| \Ent[{x_{i,j}^*} \{ \mut(x_i) - {x_i^*}'\btildet\} ] \right\|_2  + \lambda_Y/2.
\end{align*}
Let $\bm{G}_\t$ be the vector of $\{\mut(x_i)\}_{i \in \It}$ and $\tilde{\bm{G}}_\t$ that of $\{{x_i^*}'\btildet\}_{i \in \It}$. Collecting terms, then squaring both sides and taking $\sum_{j \in \tilde{S}^Y}$ (i.e. applying $\| \cdot \|_2^2$ over $j \in \tilde{S}^Y$ to both sides) yields
\begin{align*}
	\sum_{j \in \tilde{S}^Y} \lambda_Y^2 & \leq 16 \sum_{j \in \tilde{S}^Y} \sumTbar  \left( \Ent[{x_{i,j}^*} \{ \mut(x_i) - {x_i^*}'\btildet\} ] \right)^2		\\
	& =  16 \sumTbar  \frac{1}{n_\t^2}\left\|   \left[\Xnt' (\bm{G}_\t - \tilde{\bm{G}}_\t) \right]_{j \in \tilde{S}^Y} \right\|_2^2	\\
	& \leq  16 \sumTbar \frac{ \overline{\phi}\{Q_\t, \tilde{S}^Y\} }{n_\t}   \left\|   \bm{G}_\t - \tilde{\bm{G}}_\t  \right\|_2^2	\\
	& \leq  16 \sumTbar \overline{\phi}\{Q_\t, \tilde{S}^Y \}  \Ent [ (  \mut(x_i)  - {x_i^*}'\btildet)^2  ].
\end{align*}
The claim follows, as the left-hand side is equal to  $|\tilde{S}^Y| \lambda_Y^2$.
\end{proof}

\subsection{Proof of Theorem \ref{thm-ols}}

Let $\dtilde  = \btilde - \btrue$. First, because $\beta^*_{\newdot, S_*^c} = 0$
 \[\vertiii{\beta^*_{\newdot, S_*^c}} - \vertiii{\beta^*_{\newdot, S_*^c} + \tilde{\delta}_{\newdot, S_*^c}} =\vertiii{ \tilde{\delta}_{\newdot, S_*^c}}.\]
Therefore:
\begin{align}
	\vertiii{\btrue} - \vertiii{\btrue + \dtilde} & = \vertiii{\beta^*_{\newdot, S_*}} - \vertiii{\beta^*_{\newdot, S_*} + \tilde{\delta}_{\newdot, S_*}} - \vertiii{ \tilde{\delta}_{\newdot, S_*^c}}			\nonumber \\
	& \leq \vertiii{\beta^*_{\newdot, S_*}} - \vertiii{\beta^*_{\newdot, S_*} + \tilde{\delta}_{\newdot, S_*}}		\nonumber \\
	& \leq \left| \vertiii{\beta^*_{\newdot, S_*}} - \vertiii{\beta^*_{\newdot, S_*} + \tilde{\delta}_{\newdot, S_*}}\right|		\nonumber \\
	& \leq \vertiii{\beta^*_{\newdot, S_*} - \left(\beta^*_{\newdot, S_*} + \tilde{\delta}_{\newdot, S_*}\right) }		 = \vertiii{ \tilde{\delta}_{\newdot, S_*}}, 		\label{supp-ols norm}
\end{align}
where the first inequality reflects dropping the nonpositive final term (the norm is nonnegative) and the third inequality follows from the triangle inequality. Because $\btilde$ solves \eqref{grplasso}
\[ \SSE(\dtilde) + \lambda_Y \vertiii{\btilde} \leq   \SSE(\dtilde)  + \lambda_Y \vertiii{\btrue}.\]
Define the $i^{\text{th}}$ realization of $B_\t^Y$ as $\by = \mut(x_i) - {x_i^*}'\btruet$. Inserting $y_i = {x_i^*}'\btruet + \by + u_i$ on each side, we obtain
\begin{multline*}
	\sumTbar \Ent [({x_i^*}'\dtildet)^2] - 2 \sumTbar \Ent[(\by  +  u_i) {x_i^*}'\dtildet ]  + \sumTbar \Ent[(\by  +  u_i)^2]  + \lambda_Y \vertiii{\btrue + \dtilde} \\
		\leq  \sumTbar \Ent[(\by  +  u_i)^2]  + \lambda_Y \vertiii{\btrue}
\end{multline*}
Canceling common factors and rearranging, we find that
\begin{equation}
	\label{supp-optimality}
	\sumTbar \Ent [({x_i^*}'\dtildet)^2] \leq 2 \sumTbar \Ent[(\by  +  u_i) {x_i^*}'\dtildet ]   +   \lambda_Y \left\{ \vertiii{\btrue}  -  \vertiii{\btrue + \dtilde} \right\}
\end{equation}

By the Cauchy-Schwarz inequality and Lemma \ref{supp-ols score bound}, with probability at least $1 - \mathcal{P}$
\begin{align*}
	2 \sumTbar \Ent[u_i {x_i^*}'\dtildet] & = 2 \sumj  \sumTbar \Ent[u_i {x_{i,j}^*}\dtildet] 		\\
	& \leq \sumj 2 \sqrt{ \sumTbar ( \Ent[u_i {x_{i,j}^*} ])^2 }  \sqrt{ \sumTbar  \dtildetj^2 } 		\\
	& = \sumj 2 \| \E_{n,\newdot} [u_i {x_{i,j}^*}] \|_2  \| \dtildej  \|_2		\\
	& \leq \frac{\lambda_Y}{2} \vertiii{\dtilde}.
\end{align*}
Next, using the Cauchy-Schwarz inequality, the bias condition, and Jensen's inequality:
\begin{align*}
	2 \sumTbar \Ent[\by {x_i^*}'\dtildet] & \leq 2 \sumTbar \Ent[(\by)^2]^{1/2}  \Ent[({x_i^*}'\dtildet)^2]^{1/2} 		\\
	 & \leq 2 \bias^y \sumTbar  \Ent[({x_i^*}'\dtildet)^2]^{1/2} 		\\
	 & \leq 2 \bias^y \sqrt{ \sumTbar  \Ent[({x_i^*}'\dtildet)^2]   }.
\end{align*}

Plugging the previous two inequalities into \Eqref{supp-optimality} we find that with probability at least $1 - \mathcal{P}$:
\begin{equation}
	\label{supp-ols two cases}
	\sumTbar \Ent [({x_i^*}'\dtildet)^2]   \leq   \frac{\lambda_Y}{2} \vertiii{\dtilde}  +   \lambda_Y \left\{ \vertiii{\btrue}  -  \vertiii{\btrue + \dtilde} \right\}   +  2 \bias^y \sqrt{ \sumTbar  \Ent[({x_i^*}'\dtildet)^2]   }.
\end{equation}

Consider two cases, depending on whether
\[\sumTbar \Ent [({x_i^*}'\dtildet)^2]  - 2 \bias^y \sqrt{ \sumTbar  \Ent[({x_i^*}'\dtildet)^2]   } \]
is negative or nonnegative. In the first case, rearranging the display above  gives
\begin{equation}
	\label{supp-ols case1}
	\sumTbar \Ent [({x_i^*}'\dtildet)^2]  < 2 \bias^y \sqrt{ \sumTbar  \Ent[({x_i^*}'\dtildet)^2]   } 
\end{equation}

For the second case, returning to \Eqref{supp-ols two cases}, rearranging, and discarding positive terms (under the second case) from the left side, we have
	\[ 0  \leq   \frac{\lambda_Y}{2} \vertiii{\dtilde}  +   \lambda_Y \left\{ \vertiii{\btrue}  -  \vertiii{\btrue + \dtilde} \right\}    .\]
Canceling $\lambda_Y$, decomposing the supports
	\[ 0  \leq   \frac{1}{2} \vertiii{\tilde{\delta}_{\newdot,S_*} } +   \frac{1}{2} \vertiii{\tilde{\delta}_{\newdot,S_*^c } }  +  \vertiii{\beta^*_{\newdot,S_*} }  -  \vertiii{\beta^*_{\newdot,S_*} + \tilde{\delta}_{\newdot,S_*}}     -  \vertiii{\tilde{\delta}_{\newdot,S_*^c}},\]
Collecting terms and applying the final inequality of \Eqref{supp-ols norm} yields
	\[ \frac{1}{2} \vertiii{\tilde{\delta}_{\newdot,S_*^c } }   \leq   \frac{1}{2} \vertiii{\tilde{\delta}_{\newdot,S_*} }   +  \vertiii{\beta^*_{\newdot,S_*} }  -  \vertiii{\beta^*_{\newdot,S_*} + \tilde{\delta}_{\newdot,S_*}}     \leq \frac{3}{2}  \vertiii{\tilde{\delta}_{\newdot,S_*} }, \]
and hence $\dtilde$ obeys the cone constraint of \Eqref{RE}.

Thus, beginning with \Eqref{supp-ols two cases}, decomposing the support of $\dtilde$, using the cone constraint and the result of  \Eqref{supp-ols norm}, the Cauchy-Schwarz inequality, the definition of $\kappa_Y$ from \Eqref{RE}, 
\begin{align}
	\sumTbar \Ent [({x_i^*}'\dtildet)^2]  &  \leq   \frac{\lambda_Y}{2} \vertiii{\tilde{\delta}_{\newdot,S_*} }   +  \frac{\lambda_Y}{2} \vertiii{\tilde{\delta}_{\newdot,S_*^c} }   +   \lambda_Y \left\{ \vertiii{\btrue}  -  \vertiii{\btrue + \dtilde} \right\}   +  2 \bias^y \sqrt{ \sumTbar  \Ent[({x_i^*}'\dtildet)^2]   }		\nonumber \\
	&  \leq   \frac{\lambda_Y}{2} \vertiii{\tilde{\delta}_{\newdot,S_*} }   +  \frac{\lambda_Y}{2} 3 \vertiii{\tilde{\delta}_{\newdot,S_*} }   +   \lambda_Y \vertiii{ \tilde{\delta}_{\newdot, S_*}}  +  2 \bias^y \sqrt{ \sumTbar  \Ent[({x_i^*}'\dtildet)^2]   }		\nonumber \\
	&  \leq  3 \lambda_Y \vertiii{ \tilde{\delta}_{\newdot, S_*}}  +  2 \bias^y \sqrt{ \sumTbar  \Ent[({x_i^*}'\dtildet)^2]   }		\nonumber \\
	&  \leq  3 \lambda_Y \sqrt{|S_*|} \left\| \tilde{\delta}_{\newdot, S_*} \right\|_2  +  2 \bias^y \sqrt{ \sumTbar  \Ent[({x_i^*}'\dtildet)^2]   }		\nonumber \\
	&  \leq  \frac{3 \lambda_Y \sqrt{|S_*|}}{\kappa_Y} \sqrt{ \sumTbar \dtildet' Q_t \dtildet }   +  2 \bias^y \sqrt{ \sumTbar  \Ent[({x_i^*}'\dtildet)^2]   }		\nonumber \\
	& = \left( \frac{3 \lambda_Y \sqrt{|S_*|}}{\kappa_Y} +  2 \bias^y \right) \sqrt{ \sumTbar  \Ent[({x_i^*}'\dtildet)^2]   }. 		\label{supp-ols case2}
\end{align}

Equations \eqref{supp-ols case1} and \eqref{supp-ols case2} show that in both cases defined above, the root left side appears on the right. Thus, dividing through in both we find that
\[\sumTbar \Ent [({x_i^*}'\dtildet)^2] \leq \left( \frac{3 \lambda_Y \sqrt{|S_*|}}{\kappa_Y} +  2 \bias^y \right)^2,\]
because the bound given in \Eqref{supp-ols case2} contains that of \eqref{supp-ols case1}. From the union bound we have
\[\maxTbar \Ent [({x_i^*}'\dtildet)^2]^{1/2}  \leq \left( \frac{3 \lambda_Y \sqrt{|S_*|}}{\kappa_Y} +  2 \bias^y \right),\]
and therefore, by the triangle inequality
\begin{equation}
	\label{supp-ols final rate}
	\maxTbar \Ent [({x_i^*}'\btildet - \mut(x_i))^2]^{1/2}  \leq \left( \frac{3 \lambda_Y \sqrt{|S_*|}}{\kappa_Y} +  2 \bias^y \right) + \bias^y.
\end{equation}

The rate above pertains only to the ``with-in sample'' fit, for those observations with $\dt = 1$. To obtain a rate on the entire sample, we use the sparse eigenvalues defined in \Eqref{SE}, as follows:
\begin{align*}
	\En[({x_i^*}'\dtildet)^2]^{1/2} & = \left\|Q^{1/2} Q_\t^{-1/2} Q_\t^{1/2}\dtildet \right\|_2		\\
	& \leq \left( \frac{\overline{\phi}\{Q, \tilde{S}^Y \cup S_Y^* \} }{\underline{\phi}\{Q_\t, \tilde{S}^Y \cup S_Y^* \}} \right)^{1/2} \left\| Q_\t^{1/2}\dtildet \right\|_2		\\
	& = \left( \frac{\overline{\phi}\{Q, \tilde{S}^Y \cup S_Y^* \} }{\underline{\phi}\{Q_\t, \tilde{S}^Y \cup S_Y^* \}} \right)^{1/2}  \Ent[({x_i^*}'\dtildet)^2]^{1/2}		\\
	& \leq \left( \frac{\overline{\phi}\{Q, \tilde{S}^Y \cup S_Y^* \} }{\underline{\phi}\{Q_\t, \tilde{S}^Y \cup S_Y^* \}} \right)^{1/2} \left( \frac{3 \lambda_Y \sqrt{|S_*|}}{\kappa_Y} +  2 \bias^y \right).
\end{align*}

The first conclusion of the Theorem now follows from this rate, the triangle inequality, and the bias condition, because
\begin{align*}
	\En[({x_i^*}'\btildet - \mut(x_i))^2] ^{1/2} & \leq \En[({x_i^*}'\btildet - {x_i^*}'\btruet)^2] ^{1/2} + \En[(\by)^2] ^{1/2} 		\\
	& \leq  \left( \frac{\overline{\phi}\{Q, \tilde{S}^Y \cup S_Y^* \} }{\underline{\phi}\{Q_\t, \tilde{S}^Y \cup S_Y^* \}} \right)^{1/2} \left( \frac{3 \lambda_Y \sqrt{|S_*|}}{\kappa_Y} +  2 \bias^y \right) + \bias^y.
\end{align*}

The $\ell_1$ bound now follows by the Cauchy-Schwarz inequality and the definition in \Eqref{SE}:
\begin{align*}
	\left\| \dtildet \right\|_1 \leq \sqrt{|\tilde{S}^Y \cup S_Y^*|} \left\| \dtildet \right\|_2 \leq \left(\frac{ |\tilde{S}^Y \cup S_Y^*| }{ \underline{\phi}\{Q, \tilde{S}^Y \cup S_Y^*\} } \right)^{1/2} \En[({x_i^*}'\dtildet)^2]^{1/2}.	
\end{align*}

Finally, we bound the size of the selected set of coefficients. First, note that optimality of $\btilde$ ensures that $| \tilde{S}^Y | \leq \overline{n}$. Then, restating the conclusion Lemma \ref{supp-ols S-hat} using the notation of the Theorem and \Eqref{supp-ols final rate}, then bounding $\overline{\phi}$ by $\dbarphi$ we find that
\[ | \tilde{S}^Y |    \leq    |S_Y^* | 16 L_n \sumTbar  \overline{\phi}\{Q_\t, \tilde{S}^Y\}    \leq    |S_Y^* | 16 L_n \sumTbar  \dbarphi\{Q_\t, |\tilde{S}^Y|\} .\]
The argument now parallels that used by \citeasnoun{Belloni-Chernozhukov2013_Bern}, relying on their result on the sublinearity of sparse eigenvalues. Let $\lceil m \rceil$ be the ceiling function and note that $\lceil{m}\rceil \leq 2m$. For any $m \in \N_Q^Y$, suppose that $|\tilde{S}^Y| > m$. Then, 
\begin{align*}
	| \tilde{S}^Y | & \leq |S_Y^* | 16 L_n \sumTbar  \dbarphi\{Q_\t, m (|\tilde{S}^Y| / m) \} 		\\
	& \leq  \left\lceil |\tilde{S}^Y| / m \right \rceil  |S_Y^* | 16 L_n \sumTbar  \dbarphi\{Q_\t, m  \}		\\
	& \leq  ( |\tilde{S}^Y| / m) |S_Y^* | 32 L_n \sumTbar  \dbarphi\{Q_\t, m  \}.
\end{align*}
Rearranging gives 
\[m \leq |S_Y^* | 32 L_n \sumTbar  \dbarphi\{Q_\t, m  \}\]
whence $m \not\in \N_Q^Y$. Minimizing over $\N_Q^Y$ gives the result.	 \qed

\subsection{Verification of Assumption 3(c) for Group Lasso Estimators}

Under conditions imposed therein, Section 6 of the paper verifies that Assumptions \ref{consistent} and \ref{ATE RATES} hold for the proposed group lasso estimators $\muthat(x_i)$ and $\hat{p}_\t(x_i)$. Here we show that \ref{new} holds also. No additional assumptions are required. 

Recall that rather than generic  $\muthat(x)$ and $\mut(x_i)$, as above, we are now explicitly considering high-dimensional approximately sparse linear models for $\mut(x_i)$. In this context, we add and subtract the pseudotrue values to write 
\[ \sqrt{n} \En[ (\muthat(x_i) - \mut(x_i)) (1 - d_i^t/ p_\t(x_i))] = A_1 + A_2,\]
where 
\[A_1 =  \frac{1}{\sqrt{n}} \sumi ({x_i^*}\btruet - \mut(x_i)) \left(\frac{ p_\t(x_i)  -  \dt }{ p_\t(x_i) } \right)     	 \qquad \text{ and } \qquad   	 A_2 =  \frac{1}{\sqrt{n}} \sumi ({x_i^*}'\bhatt - {x_i^*}\btruet) \left(\frac{ p_\t(x_i)  -  \dt }{ p_\t(x_i) } \right).   \]

For the first term, $\E[ A_1 \vert \{x_i\}_{i = 1}^n] = 0$ holds as $\btruet$ is nonrandom. From Assumption 1(b) and the definition of the bias term $\bias^y$ we find
\[ \E[ A_1^2 \vert \{x_i\}_{i = 1}^n] = \frac{1}{n} \sumi ({x_i^*}\btruet - \mut(x_i))^2 \E \left[ \left(\frac{ p_\t(x_i)  -  \dt }{ p_\t(x_i) } \right)^2 \right] \leq C \En ({x_i^*}\btruet - \mut(x_i))^2 \leq C (\bias^y)^2. \]
Therefore $|A_1| = O_{P_n}(\bias^y) = o_{P_n}(1)$, where the second equality is assumed in the bias condition of Assumption 4 and the first equality follows from Markov's inequality.

For the second term, define $\tilde{\Sigma}_{\t,j} = \E\left[ (x_{i,j}^*)^2 (\dt - p_\t(x_i))^2 / p_\t(x_i)^2 \right]$ and then proceed as follows:
\begin{align*}
	 A_{1}  & = \frac{1}{\sqrt{n}} \sumi \left(\frac{ p_\t(x_i)  -  \dt }{ p_\t(x_i) } \right) \sum_{j \in \hat{S}_Y} x_{i,j}^*  (\bhattj - \btruetj)		\\
	& =  \sum_{j \in \hat{S}_Y} \left\{ \frac{1}{\sqrt{n}} \sumi \frac{x_{i,j}^*  (p_\t(x_i) - \dt) / p_\t(x_i)}{  \tilde{\Sigma}_{\t,j}^{1/2} } \right\} \tilde{\Sigma}_{\t,j}^{1/2}  (\bhattj - \btruetj)		\\
	& \leq \left( \maxj \tilde{\Sigma}_{\t,j}^{1/2} \right) \left( \maxj \frac{1}{\sqrt{n}} \sumi \frac{x_{i,j}^*  (p_\t(x_i) - \dt) / p_\t(x_i)}{  \tilde{\Sigma}_{\t,j}^{1/2} }   \right) \left\| \bhatt - \btruet \right\|_1		\\
	& = O(1) O_{P_n}( \log(p) )  \left\| \bhatt - \btruet \right\|_1.
\end{align*}
This quantity is $o_{P_n}(1)$ by Corollary 5 in the original paper, which among other results, gives a rate for the $\ell_1$ norm of the estimated coefficients. For the final equality, Assumptions 1(b), 2(b), and 2(c) imply that $\maxj \tilde{\Sigma}_{\t,j} = O(1)$, while the center factor is bounded by applying the moderate deviation theory for self-normalized sums of \citeasnoun[Theorem 7.4]{delaPena-Lai-Shao2009_book} and in particular \citeasnoun[Lemma 5]{BCCH2012_Ecma}. To apply this theory, first note that the summand of the center factor has bounded third moment and second moment bounded away from zero from Assumptions 1(b) and 2. $\Sigma_{t,j}$ normalizes the second moment, and the theory applies under Assumption 4.

\section{Additional Simulation Results}

The DGP is as described in the main text. The mean comparison group sample sizes for the various DGPs are in Table \ref{supp-table-n-controls}. Figure \ref{supp-fig-2000} shows the analogue of Figure \ref{fig-plot-main} Panels (a) and (b) with 2000 covariates. The manually chosen values of $\delta_\D$ and $\delta_Y$ do not appear well-suited to one particular DGP. Figures \ref{supp-fig-cross-p1000}, \ref{supp-fig-cross-p1500}, and \ref{supp-fig-cross-p2000} show the coverage results based using 10-fold cross validation to choose the penalty parameters, for 1000, 1500, and 2000 covariates, respectively. For these three, the exponents $\alpha_\beta$ and $\alpha_\gamma$ range from one to four, and hence the functions are always sparse (to a certain degree). For nonsparse functions, the current R routines are not reliable. This will be explored in future software development. Cross-validation choices perform very well.

\begin{table}
	\begin{center}
		\begin{threeparttable}
			\caption{Mean Comparison Group Sample Sizes for Various Specifications}
			\label{supp-table-n-controls}
			\small
			\begin{tabular}{l r r r r r r}
				\hline
				\hline\noalign{\smallskip}
				& \multicolumn{6}{c}{Multiplier $\rho_\gamma$ (exponent $\alpha_\gamma=2$)}  	  \\
				\cline{2-7} \noalign{\smallskip}
				No.\ of Covariates: 	&  0.01 &  0.05  &  0.25  &  0.50  &  0.75  &  1		\\
				\hline
				1000  	&  498.368  &  488.311 &   439.159  &  384.060 &   338.106  &  301.793		\\
				1500  	&  497.832   &  487.987   &  438.608   &  383.966   &  338.408  &  301.924		\\
				2000  	&  498.368  &  488.311 &   439.159  &  384.060 &   338.106  &  301.793		\\
				\hline
			\end{tabular}
			\begin{tabular}{l r r r r r r r r }
				\noalign{\bigskip}
				\noalign{\bigskip}
				& \multicolumn{6}{c}{Exponent $\alpha_\gamma$ (multiplier $\rho_\gamma=1$)}  	  \\
				\cline{2-9} \noalign{\smallskip}
				No.\ of Covariates: 	&  0.125 &  0.25  &  0.5  &  .75  &  1  &  2    &  3   &  4		\\
				\hline \noalign{\smallskip}
				1000  	&  456.373   &  420.073  &   341.358   &  312.818  &   305.861  &   301.793   &  302.134  &  302.764		\\
				1500  	&  462.646  &   426.711  &   342.527    &  312.544   &   305.965   &   301.924   &    301.943   &   302.229  		\\
				2000  	&  462.646  &   426.711  &   342.527   &   312.544   &   305.965   &   301.924   &    301.943   &   302.229  		\\
				\hline
			\end{tabular}
		\end{threeparttable}
	\end{center}
\end{table}

\begin{figure}
	\begin{center}
		\caption{Empirical Coverage of 95\% Confidence Intervals, Varying Signal Strength and Sparsity of $p_\t(x)$ and $\mut(x)$, 2000 Covariates}
		\label{supp-fig-2000}
		\includegraphics[scale=1]{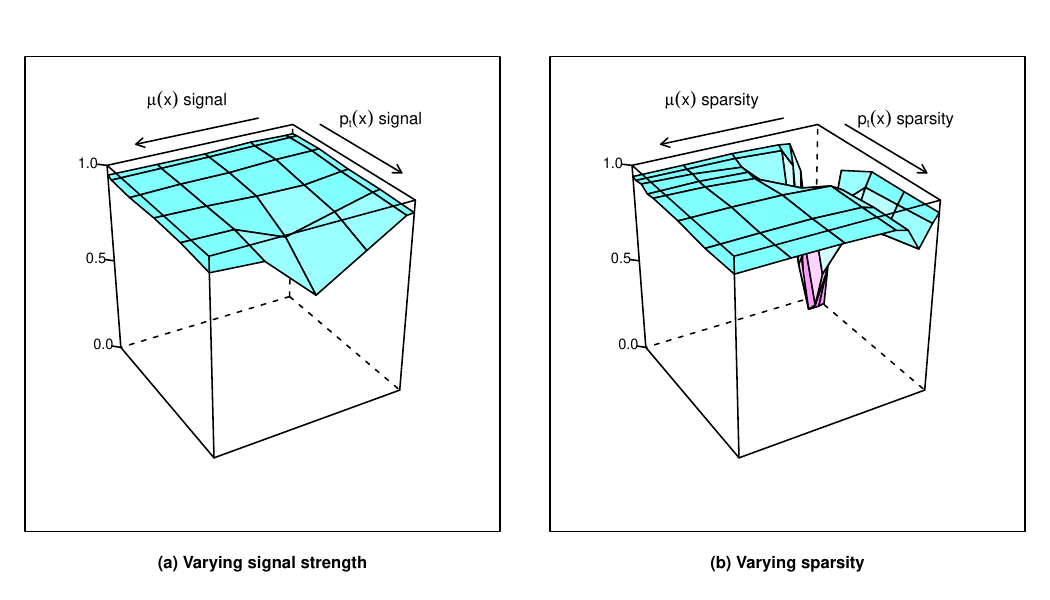}
	\end{center}
\end{figure}

\begin{figure}
	\begin{center}
		\caption{Empirical Coverage of 95\% Confidence Intervals, Penalty Chosen with Cross-Validation, Varying Signal Strength and Sparsity of $p_\t(x)$ and $\mut(x)$, 1000 Covariates}
		\label{supp-fig-cross-p1000}
		\includegraphics[scale=1]{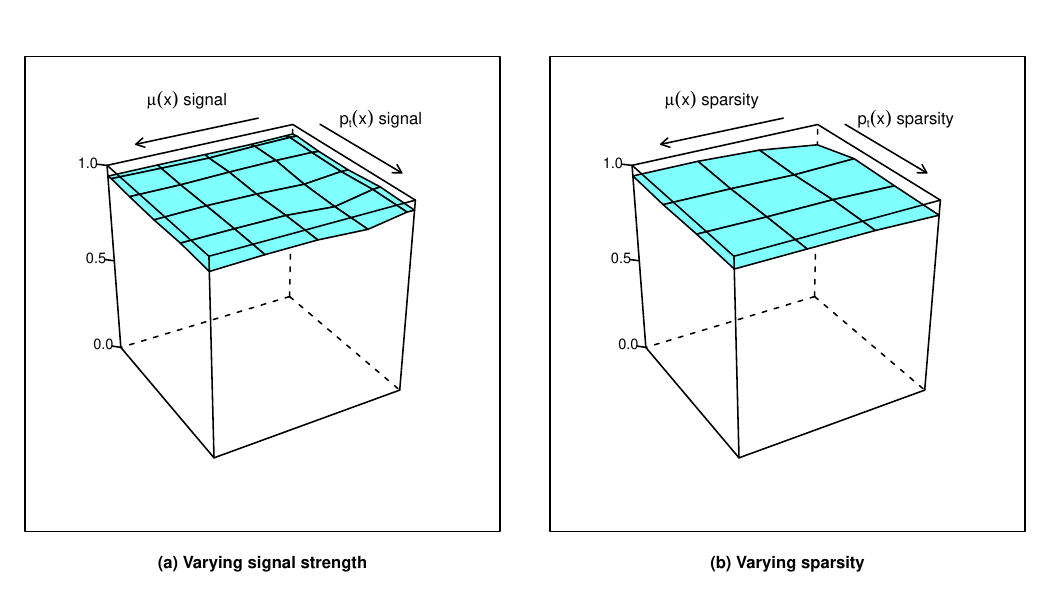}
	\end{center}
\end{figure}

\begin{figure}
	\begin{center}
		\caption{Empirical Coverage of 95\% Confidence Intervals, Penalty Chosen with Cross-Validation, Varying Signal Strength and Sparsity of $p_\t(x)$ and $\mut(x)$, 1500 Covariates}
		\label{supp-fig-cross-p1500}
		\includegraphics[scale=1]{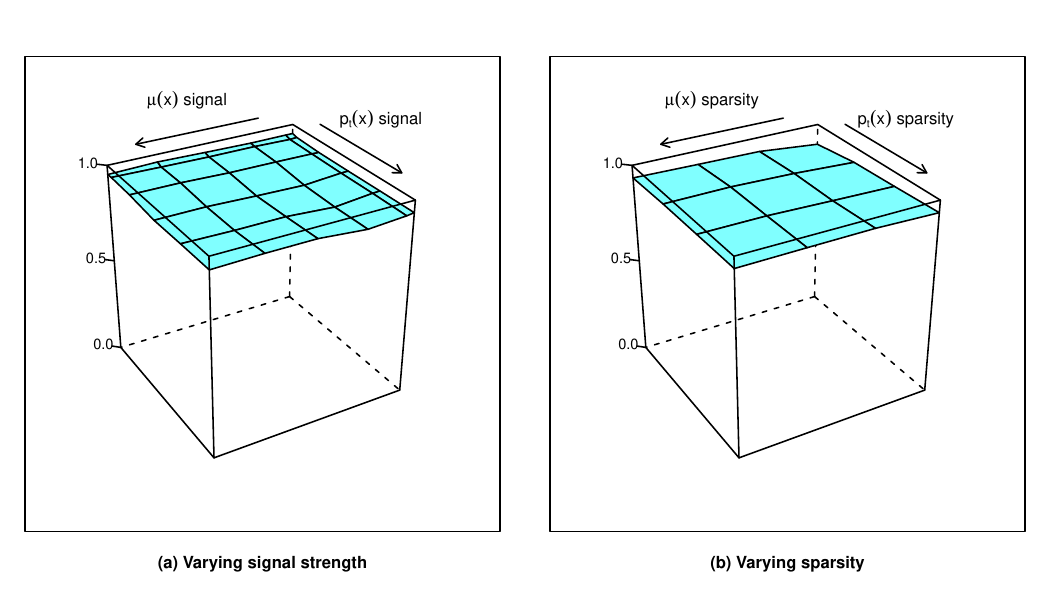}
	\end{center}
\end{figure}

\begin{figure}
	\begin{center}
		\caption{Empirical Coverage of 95\% Confidence Intervals, Penalty Chosen with Cross-Validation, Varying Signal Strength and Sparsity of $p_\t(x)$ and $\mut(x)$, 2000 Covariates}
		\label{supp-fig-cross-p2000}
		\includegraphics[scale=1]{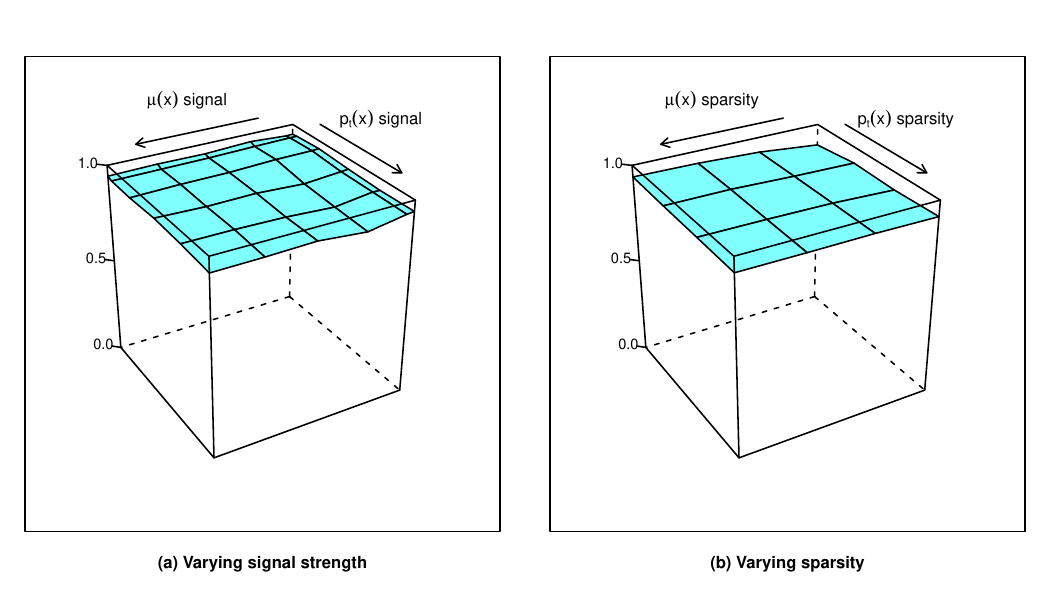}
	\end{center}
\end{figure}

\end{appendices}

\end{document}